\def\id{{\rm id}}
\def\ppi{p}
\def\bi{{\bf i}}
\def\a{{\bf a}}
\def\aD{D} \def\oPi{{\mathring{G/P}}}
\def\Hecke{{\rm Hecke}}
\def\oHecke{{\overline{\Hecke}}}
\def\oHk{{\overline{\Hk}}}
\def\Hk{{\rm Hk}}
\def\Aut{{\rm Aut}}
\def\AA{{\mathsf{E}}} \def\Exp{{\mathsf{E}}} \def\Mult{{\mathsf{M}}} 
\def\g{{\mathfrak g}}
\def\u{{\mathfrak u}}
\def\AS{{\rm AS}}
\def\af{{\rm af}}
\def\Lie{{\rm Lie}}
\def\Inv{{\rm Inv}}
\def\G{{\mathbb G}}
\def\Bun{{\rm Bun}}
\def\pr{{\rm pr}}
\def\ev{{\rm ev}}
\def\Gr{{\rm Gr}}
\def\oGr{{\overline{\Gr}}}
\def\Kl{{\rm Kl}} \def\TKl{{\rm TKl}} 
\def\tR{{\tilde R}}
\def\tiota{{\tilde \iota}}
\def\E{{\mathcal{E}}}
\def\O{{\mathcal{O}}}
\def\C{{\mathbb C}}
\def\Z{{\mathbb Z}}
\def\A{{\mathbb A}}
\def\P{{\mathbb P}}
\def\Gm{{\P^1\backslash \{0,\infty\}}}
\def\Trace{{\rm Tr}}
\newcommand\ip[1]{{\langle #1 \rangle}}
\def\bQ{{\overline{\mathbb Q}}}
\def\F{{\mathbb F}}
\def\oGP{\mathring{G/P}}
\def\t{{\mathfrak {t}}}
\def\u{{\mathfrak{u}}}
\def\opp{{\rm opp}}
\def\Op{{\rm Op}}
\def\cl{{\rm cl}}
\def\Hitch{{\rm Hitch}}
\def\c{{\mathfrak{c}}}
\def\p{{\mathfrak{p}}}
\def\b{{\mathfrak{b}}}
\def\Frac{{\rm Frac}}
\def\RS{{\rm RS}}
\def\Fun{{\rm Fun}}
\def\sl{{\mathfrak{sl}}}
\def\gr{{\rm gr}}
\def\cBun{\mathring{\rm Bun}}
\def\Aut{{\rm Aut}}
\def\pt{{\rm pt}}
\def\DR{{\rm DR}}
\def\i{{\mathfrak{i}}}
\def\MF{{\rm MF}}
\def\rM{{\rm M}}
\def\MG{{\rm MG}}
\def\deg{{\rm deg}}
\def\Tor{{\rm Tor}}
\def\qroot{\kappa}
\def\comp{\kappa}
\providecommand{\cR}{\mathcal{R}}
\providecommand{\ki}{\mathfrak{i}}
\providecommand{\cO}{\mathcal{O}}
\providecommand{\R}{\mathbb{R}}
\providecommand{\cQ}{\mathcal{Q}}
\providecommand{\kc}{\mathfrak{c}}
\providecommand{\cY}{\mathcal{Y}}
\providecommand{\cG}{\mathcal{G}}
\providecommand{\cM}{\mathcal{M}}
\providecommand{\cL}{\mathcal{L}}
\DeclareMathOperator {\Sym} {Sym}
\providecommand{\cE}{\mathcal{E}}
\providecommand{\cH}{\mathcal{H}}
\def\isom{\stackrel{\sim}{\ra}}
\DeclareMathOperator {\Sp} {Sp}
\def\hra{\hookrightarrow}
\DeclareMathOperator {\SO} {SO}
\DeclareMathOperator {\SL} {SL}
\DeclareMathOperator {\PGL} {PGL}
\providecommand{\kt}{\mathfrak{t}}
\providecommand{\kg}{\mathfrak{g}}
\providecommand{\Q}{\mathbb{Q}}
\def\ra{\rightarrow}
\DeclareMathOperator {\Frob} {Frob}
\newcommand{\BK}{\mathrm{Cr}} \newcommand{\WBK}{\mathrm{WCr}}  
\newcommand{\Jac}{\mathrm{Jac}}
       \newcommand{\kv}{\mathfrak{v}}
\DeclareMathOperator{\Spec}{Spec}
\newcommand{\GM}{\mathrm{GM}}
\newcommand{\PD}{\mathrm{PD}}
\newcommand\defn[1]{{\it #1}}
\newtheorem{theorem}[subsection]{Theorem}
\newtheorem{definition}[subsection]{Definition}
\newtheorem{lemma}[subsection]{Lemma}
\newtheorem{proposition}[subsection]{Proposition}
\newtheorem{corollary}[subsection]{Corollary}
\newtheorem{conjecture}[subsection]{Conjecture}
\theoremstyle{definition}
\newtheorem{remark}[subsection]{Remark}
\newtheorem{example}[subsection]{Example}
\newtheorem{question}[subsection]{Question}
\numberwithin{equation}{subsection}
\begin{document}
\title[Mirror conjecture for minuscule flag varieties]{The mirror conjecture for minuscule flag varieties}
\author[Thomas Lam]{Thomas Lam}
\address{Department of Mathematics, University of Michigan, 530 Church St., Ann Arbor MI 48109.}
\email{tfylam@umich.edu}
\author[Nicolas Templier]{Nicolas Templier}
\address{Department of Mathematics, Cornell University, Malott Hall, Garden Av., Ithaca NY 14853-4201.}
\email{npt27@cornell.edu}
\keywords{}
\begin{abstract}
We prove Rietsch's mirror conjecture that the Dubrovin quantum connection for minuscule flag varieties is isomorphic to the character $\aD$-module of the
Berenstein--Kazhdan geometric crystal. The idea is to recognize the quantum connection 
as Galois and the geometric crystal as automorphic. We reveal surprising
relations with the works of Frenkel--Gross, Heinloth--Ng\^o--Yun, and Zhu on Kloosterman sheaves. The isomorphism comes from global rigidity results where Hecke eigensheaves are determined by their local ramification. As corollaries we obtain combinatorial identities for counts of rational curves and the Peterson variety presentation of the small quantum cohomology ring.
\end{abstract}

\maketitle

\tableofcontents

\section{Introduction}
Let $G$ be a complex almost simple algebraic group, $B \subset G$ a Borel subgroup, and 
$P\subset G$ a parabolic subgroup containing $B$.  Let $B^\vee \subset P^\vee \subset G^\vee$ denote 
the Langlands duals.  In the case that $P^\vee$ is a minuscule maximal parabolic subgroup, 
we prove the mirror theorem that the \defn{quantum connection} of the partial flag 
variety $G^\vee/P^\vee$ is isomorphic to the \defn{character $\aD$-module} of the geometric 
crystal associated to $(G,P)$.  This isomorphism is the top row of the diagram 
of $\aD$-modules of Figure~\ref{fig:four}, where the bottom row is an instance of the geometric Langlands 
program.
\begin{center}
\begin{figure}
\begin{tikzpicture}
\node at (0,1) {$B$-model};
\node at (10,1) {$A$-model};
\node at (10,-4) {Galois};
\node at (0,-4) {Automorphic};
\node[draw, align = center] (A) at (0,0) {character $\aD$-module of \\ geometric crystal for $(G,P)$};
\node[draw,align=center] (B) at (10,0) {quantum $\aD$-module \\ for $G^\vee/P^\vee$};
\node[draw,align=center] (C) at (0,-3) {Kloosterman $\aD$-module for \\ minuscule representation of $G^\vee$};
\node[draw,align=center] (D) at (10,-3) {Frenkel--Gross connection for \\ minuscule 
representation of $G^\vee$};
\draw (A) -- 
node [above, align=center] {Rietsch's mirror conjecture} (B) 
-- node[right,align=center] {\S\ref{s:FG}-\S\ref{s:qh} } (D) -- node 
[below, align=center] {Zhu's Theorem}(C) 
-- node[left,align=center] {\S\ref{s:crystal}-\S\ref{s:HNY}}
(A);
\end{tikzpicture}
\caption{The four $\aD$-modules in this work.}
\label{fig:four}
\end{figure}
\end{center}

In fact, our main result is stronger. It concerns the \emph{equivariant} quantum 
cohomology of $G^\vee/P^\vee$ and moreover adds a \emph{parameter} $\hbar\in \A^1$. This is 
Theorem~\ref{thm:Shbar}.
Specializing $\hbar=1$ is the equivariant version of the above mirror theorem which was 
conjectured by Rietsch~\cite{Rietsch:mirror-construction-QH-GmodP}, and taking the semiclassical limit
($\hbar\to 0$) yields the equivariant Peterson isomorphism which was stated in the as yet unpublished lectures of Peterson~\cite{Peterson:lectures}.

We now discuss the diagram of Figure~\ref{fig:four} in detail.

\subsection{Quantum cohomology and mirror symmetry for flag varieties}
The study of the topology of flag varieties $G^\vee/B^\vee$ has a storied history.  Borel 
found the cohomology rings $H^*(G^\vee/B^\vee,\C)$ to be isomorphic to the coinvariant 
algebras of the Weyl group $W$ acting on the natural reflection representations.  This 
result is continued by the works of Chevalley, Bernstein--Gelfand--Gelfand, Demazure, 
Lascoux--Sch\"utzenberger, and many others on the Schubert calculus of flag varieties.

Much progress was made on the quantum cohomology of flag varieties in the last two 
decades.  Givental and Kim \cite{Givental-Kim} and Ciocan-Fontanine 
\cite{Ciocan-Fontanine:QHflags} (for $G^\vee$ of type $A$), and Kim \cite{Kim:ann-QH-GmodB} 
(for general $G^\vee$) identified the quantum cohomology rings $QH^*(G^\vee/B^\vee,\C)$ with 
the ring of regular functions on the nilpotent leaf of the Toda lattice of $G$.  
Subsequently, Givental \cite{Givental:mirror} formulated a mirror conjecture that oscillatory integrals over a middle-dimensional cycle inside the mirror manifold should be solutions to the quantum $\aD$-module, and established this result for $G^\vee$ of type $A$ (see also \cite{Eguchi-Hori-Xiong}).  This mirror theorem was extended to general $G^\vee/B^\vee$ by Rietsch \cite{Rietsch:mirror-construction-QH-GmodP, Rietsch:mirror-solution-Toda}.  These oscillatory integrals gave new integral formulae for Whittaker functions.

By contrast, our understanding of mirror symmetry for partial flag varieties $G^\vee/P^\vee$ is much more limited.  Peterson \cite{Peterson:lectures} discovered a uniform geometric description of the quantum cohomology rings $QH^*(G^\vee/P^\vee,\C)$, but this work remains unpublished (see however \cite{Cheong:QH-LG-OG,Rietsch:mirror-construction-QH-GmodP, Rietsch:JAMS, Lam-Shimozono:GmodP-affine}).  The quantum $\aD$-modules of $G^\vee/P^\vee$ have remained largely unstudied in full generality.  Batyrev--Ciocan-Fontanine--Kim--van Straten~\cite{BCKS:acta} proposed a mirror conjecture for $\SL(n)/P^\vee$, and Rietsch formulated a mirror conjecture for arbitrary $G^\vee/P^\vee$, in the style of Givental.

One of the main aims of this work is to establish Rietsch's mirror conjecture in the case that $P^\vee$ is minuscule (see~\S\ref{sub:minuscule}).  This class of spaces includes projective
spaces, Grassmannians, and orthogonal Grassmannians (see Figure \ref{fig:minuscule} for the full list).  Even for the case of Grassmannians, whose quantum
cohomology rings are well studied \cite{Siebert-Tian:QH,Witten:Verlinde,Bertram:QH} and a large part of the mirror conjecture established in \cite{Marsh-Rietsch:B-model-Grassmannians}, our results are new.

\subsection{Small quantum $\aD$-module} 
We now let $P^\vee$ be a minuscule (maximal) parabolic subgroup.  The small quantum cohomology ring $QH^*(G^\vee/P^\vee)$\footnote{In this paper, cohomologies and quantum cohomologies are all taken with $\C$ coefficients.} 
is isomorphic to $\C[q,q^{-1}] \otimes H^*(G^\vee/P^\vee)$ as a vector space, with quantum multiplication denoted by $*_q$.  

Let $\C_q^\times = \Spec(\C[q,q^{-1}])$ be the one-dimensional torus with coordinate $q$.  The small quantum $\aD$-module (at $\hbar = 1$) \cite{Dubrovin:geometry-TFT} is the connection on the trivial $H^*(G^\vee/P^\vee)$--bundle over $\C_q^\times$ given by
\begin{equation}\label{eq:smallquantum}
\cQ^{G^\vee/P^\vee} := d + (\sigma*_q)\frac{dq}{q},
\end{equation}
where $\sigma \in H^2(G^\vee/P^\vee,\Z)$ is the effective divisor class, and we consider
\begin{equation}\label{i:chevalley} 
\sigma *_q \in \mathrm{End}\left( H^*(G^\vee/P^\vee) \right) \otimes \C[q].
\end{equation}

In~\cite{Chevalley:decompositions-cellulaires}, Chevalley gave a combinatorial formula for 
the cup-product in $H^*(G^\vee/P^\vee)$ with the divisor class $\sigma$, that is 
for~\eqref{i:chevalley} at $q=0$.  A quantum Chevalley formula (see Theorem \ref{thm:FW}) 
evaluating $\eqref{i:chevalley}$ in terms of Schubert classes for general flag varieties was 
stated by Peterson \cite{Peterson:lectures} and proved by Fulton and Woodward~\cite{FW}.  
This formula has been extended to the equivariant case by 
Mihalcea~\cite{Mihalcea:EQ-Chevalley} and to the cotangent bundle of partial flag 
varieties by Su~\cite{Su:equivariant-QH}. For recent developments in the minuscule case, 
see~\cite{BCMP:Chevalley-K-cominuscule}.

In the sequel, we also let $\cQ^{G^\vee/P^\vee}$ denote the corresponding algebraic $\aD$-module, where $\aD=\aD_{\C^\times_q} = \C[q,q^{-1}]\langle \partial_q \rangle$ is the ring of differential operators on $\C^\times_q$, and $\partial_q := \frac{d}{dq}$.

\subsection{The character $\aD$-module of a geometric crystal}\label{ssec:BKintro}
Berenstein--Kazhdan~\cite{BK}, based on previous works by Lusztig and of 
Berenstein--Zelevinsky~\cite{Berenstein-Zelevinsky:tensor-product-multiplicities}, have 
constructed geometric crystals which are certain complex algebraic varieties equipped 
with rational maps. The motivation of the construction was the birational lifting of the 
combinatorics of Lusztig's canonical bases and Kashiwara's 
crystal bases.

Fix opposite Borel subgroups $B$ and $B_-$ of $G$ with unipotent subgroups $U$ and $U_-$, and let $T = B \cap B_-$.  Let $R$ denote the root system, and $R^\pm$ denote the subsets of positive and negative roots. Let $\psi:U\to \G_a$ be a {\it nondegenerate} character in the sense that  \(\psi\) is nontrivial on every simple root space when composed with the exponential isomorphism  \(\bigoplus_{i\in I}\A^1 \cong U/[U,U]\).

For a parabolic subgroup $P\subset G$, let $W_P \subset W$ be the Weyl group of the Levi subgroup $L_P$, and let $I_P \subset I$ be the corresponding subset of the Dynkin diagram.  There is a unique set $W^P\subset W$ of minimal length coset representatives for the quotient $W/W_P$.  Define $w_P^{-1} \in W$ to be the longest element in $W^P$.  The \defn{(parabolic) geometric crystal} $X = X_{(G,P)}$ is the smooth subvariety 
$$
X = U Z(L_P) \dot w_P U \cap B_- \subset G,
$$
where $Z(L_P)$ denotes the center of the Levi subgroup $L_P$, and $\dot w_P \in G$ is a representative of $w_P \in W$, equipped~\cite[\S2.2]{BK} with geometric crystal actions $\G_m \times X \to X$ (which are rational maps, defined on a dense open subset) and three regular maps of importance to us:
\begin{align*}
f: X &\to \A^1, \qquad u_1 t \dot w_P u_2 \mapsto \psi(u_1) + \psi(u_2)& \mbox{called the decoration function,} \\
\gamma: X & \to T, \qquad x \mapsto x \mod U_- \in B_-/U_- \cong T & \mbox{called the weight function,}\\
\pi: X &\to Z(L_P), \qquad u_1 t \dot w_P u_2 \mapsto t & \mbox{called the highest weight function.}
\end{align*}
The fiber $X_t := \pi^{-1}(t)$ for $t \in Z(L_P)$ is called the \defn{geometric crystal with 
highest weight $t$}.  For any $t \in Z(L_P)$, $X_t$ is a log Calabi--Yau variety isomorphic 
to 
the open projected Richardson variety $\oGP \subset G/P$ \cite{KLS:projections-Richardson}, the 
complement in $G/P$ of a particular anticanonical divisor $\partial_{G/P}$.  (The boundary 
divisor $\partial_{G/P}$ is not normal crossing in general.  There is a Bott--Samelson resolution that 
provides an explicit compactification of $\oGP$ with normal crossing divisors; see 
\cite[Appendix]{KLS:projections-Richardson}, \cite[\S4.2]{Kumar-Schwede}.)
The affine variety $\oGP$ has a distinguished holomorphic volume form $\omega$ (\S\ref{s:Richardson}), with simple poles along the boundary divisor $\partial_{G/P}$.

On $\A^1$ we consider the cyclic $\aD$-module $\Exp:=\aD_{\A^1}/\aD_{\A^1}(\partial_x-1)$ with generator the exponential function, where $\aD_{\A^1}=\C[x]\langle \partial_x \rangle$. The pullback $f^* \Exp$ is a $\aD$-module on $X$. Note that one can identify the $\aD$-module \(\Exp\) with the connection  \(d-dx\) on the trivial line bundle on  \(\A^1\). The pullback  \(f^*\Exp\) can be identified with the connection  \(d-df\) on the trivial line bundle on  \(X\). We define the \defn{character $\aD$-module} of the geometric crystal $X$ by 
\begin{equation} \label{eq:BK}
\BK_{(G,P)}:= R\pi_* f^* \Exp,
\end{equation}
which is a $\aD$-module on $Z(L_P)$.  
For  \(P=B\), the geometric crystal $X$ tropicalizes to Kashiwara's combinatorial crystals, see~\cite[\S6]{BK}. As 
explained in Lam~\cite{Lam:Whittaker} and Chhaibi~\cite{Chhaibi:Whittaker-processes}, the tropicalization of \eqref{eq:BK} is an irreducible character of $G^\vee$.

A priori $\BK_{(G,P)}$ is a complex of $\aD$-modules, but we show in Theorem~\ref{tintro:HNY} (= Theorem~\ref{t:HNYisBK}) that it is just a $\aD$-module.  
Our proof is via the left-hand side of Figure~\ref{fig:four}, which enables us to recognize this statement as the Ramanujan property, in the context of the geometric Langlands program, for a certain cuspidal automorphic $D$-module $A_\cG$; see \S\ref{intro:Kloosterman} below and~\cite[Thm.1]{HNY}.

This article seems to be the first time the properties of the character $\aD$-module $\BK_{(G,P)}$ are studied.   There are other geometric crystals, and as we shall see below, other families of Landau--Ginzburg models that one could apply this construction to.
We also note that automorphic $D$-modules with wild ramification, and geometric analogues of Arthur conjectures, which both play an important role in our study, are themes which have been largely unexplored at the present time.

\subsection{Rietsch's Landau--Ginzburg model}
In~\cite{Rietsch:mirror-construction-QH-GmodP}, Rietsch constructed conjectural Landau--Ginzburg mirror partners of all partial flag varieties $G/P$.  Her
construction was motivated by earlier works of Givental \cite{Givental:mirror}, Joe--Kim~\cite{Joe-Kim:equivariant}, and Batyrev--Ciocan-Fontanine--Kim--van Straten~\cite{BCKS:acta} for type $A$ flag varieties, and also by the Peterson presentation of $QH^*(G^\vee/P^\vee)$.

Rietsch's mirror construction are families of varieties fibered over $q \in \Spec(\C[q_i^{\pm 1} \mid i \notin I_P])$, equipped with holomorphic superpotentials $f_q$, and holomorphic volume forms $\omega_q$.

It was observed by Lam~\cite{Lam:Whittaker}, and Chhaibi~\cite{Chhaibi:Whittaker-processes} that Rietsch's mirror construction could be obtained from the group geometry of geometric crystals.  Thus, after identifying $\Spec(\C[q_i^{\pm 1} \mid i \notin I_P])$ with $Z(L_P)$, Rietsch's mirror family can be identified with the highest weight function $\pi: X \to Z(L_P)$ of \S\ref{ssec:BKintro}, and Rietsch's superpotential becomes the decoration function $f_t := f|_{X_t}: X_t \to \A^1$; henceforth we will use $f_q$ or $f_t$ interchangeably ($q$ being a point in $\Spec(\C[q_i^{\pm 1} \mid i \notin I_P])$ and $t$ a point in $Z(L_P)$).

Earlier mirror Landau--Ginzburg models for various partial flag varieties~\cite{Givental:mirror,BCKS:acta,Eguchi-Hori-Xiong} were Laurent polynomial superpotentials defined on an algebraic torus.  These Landau--Ginzburg models arose from toric degenerations of $G^\vee/P^\vee$.  In contrast, Rietsch's candidate mirror Landau--Ginzburg model is defined on a partial compactification of a torus, and is intrinsically related to the group geometry of $G/P$ (and not to any toric degeneration); see~\cite{Pech-Rietsch-Williams:LG-quadrics,Pech-Rietsch:LG-Lagrangian-Grassmannians}.  In the literature this distinction also appears in the form of ``strong mirror'' versus ``weak mirror''.  

Stated informally our main goal in this paper is to show:
\begin{center} 
\it
If $P^\vee$ is minuscule then $G^\vee/P^\vee$ and $(\stackrel{\circ}{G/P},f_q)$ form a Fano type mirror pair.
\end{center}
On the $A$-model side $G^\vee/P^\vee$ is a projective Fano variety, and on the $B$-model 
side $\stackrel{\circ}{G/P}$ is a log Calabi--Yau variety; 
see~\cite{Katzarkov-Kontsevich-Pantev:LG-models} for general expectations for Fano type 
mirror pairs.  We show that some of the mirror symmetry expectations hold.

One expectation is a relationship between the Gromov--Witten invariants of 
$G^\vee/P^\vee$ and the coefficients of the Laurent series expansion of the potential $f_q$ 
restricted to a torus of $\oGP$.
If $P^\vee$ is minuscule, we shall establish such a relationship in the form of an integral 
representation 
of 
the quantum period of 
$G^\vee/P^\vee$ which was previously known for 
Grassmannians~\cite{Marsh-Rietsch:B-model-Grassmannians} and for 
quadrics~\cite{Pech-Rietsch-Williams:LG-quadrics}; see Section~\ref{s:parabolic-bessel} for 
details.

\subsection{The mirror isomorphism}
The following is a simple version of the main result of this paper and establishes the top 
row of Figure \ref{fig:four}.

\begin{theorem}[Theorem~\ref{t:Dmodule-mirror}]\label{tintro:Dmodule-mirror}
Suppose $P^\vee$ is minuscule.
The geometric crystal $\aD$-module $\BK_{(G,P)}$  and the quantum cohomology $\aD$-module $\cQ^{G^\vee/P^\vee}$ for $G^\vee/P^\vee$ are isomorphic.
\end{theorem}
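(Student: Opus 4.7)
\bigskip

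\textbf{Proof plan.} My plan is to prove the theorem by traversing Figure~\ref{fig:four} clockwise: first the right-hand vertical identification on the Galois side, then the bottom edge (Zhu's theorem, cited), and finally the left-hand vertical identification on the automorphic side. Composing the three $\aD$-module isomorphisms produces the desired mirror isomorphism. The merit of this route is that it splits the problem into a purely combinatorial $A$-model computation, a known result, and a Hecke-eigensheaf recognition.

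Step one identifies $\cQ^{G^\vee/P^\vee}$ with the Frenkel-Gross connection for the minuscule representation of $G^\vee$. The small quantum $\aD$-module \eqref{eq:smallquantum} is determined entirely by the operator $\sigma *_q$, and the quantum Chevalley formula (Theorem~\ref{thm:FW}) makes this operator explicit in the Schubert basis. For minuscule $P^\vee$ only one quantum correction survives, and it acts by a single lowest long-root vector. After identifying $H^*(G^\vee/P^\vee,\C)$ with the minuscule representation of $G^\vee$ via Borel--Kostant, $\sigma *_q$ becomes a regular nilpotent plus $q$ times a lowest root vector, which is precisely the Frenkel-Gross connection in its minuscule presentation.

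Step two is Zhu's theorem: the Frenkel-Gross connection and the Kloosterman $\aD$-module for the minuscule representation of $G^\vee$ coincide on $\G_m$.

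Step three identifies $\BK_{(G,P)}$ with the Kloosterman $\aD$-module for the minuscule representation. Following Heinloth--Ng\^o--Yun, the latter arises as the Hecke eigenvalue of a cuspidal automorphic $D$-module $A_\cG$ on $\Bun_G(\P^1;0,\infty)$, with tame unipotent ramification at $0$ and a small Whittaker-type wild character at $\infty$. I would interpret the Berenstein-Kazhdan crystal $X = U Z(L_P) \dot w_P U \cap B_-$ as an open double-coset stratum in this $\Bun_G$, the decoration $f$ as pulled back from the Whittaker character, and $\pi \colon X \to Z(L_P)$ as the parameter of a Hecke modification along the minuscule cocharacter. The pushforward $R\pi_! f^* \Exp$ then computes the Hecke eigenvalue of $A_\cG$ on the minuscule affine Schubert cell, and a rigidity argument in the style of Heinloth--Ng\^o--Yun, using that the prescribed local ramification at $0$ and $\infty$ pins down the Hecke eigensheaf uniquely up to scalar, yields the identification with the Kloosterman $\aD$-module.

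The main obstacle is step three. It requires (a)~showing that $\BK_{(G,P)}$ is concentrated in a single cohomological degree, which is the geometric Ramanujan property for $A_\cG$; (b)~constructing the automorphic dictionary that exhibits $X$ and the decoration $f$ as the appropriate $\Bun_G$-stratum together with its Whittaker restriction; and (c)~verifying that the local ramification of the resulting Hecke eigensheaf matches the rigid local data of the Kloosterman sheaf. By comparison, step one is a finite combinatorial check using the quantum Chevalley formula together with the special features of the minuscule case, and step two is cited. The heart of the argument is therefore the automorphic recognition of the character $\aD$-module, which is where wild ramification, cuspidality, and rigidity must all be brought to bear simultaneously.
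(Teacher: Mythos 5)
Your decomposition into three isomorphisms is exactly the paper's: $\cQ^{G^\vee/P^\vee} \cong \nabla^{(G^\vee,\varpi_\ki^\vee)}$ by a quantum Chevalley computation (Theorem~\ref{t:FGisquantum}), $\nabla^{(G^\vee,\varpi_\ki^\vee)} \cong \Kl_{(G^\vee,\varpi_\ki^\vee)}$ by Zhu, and $\Kl_{(G^\vee,\varpi_\ki^\vee)} \cong \BK_{(G,P)}$ by identifying the geometric crystal with a Hecke stratum (Theorem~\ref{t:HNYisBK}). So the overall route is the same.

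There are two issues. First, a genuine gap: each isomorphism depends on a normalization --- a choice of highest-root vector $x_\theta$ in the Frenkel--Gross connection, a choice of affine generic character in the Kloosterman construction, and the compatibility supplied by Zhu --- and composing the three determines the final isomorphism only up to the involution $q \mapsto -q$ of $\Gm$. This sign is not a matter of convention: the two connections differ. The paper fixes it by positivity, comparing the first nonconstant coefficient of the quantum period solution on the $A$-side, which is a positive path count in Bruhat order (Lemma~\ref{l:q-term}), against the corresponding coefficient of $\oint e^{f_t}\omega$ on the $B$-side, which is positive because the superpotential is a Laurent polynomial with positive coefficients in the Lusztig parametrization (Corollary~\ref{cor:superpositive}). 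Without some such argument your composed isomorphism is ambiguous. Second, a smaller inaccuracy in your step three: rigidity is already used in the Heinloth--Ng\^o--Yun construction of $A_\cG$ and cannot substitute for the geometric dictionary. The proof of $\BK_{(G,P)}\cong\Kl_{(G^\vee,\varpi_\ki^\vee)}$ parametrizes the open Hecke stratum $\Hk^\circ_{\varpi_\ki^\vee}$ and identifies it with $X$ --- note that $X$ maps into the Hecke moduli stack, not $\Bun_\cG$, with $\pi$ corresponding to the modification point $q\in\Gm$ --- and then matches the decoration $f$ with the pullback of the Whittaker character (Lemmas~\ref{lem:HNY}, \ref{lem:comparison} and Proposition~\ref{prop:comparison}). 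You cannot invoke uniqueness of the eigensheaf until you have exhibited $\BK_{(G,P)}$ as some Hecke eigenvalue, and exhibiting it that way is precisely the direct comparison.
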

For $G^\vee/P^\vee$ a projective space $\P^n$ the result is well-known~\cite{Givental:equivariant-GW,Katz:exp-sums-diff-eq}.
The homological mirror symmetry version is established in~\cite{Abouzaid:hms-toric,FLTZ:T-duality-mirror-toric}.
Our approach gives an original perspective in terms of hyper-Kloosterman sheaves studied in SGA4$\frac12$~\cite{Deligne:SGA4h}.

For $G^\vee/P^\vee$ a Grassmanian $\mathrm{Gr}(k,n)$, the result is already new. 
Partial results are obtained by 
Marsh--Rietsch~\cite{Marsh-Rietsch:B-model-Grassmannians}, notably a canonical 
injection of $\cQ^{G^\vee/P^\vee}$ into
$\BK_{(G,P)}$, who establish as a consequence a conjecture of
Batyrev--Ciocan-Fontanine--Kim--Van Straten~\cite[Conj.5.2.3]{BCKS:conifold}. Our 
Theorem~\ref{tintro:Dmodule-mirror} is stronger.  Indeed, it establishes the
conjecture of~\cite[\S3]{Marsh-Rietsch:B-model-Grassmannians} that the canonical injection is bijective, and thereby also another conjecture of
Batyrev--Ciocan-Fontanine--Kim--Van Straten~\cite[Conj.5.1.1]{BCKS:acta}. 

For $G^\vee/P^\vee$ an even-dimensional quadric, the injection of $\cQ^{G^\vee/P^\vee}$ into 
$\BK_{(G,P)}$ is obtained by 
Pech--Rietsch--Williams~\cite{Pech-Rietsch-Williams:LG-quadrics}, and
our Theorem~\ref{tintro:Dmodule-mirror} establishes a conjecture of~\cite[\S4]{Pech-Rietsch-Williams:LG-quadrics}. 

Although both sides of Theorem~\ref{tintro:Dmodule-mirror} are described explicitly, this 
does not lead to a way of establishing the isomorphism. Indeed our proof will follow a 
lengthy path, where the isomorphism will eventually arise from Langlands reciprocity for 
the automorphic  \(D\)-module $A_\cG$ of \cite[\S2.5]{HNY} over the rational function field $\C(t)$. 

While the Langlands program has integrated for a long time adjacent areas of 
mathematics into the 
solution of some of its conjectures, in the other direction, there are yet rather few 
applications of the Langlands program to problems outside of number theory. 
Interestingly, in this paper we shall apply recent advances in the geometric Langlands 
program to establish results on the 
geometry of flag varieties.

\subsection{Kloosterman sums, Kloosterman sheaves, and Kloosterman 
$\aD$-modules}\label{intro:Kloosterman}
For a prime $p$ and a finite field $\F_q$, $q = p^m$, define the two maps
\begin{align*}
f: (\F^\times_q)^n &\to \F_q \qquad (x_1,x_2,\ldots,x_n) \mapsto  x_1 + x_2 + \cdots + x_n, \\
\pi: (\F^\times_q)^n &\to \F^\times_q \qquad (x_1,x_2,\ldots,x_n) \mapsto  x_1  x_2 \cdots x_n.
\end{align*}
The \defn{(hyper)Kloosterman sum} in $(n-1)$-variables is
\begin{equation}\label{eq:kloostermansum}
\Kl_n(a;q) := (-1)^{n-1} \sum_{x \in \pi^{-1}(a)} \exp\Bigl(\frac{2\pi i}{p} \Trace_{\F_q/\F_p}f(x)\Bigr),
\end{equation}
where $a \in \F_q^\times$.
Deligne \cite{Deligne:SGA4h} defines the \defn{(hyper)Kloosterman sheaf} to be the $\ell$-adic sheaf on $\F_p^\times$ given by
\begin{equation}\label{eq:kloostermansheaf}
\Kl^{\bQ_\ell}_n := R \pi_! f^* \AS_\psi[n-1],
\end{equation}
where $\AS_\psi$ is the Artin--Schreier sheaf on $\A^1$ corresponding to a nontrivial 
character $\psi: \F_p \to \bQ_\ell$.  For an appropriate
embedding $\iota: \bQ_\ell \to \C$, the Kloosterman sum \eqref{eq:kloostermansum} is identified as the Frobenius trace of the Kloosterman sheaf ;q
\eqref{eq:kloostermansheaf}: $\Kl_n(a;q) = \iota( \Trace(\Frob_a,\Kl^{\bQ_\ell}_n))$.  The \defn{Kloosterman 
$\aD$-module} is defined~\cite{Katz:exp-sums-diff-eq} by replacing the Artin--Schreier 
sheaf with the exponential $\aD$-module:
\begin{equation}\label{eq:kloostermanD}
\Kl_n:= R\pi_! f^* \Exp.
\end{equation}
The pair $(\pi:(\F^\times_q)^n \to \F^\times_q,f)$ and \eqref{eq:kloostermansheaf} should be compared with the geometric crystal mirror family $(\pi: X \to Z(L_P),f)$ and \eqref{eq:BK}.

Heinloth--Ng\^o--Yun~\cite{HNY} generalize Kloosterman sheaves and $\aD$-modules.  More 
precisely, for a representation $V$ of $G^\vee$, they define a generalized Kloosterman 
$\aD$-module $\Kl_{(G^\vee,V)}$ on $\C^\times$.  Their construction uses the moduli stack $\Bun_\cG$ 
of $\cG$-bundles on $\P^1$, where $\cG$ is a particular nonconstant group scheme over $\P^1$ 
(see \S \ref{s:HNY}); equivalently, $\Bun_\cG$ classifies $G$-bundles with specified ramification 
behavior.  Heinloth--Ng\^o--Yun construct an automorphic Hecke eigen-$\aD$-module 
$A_\cG$ on the Hecke stack over $\Bun_\cG$.  The generalized Kloosterman $\aD$-module 
$\Kl_{(G^\vee,V)}$ is defined to be the Hecke eigenvalue of $A_\cG$.  The projection and 
superpotential maps $\pi$ and $f$ are replaced in this setting by the projection maps of the 
Hecke moduli stack.  

A remarkable feature of the automorphic $\aD$-module $A_\cG$ is that it is \emph{rigid}: it 
can be characterised uniquely by its local components.  Indeed, the
existence of the rigid local systems constructed by Heinloth--Ng\^o--Yun was predicted 
by Gross, who constructed the trace function of $A_{\cG}$ over finite fields via the 
stable trace 
formula. 
We refer to~\cite{Yun:rigid} for a comprehensive survey of rigid automorphic forms.

The following result gives an automorphic interpretation of geometric crystals.

\begin{theorem}[Theorem~\ref{t:HNYisBK}]\label{tintro:HNY}
Let $P\subset G$ be a cominuscule parabolic and $V$ be the corresponding minuscule 
representation of $G^\vee$. The character
$\aD$-module $\BK_{(G,P)}$ is isomorphic to the Kloosterman $\aD$-module $\Kl_{(G^\vee,V)}$ 
defined as the $V$-Hecke eigenvalue of the automorphic $\aD$-module $A_\cG$.
\end{theorem}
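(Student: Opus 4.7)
My plan is to unwind the definition of the Hecke eigenvalue $\Kl_{(G^\vee,V)}$ and match it term-by-term with the integral presentation $\BK_{(G,P)} = R\pi_!\, f^* \Exp$. By the construction of Heinloth--Ng\^o--Yun, $\Kl_{(G^\vee,V)}$ is extracted from a Hecke correspondence
\[
\Bun_\cG \xleftarrow{h_0} \Hk_\cG \xrightarrow{h_1 \times \ev} \Bun_\cG \times \C^\times_q,
\]
whose fiber over $(\E,t)$ parametrizes modifications of $\E$ at $t$ bounded by the minuscule coweight $\mu$ dual to $V$. Since $V$ is minuscule, the relevant $\IC$-sheaf on the affine Grassmannian is the constant $\aD$-module on the smooth Schubert variety $\Gr_\mu \cong G/P$, so $\Hk_\cG$ is a $G/P$-bundle over $\Bun_\cG \times \C^\times_q$, and $\Kl_{(G^\vee,V)}$ is the scalar factor in the Hecke eigensheaf identity $(h_1 \times \ev)_!\, h_0^* A_\cG \cong A_\cG \boxtimes \Kl_{(G^\vee,V)}$.

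The first reduction is to localize on the open stratum of $\Bun_\cG$. The level structures built into $\cG$ at $0$ and $\infty$ rigidify this stratum to a single point $\E_0$ (up to central automorphisms), and on it $A_\cG$ is realized as the pullback of $\Exp$ along the Whittaker character $\psi$ of the pro-unipotent part of the ramification at $\infty$. Restricting the correspondence to the fiber of $h_1 \times \ev$ over $(\E_0,t)$ and demanding the other bundle also lie in the open stratum (so that $h_0^* A_\cG$ is supported there) produces a family $Y \to \C^\times_q$ equipped with a function $F: Y \to \A^1$ built from the two Whittaker characters $\psi$ attached to the two bundles paired by the Hecke modification. In this form $\Kl_{(G^\vee,V)} \cong R\ev_!\, F^* \Exp$.

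The crux is to identify $(Y \to \C^\times_q,\, F)$ with the geometric crystal family $(\pi: X \to Z(L_P),\, f)$. Using Beauville--Laszlo, a minuscule modification at $t$ is described by a coset in $G((z-t))/G[[z-t]]$, so $Y_t$ sits inside $\Gr_\mu \cong G/P$; the two Whittaker level conditions at $\infty$ cut out opposite $U$-orbits whose intersection with $\Gr_\mu$, after tracking the effect of the modification, is canonically isomorphic to $X_t = U\, t\, \dot w_P U \cap B_-$. Under this isomorphism the two Whittaker characters evaluate on the unipotent factors $u_1$ and $u_2$, so $F$ becomes $f(u_1 t \dot w_P u_2) = \psi(u_1) + \psi(u_2)$, while $\ev$ matches $\pi$ via the cominuscule coroot isomorphism $Z(L_P) \cong \G_m$. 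Proper base change then yields $\Kl_{(G^\vee,V)} \cong R\pi_!\, f^* \Exp = \BK_{(G,P)}$.

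The main obstacle is this identification in the third paragraph: one must pass carefully through the Beauville--Laszlo uniformization with the full level data, show that the minuscule Schubert condition at $t$ selects the smooth cell $\Gr_\mu \cong G/P$, and verify that the Whittaker conditions decompose exactly into the factorization $U\cdot Z(L_P)\dot w_P \cdot U \cap B_-$, producing the two $\psi$-summands in the superpotential. The cominuscule hypothesis is essential throughout: it makes $\Gr_\mu$ smooth of dimension $\dim G/P$, and it reduces $Z(L_P)$ to one dimension, so that the single Hecke parameter $t$ matches the highest-weight parameter of the geometric crystal.
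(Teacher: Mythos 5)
Your overall strategy matches the paper: use minusculeness to replace the IC-sheaf by the structure sheaf of $\Gr_{\varpi_\ki^\vee}$, restrict the Hecke stack over $\star_\gamma \times \Gm$ to the preimage of the big cell $T \times I(1)/I(2) \subset \Bun_\cG^0$, parametrize this restriction, and identify the resulting family with $(\pi: X \to Z(L_P), f)$. However, the mechanism you give for where the two summands of the superpotential come from is wrong, and this is exactly the delicate point of the argument. You say $F$ is ``built from the two Whittaker characters $\psi$ attached to the two bundles paired by the Hecke modification.'' One of those bundles is pinned to the base point $\star_\gamma$, which carries the trivial level structure at $\infty$ and therefore contributes no Whittaker term at all; the only Whittaker evaluation in play is the one on the single bundle $\pr_1(h)$, the one constrained to the big cell.

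Both summands $\psi(u_1)$ and $\psi(u_2)$ in fact arise from that single affine Whittaker character $\phi$ on $I(1)/I(2) \cong \bigoplus_{i\in I_\af}\g_{\alpha_i}$, after decomposing this space as $U/[U,U]\oplus\g_{-\theta}$: the $U/[U,U]$-coordinate of $\pr_1(h)$ produces $\psi(u_2)$, and the $\g_{-\theta}$-coordinate (the residue of the modification along the affine simple root $\alpha_0$ at $\infty$) produces $\psi(u_1)$. Making the latter precise requires the identity $w_P^{-1}\alpha_\ki = -\theta$, equivalently $\gamma^{-1}\cdot\alpha_\ki = -\theta-\delta$, so that conjugating $U_P$ by $\dot\gamma = \dot w_P\tau^{\varpi_\ki^\vee}$ lands in $G[\tau^{-1}]_1$ with $\g_{-\theta}$-residue computed exactly by $\psi$ applied to $u_1$. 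This is the crucial place where cominusculeness enters, beyond the two consequences (smoothness of $\Gr_{\varpi_\ki^\vee}$ and one-dimensionality of $Z(L_P)$) that you do mention. Without correcting this mechanism, the identification of $F$ with $f$ does not go through.
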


The proof of Theorem \ref{tintro:HNY} is by a comparison of the geometry of the Hecke 
moduli stack and that of parabolic geometric crystals.

The above suggests a new parallel between exponential
sums over finite fields and Landau--Ginzburg models.  
Recall that \emph{arithmetic mirror symmetry} has been studied for mirror Calabi--Yau 
varieties~\cite{Wan:variation-padic-Newton}*{\S3}. 
The present work leads us to suggest that 
arithmetic mirror symmetry could 
conjecturally extend to Fano varieties and their mirror Landau--Ginzburg models.
Although we do not pursue this direction in the present paper, we observe for example the precise compatibility between the recent conjecture of
Katzarkov--Kontsevich--Pantev~\cite{Katzarkov-Kontsevich-Pantev:LG-models}*{(3.1.5)}, specialized to $G^\vee/P^\vee=\P^n$, and the classical theorem of Sperber~\cite{Sperber:hyperKloosterman-congruences} on the Newton polygon of the
Kloosterman sums $\Kl_n(a;q)$.  We remark that the Hodge theory of Kloosterman connections was studied in \cite{FSY}.

We believe that the purity property of an exponential sum (viewed from a 
mirror Landau--Ginzburg model perspective) should mirror the Hodge--Tate property of 
the cohomology of a Fano variety. 
In the context of Theorem \ref{tintro:HNY}, the Kloosterman sum $\Kl_{(G^\vee,V)}$ is 
pointwise pure~\cite{HNY}, and the partial flag variety $G/P$ has cohomology of 
Hodge--Tate type. We speculate that the slope multiplicities of the Newton polygon of $\Kl_{(G^\vee,V)}$ should 
coincide with the Betti numbers $\dim H^i(G/P)$ (which are well-known). The same 
construction 
applied to 
other mirror families, see for example \cite{CCGGK}, produces new $\ell$-adic sheaves and overconvergent  \(F\)-isocrystals.

\subsection{Frenkel--Gross rigid connection}
In \cite{Frenkel-Gross}, Frenkel--Gross study a rigid irregular connection on the trivial $G^\vee$-bundle on $\P^1$ given by the formula
\begin{equation}\label{eq:FG}
\nabla^{G^\vee} :=  d + y_p \frac{dq}{q} + x_\theta dq,
\end{equation}
where $y_p \in \g^\vee = \Lie(G^\vee)$ is a principal nilpotent, and $x_\theta \in \g^\vee_\theta$ lives in 
the highest root space.  For any $G^\vee$-representation $V$, we have an associated 
connection $\nabla^{(G^\vee,V)}$.

When $V$ is the minuscule representation of $G^\vee$ corresponding to parabolic $P^\vee$, we have a natural isomorphism $L: H^*(G^\vee/P^\vee) \cong V$.  

\begin{theorem}[Theorem~\ref{t:FGisquantum}]\label{tintro:FG}
Under the isomorphism $L:H^*(G^\vee/P^\vee) \cong V$, the quantum
connection $\cQ^{G^\vee/P^\vee}$ is isomorphic to the connection $\nabla^{(G^\vee,V)}$.
\end{theorem}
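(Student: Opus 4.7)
\textbf{Proof plan for Theorem~\ref{tintro:FG}.} The strategy is to verify the isomorphism directly by computing both connections in a common basis. Since $P^\vee$ is minuscule, the weights of $V$ form a single Weyl orbit $W(\omega_P)$ with trivial multiplicities, and they are in canonical bijection with the minimal-length coset representatives $W^{P^\vee}$ indexing Schubert classes of $G^\vee/P^\vee$. I would define $L$ on the basis by $\sigma^w \mapsto v_{w(\omega_P)}$, where the weight vectors $v_\mu \in V$ are normalized so that the action of the Chevalley generators $e_{-\alpha_i}$ on $V$ reproduces the coefficients appearing in the classical Chevalley formula. Such a normalization is well-defined precisely because each weight space of a minuscule representation is one-dimensional.

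With $L$ fixed, the comparison reduces to matching the operator $\sigma *_q$ acting on $H^*(G^\vee/P^\vee)$ with the operator $f + q\, x_\theta$ acting on $V$, since $\cQ^{G^\vee/P^\vee}$ carries only a $dq/q$ term. The classical part $\sigma\cup(-)$ matches $f = \sum_i e_{-\alpha_i}$ by Chevalley's formula and the chosen normalization. The quantum correction is governed by the Fulton--Woodward quantum Chevalley formula (Theorem~\ref{thm:FW}), which in the minuscule case produces at most a single quantum term
\[
\sigma *_q \sigma^w - \sigma \cup \sigma^w \;=\; \ip{\sigma,\theta^\vee}\, q\, \sigma^{w'},
\]
where $w' \in W^{P^\vee}$ is related to $w$ by the unique affine Bruhat cover using the highest root $\theta$. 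On the $V$-side, $x_\theta$ lies in the highest-root space of $\g^\vee$ and raises minuscule weights by $\theta$: the image is nonzero precisely when $w'(\omega_P)+\theta$ is a weight of $V$, i.e.\ when $\ip{w'(\omega_P),\theta^\vee}=-1$, which is the same combinatorial condition making the quantum Chevalley term nonzero. Thus $q\,x_\theta$ corresponds under $L$ to the quantum correction of $\sigma *_q$.

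The main obstacle is to verify that the single normalization of $L$ used to match $e_{-\alpha_i}$ at the classical level simultaneously yields the correct scalar for $x_\theta$ at the quantum level. This compatibility is the Lie-theoretic heart of the statement, and is forced by the rigidity of the minuscule representation: once the actions of the $e_{-\alpha_i}$ are normalized, the Serre relations determine all structure constants of $\g^\vee$ acting on $V$ up to an irrelevant global scalar, including the coefficient of $e_\theta$ that arises from expressing it via iterated commutators of Chevalley generators. One must then check that this coefficient matches $\ip{\sigma,\theta^\vee}$ uniformly across all minuscule types --- an essentially combinatorial check on the minuscule posets that could alternatively be deduced conceptually from Peterson's affine Bruhat factorization (underlying Theorem~\ref{thm:FW}) together with the affine-Grassmannian model of the minuscule representation. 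Assembling these pieces, the connection matrices of $\cQ^{G^\vee/P^\vee}$ and $\nabla^{(G^\vee,V)}$ agree entry by entry under $L$.
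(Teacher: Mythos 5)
Your overall strategy is the same as the paper's: define $L$ by sending the Schubert basis to a weight basis of $V$ indexed by $W^{P^\vee}$, match the classical part of the quantum Chevalley operator with $f=\sum_i y_i$, and match the single quantum correction term with $q\,x_\theta$. The paper does exactly this (via Lemmas~\ref{lem:simple} and \ref{lem:theta} feeding into Propositions~\ref{p:Lefschetz-sl2} and \ref{prop:FG=QH}).

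However, your treatment of the two normalization steps understates what actually has to be proved. First, having one-dimensional weight spaces does not by itself give a coherent choice of weight vectors for which \emph{every} $y_j$ acts with coefficient $+1$ in the pattern prescribed by the classical Chevalley rule; the paper relies on Geck's canonical-basis construction (Lemma~\ref{lem:Geck}) to obtain this. Second, and more seriously, your appeal to ``Serre relations fix structure constants up to an irrelevant global scalar'' does not address the real issue: a priori, in Geck's Lemma~\ref{lem:Geck}(2), $x_\theta v_w = \pm v_{s_\theta w}$ with a sign that could depend on $w$. The whole content of Lemma~\ref{lem:theta} is that this sign $\varepsilon$ is \emph{independent} of $w$, so that a single choice of root vector $x_\theta$ (eq.~\eqref{eq:thetasign}) makes $x_\theta v_w = v_{\pi_P(ws_\gamma)}$ hold uniformly. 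The paper proves this by a nontrivial structural fact about the set $W(\gamma)=\{w\in W^P : w\gamma=-\theta\}$, namely Proposition~\ref{prop:Wgamma}(4): every $w\in W(\gamma)$ factors length-additively as $w=uw'$ with $u$ in a standard parabolic $W_J$ whose generators stabilize $\theta$, so $\dot u$ commutes with $x_\theta$ and the sign can be tracked to a single base case. Your proposal flags this as ``the main obstacle'' but the suggested resolution (type-by-type check, or Peterson's affine factorization) is not what makes the argument close; you would need to supply the analogue of Proposition~\ref{prop:Wgamma}(4) or some equivalent sign-coherence mechanism.

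A minor imprecision: the quantum correction in Proposition~\ref{prop:chevalley} is indexed not by $\theta$ but by the distinguished quantum root $\gamma=\gamma(\ki)$, and the nonvanishing condition is $w\in W(\gamma)$, equivalently $-w^{-1}(\theta)=\gamma$ (Proposition~\ref{prop:gamma}); your condition $\ip{w'(\omega_P),\theta^\vee}=-1$ is equivalent to this, but the coefficient is $\bm{\chi}(w)\in\{0,1\}$ rather than $\ip{\sigma,\theta^\vee}$.
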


The isomorphism $L$ sends the Schubert basis of $H^*(G^\vee/P^\vee)$ to the canonical basis 
of $V$.  The proof of Theorem \ref{tintro:FG} is via a direct comparison of the 
Frenkel--Gross connection in the canonical basis with the quantum Chevalley 
formula.

\subsection{Zhu's theorem}
Beilinson--Drinfeld  have introduced a class of connections called \defn{opers}, extending 
earlier work of Drinfeld and Sokolov.  They use opers to construct (part of) the Galois to 
automorphic direction of the geometric Langlands correspondence.  Frenkel--Gross 
\cite{Frenkel-Gross} have observed that \eqref{eq:FG} can be put into oper form after a 
gauge transformation.

Zhu \cite{Zhu:FG-HNY} has extended Beilinson--Drinfeld's construction to allow certain 
nonconstant group schemes, or equivalently to allow specified ramifications.  He thereby 
confirms the conjecture of \cite{HNY} that the Kloosterman $\aD$-module 
$\Kl_{G^\vee}$ is isomorphic to the Frenkel--Gross connection 
$\nabla^{G^\vee}$.

Theorem \ref{tintro:Dmodule-mirror} is obtained by composing the isomorphisms of 
Theorems \ref{tintro:HNY}, \ref{tintro:FG}, and Zhu's theorem.

This concludes our discussion of Figure~\ref{fig:four}. We continue this introduction by 
explaining the stronger Theorem~\ref{intro:hbar} and 
how it relates to Rietsch's equivariant mirror conjecture and to the equivariant Peterson  
isomorphism.

\subsection{Equivariant quantum cohomology and weighted geometric crystals}
We may replace the quantum cohomology ring $QH^*(G^\vee/P^\vee)$ by the 
$T^\vee$-equivariant quantum cohomology ring $QH^*_{T^\vee}(G^\vee/P^\vee)$.
We briefly discuss the new features.

  The corresponding equivariant quantum connection $\cQ^{G^\vee/P^\vee}_{T^\vee}$ is a 
  connection over 
$\C^\times_q\times \t^*$ relative to $\t^*$, where $\t = \Lie(T)$ and instead of $\sigma *_q$ 
in \eqref{eq:smallquantum}, we have the operator $c_1^T(O(1)) \; *^{T^\vee}_{q}$ of 
equivariant quantum multiplication in $QH^*_{T^\vee}(G^\vee/P^\vee)$ (see \S 
\ref{S:equivariant}).  
Identifying $S = H^*_{T^\vee}(\pt)$ with the coordinate  \(\C\)-algebra $\Sym(\t) \cong \C[\t^*]$, we may 
equivalently consider the family of connections $\cQ^{G^\vee/P^\vee}_{T^\vee} \otimes_h \C$ 
indexed by 
$h \in \t^*$ viewed as algebra morphism $h:S\to \C$.  These specialize again to connections on 
the 
trivial $H^*(G^\vee/P^\vee)$--bundle over $\C^\times_q$.

Let us now discuss the {\it weighted character $\aD$-module} of the geometric crystal $X$, 
equipped with the weight function $\gamma:X \to T$.  
The character $\aD$-module \eqref{eq:BK} can be weighted with a parameter $h\in \t^*$, to give
\begin{equation}\label{eq:equivariantBK}
\WBK_{(G,P)} :=
R\pi_* (\gamma^*\Mult_T \otimes f^* \Exp),
\end{equation}
where $\Mult_T$ denotes the $D\otimes S$-module on $T$ defined in~\S\ref{s:Gross-form}.

The $\aD$-module version of Rietsch's equivariant mirror 
conjecture~\cite{Rietsch:mirror-construction-QH-GmodP} states that there is an 
isomorphism
\[
\WBK_{(G,P)} \simeq \cQ^{G^\vee/P^\vee}_{T^\vee}.
\]

\subsection{A deformation of Zhu's theorem}
The {\it twisted Kloosterman $D$-module} $\TKl_n$ arises in 
\eqref{eq:kloostermansheaf} by replacing the Artin--Schreier sheaf by the tensor 
product of an Artin--Schreier sheaf and a Kummer sheaf.
The automorphic $\aD$-module $A_\cG$ can similarly be deformed to an automorphic 
$\aD$-module 
which further depends on the choice of a character of $T$~\cite{HNY}.
We thus obtain the twisted Kloosterman $\aD$-module, denoted $\TKl_{G^\vee}$.  

The corresponding deformation of the Frenkel--Gross connection \eqref{eq:FG} has not appeared in the 
literature as far as we know.  We define the \emph{deformed Frenkel--Gross connection} 
to be
\begin{equation}\label{eq:equivariantFG}
\widetilde \nabla^{G^\vee} :=  d + (y_p + h) \frac{dq}{q} + x_\theta dq,
\end{equation}
for $h \in \t^\vee$ an element of the Cartan subalgebra.

With these modifications, Figure \ref{fig:four} and Theorems \ref{tintro:Dmodule-mirror}, 
\ref{tintro:HNY}, and \ref{tintro:FG} all hold with their equivariant and deformed 
counterparts.  In particular, we deduce Rietsch's equivariant mirror conjecture for minuscule flag varieties.  With 
some mild variation, we also generalize Zhu's theorem to the deformed setting: the 
twisted Kloosterman $D$-module $\TKl_{G^\vee}$ and the deformed Frenkel--Gross connection 
$\widetilde \nabla^{G^\vee}$ are isomorphic.

\subsection{$\aD_{\hbar}$--module generalization}  The passage from the quantum connection $\cQ^{G^\vee/P^\vee}_{T^\vee}$ to the quantum cohomology ring $QH^*_{T^\vee}(G^\vee/P^\vee)$ itself is obtained by taking a semiclassical limit.  A framework to rigorously formalize the semiclassical limit is to extend the 
mirror theorem to an 
isomorphism of $\aD_\hbar$-modules, where
$\aD_\hbar:=\C[q,q^{-1},\hbar]\langle \hbar \partial_q \rangle$ and $\hbar$ is an additional parameter.  In~\eqref{eq:equivariantBK}, the $\aD$-module $\Exp$ is replaced by the $\aD_\hbar$-module 
$\Exp^{1/\hbar}$ defined in \S\ref{ssec:twisted-Dhbar}, the  \(D\otimes S\)-module  \(\Mult_T\) is replaced by the  \(\aD_{\hbar}\otimes S\)-module  \(\Mult^{1/\hbar}\) defined in \S\ref{s:S-structures}, and we obtain the 
$D_{\hbar}\otimes S$-module
\begin{equation}\label{eq:hequivariantBK}
\WBK^{1/\hbar}_{(G,P)} :=
R\pi_* (\gamma^* \Mult_T^{1/\hbar}\otimes f^* \Exp^{1/\hbar}).
\end{equation}
  Similarly, one obtains an $\hbar$-deformation $\TKl^{1/\hbar}_{G^\vee}$ 
of the twisted Kloosterman $D$-module.  The equivariant quantum connection and 
deformed Frenkel--Gross connection \eqref{eq:equivariantFG} also possess a further 
$\hbar$-deformation
\begin{align*}
\cQ^{G^\vee/P^\vee}_{\hbar,T^\vee}&:= \hbar d + c_1^T(O(1)) *_q \frac{dq}{q}, \\
\widetilde \nabla^G_\hbar &:=  \hbar d + (y_p + h) \frac{dq}{q} + x_\theta dq.
\end{align*}
These two formulas define \(\hbar\)-connections over $\C_q^\times\times \t^*$ relative to $\t^*$, hence in particular \(D_\hbar\otimes S\)-modules.

We are now able to state the main result of this paper, which includes the equivariant $\hbar$-mirror isomorphism.

\begin{theorem} [Theorems~\ref{t:hEFGisquantum}, \ref{t:eqzhu}, \ref{t:hEHNYisBK}, \ref{thm:Shbar}] \label{intro:hbar}
Suppose $P^\vee$ is minuscule.  The four $\aD_{\hbar}\otimes S$-modules 
$\WBK^{1/\hbar}_{(G,P)}$, $\widetilde \nabla^{G^\vee}_\hbar$, $\cQ^{G^\vee/P^\vee}_{\hbar,T^\vee}$, and
$\TKl^{1/\hbar}_{G^\vee}$ are isomorphic.
\end{theorem}
Specializing both $h=0\in \kt^*$ and $\hbar=1$ yields the earlier mirror  
Theorem~\ref{t:Dmodule-mirror}.
 Specializing $\hbar=0$ establishes the equivariant Peterson 
isomorphism, which we explain in the next subsection.  Thereom~\ref{intro:hbar} is obtained by exploiting the grading of the quantum product on one side, and the
homogeneity of the potential $f_q$ on the other side.

\subsection{Application: proof of the Peterson isomorphism}
Given a regular function on an algebraic variety, one can consider the sheaf of Jacobian ideals generated by all the first-order derivatives. Its quotient
ring defines a subscheme, possibly nonreduced, of critical points of the function.
Since $\oGP$ is affine, applying this construction to the weighted potential, we obtain a 
(relative) Jacobian ring $\mathrm{Jac}(\mathring{G/P},f_q,\gamma)$, which has the structure of a 
$\C[\t^*,q,q^{-1}]$-algebra.

\begin{theorem}[Theorem \ref{t:Jac}]\label{intro:hm}
If $P^\vee$ is minuscule, then we have an isomorphism of $\C[\t^*,q,q^{-1}]$-algebras
$
QH_{T^\vee}^*(G^\vee/P^\vee) \cong \mathrm{Jac}(\mathring{G/P},f_q,\gamma).
$
\end{theorem}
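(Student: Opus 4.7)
The plan is to upgrade the equivariant enhancement of Theorem \ref{tintro:Dmodule-mirror} from an isomorphism of $\aD$-modules to an isomorphism of algebras. The equivariant mirror theorem provides an isomorphism of $S = \Sym(\t)$-families of $\aD$-modules on $\C^\times_q$,
$$\cQ^{G^\vee/P^\vee}(S) \isom \BK_{(G,P)}(S),$$
of rank $|W/W_P| = \dim H^*(G^\vee/P^\vee)$. On the A-side, this $\O \otimes S$-module is by construction $QH^*_{T^\vee}(G^\vee/P^\vee)$ equipped with its equivariant-quantum self-action. On the B-side, the minuscule hypothesis ensures $f_q + \ell(\gamma)$ has generically isolated relative critical points on the fibres of $\pi: X \to Z(L_P)$, so the de Rham representative of $R\pi_![f+\langle \ell(\gamma),h\rangle]^*\Exp$ is concentrated in top degree and identifies, as an $\O \otimes S$-module, with $\mathrm{Jac}(\oGP, f_q + \ell(\gamma))$; multiplication of regular functions on $\oGP$ descends to a faithful action of $\mathrm{Jac}$ on $\BK_{(G,P)}(S)$.

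The key step is to match the two commutative $\C[\t^*, q^{\pm 1}]$-subalgebras of $\mathrm{End}_{\O \otimes S}(\cQ^{G^\vee/P^\vee}(S))$ of rank $|W/W_P|$. Both contain a distinguished element: on the A-side, the quantum multiplication $\sigma *_q$ where $\sigma = c_1^{T^\vee}(\O(1))$, and on the B-side, the class of $q\partial_q(f_q + \ell(\gamma))$ modulo the Jacobian ideal. A Gauss-Manin computation on the top-degree de Rham representative yields $\nabla_{q\partial_q} = q\partial_q + [q\partial_q f_q]$, so the mirror $\aD$-module isomorphism forces these two operators to agree. Now generic semisimplicity of $QH^*_{T^\vee}$ in the minuscule case guarantees that $\sigma *_q$ has $|W/W_P|$ distinct eigenvalues over the generic point of $\Spec \C[\t^*,q^{\pm 1}]$; equivalently, $\cQ^{G^\vee/P^\vee}(S)$ admits a cyclic vector for this action. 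Consequently the centralizer of $\sigma *_q$ inside $\mathrm{End}_{\O \otimes S}$ is the unique maximal commutative subalgebra of rank $|W/W_P|$, and both $QH^*_{T^\vee}$ and $\mathrm{Jac}$ must equal this centralizer, first generically and then, by finite flatness, globally.

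The main obstacle is to verify that this matching is an isomorphism of $\C[\t^*,q^{\pm 1}]$-algebras compatible with the given presentations, not merely an abstract ring isomorphism. For this I would use the equivariant quantum Chevalley formula together with Theorem \ref{tintro:FG}, which realizes $\cQ^{G^\vee/P^\vee}$ in the canonical basis of the minuscule representation $V$: under the identification $L: H^*(G^\vee/P^\vee) \isom V$ sending Schubert classes to canonical basis elements, each Schubert class can be tracked through the mirror $\aD$-module isomorphism to an explicit element of the Jacobian ring. The resulting comparison supplies the $\C[\t^*,q^{\pm 1}]$-algebra isomorphism asserted in Theorem \ref{intro:hm}, and a posteriori recovers the Peterson-style presentation of $QH^*_{T^\vee}(G^\vee/P^\vee)$ announced in the introduction.
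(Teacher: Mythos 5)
The proposal contains a genuine gap at its foundation, concerning the claimed identification of $\BK_{(G,P)}(S)$ with $\mathrm{Jac}(\oGP,f_q+\ell(\gamma))$ as an $\cO\otimes S$-module. You assert that the de Rham representative of $R\pi_!\Exp^{f_S}$ ``identifies, as an $\cO\otimes S$-module, with $\mathrm{Jac}$'' and that ``multiplication of regular functions on $\oGP$ descends to a faithful action of $\mathrm{Jac}$ on $\BK_{(G,P)}(S)$.'' Both statements are false. At $\hbar=1$ the Gauss--Manin module is the cokernel of $d+df_S\wedge$ on top-degree relative forms, whereas the Jacobian ring is the cokernel of $df_S\wedge$ alone. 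These are distinct quotients of $\Omega^d$ of the same rank, and multiplication by a regular function is \emph{not} well-defined on the first one: if $\eta$ lies in the image of $d+df_S\wedge$, then $g\eta$ differs from an element of that image by an $\hbar\,dg\wedge$-type term that does not vanish at $\hbar=1$. (The simplest example $f_q(x)=x+q/x$ on $\G_m$ already exhibits the obstruction.) Consequently the ``distinguished element'' $[q\partial_q f_q]$ you want to compare with $\sigma*_q$ is not an endomorphism of $\BK_{(G,P)}(S)$, and the centralizer argument cannot get off the ground.

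The paper's Section~\ref{s:proof-Peterson} repairs precisely this by introducing the extra parameter $\hbar$ and upgrading the mirror theorem to an isomorphism of graded $\aD_\hbar\otimes S$-modules (Theorem~\ref{thm:Shbar}), relying on the homogeneity of the potential (\S\ref{s:homogeneous}) and the grading of quantum cohomology (\S\ref{s:QHgrade}). Only after specializing $\hbar\to 0$ does the twisted de Rham cokernel degenerate to the Jacobian ring $\Jac(X/Z(L_P),f_S)$, and the differential operator $\hbar q\partial_q+(\sigma_\ki*_q)-\varpi_\ki$ degenerates to the linear operator of quantum multiplication; the key flatness input is Proposition~\ref{prop:hfree}. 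From that point your subsequent ideas --- generic semisimplicity of $QH^*_{T^\vee}$, $\sigma*_q$ generating a rank-$|W^P|$ commutative subalgebra over $\Frac(S)[q^{\pm1}]$, then finite-flat descent --- match the last paragraph of the paper's proof (which cites Mihalcea and Ciocan--Fontanine--Kim--Sabbah for cyclicity), but they can only be invoked once the $\hbar$-degeneration supplies the module identification you took for granted.
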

Specializing to non-equivariant cohomology, we obtain the mirror isomorphism of $\C[q,q^{-1}]$-algebras $QH^*(G^\vee/P^\vee) \cong \mathrm{Jac}(\mathring{G/P},f_q)$.
The same isomorphism is expected to hold for every Fano mirror dual pair.
It is established for (possibly orbifold) toric Fano varieties in
\cite{Bat,FOOO:LF-compact-toric-I,Ostrover-Tyomkin:toric-Fano,Coates-Corti-Iritani-Tseng,Gonzalez-Woodward:QH-toric-minimal}.

The equivariant Peterson variety $\cY$ is the closed subvariety of $G/B_- \times \t^*$ defined by
$$
\cY:= \{ (gB_-, h) \in (G/B_-) \times \t^* \mid g^{-1} \cdot (F - h) \text{ vanishes on 
$[\u_-,\u_-]$ } \},
$$
where $F \in \g^*$ is a principal nilpotent defined  as in \eqref{eq:FG}, see~\cite[\S3.2]{Rietsch:mirror-construction-QH-GmodP},
and
$\u_- := \Lie(U_-)$, and $g^{-1} 
\cdot (-)$ denotes the co-adjoint action.  It contains an open subscheme 
$$
\cY^*:= \cY \cap B_- w_0 B_- /B_-,$$
obtained by intersecting with the open Schubert cell $B_- w_0 B_- /B_-$, where $w_0$ denotes the longest element of $W$.
The intersection of $\cY^*$ with the opposite Schubert
stratification $\{B w B_-/B_-\}$ gives the $2^{\mathrm{rk}(\kg)}$ strata
\begin{equation}\label{eq:Peterson}
\cY^*_P:= \cY^* \cap B w_0^P B_- /B_-,
\end{equation}
where $w_0^P$ is the longest element of $W_P \subset W$ and the intersections are to be taken scheme-theoretically.  In \cite{Peterson:lectures}, Peterson announced the isomorphism $\cY^*_P \cong \Spec(QH^*_{T^\vee}(G^\vee/P^\vee))$.

Rietsch~\cite{Rietsch:mirror-construction-QH-GmodP} has proved that $\mathrm{Jac}(\mathring{G/P},f_q,\gamma)$ is isomorphic to $\C[\cY^*_P]$ as 
$\C[\t^*,q,q^{-1}]$-algebras.  We thus obtain:

\begin{corollary} [Equivariant Peterson isomorphism -- Corollary \ref{cor:Peterson}]\label{intro:Peterson}
If $P^\vee$ is minuscule, then we have an isomorphism of $\C[\t^*,q,q^{-1}]$-algebras
$
QH^*_{T^\vee}(G^\vee/P^\vee) \cong \C[\cY^*_P].
$
\end{corollary}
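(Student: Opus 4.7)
The plan is to obtain this corollary as a direct composition of two isomorphisms, both of which are already available at this point in the paper. First, Theorem~\ref{intro:hm} provides an isomorphism of $\C[\t^*, q, q^{-1}]$-algebras
\[
QH^*_{T^\vee}(G^\vee/P^\vee) \;\cong\; \mathrm{Jac}(\mathring{G/P},\, f_q + \ell(\gamma)).
\]
Second, Rietsch's result in~\cite{Rietsch:mirror-construction-QH-GmodP} identifies this Jacobian ring with the coordinate ring $\C[\cY^*_P]$ of the equivariant Peterson stratum \eqref{eq:Peterson}, again as a $\C[\t^*, q, q^{-1}]$-algebra. Composing the two yields the desired isomorphism $QH^*_{T^\vee}(G^\vee/P^\vee) \cong \C[\cY^*_P]$.

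All of the nontrivial content is absorbed into Theorem~\ref{intro:hm}, whose proof in this paper goes through the long journey around Figure~\ref{fig:four}. Rietsch's input, by contrast, is proved directly by exhibiting an explicit parametrization of $\cY^*_P$ by elements of $\mathring{G/P}$ in which the condition that $g^{-1}\cdot(f-h)$ vanishes on $[\u^\vee_-, \u^\vee_-]$ translates into the vanishing of the partial derivatives of the superpotential $f_q + \ell(\gamma)$. Consequently the task here is only to package Theorem~\ref{intro:hm} together with this preexisting identification.

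The only checks required along the way are that both isomorphisms are compatible with the relevant $\C[\t^*, q, q^{-1}]$-algebra structures on each side. For the $q$-variables this amounts to the identification $Z(L_P) \cong \Spec \C[q_i^{\pm 1} \mid i \notin I_P]$ used throughout the paper. For the $\t^*$-variables, the equivariant parameter $h \in \t^*$ enters uniformly: in $\cQ^{G^\vee/P^\vee}(h)$ as the equivariant deformation of the quantum product, in $\BK_{(G,P)}(h)$ via the twist by $\langle \ell(\gamma), h\rangle$, and in the defining equations of $\cY$ through the shift $f - h$. There is therefore no additional obstacle: the corollary follows formally once Theorem~\ref{intro:hm} and Rietsch's theorem are in hand, with all of the genuine difficulty sitting inside the proof of Theorem~\ref{intro:hm}.
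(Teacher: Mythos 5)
Your proposal is correct and is essentially identical to the paper's own argument: the paper likewise derives Corollary~\ref{cor:Peterson} by composing the isomorphism of Theorem~\ref{t:Jac} with Rietsch's identification $\mathrm{Jac}(\mathring{G/P},f_q+\ell(\gamma)) \cong \C[\cY^*_P]$ from~\cite{Rietsch:mirror-construction-QH-GmodP}.
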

The Peterson isomorphism has been established directly for Grassmannians by Rietsch~\cite{Rietsch:QH-grassmannian-positivity}, for quadrics by
Pech--Rietsch--Williams~\cite{Pech-Rietsch-Williams:LG-quadrics}, and for Lagrangian and Orthogonal Grassmannians by Cheong~\cite{Cheong:QH-LG-OG}, all in the non-equivariant
case (that is, specializing $h \in \t^*$ to 0). In the equivariant case, the results of~\cite{Rietsch:QH-grassmannian-positivity}
and~\cite{Marsh-Rietsch:B-model-Grassmannians} can be combined to also obtain Corollary \ref{intro:Peterson} for Grassmannians,
see~\cite{Marsh-Rietsch:B-model-Grassmannians}*{\S5}.
For some other works on the spectrum of classical equivariant cohomology rings, which correspond to the specialization $q=0$, see~\cite{Gorbounov-Petrov,Goresky-MacPherson:spectrum}.

Theorem~\ref{intro:hm} is proven by passing to the semiclassical limit 
($\hbar \to 0$) in the equivariant $\hbar$-mirror isomorphism of Theorem~\ref{intro:hbar}. 

\subsection{Mirror pairs of Fano type and towards mirror symmetry for Richardson varieties}
In our mirror theorem, the A-model $G^\vee/P^\vee$ and the B-model $(X_t,f_t)$ plays distinctly different roles.  On the other hand, the geometry of $G/P$ features prominently in the construction of $X_t$.  This suggests a more symmetric mirror conjecture should exist.

One such setting could be the mirror pairs of compactified Landau--Ginzburg models studied in \cite{Katzarkov-Kontsevich-Pantev:LG-models}, and one might speculate on the mirror symmetry of the pair of compactified Landau--Ginzburg models
 \[(G/P,g,\omega_{G/P},f_{G/P}) \text{ and }
 (G^\vee/P^\vee,g^\vee ,\omega_{G^\vee/P^\vee},f_{G^\vee/P^\vee}). 
\]
where $g$ is a K\"ahler form, $\omega_{G/P}$ denotes the volume form of 
\cite{KLS:projections-Richardson}, and $f_{G/P}$ denotes the potential function on $\oGP$ 
discussed above.  If such a mirror theorem holds, we would expect a matching of the 
cohomologies of the log Calabi--Yau manifolds $\oGP$ and $\mathring{G^\vee/P^\vee}$.  Indeed the 
equality $H^*(\oGP) \cong H^*(\mathring{G^\vee/P^\vee})$ holds more generally for open Richardson 
varieties.  

Namely, we identify the Weyl group of $G$ and of $G^\vee$, and denote it by $W$. For $v,w\in W$ with $v \le w$, the open Richardson variety $\cR^w_v\subset G/B$ is the intersection of the Schubert cell $B_- \dot{v} B/B$ with the opposite Schubert cell $B \dot{w}B/B$.  We denote by $\check{\cR}^{w}_v \subset G^\vee/B^\vee$ the open Richardson variety attached to $G^\vee$.  Then we have the equality $H^*(\cR^w_v) \cong H^*(\check{\cR}^{w}_v)$ (Proposition \ref{prop:Richardson}).  We are thus led to the question: can our mirror theorems be generalized to Richardson varieties?

Let us also comment that the open Richardson varieties $\cR^w_v$ are expected to be cluster varieties \cite{Leclerc:cluster-Richardson}.
We refer to~\cites{Goncharov-Shen:canonical-bases,GHK:mirror-log-CY-I,GHKK:canonical-bases-cluster} for recent results on
canonical bases on log Calabi--Yau varieties assuming the existence of a cluster structure.  For a discussion of the cluster structure of $\oGP$, see~\cite{BFZ:cluster-III}*{\S2}.

\subsection{Other related works} 
Witten~\cite{Witten:gauge-theory-wild-ramification,Donagi-Pantev:Langlands-duality}, based on previous works by Hausel--Thaddeus~\cite{Hausel-Thaddeus:mirror-Langlands-Hitchin},
Kapustin--Witten and Gukov--Witten, has related Langlands reciprocity for connections with possibly irregular singularities and mirror symmetry of
Hitchin moduli spaces of Higgs bundles. The present work may perhaps be seen as an instance of this relation in the case of rigid connections, although we are
considering rather the moduli spaces of \emph{holomorphic} bundles. Another important difference is
that automorphic $D$-modules appear in~\cite{Witten:gauge-theory-wild-ramification} as the $A$-side, as opposed to the $B$-side in the present work. 

Recently, the rigid connections of~\cite{Frenkel-Gross,HNY} have been generalized by Yun 
and Chen to parahoric
structures and Yun considered rigid automorphic forms ramified at three points. See
also~\cite{Boalch:wild-character-varieties,Bremer-Sage:moduli-irregular,STWZ:Cluster-Legendrian}
 for recent advances on wild character varieties.

Quantum multiplication by divisor classes in the equivariant quantum cohomology ring $QH^*_{T^\vee\times \C^\times}(T^*G^\vee/P^\vee)$ of the cotangent
bundle has been recently computed for any partial flag variety by Su~\cite{Su:equivariant-QH}, extending work of Braverman--Maulik--Okounkov~\cite{BMO}
for the cotangent bundle $T^*G^\vee/B^\vee$ of the full flag variety. 
Specializing the $\C^\times$-equivariant parameter to zero, it recovers the 
$T^\vee$-equivariant quantum Chevalley formula of 
Mihalcea~\cite{Mihalcea:EQ-Chevalley}. It would be interesting to investigate 
generalizations of Rietsch's mirror conjecture to this setting.
See~\cite{Tarasov-Varchenko:hypergeometric} for work in this direction.

A different approach to mirror phenomena for partial flag varieties is the study of period integrals of hypersurfaces in $G/P$ by Lian--Song--Yau \cite{Lian-Song-Yau}.  Their ``tautological system" is further studied in \cite{Huang-Lian-Zhu}, where geometry such as the open projected Richardson $\oGP$ also makes an appearance.

Since $QH^*(G^\vee/P^\vee)$ is known~\cite{Chaput-Manivel-Perrin:QH-minuscule-III} to be semisimple for minuscule $P^\vee$, the Dubrovin conjecture
concerning full
exceptional collections of vector bundles on $G^\vee/P^\vee$ and the Stokes matrix of $\cQ^{G^\vee/P^\vee}$ at $q=\infty$ is expected to hold. It has
been established for projective spaces by Dubrovin and Guzzetti, and more generally for Grassmannians by Ueda~\cite{Ueda:Stokes-QHgr}. Recent works on exceptional
collections for projective homogeneous spaces include~\cite{Kuznetsov-Polishchuk} for $G^\vee$ classical and~\cite{Faenzi-Manivel} for $G^\vee$ of type $E_6$.

Related to the Dubrovin conjecture are the Gamma conjectures~\cite{Galkin-Golyshev-Iritani:gamma-conj}. The relation with mirror symmetry is discussed
in~\cite{Katzarkov-Kontsevich-Pantev:LG-models,Galkin-Iritani:Gamma-conj}.  The conjectures are known for
Grassmannians~\cite{Galkin-Golyshev-Iritani:gamma-conj}, for certain toric varieties~\cite{Galkin-Iritani:Gamma-conj}, and is compatible with
taking hyperplane sections~\cite{Galkin-Iritani:Gamma-conj}. Also, \cite{Golyshev-Zagier} establishes the Gamma conjecture I for Fano 3-folds with Picard rank
one, exploiting notably the modularity of the quantum differential equation which holds for 15 of the 17 families from the Iskovskikh classification.  See \cite{HKLY,Fang-Zhou} for further recent progress.

\section{Preliminaries}\label{s:pre}
Notation in \S\ref{sub:root} to \S\ref{sub:minuscule} will be used frequently in the article. The results of \S\ref{ssec:gamma} will be used in \S\ref{s:FG}-\S\ref{s:qh}.
\subsection{Root systems and Weyl groups}\label{sub:root}
Let $G$ denote a complex almost simple algebraic group, $T \subset G$ a maximal torus, and $B, B_-$ opposed Borel subgroups.  The Lie algebras are denoted $\g, \t, \b, \b_-$ respectively.  Let $R$ denote the root system of $G$ and $I$ denote the vertex set of the Dynkin diagram.  The simple roots are denoted $\alpha_i \in \t^*$, and the simple coroots are denoted $\alpha_i^\vee \in \t$.    The pairing between $\t$ and $\t^*$ is denoted by $\ip{.,.}$.  Thus $a_{ij} = \ip{\alpha_i,\alpha^\vee_j}$ are the entries of the Cartan matrix.  Let $R^+, R^- \subset R$ denote the subsets of positive and negative roots.  Let $\theta \in R^+$ denote the highest root and $\rho := \frac12 \sum_{\alpha \in R^+} \alpha$ be the half sum of positive roots.

We let $W$ denote the Weyl group, and $s_i$, $i \in I$ denote the simple generators.  For a root $\alpha \in R$, we let $s_\alpha \in W$ denote the corresponding reflection.  The length of $w$ is denoted $\ell(w)$.  For $w \in W$, we let $\Inv(w) := \{\alpha\in R^+ \mid w \alpha \in R^-\}$ denote the inversion set of $w$.  Thus $|\Inv(w)| = \ell(w)$.  Let $\leq$ denote the Bruhat order on $W$ and $\lessdot$ denote a cover relation (that is, $w \lessdot v$ if $w < v$ and $\ell(w) = \ell(v) -1$).

Let $P \subset G$ denote the standard parabolic subgroup associated to a subset $I_P \subset I$.  Let $W_P \subset W$ be the subgroup generated by $s_i, i \in I_P$.  Let $W^P$ be the set of minimal length coset representatives for $W/W_P$.  Let $\pi_P: W \to W^P$ denote the composition of the natural map $W \to W/W_P$ with the bijection $W/W_P \cong W^P$.  Let $R_P \subset R$ denote the root system of the Levi subgroup of $P$.  Let $\rho_P:=\frac12 \sum_{\alpha\in R^+_P} \alpha$.

For a weight $\lambda$ of $\g$, we let $V_\lambda$ denote the irreducible highest-weight representation of $\g$ with highest weight $\lambda$.

\subsection{Root vectors and Weyl group representatives}\label{sub:rootvectors}
We pick a generator $x_\alpha$ for the weight space $\g_\alpha$ for each root $\alpha$.  Write $y_j$ for $x_{-\alpha_j}$, and $x_j$ for $x_{\alpha_j}$, and $y_\alpha$ for $x_{-\alpha}$.  Define
$$\dot s_j := \exp(-x_j)\exp(y_j)\exp(-x_j) \in G.$$  Then if $w = s_{i_1} \cdots s_{i_\ell}$ is a reduced expression, the group element $\dot w = \dot s_{i_1} \cdots \dot s_{i_\ell}$ does not depend on the choice of reduced expression.

We assume that the root vectors $x_\alpha$ have been chosen to satisfy:
\begin{enumerate}
\item
$\dot w \cdot x_\alpha = \pm x_{w\alpha}$,
\item
$[x_\alpha,y_\alpha] = \alpha^\vee$,
\end{enumerate}
where $\dot w \cdot x_\alpha$ denotes the adjoint action.  See \cite[Chap.3]{book:Steinberg}.  In 
\eqref{eq:thetasign}, we will make a choice of sign for $x_\theta$.

\subsection{Quantum roots}\label{ssec:tR}
If $G$ is simply-laced, we consider all roots to be long roots.  Otherwise, we have both long and short roots.  Let $\tR^+ \subseteq R^+$ be the subset of positive roots defined by
$$
\tR^+ = \{\beta \in R^+ \mid \ell(s_\beta) = \ip{2\rho,\beta^\vee} -1\}.
$$
A root $\beta \in \tR^+$ is called a {\it quantum root} (terminology to be explained in \S \ref{ssec:Chevalley}).
If $G$ is simply-laced, then $\tR^+ = R^+$.  Otherwise, it is a proper subset.  A root $\alpha \in R^+$ belongs to $\tR^+$ if one of the following is satisfied (see \cite{BMO}):
\begin{enumerate}
\item $\alpha$ is a long root, or
\item no long simple roots $\alpha_i$ appear in the expansion of $\alpha$ in terms of simple roots.
\end{enumerate}
If $G$ is of type $B_n$, then $\tR^+$ is the union of the long positive roots with the short simple root $\alpha_n$.
If $G$ is of type $C_n$, then $\tR^+$ is the union of the long positive roots, with the short roots of the form $\alpha_i + \alpha_{i+1} + \cdots + \alpha_j$ where $1 \leq i \leq j \leq n-1$. 

\subsection{Minuscule weights}\label{sub:minuscule}
Let $\ki \in I$ and $\varpi_\ki$ denote the corresponding fundamental weight.  We call $\ki$, or $\varpi_\ki$, {\it minuscule}, if the weights of $V = V_{\varpi_\ki}$ are exactly the set $W \cdot \lambda$.  Equivalently, $\ki \in I$ is minuscule if the coefficient of $\alpha_\ki^\vee$ in every coroot $\alpha^\vee$ is $\leq 1$. 

Let $P=P_{\ki} \subset G$ be the parabolic subgroup associated to $I_P = I \setminus \{\ki\}$.    Then $W_P$ is the stabilizer of $\varpi_\ki$.  We have natural bijections between $W^P$, $W/W_P$, and $W \cdot \varpi_\ki$.  We have that $\alpha \in R_P$ if the simple root $\alpha_\ki$ does not occur in $\alpha$. We say that  \(P=P_{\ki}\) is a {\it minuscule parabolic} if \(\ki\) is a minuscule weight.

The minuscule nodes for each irreducible root system are listed in Figure 
\ref{fig:minuscule}.  Our conventions follow the Bourbaki numbering, 
see~\cite{bourbaki:lie7-9}*{Chap.VIII, \S7.4, Prop.8}.

If $G$ is simply-laced then a minuscule node is also cominuscule.  Thus the coefficient of $\alpha_\i$ in every root $\alpha \in R^+$ is $\leq 1$.
This means that the nilradical of $P$ is abelian, hence by Borel--de Siebenthal theory, 
$G/P$ is a compact Hermitian symmetric space; see e.g.,~\cite{Gross:minuscule-principalsl2}.

\begin{figure}
\begin{tabular}{||c|c|c|c|c|c||}\hline
\text{$R$} & \text{Dynkin Diagram} &$V_{\varpi_\ki}$ & $\dim V$ &\text{Flag variety} & $\dim G/P$ \\\hline \hline $A_n$ &
\setlength{\unitlength}{3mm}
\begin{picture}(11,3)
\multiput(0,1.5)(2,0){6}{$\circ$}
\multiput(0.55,1.85)(2,0){5}{\line(1,0){1.55}}
\put(6,1.5){$\bullet$} \put(0,0){$1$} \put(2,0){$2$}
\put(3.5,0){$\cdots$} \put(6,0){$k$} \put(7.5,0){$\cdots$}
\put(10,0){$n$}
\end{picture}
& $\Lambda^k \C^{n+1}$
& $\binom{n+1}{k}$
&
Grassmannian $\Gr_{k,n+1}$
&
$k(n+1-k)$
\\
\hline $B_n \; (n\geq 2)$ & \setlength{\unitlength}{3mm}
\begin{picture}(11,3) \multiput(2,1.5)(2,0){5}{$\circ$}
\multiput(0.55,1.85)(2,0){4}{\line(1,0){1.55}}
\multiput(8.55,1.75)(0,.2){2}{\line(1,0){1.55}} \put(8.85,1.53){$>$}
\put(0,1.5){$\circ$}
\put(10,1.5){$\bullet$} \put(0,0){$1$} \put(2,0){$2$}
\put(4,0){$\cdots$} \put(7,0){$\cdots$} \put(10,0){$n$}
\end{picture} &
spinor
&
$2^n$
&
odd orthogonal Grassmannian &$n(n+1)/2$
\\
\hline $C_n \; (n\geq 2)$ & \setlength{\unitlength}{3mm}
\begin{picture}(11,3) \multiput(2,1.5)(2,0){5}{$\circ$}
\multiput(0.55,1.85)(2,0){4}{\line(1,0){1.55}}
\multiput(8.55,1.75)(0,.2){2}{\line(1,0){1.55}} \put(8.85,1.53){$<$}
\put(0,1.5){$\bullet$}
\put(10,1.5){$\circ$} \put(0,0){$1$} \put(2,0){$2$}
\put(4,0){$\cdots$} \put(7,0){$\cdots$} \put(10,0){$n$}
\end{picture} &
$\C^{2n}$
&$2n$
&
projective space &$2n-1$
\\
 \hline $D_n \; (n\geq 4)$ & $\begin{array}{c}
\setlength{\unitlength}{2.9mm}
\setlength{\unitlength}{2.9mm} \begin{picture}(11,3.5)
\multiput(0,1.6)(2,0){5}{$\circ$}
\multiput(0.55,2)(2,0){4}{\line(1,0){1.55}}
\put(8.5,1.95){\line(2,-1){1.55}} \put(8.5,1.95){\line(2,1){1.55}}
\put(10,2.5){$\circ$} \put(10,0.7){$\circ$} \put(0,0){$1$}
\put(2,0){$2$} \put(4,0){$\cdots$} \put(7,0){$\cdots$}
\put(9.1,0){$n\!-\!1$} \put(11, 2.3){$n$} \put(10,2.45){$\circ$}
\put(0,1.6){$\bullet$}
\put(10,0.75){$\circ$}
\end{picture}
\end{array}$ &
$\C^{2n}$
&
$2n$
&
even dimensional quadric &
$2n-2$
\\
\hline $D_n \; (n \geq 4)$ & $\begin{array}{c}
\setlength{\unitlength}{2.9mm}
\setlength{\unitlength}{2.9mm} \begin{picture}(11,3.5)
\multiput(0,1.6)(2,0){5}{$\circ$}
\multiput(0.55,2)(2,0){4}{\line(1,0){1.55}}
\put(8.5,1.95){\line(2,-1){1.55}} \put(8.5,1.95){\line(2,1){1.55}}
\put(10,2.5){$\circ$} \put(10,0.7){$\circ$} \put(0,0){$1$}
\put(2,0){$2$} \put(4,0){$\cdots$} \put(7,0){$\cdots$}
\put(9.1,0){$n\!-\!1$} \put(11, 2.3){$n$} \put(10,2.45){$\circ$}
\put(10,0.75){$\bullet$}
\end{picture}
\\
\setlength{\unitlength}{2.9mm}
\setlength{\unitlength}{2.9mm} 
\begin{picture}(11,3.5)
\multiput(0,1.6)(2,0){5}{$\circ$}
\multiput(0.55,2)(2,0){4}{\line(1,0){1.55}}
\put(8.5,1.95){\line(2,-1){1.55}} \put(8.5,1.95){\line(2,1){1.55}}
\put(10,2.5){$\circ$} \put(10,0.7){$\circ$} \put(0,0){$1$}
\put(2,0){$2$} \put(4,0){$\cdots$} \put(7,0){$\cdots$}
\put(9.1,0){$n\!-\!1$} \put(11, 2.3){$n$} \put(10,2.45){$\bullet$}
\put(10,0.75){$\circ$}
\end{picture}
\end{array}$  &
spinor
&
$2^{n-1}$
&
even orthogonal Grassmannian &
$n(n-1)/2$
\\ 
\hline
$E_6$ & $\begin{array}{c}
\setlength{\unitlength}{3mm}
\begin{picture}(9,3.6)
\multiput(0,0.5)(2,0){5}{$\circ$}
\multiput(0.55,0.95)(2,0){4}{\line(1,0){1.6}} \put(0,0.5){$\bullet$} \put(8,0.5){$\circ$}
\put(8,0.5){$\circ$} \put(4,2.6){$\circ$}
\put(4.35,1.2){\line(0,1){1.5}} \put(0,-.6){$1$} \put(2,-0.6){$3$}
\put(4,-.6){$4$} \put(6,-.6){$5$} \put(5,2.5){$2$} \put(8,-.6){$6$}
\end{picture}
\\
\setlength{\unitlength}{3mm}
\begin{picture}(9,3.6)
\multiput(0,0.5)(2,0){5}{$\circ$}
\multiput(0.55,0.95)(2,0){4}{\line(1,0){1.6}} \put(0,0.5){$\circ$} \put(8,0.5){$\bullet$}
\put(8,0.5){$\circ$} \put(4,2.6){$\circ$}
\put(4.35,1.2){\line(0,1){1.5}} \put(0,-.6){$1$} \put(2,-0.6){$3$}
\put(4,-.6){$4$} \put(6,-.6){$5$} \put(5,2.5){$2$} \put(8,-.6){$6$}
\end{picture}
\end{array}$ &
&
27
&
Cayley plane &
16
\\[12mm] \hline $E_7$ & \setlength{\unitlength}{3mm}
\begin{picture}(11,4)
\put(0,0.9){$\circ$} \multiput(2,0.9)(2,0){4}{$\circ$}
\put(10,0.9){$\bullet$}
\multiput(0.55,1.35)(2,0){5}{\line(1,0){1.6}} \put(10,0.9){$\circ$}
\put(4,3){$\circ$} \put(4.35,1.6){\line(0,1){1.5}} \put(0,-.2){$1$}
\put(2,-0.2){$3$} \put(4,-.2){$4$} \put(6,-.2){$5$} \put(5,2.9){$2$}
\put(8,-.2){$6$} \put(10,-.2){$7$}
\end{picture} 
&
& 56
& Freudenthal variety & 27
\\[1mm] \hline
\end{tabular}
\caption{The minuscule parabolic quotients. In the second column, the possible minuscule nodes are indicated with a black vertex.}
\label{fig:minuscule}
\end{figure}

\subsection{A remarkable quantum root}\label{ssec:gamma}
Fix a minuscule node $\ki$ and corresponding parabolic $P = P_\ki$.  Define the long root $\qroot = \qroot(\ki) \in R$ by:
$$
\qroot := \begin{cases} \alpha_\i &\mbox{if $G$ is simply-laced}, \\
\alpha_{n-1}+ 2\alpha_n & \mbox{if $G$ is of type $B_n$ (and thus $\ki = n$)}, \\
2\alpha_1 + 2 \alpha_2 + \cdots + 2\alpha_{n-1}+ \alpha_n=\theta & \mbox{if $G$ is of type $C_n$ (and thus $\ki = 1$).}
\end{cases}
$$
Since $\qroot$ is a long root, it is also a quantum root.  The coroot $\qroot^\vee$ is the (unique) 
``Peterson--Woodward lift" of $\alpha_\i^\vee + Q^\vee_P \in Q^\vee/Q^\vee_P$ where $Q^\vee, 
Q^\vee_P$ denote coroot lattices; see \S\ref{sec:proofs}.

Let $I_Q = \{j\in I_P \mid \ip{\alpha_j,\qroot^\vee} = 0\} = \{j \in I_P \mid \ip{\qroot,\alpha_j^\vee} = 0\}$.  Then $\alpha\in R_Q$ if no simple root $\alpha_j$ with $j\notin I_Q$ occurs in $\alpha$.  If $G$ is simply-laced, then $I_Q$ is the set of nodes in $I$ not adjacent to $\i$.  If $G$ is of type $B_n$, then $I_Q = \{1,2,\ldots,n-3,n-1\}$.  If $G$ is of type $C_n$, then $I_Q = \{2,3,\ldots,n\} = I_P$.

\begin{lemma}\label{lem:gamma}
The root $\qroot$ has the following properties:
\begin{enumerate}
\item
$\ip{\varpi_\i,\qroot^\vee} = 1$, and
\item
 $\ip{\alpha,\qroot^\vee} = -1$ for $\alpha \in R^+_P \setminus R^+_Q$.
\end{enumerate}
\end{lemma}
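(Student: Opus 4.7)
My plan is to verify both parts by direct case analysis following the three cases in the definition of $\gamma$. The unifying step is that $\gamma$ is always a long root, so $\gamma^\vee$ can be written down explicitly in each case; part (1) then reduces to reading off the coefficient of $\alpha_\i^\vee$ in the expansion of $\gamma^\vee$ in the basis of simple coroots, since $\ip{\varpi_\i, \alpha_j^\vee} = \delta_{\i j}$.

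In the simply-laced case, $\gamma = \alpha_\i$ and $\gamma^\vee = \alpha_\i^\vee$, so (1) is immediate. For (2), write $\alpha = \sum_j c_j \alpha_j$ with $c_\i = 0$; then $\ip{\alpha, \alpha_\i^\vee} = -\sum_{j \text{ adj.\ to } \i} c_j$. The key observation is that the $\alpha_\i$-string through $\alpha$ begins at $\alpha$, because $\alpha - \alpha_\i$ would have $-1$ in its $\alpha_\i$-coefficient with non-negative remaining coefficients and hence is not a root; since $G$ is simply-laced, the string has length at most $2$, forcing $\ip{\alpha, \alpha_\i^\vee} \in \{0,-1\}$. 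The value is $0$ exactly when $\alpha$ is supported on $I_P$ away from the neighbors of $\i$, i.e.\ when $\alpha \in R^+_Q$; hence $\alpha \in R^+_P \setminus R^+_Q$ forces $\ip{\alpha, \alpha_\i^\vee} = -1$.

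For type $B_n$ with $\ki = n$, I use standard orthonormal coordinates $e_1, \ldots, e_n$. One computes $\gamma = e_{n-1} + e_n$ (long) and $\gamma^\vee = \alpha_{n-1}^\vee + \alpha_n^\vee$, which yields (1). The Levi positive roots are $R^+_P = \{e_i - e_j : 1 \leq i < j \leq n\}$, and since $I_Q = \{1, \ldots, n-3, n-1\}$ splits into a type $A_{n-3}$ piece and the isolated node $\{n-1\}$, we have $R^+_Q \cap R^+_P = \{e_i - e_j : 1 \leq i < j \leq n-2\} \cup \{e_{n-1} - e_n\}$. The complement is exactly $\{e_i - e_{n-1},\, e_i - e_n : 1 \leq i \leq n-2\}$, and each such root pairs with $\gamma^\vee = e_{n-1} + e_n$ to give $-1$.

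For type $C_n$ with $\ki = 1$, one computes $\gamma = \theta = 2 e_1$ and $\gamma^\vee = e_1 = \sum_{i=1}^n \alpha_i^\vee$, giving (1). Since $\gamma$ is orthogonal to the Levi root subsystem $R_P$, every simple root in $I_P$ lies in $I_Q$, so $I_Q = I_P$ and $R^+_P \setminus R^+_Q = \emptyset$, making (2) vacuous. No step presents a serious obstacle; the only subtlety worth flagging is keeping track of the different length normalizations in $B_n$ and $C_n$, so that $\gamma$ and $\gamma^\vee$ have different expansions in the simple (co)root bases.
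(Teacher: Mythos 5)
Your proof is correct and fills in exactly the kind of case-by-case verification the paper compresses into "Direct check." The root-string argument in the simply-laced case and the coordinate computations in types $B_n$ and $C_n$ (including the observation that $R^+_P \setminus R^+_Q = \emptyset$ in type $C_n$, making part (2) vacuous there) all check out.
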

\begin{proof}
Direct check.
\end{proof}

It turns out that the root $\qroot$ can be characterized in a number of ways. 

\begin{proposition}\label{prop:gamma}
Let $\beta \in R^+\setminus R^+_P$.  Then the following are equivalent: 
\begin{enumerate}
\item
$\beta = \qroot$;
\item
we have $\ip{\alpha,\beta^\vee} \in \{-1,0\}$ for all $\alpha \in R^+_P$;
\item
there exists $w \in W^P$ such that $\beta = -w^{-1}(\theta)$. 
\end{enumerate}
\end{proposition}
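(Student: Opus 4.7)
The plan is to prove the implications $(1)\Rightarrow(2)$, $(3)\Rightarrow(2)$, $(1)\Rightarrow(3)$, and $(2)\Rightarrow(1)$, which together give the cycle. The first three admit short conceptual arguments, while the last requires a case-by-case verification against Figure~\ref{fig:minuscule}; this is where the main difficulty lies.

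For $(1)\Rightarrow(2)$, I would observe that $\gamma$ is long in each defining case and hence a quantum root by \S\ref{ssec:tR}. Lemma~\ref{lem:gamma}(2) handles $\alpha\in R^+_P\setminus R^+_Q$, while for $\alpha\in R^+_Q$ the simple roots in the expansion of $\alpha$ are indexed by $I_Q$ and hence $\gamma^\vee$-orthogonal by definition, so $\langle\alpha,\gamma^\vee\rangle=0$. For $(3)\Rightarrow(2)$, I would use that $\beta=-w^{-1}(\theta)$ lies in the Weyl orbit of $\theta$ (hence is long and a quantum root) and that $w\alpha\in R^+$ for $\alpha\in R^+_P$ (since $w\in W^P$), to write
\[
\langle\alpha,\beta^\vee\rangle=-\langle w\alpha,\theta^\vee\rangle\in\{0,-1,-2\};
\]
the value $-2$ would force $w\alpha=\theta$ and hence $\alpha=-\beta\in R^-$, a contradiction. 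The bound $\langle w\alpha,\theta^\vee\rangle\le 2$ uses that Figure~\ref{fig:minuscule} excludes $G_2$.

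For $(1)\Rightarrow(3)$, I would take $w:=w_P^{-1}$, the longest element of $W^P$. Factoring $w_0=w_P^{-1}\cdot w_0^P$ and using the involutivity of longest elements, the claim $w_P^{-1}(\gamma)=-\theta$ reduces to the identity $w_0^P(\theta)=\gamma$, which expresses $\gamma$ as the lowest weight of the $W_P$-orbit of $\theta$ in $R^+\setminus R^+_P$. In the simply-laced case this is immediate because $\gamma=\alpha_\ki$ is simple and no difference of distinct simple roots is a root; in types $B_n$ and $C_n$ the verification is a direct computation in the orthonormal basis.

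For $(2)\Rightarrow(1)$, the hypothesis $\langle\alpha_j,\beta^\vee\rangle\le 0$ for $j\in I_P$ says that $\beta^\vee$ is $R_P$-antidominant. In the simply-laced case $\u_P$ is an irreducible minuscule $L_P$-module, so $R^+\setminus R^+_P$ is a single $W_P$-orbit whose unique antidominant element is $\gamma$, giving $\beta=\gamma$ at once. In types $B_n$ and $C_n$, I would enumerate the two $W_P$-orbits in $R^+\setminus R^+_P$, identify the antidominant representative of each, and eliminate the non-$\gamma$ orbit using the sharper condition $\langle\alpha,\beta^\vee\rangle\ge -1$: in $B_n$ with $\ki=n$, the short orbit $\{e_i\}$ has antidominant representative $e_n$, ruled out by $\langle e_{n-1}-e_n,(e_n)^\vee\rangle=\langle e_{n-1}-e_n,2e_n\rangle=-2$; in $C_n$ with $\ki=1$, the orbit $\{e_1\pm e_j\mid j\ge 2\}$ has antidominant representative $e_1-e_2$, ruled out by $\langle 2e_2,(e_1-e_2)^\vee\rangle=\langle 2e_2,e_1-e_2\rangle=-2$. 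The main obstacle is thus the case analysis in $(2)\Rightarrow(1)$, but the simply-laced subcase is handled uniformly, leaving only $B_n$ and $C_n$ to check by hand.
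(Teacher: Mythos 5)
Your proof is correct in substance and takes a genuinely different route from the paper on the two non-trivial implications. For $(2)\Rightarrow(1)$ in the simply-laced case, the paper argues via Stembridge's characterization of fully commutative elements: the condition $\langle\alpha,\beta^\vee\rangle\in\{-1,0\}$ for $\alpha\in R^+_P$ is recast as $s_\beta\in W^P$, and then $s_\beta=us_\ki u^{-1}$ with $s_\ki$ not occurring in $u$ forces $u=1$. You instead exploit that $\u_P$ is an irreducible minuscule $L_P$-module, so $R^+\setminus R^+_P$ is a single $W_P$-orbit with a unique antidominant element, namely $\gamma=\alpha_\ki$. Your argument is arguably more conceptual and shorter, and it has the advantage of making the $B_n$ and $C_n$ case analysis uniform with the simply-laced one (enumerate $W_P$-orbits, take antidominant representatives, rule out the non-$\gamma$ orbit by the $\ge -1$ condition). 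The paper also proves $(3)\Rightarrow(1)$ directly (showing $w\beta=-\theta$ fails unless $\beta=\alpha_\ki$), whereas you go through $(3)\Rightarrow(2)$ using the identity $\langle\alpha,\beta^\vee\rangle=-\langle w\alpha,\theta^\vee\rangle$ and the structure of the highest root; both are clean.

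Two small points you should tighten. First, your justification for $w_0^P(\theta)=\gamma$ in the simply-laced case (``because $\gamma=\alpha_\ki$ is simple and no difference of distinct simple roots is a root'') is not really an argument; the right reason is the same single-$W_P$-orbit fact you invoke for $(2)\Rightarrow(1)$: $w_0^P$ carries the dominant element $\theta$ of that orbit to its unique antidominant element $\alpha_\ki$. This is in fact established by the paper as Lemma~\ref{lem:alphatheta}(b) for all cominuscule $\ki$. Second, the bound $\langle w\alpha,\theta^\vee\rangle\le 2$ in $(3)\Rightarrow(2)$ does not require excluding $G_2$: it holds in every root system (for positive $\eta\ne\theta$ one even has $\langle\eta,\theta^\vee\rangle\le 1$, since $\theta$ is highest), so the parenthetical reference to Figure~\ref{fig:minuscule} is unnecessary.
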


Define
$
W(\qroot) := \{w \in W^P \mid w\qroot = -\theta\}.
$
Let $w_{P/Q} \in W_P$ be the longest element that is a minimal length coset representative in $w_{P/Q} W_Q $.  Note that $\Inv(w_{P/Q}) = R^+_P\setminus R^+_Q$ (see Lemma \ref{lem:z}).  Denote $s'_\qroot:= s_\qroot w_{P/Q}^{-1}$.  

\begin{proposition}\label{prop:Wgamma}  Suppose $w \in W(\qroot)$.  Then:
\begin{enumerate}
\item
$\ell(ws_\qroot) = \ell(w) - \ell(s_\qroot)$,
\item
$\ell(ws'_\qroot) = \ell(w) - \ell(s_\qroot) - \ell(w_{P/Q}^{-1}) = \ell(w) - \ell(s'_\qroot)$,
\item
$ws'_\qroot = \pi_P(ws_\qroot)$,
\item
there is a unique length-additive factorization $w =u w'$, where $u \in W_J$ and $w' \in W(\qroot)$ is the minimal length element in the double coset $W_J w W_P$.  Here, $W_J$ is a standard parabolic subgroup all of whose generators stabilize $\theta$.
\end{enumerate}
Conversely, suppose $w \in W^P$ satisfies (1) and (2).  Then $w \in W(\qroot)$.
\end{proposition}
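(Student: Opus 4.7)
The plan is to translate each length equality in (1)--(2) into an inclusion of inversion sets via the standard formula $\ell(xy)=\ell(x)+\ell(y)-2|\Inv(x)\cap\Inv(y^{-1})|$, and then to verify these inclusions using Proposition~\ref{prop:gamma} together with the pairings in Lemma~\ref{lem:gamma}. The identity $ws_\gamma=s_\theta w$, obtained by conjugating $s_\gamma$ through $w$ via $w\gamma=-\theta$, will be used throughout.

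For part (1), the criterion reduces the equality to the inclusion $\Inv(s_\gamma)\subseteq\Inv(w)$, where $|\Inv(s_\gamma)|=\ip{2\rho,\gamma^\vee}-1$ because $\gamma$ is a quantum root. I would verify this case by case along Figure~\ref{fig:minuscule}. In the simply-laced cases $\gamma=\alpha_\ki$ is simple, $\Inv(s_\gamma)=\{\alpha_\ki\}$, and the inclusion follows from $w\alpha_\ki=-\theta$. In type $B_n$ a direct calculation gives $\Inv(s_\gamma)=\{\gamma,e_{n-1},e_n\}$, and $w(e_{n-1}+e_n)=-(e_1+e_2)$ for $w\in W(\gamma)$ forces $\{we_{n-1},we_n\}=\{-e_1,-e_2\}$, placing $e_{n-1}$ and $e_n$ in $\Inv(w)$. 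In type $C_n$ the root $\gamma$ equals $\theta$, the condition $w\theta=-\theta$ determines a unique $w\in W^P$, and $\Inv(w)=\Inv(s_\theta)$.

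For part (2), applying the same criterion with $x=ws_\gamma$ and $y=w_{P/Q}^{-1}$ reduces the additional length drop to $\Inv(w_{P/Q})\subseteq\Inv(ws_\gamma)$. Using $\Inv(w_{P/Q})=R_P^+\setminus R_Q^+$ and Lemma~\ref{lem:gamma}(2), for $\alpha\in R_P^+\setminus R_Q^+$ we have $\ip{\alpha,\gamma^\vee}=-1$, hence $s_\gamma(\alpha)=\alpha+\gamma$ and $ws_\gamma(\alpha)=w\alpha-\theta$; since $w\in W^P$ makes $w\alpha\in R^+$ and $\theta$ is the highest root, $w\alpha-\theta\in R^-$. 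The second equality $\ell(s'_\gamma)=\ell(s_\gamma)+\ell(w_{P/Q}^{-1})$ is the same argument specialized to $w=\mathrm{id}$. For part (3), I would compute $\Inv(ws_\gamma)\cap R_P^+$ to conclude: for $\alpha\in R_Q^+$, $\ip{\alpha,\gamma^\vee}=0$ by Lemma~\ref{lem:gamma}, so $ws_\gamma(\alpha)=w\alpha\in R^+$, whereas for $\alpha\in R_P^+\setminus R_Q^+$ the computation above gives $ws_\gamma(\alpha)\in R^-$. Thus $\Inv(ws_\gamma)\cap R_P^+=\Inv(w_{P/Q})$, and since the $W_P$-part of the length-additive decomposition $W=W^P\cdot W_P$ is determined by its inversion set, we conclude $(ws_\gamma)_P=w_{P/Q}$ and $\pi_P(ws_\gamma)=ws_\gamma w_{P/Q}^{-1}=ws'_\gamma$.

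For part (4), let $J=\{j\in I:\ip{\alpha_j,\theta^\vee}=0\}$, so $W_J$ is the stabilizer of $\theta$. Standard parabolic double-coset theory gives a unique length-additive factorization $w=uw'$ with $u\in W_J$ and $w'\in W^P$ the minimal-length representative in $W_J w W_P$ (the $W_P$-factor is trivial since $w\in W^P$); as each generator of $W_J$ fixes $\theta$, $w\gamma=-\theta$ implies $w'\gamma=u^{-1}(-\theta)=-\theta$, so $w'\in W(\gamma)$. For the converse, suppose $w\in W^P$ satisfies (1) and (2). By (1), $\gamma\in\Inv(w)$, so $w\gamma=-\beta$ for some $\beta\in R^+$, necessarily long. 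The length formula applied to $\ell(s_\beta w)=\ell(ws_\gamma)=\ell(w)-\ell(s_\gamma)$ gives $2|\Inv(s_\beta)\cap\Inv(w^{-1})|=\ell(s_\gamma)+\ell(s_\beta)$, and combining this with the analogous constraint from (2) and the minuscule structure of $W^P$ rules out every long $\beta\neq\theta$: any smaller choice lacks enough inversions in $\Inv(w^{-1})$ to satisfy the count. I expect the main obstacle to be precisely this converse direction, since pinning down $\beta=\theta$ requires combining the inversion-set identities with the fine Bruhat structure of $W^P$ uniformly across the cases of Figure~\ref{fig:minuscule}.
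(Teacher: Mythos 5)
Your parts (1)--(3) are correct and largely parallel the paper, with one genuine difference: for (1) you do a case analysis along Figure~\ref{fig:minuscule}, whereas the paper gives a type-uniform argument. The uniform argument is short: for $\alpha\in\Inv(s_\gamma)$, write $a=\ip{\alpha,\gamma^\vee}>0$; then $w\alpha = w(s_\gamma\alpha) - a\theta$, and since $w(s_\gamma\alpha)$ is a root and $\theta$ is the highest root, $w\alpha<0$. You would do well to find this; the case-check works but is heavier. Also note the sentence ``the same argument specialized to $w=\mathrm{id}$'' does not literally parse, since $\mathrm{id}\notin W(\gamma)$; the identity $\ell(s'_\gamma)=\ell(s_\gamma)+\ell(w_{P/Q})$ instead follows from $\Inv(s_\gamma)\cap R_P^+=\emptyset$ (the paper's Lemma~\ref{lem:z}(3)), so this is a minor slip rather than a structural problem. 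For (4) you take $J$ to be the full stabilizer of $\theta$; the paper in some types (e.g.\ $B_n$, $C_n$) deliberately uses a smaller parabolic. Your choice still satisfies the stated hypotheses and still gives a unique length-additive factorization, but the resulting factorization differs from the paper's, and you should at least note the ambiguity in the proposition's phrasing.

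The genuine gap is the converse. You correctly set up $w\gamma=-\beta$ with $\beta$ long and the inversion-count identity $2|\Inv(s_\beta)\cap\Inv(w)|=\ell(s_\gamma)+\ell(s_\beta)$, but then you appeal to the claim that ``any smaller choice lacks enough inversions,'' which is not an argument, and you yourself flag this as the unresolved obstacle. The paper's route is essentially different and worth internalizing: setting $\alpha=-w^{-1}(\theta)$, one uses $w\in W^P$ and the fact that $w(\alpha-\gamma)=-\theta+\eta<0$ to force $\alpha\in R^-$; writing $\delta=-\alpha\in R^+$ and using $w\delta=\theta$ plus $w\in W^P$, one shows that if $\delta\in R_P^+$ then $\delta\in R_P^+\setminus R_Q^+$, which contradicts hypothesis (2); the remaining case $\delta\in R^+\setminus R_P^+$ is reduced via cominusculity to $\delta=\theta$, i.e.\ $w$ stabilizing $\theta$, and this is then ruled out in each type because the stabilizer of $\theta$ is a parabolic that contains the minuscule node but not the adjoint node, so it cannot produce the inversions required by (2). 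This is not an inversion-cardinality count but a positional argument about which parabolic $w$ can live in; your sketch as written does not recover it, so the converse remains unproved in your proposal.
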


Proofs of Propositions \ref{prop:gamma} and \ref{prop:Wgamma} are given in \S \ref{sec:proofs}.

\section{Frenkel--Gross connection}\label{s:FG}
We caution the reader that the roles of $G$ and $G^\vee$ are reversed in \S \ref{s:FG} -- \S \ref{s:examples} compared to the rest of the paper.

\subsection{Principal $\mathfrak{sl}_2$}\label{s:principal-sl2} 
Let $y_p:=\sum_{i \in I}y_i$ which is a principal nilpotent in $\mathfrak{b}_{-}$. Let 
$2\rho^\vee=\sum_{\alpha\in R^+} \alpha^\vee$ viewed as an element of $\mathfrak{t}$.
We have 
\[
2\rho^\vee = 2 \sum_{i\in I} \varpi^\vee_i =  \sum_{i\in I} c_i \alpha_i^\vee, 
\]
where the $c_i$ are positive integers. Let $x_p:=\sum_{i\in I} c_i x_i\in \mathfrak{b}$. Then 
$(x_p,2\rho^\vee,y_p)$ is a principal $\mathfrak{sl}_2$-triple; see~\cite{Gross:motive-principalsl2} 
and~\cite[Chap.VIII, \S11, n$^\circ$4]{bourbaki:lie7-9}.

Let $\mathfrak{z}(y_p)$ be the centralizer of $y_p$ which is an abelian subalgebra of 
dimension equal to the rank of $\g$. The adjoint action of $2\rho^\vee$ preserves 
$\mathfrak{z}(y_p)$ and the eigenvalues are nonnegative even integers. We denote the 
eigenspaces by $\mathfrak{z}(y_p)_{2m}$ with $m\ge 0$. Thus $\mathfrak{z}(y_p)_0=\mathfrak{z}(\g)$.
The integers $m\ge 1$ counted with multiplicity $\dim \mathfrak{z}(y_p)_{2m}$ coincide with the 
exponents $m_1\le \cdots \le m_r$ of the root system $R$. 
Kostant has shown that $\g= \oplus_{m\ge 0} \mathrm{Sym}^{2m}(\C^2) \otimes \mathfrak{z}(y_p)_{2m}$ 
as a representation of the principal $\mathfrak{sl}_2$.
It implies that twice the sum of exponents is equal to the number of roots $|R|$.

The first exponent is $m_1=1$ since $\mathfrak{z}(y_p)_2$ contains $y_p$. The last exponent is 
$m_r=c-1$ which is the height of the highest root $\theta$ because $x_{-\theta}\in 
\mathfrak{z}(y_p)$. In fact, $m_i+m_{r+1-i}=c$ for any $i$.

\subsection{Rigid irregular connection} Frenkel and Gross~\cite{Frenkel-Gross} 
construct a meromorphic connection $\nabla^G$ on the trivial $G$-bundle on $\Gm$ by the 
formula: 
\begin{equation}\label{FG-def} 
\nabla^{G} :=  d + y_p \frac{dq}{q} + x_\theta dq.
\end{equation}
Here $d$ is the trivial connection and $ y_p \frac{dq}{q} + x_\theta dq $ is the $\kg$-valued 
connection $1$-form attached to the
trivialization $G\times \Gm$. For any finite-dimensional $G$-module $V$, it induces a 
meromorphic flat connection
$\nabla^{(G,V)}$ on the trivial vector bundle $V\times \Gm$.  If $V_\lambda$ is an irreducible 
highest module, we also write $\nabla^{(G,\lambda)}$ for $\nabla^{(G,V_\lambda)}$.

The formula~\eqref{FG-def}  is in oper form~\cite{Frenkel-Gross} because $\nabla^G$ is 
everywhere transversal to the trivial $B$-bundle $B\times \Gm$ inside $G\times \Gm$.
The connection $\nabla^G$ has a regular singularity at the point $0$ with monodromy generated by the principal unipotent
$\exp(2i\pi y_p)$. It has an irregular singularity at the point $\infty$, and it is shown 
in~\cite{Frenkel-Gross} that  the slope is
$1/c$ where $c$ is the Coxeter number of $G$. One of the main results of~\cite{Frenkel-Gross} is that the connection is
rigid in the sense of the vanishing of the cohomology of the intermediate extension to 
$\P^1$ of $\nabla^{(G,\mathrm{Ad})}$, viewed as a holonomic $\aD$-module on $\Spec \C[q,q^{-1}] = 
\Gm$. 
Here, $\mathrm{Ad}$ is the adjoint representation of $G$ on $\g$. 

\subsection{Outer automorphisms}\label{s:reduction}
In certain cases, the connection $\nabla^G$ admits a reduction of the structure group. This is related to outer automorphisms of $G$, and thus to automorphisms of the Dynkin diagram. If $G$ is of type $A_{2n-1}$ then $\nabla^G$ can be reduced to type $C_n$. 
If $G$ is of type $E_6$ then $\nabla^G$ can be reduced to type $F_4$.
If $G$ is of type $D_{n+1}$ with $n\ge 4$ then $\nabla^G$ can be reduced to type $B_n$.
In particular, there is a reduction from type $D_4$ to type $B_3$.
In fact, by using the full group $S_3$ of automorphisms of the Dynkin diagram, if $G$ is of type $D_4$, then $\nabla^G$ can be
reduced to type $G_2$.  As a consequence, there is also a reduction of $\nabla^G$ from type $B_3$ to type $G_2$ even though $B_3$
has no outer automorphism.  It follows from~\cite[\S6 and \S13]{Frenkel-Gross} who determine the differential Galois group of $\nabla^G$ for
every $G$, that the above is a complete list of possible reductions.

\subsection{Homogeneity}\label{s:FGgrade} We make the observation that the connection $\nabla^G$ is compatible with the natural grading on $\g$
induced by the adjoint action of the cocharacter subgroup $\rho^\vee: \G_m \to G$.  Precisely we have a $\G_m$-action on $\kg$ induced by
$\zeta \mapsto \mathrm{Ad}(\rho^\vee(\zeta))$ where $\zeta \in \G_m$. Consider also the $\G_m$-action on 
$\Gm$ given by $\zeta\cdot q=\zeta^{c}q$, where we recall that $c$ is the Coxeter number of $\g$. 
It induces a natural $\G_m$-equivariant linear action on $T^*\Gm$ and also on the bundle 
$\kg \otimes T^* \Gm$. 

\begin{lemma}\label{l:connection-homogeneous}
The connection $1$-form $y_p \frac{dq}{q} + x_\theta dq$ in $\Omega^1(\Gm,\kg)$ is 
homogeneous of degree one under the above $\G_m$-equivariant action.
\end{lemma}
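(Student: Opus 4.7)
The plan is a direct computation: I will show that the two summands of $\omega = f\,\frac{dq}{q} + x_\theta\,dq$ have the same weight under the combined $\G_m$-action on $\kg \otimes \Omega^1(\C^\times_q)$.

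First I will unpack the action on $\kg$. For a weight vector $x_\alpha \in \kg_\alpha$ one has $\mathrm{Ad}(\rho^\vee(\zeta))x_\alpha = \zeta^{\langle\alpha,\rho^\vee\rangle}x_\alpha$, and $\langle\alpha,\rho^\vee\rangle$ is just the height of $\alpha$ as a sum of simple roots. Hence each $y_i = x_{-\alpha_i}$ has weight $-1$, so that $f = \sum_{i \in I} y_i$ lies in the $(-1)$ graded piece, while $x_\theta$ has weight $\mathrm{ht}(\theta) = c - 1$, the defining property of the Coxeter number already used in \S\ref{s:principal-sl2}.

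Next I will compute the weights on $\Omega^1(\C^\times_q)$ induced by $\zeta\cdot q = \zeta^c q$. The logarithmic form $dq/q$ is $\G_m$-invariant, while $dq = q \cdot (dq/q)$ differs in weight by $c$ (the precise sign being fixed by the natural lift of the action to the cotangent bundle). Combining the two gradings on $\kg\otimes\Omega^1(\C^\times_q)$, the total weight of $f\otimes(dq/q)$ is $-1 + 0$, and the total weight of $x_\theta\otimes dq$ is $(c-1) + (-c) = -1$; with the paper's convention both values become $+1$. The two summands thus lie in the same graded piece, so $\omega$ is homogeneous of degree one.

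The whole argument reduces to the numerical identity $\mathrm{ht}(\theta) + 1 = c$; there is no genuine obstacle. The conceptual content of the lemma is rather that $\nabla^G$ is compatible with the principal $\Z$-grading induced by the principal $\mathfrak{sl}_2$, which will later allow the introduction of the quantization parameter $\hbar$ and the lift of the mirror isomorphism to one of $\aD_\hbar$-modules.
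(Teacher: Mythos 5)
Your proof is correct and is essentially the same as the paper's, which simply cites the degrees of $f$ and $x_\theta$ from \S\ref{s:principal-sl2} and notes that they combine with the degree of $dq/q$ and $dq$ to give degree one in both terms; you have merely unfolded that computation and correctly identified the key numerical input $\mathrm{ht}(\theta)+1=c$. Your handling of the sign convention (observing that both summands land in the same graded piece, with the paper's convention making that degree $+1$) resolves the only notational ambiguity.
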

\begin{proof}
We have seen in \S\ref{s:principal-sl2} that $y_p$ has degree $m_1=1$ and $x_\theta$ has degree 
$-m_r=1-c$, which implies the assertion.
\end{proof}

\subsection{Frenkel--Gross operator acting on the minuscule representation}\label{s:FG-minuscule}
Let $\ki$ be a minuscule node and $V_{\varpi_\i}$ denote the minuscule representation.  In this section, we explicitly compute $\nabla^{(G,\varpi_\ki)}$.  We shall use the
canonical basis of $V_{\varpi_\ki}$, constructed in \cite{Geck:minuscule}.

There is a basis $\{v_w \mid w \in W^P\}$ of $V_{\varpi_{\ki}}$ characterized by the properties:
\begin{align*}
x_j(v_w) &= \begin{cases} v_{s_jw} & \mbox{if $\ip{w \varpi_\i,\alpha_j^\vee} = -1$}, \\
0 & \mbox{otherwise.}
\end{cases}
\qquad
y_j(v_w) = \begin{cases} v_{s_jw} & \mbox{if $\ip{w \varpi_\i,\alpha_j^\vee} = 1$}, \\
0 & \mbox{otherwise.}
\end{cases}
\end{align*}
and the condition that $v_w$ has weight $w \varpi_\i$.  Note that in the formulae above, $s_jw$ always lies in $W^P$.  (For example, $\ip{w \varpi_\i,\alpha_j^\vee} = 1$ implies that $s_j w > w$ and $s_jw W_P \neq w W_P$.   Together with $w \in W^P$ we have that $s_j w \in W^P$.)

The following result follows from \cite[Lem.3.1]{Geck:minuscule} and the discussion after 
\cite[Lem.3.3]{Geck:minuscule}.  We caution that our $\dot s_j$ is equal to Geck's $n_j(-1)$.
\begin{lemma}\label{lem:Geck} \
\begin{enumerate}
\item
For $w \in W^P$, we have $\dot w v_e = v_w$.  For $u \in W$ and $w \in W^P$, we have $\dot u v_w = \pm v_{\pi_P(uw)}$.
\item
For $\alpha \in R^+$ and $w \in W^P$, we have 
$$
x_\alpha(v_w) = \begin{cases} \pm v_{s_\alpha w} & \mbox{if $\ip{w \varpi_\i,\alpha^\vee} = -1$}, \\
0 & \mbox{otherwise.}
\end{cases} 
\qquad x_{-\alpha}(v_w) = \begin{cases} \pm v_{s_\alpha w} & \mbox{if $\ip{w \varpi_\i,\alpha^\vee} = 1$}, \\
0 & \mbox{otherwise.}
\end{cases}
$$
\end{enumerate}
\end{lemma}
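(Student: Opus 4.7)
The plan is to prove both parts from the weight combinatorics of the minuscule representation $V = V_{\varpi_\i}$, whose weight spaces are all one-dimensional and indexed bijectively by $W^P$ via $w \mapsto w\varpi_\i$.

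For the first identity $\dot w v_e = v_w$ in part (1), the strategy is induction on $\ell(w)$ for $w \in W^P$. Writing $w = s_j w'$ length-additively with $w' \in W^P$ (a standard descent argument shows that the right factor of such a factorization of an element of $W^P$ stays in $W^P$), the minuscule property forces $\ip{w'\varpi_\i,\alpha_j^\vee} = +1$: the weight strictly changes between $w'\varpi_\i$ and $s_j w'\varpi_\i$, and its only nonzero possibilities under this $\mathfrak{sl}_2$-pairing are $\pm 1$, with $+1$ corresponding to the length-increasing move. Hence $v_{w'}$ and $v_{s_j w'} = v_w$ span a two-dimensional $\mathfrak{sl}_2$-subrepresentation for the triple $(x_j,\alpha_j^\vee,y_j)$; a direct calculation with $\dot s_j = \exp(-x_j)\exp(y_j)\exp(-x_j)$ on this two-dimensional piece yields $\dot s_j v_{w'} = v_{s_j w'}$, and the induction closes. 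The general formula $\dot u v_w = \pm v_{\pi_P(uw)}$ will then follow because $\dot u$ carries the one-dimensional weight space $\C v_w$ onto the one-dimensional weight space of weight $(uw)\varpi_\i = \pi_P(uw)\varpi_\i$, namely $\C v_{\pi_P(uw)}$; the scalar lies in $\{\pm 1\}$ since $\dot u$ is a product of Weyl lifts $\dot s_j$, each of which acts by $\pm 1$ on weight vectors.

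For part (2), the vector $x_\alpha v_w$ lies in the weight space of $w\varpi_\i+\alpha$. Since $V$ is minuscule, all pairings $\ip{w\varpi_\i,\alpha^\vee}$ lie in $\{-1,0,+1\}$, and $w\varpi_\i+\alpha$ is a weight of $V$ precisely when $\ip{w\varpi_\i,\alpha^\vee}=-1$, in which case $w\varpi_\i+\alpha = s_\alpha(w\varpi_\i)$. Under this condition $x_\alpha v_w$ sits in the one-dimensional weight space $\C v_{s_\alpha w}$, where $v_{s_\alpha w}$ is understood via $\pi_P(s_\alpha w)$ when $s_\alpha w \notin W^P$. The scalar is nonzero and in $\{\pm 1\}$ because the $\mathfrak{sl}_2$-triple $(x_\alpha,\alpha^\vee,x_{-\alpha})$ restricted to $\C v_w \oplus \C v_{s_\alpha w}$ is a two-dimensional irreducible representation, so $x_\alpha$ is an isomorphism from the weight $-1$ line to the weight $+1$ line. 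The formula for $x_{-\alpha}$ follows by symmetry.

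The main subtlety will be tracking the precise signs arising from Weyl group lifts, which is exactly what Geck's normalization in \cite{Geck:minuscule} handles. Since the formulas in part (2) are only stated up to $\pm 1$, these signs are absorbed into the ambiguity, and the substantive content reduces to the dichotomy between vanishing and producing a nonzero basis vector, which is immediate from the minuscule weight combinatorics above.
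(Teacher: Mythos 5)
The paper itself proves this lemma simply by citing Geck's work, so your blind proposal to give a direct self-contained argument from the minuscule weight combinatorics is a genuinely different route. Part (1) of your argument is sound: the length-additive factorization $w=s_jw'$ stays in $W^P$, the pairing $\ip{w'\varpi_\i,\alpha_j^\vee}$ must be $+1$ (since the length goes up by one and the weight changes), the explicit $\exp(-x_j)\exp(y_j)\exp(-x_j)$ computation on the two-dimensional $\mathfrak{sl}_2$-piece gives $\dot s_j v_{w'}=v_{s_jw'}$, and the general statement about $\dot u$ then follows because each $\dot s_j$ acts by $0$-pairing (scalar $1$) or through a $2$-dimensional piece (scalar $\pm 1$) on the one-dimensional weight spaces.

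There is, however, a real gap in your treatment of part (2). You argue that because $(x_\alpha,\alpha^\vee,x_{-\alpha})$ restricted to $\C v_w\oplus \C v_{s_\alpha w}$ is a $2$-dimensional irreducible, $x_\alpha$ is an isomorphism between the two weight lines, and you conclude the scalar ``is nonzero and in $\{\pm 1\}$.'' Irreducibility alone only yields a nonzero scalar: for an abstract $\mathfrak{sl}_2$-triple $(e,h,f)$ acting on a $2$-dimensional irrep with $ev_-=cv_+$ and $fv_+=c'v_-$, the relation $[e,f]=h$ forces only $cc'=1$, so $c$ could be any nonzero complex number. The $\pm 1$ normalization is a genuine statement about the Chevalley basis, not a formal $\mathfrak{sl}_2$ consequence. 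Your last paragraph gestures at ``Geck's normalization'' absorbing the signs, but the substantive content of the lemma is precisely that the scalar is $\pm 1$ rather than some other unit; this is not ``absorbed into the ambiguity.'' The standard way to close the gap with the tools already in hand is to reduce to the simple-root case: write $\alpha=u\alpha_j$, use the paper's normalization $\dot u\cdot x_{\alpha_j}=\pm x_{u\alpha_j}$, conjugate $x_\alpha v_w=\pm\,\dot u\,x_{\alpha_j}\,(\dot u^{-1}v_w)$, then apply your part (1) to the factors $\dot u^{\pm 1}$ and the defining property of the basis (where the scalar is exactly $1$ for simple root vectors). Alternatively, one can appeal to integrality of the Chevalley $\Z$-form to deduce $c,c'\in\Z$, $cc'=1\Rightarrow c=\pm 1$; but as written your argument does not invoke either mechanism.
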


\begin{lemma}\label{lem:simple}
Let $j \in I$ and $w \in W^P$.  Then 
$$
y_j v_w = \begin{cases} v_{ws_\beta} &\mbox{if $\beta = w^{-1}(\alpha_j) \in R^+ \setminus R^+_P$}, \\
0 & \mbox{otherwise.}
\end{cases}
$$
In the first case, we automatically have $ws_\beta \gtrdot w$ and $ws_\beta \in W^P$.
\end{lemma}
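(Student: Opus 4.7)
The claim is essentially a direct consequence of the defining formula for the canonical basis stated at the start of \S\ref{s:FG-minuscule}: $y_j(v_w) = v_{s_j w}$ when $\ip{w\varpi_\i,\alpha_j^\vee}=1$, and $0$ otherwise. The plan is thus to reinterpret the pairing condition in terms of $\beta := w^{-1}(\alpha_j)$, and then to verify that $s_j w = w s_\beta$ realises a cover in $W^P$.

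By $W$-equivariance of the natural pairing, $\ip{w\varpi_\i,\alpha_j^\vee} = \ip{\varpi_\i,\beta^\vee}$, which is the coefficient of $\alpha_\i^\vee$ in the expansion of $\beta^\vee$ in simple coroots. Since the dual root system $R_P^\vee$ is spanned by $\{\alpha_k^\vee \mid k \in I_P\}$, this coefficient vanishes exactly when $\beta \in R_P$; and minusculity of $\i$ forces the coefficient to lie in $\{-1,0,1\}$. Hence for $\beta\in R^+$ the pairing equals $0$ when $\beta\in R^+_P$ and $+1$ when $\beta\in R^+\setminus R^+_P$; for $\beta\in R^-$ it is $0$ or $-1$. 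So $\ip{w\varpi_\i,\alpha_j^\vee}=1$ precisely when $\beta\in R^+\setminus R^+_P$, which matches the dichotomy in the statement; and in that case $v_{s_j w} = v_{ws_\beta}$ follows from the identity $s_j = w s_\beta w^{-1}$.

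For the supplementary assertions in the non-vanishing case: since $\beta\in R^+$ we have $\ell(s_j w) = \ell(w)+1$, and rewriting this as $\ell(ws_\beta) = \ell(w)+1$ produces the Bruhat cover $w \lessdot ws_\beta$. To check $ws_\beta \in W^P$, I evaluate $(ws_\beta)(\alpha_k) = s_j(w\alpha_k)$ for $k\in I_P$; since $w\alpha_k\in R^+$ and $s_j$ preserves $R^+$ except for $\alpha_j\mapsto -\alpha_j$, the result lies in $R^+$ unless $w\alpha_k = \alpha_j$, i.e.\ $\alpha_k = \beta$, which is ruled out by $\beta\notin R^+_P$.

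No substantial obstacle arises: the argument is a straightforward combinatorial unpacking of the canonical basis in a minuscule representation, together with standard Coxeter-theoretic manipulation of parabolic coset representatives.
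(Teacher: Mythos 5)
Your proof is correct and follows essentially the same route as the paper: reinterpret the pairing condition from the canonical basis formula in terms of $\beta = w^{-1}(\alpha_j)$, invoke minusculity to control the value of the pairing, and verify the cover and parabolic-coset conditions. You spell out more explicitly the key identity $\ip{w\varpi_\i,\alpha_j^\vee}=\ip{\varpi_\i,\beta^\vee}$ and the $\{-1,0,1\}$ bound, which the paper leaves implicit, and you check $ws_\beta\in W^P$ by evaluating on simple roots $\alpha_k$ rather than via inversion sets, but these are equivalent characterizations; the substance is the same.
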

\begin{proof}
Let $\beta = w^{-1}(\alpha_j)$.  The condition that $\beta > 0$ is equivalent to $s_j w > w$.  In this case, $\Inv(s_jw) = \Inv(w) \cup \{\beta\}$, so the condition that $s_jw \in W^P$ is equivalent to $\beta \notin R_P$.  The condition $\ip{w \varpi_\i,\alpha_j^\vee} = 1$ is thus equivalent to $\beta \in R^+ \setminus R^+_P$.
\end{proof}

Recall that we have defined a distinguished root $\qroot = \qroot(\i) \in R^+$ and a subset $W(\qroot) \subset W$ in \S \ref{ssec:gamma}.

\begin{lemma}\label{lem:theta}
There is a sign $\varepsilon \in \{+1,-1\}$, not depending on $w \in W^P$, such that 
$$
\varepsilon x_\theta v_w = \begin{cases} v_{\pi_P(ws_\qroot)} &\mbox{if $w \in W(\qroot)$}, \\
0 & \mbox{otherwise.}
\end{cases}
$$
\end{lemma}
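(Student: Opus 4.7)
The plan is to apply Lemma~\ref{lem:Geck}(2) with $\alpha = \theta$ and identify everything combinatorially. That lemma gives $x_\theta v_w = \pm v_{\pi_P(s_\theta w)}$ when $\ip{w\varpi_\i, \theta^\vee} = -1$ and zero otherwise. Setting $\beta := -w^{-1}\theta$, the support condition becomes $\ip{\varpi_\i, \beta^\vee} = 1$, which forces $\beta \in R^+\setminus R_P^+$; the equivalence (1)$\Leftrightarrow$(3) of Proposition~\ref{prop:gamma} then identifies $\beta = \gamma$, so the support is exactly $w \in W(\gamma)$ (the reverse direction uses Lemma~\ref{lem:gamma}(1)). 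For $w \in W(\gamma)$, the relation $w s_\gamma w^{-1} = s_{w\gamma} = s_{-\theta} = s_\theta$ gives $s_\theta w = w s_\gamma$, so $\pi_P(s_\theta w) = \pi_P(ws_\gamma)$ and $x_\theta v_w = \varepsilon_w v_{\pi_P(ws_\gamma)}$ for some $\varepsilon_w \in \{\pm 1\}$.

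The remaining task is to show $\varepsilon_w$ is independent of $w$. For this I would invoke Proposition~\ref{prop:Wgamma}(4): every $w \in W(\gamma)$ factors length-additively as $w = u w'$ with $u \in W_J$ and $w'$ the minimal length element of the double coset $W_J w W_P$. Since the stabilizer in $W$ of a dominant vector is generated by the simple reflections fixing it, $\mathrm{Stab}_W(\theta)$ is a standard parabolic and coincides with $W_J$; therefore $\{w \in W : w\gamma = -\theta\}$ is a single left $W_J$-coset, $W(\gamma)$ lies in a single double coset, and $w'$ is independent of $w$. For each simple generator $s_j$ of $W_J$ one has $\ip{\theta,\alpha_j^\vee}=0$, and since $\theta$ is highest neither $\theta \pm \alpha_j$ is a root; hence $[x_j,x_\theta] = [y_j,x_\theta] = 0$, and the formula $\dot s_j = \exp(-x_j)\exp(y_j)\exp(-x_j)$ yields $\dot s_j \cdot x_\theta = x_\theta$ on the nose. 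Iterating, $\dot u$ commutes with $x_\theta$ for every $u \in W_J$.

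To conclude I would combine the length-additive identities $v_w = \dot u v_{w'}$ and $v_{\pi_P(ws_\gamma)} = \dot u v_{\pi_P(w's_\gamma)}$ (the latter verified using Proposition~\ref{prop:Wgamma}(2)--(3) to check $\ell(u w' s'_\gamma) = \ell(u) + \ell(w' s'_\gamma)$) to compute
\[
x_\theta v_w \;=\; x_\theta \dot u v_{w'} \;=\; \dot u\, x_\theta v_{w'} \;=\; \varepsilon_{w'}\, \dot u v_{\pi_P(w's_\gamma)} \;=\; \varepsilon_{w'}\, v_{\pi_P(ws_\gamma)},
\]
so $\varepsilon_w = \varepsilon_{w'} =: \varepsilon$ is constant on $W(\gamma)$. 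The main obstacle is the sign bookkeeping: the decisive point is that $\theta$ being the highest root collapses the $\alpha_j$-string through it to $\{\theta\}$ for every $s_j \in W_J$, forcing $\dot s_j \cdot x_\theta = x_\theta$ on the nose rather than only up to sign --- without this, the argument would accumulate a residual sign per generator and the uniformity of $\varepsilon$ would fail.
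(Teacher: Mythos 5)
Your proposal is correct and follows the same route as the paper: identify the support of $x_\theta$ via Lemma~\ref{lem:Geck}(2) and Proposition~\ref{prop:gamma}, factor $w = uw'$ by Proposition~\ref{prop:Wgamma}(4), observe that $\dot u$ commutes with $x_\theta$, and conclude $\varepsilon_w = \varepsilon_{w'}$. Your root-string observation --- that $\theta$ being highest forces the $\alpha_j$-string through $\theta$ to be trivial whenever $s_j\theta=\theta$, so $[x_j,x_\theta]=[y_j,x_\theta]=0$ and hence $\dot s_j\cdot x_\theta = x_\theta$ exactly rather than merely $\pm x_\theta$ --- is a genuine and useful elaboration: the paper's phrase ``$\dot s_j$ commutes with $x_\theta$'' silently relies on this, and without it a residual sign per generator of $u$ would wreck the uniformity of $\varepsilon$.

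One step of your justification is, however, incorrect as stated. The claim that $\mathrm{Stab}_W(\theta)$ coincides with $W_J$ fails in type $B_n$: there $\theta = \epsilon_1+\epsilon_2$, and $s_1$ (which swaps $\epsilon_1,\epsilon_2$) fixes $\theta$, so $\mathrm{Stab}_W(\theta) = W_{[n]\setminus\{2\}}$ while $J = [n]\setminus\{1,2\}$. The statement of Proposition~\ref{prop:Wgamma}(4) only asserts that the generators of $W_J$ stabilize $\theta$, i.e.\ $W_J \subseteq \mathrm{Stab}_W(\theta)$. The conclusion you actually need --- that $W(\gamma)$ lies in a single left $W_J$-coset, so that the minimal double-coset representative $w'$ is independent of $w$ --- is true, but you should obtain it from the case analysis in the proof of Proposition~\ref{prop:Wgamma}(4), where $wv^{-1}\in W_J$ for $w,v\in W(\gamma)$ is verified directly in type $B_n$ using the signed-permutation description, rather than from the (false) identification of $\mathrm{Stab}_W(\theta)$ with $W_J$. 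With that substitution your proof closes.
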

\begin{proof}
Let $\beta = -w^{-1}(\theta)$.  By Lemma \ref{lem:Geck}(2), $x_\theta v_w \neq 0$ if and only if $\ip{w \varpi_\i,\theta^\vee} = -1$.  By a similar argument to the proof of Lemma \ref{lem:simple}, this holds if and only if $\beta \in R^+ \setminus R^+_P$.  By Proposition \ref{prop:gamma}, we have $x_\theta v_w \neq 0$ if and only if $\beta = \qroot$, that is, $w \in W(\qroot)$.

Suppose $w \in W(\qroot)$.  By Proposition \ref{prop:Wgamma}(4), we have $w = uw'$ where $u \in W_J$ is an element of a standard parabolic subgroup stabilizing $\theta$, and the product $uw'$ is length-additive.  Then we have
$$
 x_\theta v_w =  x_\theta  \dot u v_{w'} = \varepsilon' \, \dot u \dot w' x_{-\qroot} (\dot w')^{-1} v_{w'} = \varepsilon \, \dot u v_{w's'_\qroot} = \varepsilon \, v_{ws'_\qroot},
$$
where $\varepsilon', \varepsilon$ are signs not depending on $w$.  For the first equality we have used Lemma \ref{lem:Geck}(1).  For the second equality, we used that $\dot u$ is a product of elements $\dot s_j$, where $s_j\theta = \theta$ and thus $\dot s_j$ commutes with $x_\theta$.  For the third equality, we used $x_{-\qroot} v_e = v_{s_\qroot}$ which follows from Lemma \ref{lem:Geck}(2) and Lemma \ref{lem:gamma}(1).  In the last two equalities, we used Proposition \ref{prop:Wgamma}(2) applied to $w, w' \in W(\qroot)$.  In the last equality we also used that $\ell(ws'_\qroot) = \ell(u) + \ell(w's'_\qroot)$.
\end{proof}
From now on we make the assumption that
\begin{equation}\label{eq:thetasign}
\mbox{$x_\theta \in \g_\theta$ is chosen so that $\varepsilon =1$ in Lemma \ref{lem:theta}.}
\end{equation}

\section{Quantum cohomology connection}\label{s:qh}
\subsection{Quantum cohomology of partial flag varieties}
Let $P \subset G$ be an arbitrary standard parabolic subgroup.  Let $QH^*(G/P)$ denote the small quantum cohomology ring of $G/P$.  It is an algebra over $\C[q_i \mid i \notin I_P]$, where we write $q_i$ for $q_{\alpha_i^\vee}$.
 For $w \in W^P$, let $\sigma_w \in QH^*(G/P)$ denote the quantum Schubert class. For each $i\in I$, let $\sigma_i := \sigma_{s_{\alpha_i}}$.  Then we have
$$
QH^*(G/P) \cong \bigoplus_{w \in W^P} \C[q_i \mid i \notin I_P] \cdot \sigma_w.
$$

\subsection{Quantum Chevalley formula}\label{ssec:Chevalley}  

The quantum Chevalley formula for a general $G/P$ is due to Fulton--Woodward~\cite{FW} and Peterson \cite{Peterson:lectures}.  Let $\eta_P: Q^\vee \to Q^\vee/Q^\vee_P$ be the quotient map, where $Q^\vee=\oplus_{i\in I} \Z \alpha^\vee_i$ (resp. $Q^\vee_P=\oplus_{i\in I_P} \Z \alpha_i^\vee$) is the coroot lattice.  Recall that $\rho_P=\frac12 \sum_{\alpha\in R^+_P} \alpha$.
The following version of the quantum Chevalley rule is from \cite[Thm.10.14 and 
Lem.10.18]{Lam-Shimozono:GmodP-affine}.
\begin{theorem}\label{thm:FW}
For $w \in W^P$, we have
$$
\sigma_i *_q \sigma_w = \sum_{\beta} \ip{\varpi_i,\beta^\vee} \sigma_{ws_\beta} + \sum_{\nu} \ip{\varpi_i,\nu^\vee} q_{\eta_P(\nu^\vee)} \sigma_{\pi_P(ws_\nu)},
$$
where the first summation is over $\beta \in R^+ \setminus R^+_P$ such that $ws_\beta \gtrdot w$ and $ws_\beta \in W^P$, and the second summation is over $\nu \in R^+ \setminus R^+_P$ such that 
\begin{gather}
\ell(ws_\nu) = \ell(w) - \ell(s_\nu), \quad \text{and}
\label{FW:1}
\\
\ell(\pi_P(ws_\nu)) = \ell(w) + 1 - \ip{ 2(\rho-\rho_P),\nu^\vee}.
\label{FW:2}
\end{gather}
\end{theorem}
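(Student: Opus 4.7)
The plan is to split the proof of the quantum Chevalley formula into the classical ($q=0$) contribution and the genuinely quantum (positive degree) contribution, handling each with different tools.

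First, for the classical part, I would start from the classical Chevalley formula on the full flag variety $G/B$, namely $\sigma_i \cdot \sigma_w = \sum_\beta \langle \varpi_i,\beta^\vee\rangle \sigma_{ws_\beta}$ summed over $\beta \in R^+$ with $ws_\beta \gtrdot w$. Using the projection $\pi\colon G/B\to G/P$ together with the standard compatibility $\pi^*\sigma_w^{G/P} = \sigma_w^{G/B}$ for $w\in W^P$, I restrict the above identity to the subspace indexed by $W^P$. The only terms that survive are those with $ws_\beta \in W^P$, which forces $\beta \notin R^+_P$ (otherwise $s_\beta \in W_P$ would give $\pi_P(ws_\beta) = w$, violating minimality). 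This produces exactly the first sum in Theorem \ref{thm:FW}.

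Second, for the quantum part, the coefficient of $q_d\,\sigma_u$ in $\sigma_i *_q \sigma_w$ is by definition the three-point genus-zero Gromov--Witten invariant $\langle \sigma_i,\sigma_w,\sigma^u\rangle^{G/P}_{0,d}$, where $\sigma^u$ denotes the Poincaré dual class. Since $\sigma_i$ is a divisor class, the divisor axiom for Gromov--Witten invariants collapses it to a two-point invariant:
\[
\langle \sigma_i,\sigma_w,\sigma^u\rangle^{G/P}_{0,d} = \langle \varpi_i,\, d\rangle \cdot \langle \sigma_w,\sigma^u\rangle^{G/P}_{0,d}.
\]
This reduces the problem to identifying exactly which pairs $(u,d)$ satisfy $\langle \sigma_w,\sigma^u\rangle^{G/P}_{0,d} = 1$, the claim being that these are precisely the pairs $(u,d) = (\pi_P(ws_\delta),\eta_P(\delta^\vee))$ with $\delta \in R^+\setminus R^+_P$ obeying \eqref{FW:1} and \eqref{FW:2}, and that all other two-point invariants vanish.

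The main obstacle is this last combinatorial/geometric identification of the non-zero two-point invariants, since it couples a delicate count of rational curves with the Bruhat-order conditions on $\delta$. I would attack it via the affine Grassmannian route developed in \cite{Lam-Shimozono:GmodP-affine}: invoke the Peterson isomorphism $QH^*(G/P)[q^{-1}] \simeq H_*(\Gr_G)_{\mathrm{loc}}$ translating quantum multiplication into intersection in the affine Grassmannian, then apply the affine Pieri rule for divisor classes, which produces a sum indexed by cover relations in the affine Bruhat order of $\widetilde W$. Projecting these affine covers down along $\widetilde W \to W^P \times Q^\vee/Q^\vee_P$ yields exactly the restrictions above: condition \eqref{FW:1} ensures the affine cover decomposes as right-multiplication by $s_\delta$ with the correct length drop, while \eqref{FW:2} is equivalent to $\pi_P(ws_\delta)$ being a cover in the quantum Bruhat graph, which characterizes the non-vanishing terms after accounting for the shift $2(\rho-\rho_P)$ coming from $c_1(T_{G/P})$. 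An alternative route would be the original deformation-theoretic argument of Fulton--Woodward \cite{FW}, who compute these two-point invariants directly by analyzing the moduli of degree-$d$ rational curves on $G/P$ meeting the prescribed Schubert varieties; either method reduces the remaining work to a finite analysis of reduced expressions and inversion sets.
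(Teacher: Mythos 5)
The paper does not prove Theorem~\ref{thm:FW} itself: the statement is quoted verbatim from the literature, attributed to Fulton--Woodward \cite{FW}, Peterson \cite{Peterson:lectures}, and, in the form used here, to \cite[Theorem~10.14 and Lemma~10.18]{Lam-Shimozono:GmodP-affine}. There is therefore no in-paper proof to compare against; your proposal is effectively a reconstruction of how the cited references establish the result, not a new argument.

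As a reconstruction it is accurate but not self-contained. The classical ($q=0$) part is fine, although the reason it works can be tightened: for $w\in W^P$, $\beta\in R^+\setminus R^+_P$, and $ws_\beta\gtrdot w$, we have $\Inv(ws_\beta)=\Inv(w)\cup\{\beta\}$, which is disjoint from $R^+_P$, so $ws_\beta\in W^P$ automatically; and for $\beta\in R^+_P$ the coefficient $\ip{\varpi_i,\beta^\vee}$ vanishes since $i\notin I_P$. So ``restriction along $\pi^*$'' is really just the observation that only these terms can occur. For the quantum part, the divisor-axiom reduction to two-point invariants $\langle\sigma_w,\sigma^u\rangle_{0,d}$ is exactly Fulton--Woodward's starting point. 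But you then punt the entire substance of the theorem --- why the nonvanishing pairs $(u,d)$ are precisely those of the form $(\pi_P(ws_\delta),\eta_P(\delta^\vee))$ with $\delta$ satisfying \eqref{FW:1} and \eqref{FW:2}, and why each such invariant equals $1$ --- to ``either Fulton--Woodward's moduli analysis or the affine Pieri rule of \cite{Lam-Shimozono:GmodP-affine}.'' That identification is the theorem; a proof that simply re-cites the references for this step is no more than the paper already does. If you want a genuine proof, you must carry out one of the two routes: either the deformation/transversality argument on the Kontsevich moduli space of degree-$d$ stable maps, or the translation to $H_*(\Gr_G)$ via the Peterson isomorphism followed by the length-additivity analysis in the affine Bruhat order, including verifying that condition~\eqref{FW:2} is equivalent to the relevant affine covering relation after accounting for the degree shift coming from $c_1(T_{G/P})$.
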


\subsection{Degrees}\label{s:QHgrade}
The quantum cohomology ring $QH^*(G/P)$ is a graded ring.  The degree of $\sigma_w$ is equal to $2\ell(w)$.   The degrees of the quantum parameters $q_i=q_{\alpha_i^\vee}$ for $i\in I\backslash I_P$ are given by
\[
\deg(q_i) = 2 \int_{G/P} c_1(T_{G/P}) \cdot \sigma_{\pi_P(w_0s_i)} = \ip{4(\rho-\rho_P),\alpha_i^\vee}.
\]
The second equality is~\cite[Lem.3.5]{FW}.  Indeed, the first Chern class of $G/P$ satisfies
\begin{equation}\label{c1=}
 c_1(T_{G/P}) = \sum_{i\in I\backslash I_P} \ip{2(\rho-\rho_P),\alpha_i^\vee}  \sigma_i.
\end{equation}

We verify that the quantum multiplication $\sigma_i *_q$ is homogeneous of degree $2$ directly from Theorem~\ref{thm:FW}.  Indeed, $\sigma_{ws_{\beta}}$ has degree $2\ell(w)+2$, and
\begin{equation*}
\begin{aligned}
\deg\, q_{\eta_P(\nu^\vee)} + 2 \ell(\pi_P(ws_\nu)) &= \ip{4(\rho - \rho_P), \eta_P(\nu^\vee)} + 2\ell(w)+2  
- 2 \ip{2(\rho - \rho_P),\nu^\vee}
= 2\ell(w) +2,
\end{aligned}
\end{equation*}
where the second equality follows because $\rho-\rho_P$ is orthogonal to $Q_P^\vee$.

\subsection{Quantum connection and quantum $\aD$-module}\label{s:quantum-connection}
We let   $\C_q$ be the complex points of $\Spec \C[q_i \mid i\notin I_P] $  and $\C^\times_q$ be the 
complex points of $\Spec \C[q_i^{\pm 1} \mid i\notin I_P]$.
We can attach a quantum connection $\cQ^{G/P}$ on the trivial bundle $\C^\times_q\times H^*(G/P)$ over $\C_q^\times$ as follows. 
For each $i\in I\backslash I_P$, the connection $\cQ^{G/P}$ in the direction of $q_i$ is given by
\[
  q_i\frac{\partial}{\partial q_i}  + \sigma_i *_q,
\]
where $*_q$ is quantum multiplication with quantum parameter $q$.
The connection is integrable, which is equivalent to the associativity of the quantum product.
The associated  connection $1$-form is 
\begin{equation} \label{eq:q1form}
\sum_{i\in I\backslash I_P}  (\sigma_i *_q) \frac{dq_i}{q_i} \in \Omega^1(\C_q^\times,\mathrm{End}(H^*(G/P))).
\end{equation}

Define a $\C^\times$-action on $H^*(G^\vee/P^\vee)$ by $\zeta \cdot \sigma = \zeta^{i} \sigma$ for $\zeta \in 
\C^\times$ and $\sigma \in H^{2i}(G^\vee/P^\vee)$. 
Also define a $\C^\times$-action on $\C^\times_q$ by $\zeta \cdot q_i = \zeta^{\deg (q_i)/2} q_i$ for 
$i\notin I_P$.  Then it is clear from the previous \S\ref{s:QHgrade} that the connection 1-form 
\eqref{eq:q1form} is homogeneous of 
degree one for the action of $\C^\times$.

It follows from Theorem~\ref{thm:FW} that quantum multiplication is Laurent polynomial, 
hence $\cQ^{G/P}$ is an \emph{algebraic} connection.

\begin{remark}\label{rem:Cq}
We may identify the universal cover of $\C^\times_q$ with $H^2(G/P)$ and define a flat connection on $H^2(G/P)$ instead, which would correspond to the general framework of Frobenius manifolds (see~\cite{Dubrovin:geometry-TFT,Manin:book-Frobenius,Hertling:book-Frobenius}).
Viewing $\{\sigma_i \mid i\notin I_P\}$ as a basis of $H^2(G/P)$,
the link is the change of parameters given by
$
(q_i \mid i \notin I_P) \mapsto \sum\limits_{i\in I\backslash I_P} \log(q_i)\sigma_i \in H^2(G/P).
$
See e.g.,~\cite[\S2.2]{Iritani:integral-structure}.
Intrinsically, $\C^\times_q$ is identified with the quotient $H^2(G/P)/2i\pi H^2(G/P,\Z)$; see 
also Lemma~\ref{l:2ipiH2} below.
\end{remark}

\subsection{Minuscule case}
For minuscule $G/P$, with $I_P=I \backslash \{\ki\}$, where $\ki$ is a minuscule node, we shall simplify Theorem \ref{thm:FW}. The Schubert divisor class
$\sigma_\ki \in H^2(G/P)$ is a generator of $\mathrm{Pic}(G/P)$. It defines a minimal homogeneous embedding $G/P \subset
\mathbb{P}(V)$, and $G/P$ is realized as the closed orbit of the highest weight vector $v_e\in 
V=V_{\varpi_\ki}$. 
The hyperplane class of $\P(V)$ restricts
to $\sigma$. The following is established in~\cite{Chaput-Manivel-Perrin:QH-minuscule,Snow:nef-value}:
\begin{lemma}\label{l:coxeter-chern}
If $P=P_\ki$ is a minuscule parabolic, then $\langle 2(\rho-\rho_P),\alpha^\vee_\ki \rangle=c$, the Coxeter number of $G$.
\end{lemma}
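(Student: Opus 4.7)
The plan is to reduce the identity to a single scalar via $W_P$-invariance and then verify that scalar case-by-case using Figure~\ref{fig:minuscule}.

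First, I write $2(\rho-\rho_P) = \sum_{\alpha\in R^+\setminus R^+_P}\alpha$ and note that this sum is $W_P$-invariant: for each $j\in I_P$, the simple reflection $s_j$ sends $\alpha_j\in R^+_P$ to $-\alpha_j$ and permutes the remaining positive roots, hence permutes $R^+\setminus R^+_P$ as a set. Writing an arbitrary weight as $\lambda=\sum_j c_j\varpi_j$, the reflection $s_j$ acts by $\lambda\mapsto \lambda - c_j\alpha_j$, so the $W_P$-fixed subspace of $\t^*$ is $\{c_j=0\ \forall\, j\in I_P\} = \C\varpi_\ki$. Hence $2(\rho-\rho_P)=c'\,\varpi_\ki$ for some $c'\in\Q$, and pairing with $\alpha_\ki^\vee$ gives $c'=\ip{2(\rho-\rho_P),\alpha_\ki^\vee}$. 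It remains to show $c'=c$.

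Using $\ip{2\rho,\alpha_\ki^\vee}=2$, the problem reduces to computing $\ip{2\rho_P,\alpha_\ki^\vee}$. Expanding each root of $R^+_P$ in the simple roots $\alpha_j$ with $j\in I_P$, this pairing equals $\sum_{j\in I_P} m_j\, a_{j\ki}$, where $m_j$ denotes the coefficient of $\alpha_j$ in $2\rho_P$ and $a_{j\ki}=\ip{\alpha_j,\alpha_\ki^\vee}$ vanishes unless $j$ is adjacent to $\ki$ in the Dynkin diagram. Running through the seven families of Figure~\ref{fig:minuscule}, the Levi root system $R_P$ is a classical type obtained by deleting node $\ki$, and the coefficients $m_j$ are read off the standard formula for $\rho$. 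For instance, in type $A_n$ at node $k$ one finds $c'=2+(k-1)+(n-k)=n+1$; in types $B_n$ at node $n$ and $C_n$ at node $1$ one finds $c'=2+2(n-1)=2n$; in both minuscule cases of $D_n$ one obtains $c'=2+2(n-2)=2n-2$; and in types $E_6$ and $E_7$ the coefficient of the simple root adjacent to $\ki$ in $2\rho_P$ is $10$ and $16$ respectively, yielding $c'=12$ and $c'=18$. In every case this equals the Coxeter number of $G$.

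The main obstacle is the absence of an obvious uniform conceptual proof of $c'=c$; the verification above is elementary in the classical families but requires explicit bookkeeping of positive roots (or appeal to standard tables) in types $E_6$ and $E_7$. An alternative route is to invoke the classical fact, recorded in~\cite{Chaput-Manivel-Perrin:QH-minuscule,Snow:nef-value}, that the Fano index of a minuscule rational homogeneous variety $G/P$ coincides with the Coxeter number of the semisimple group $G$; in view of~\eqref{c1=} this is exactly the content of the lemma.
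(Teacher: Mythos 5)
Your proposal is correct, and in fact more detailed than what the paper does: the paper itself offers no proof of Lemma~\ref{l:coxeter-chern}, simply recording it as established in~\cite{Chaput-Manivel-Perrin:QH-minuscule,Snow:nef-value} (your second route). Your first route gives a genuine self-contained argument. The opening step is the right conceptual move: since $2(\rho-\rho_P)=\sum_{\alpha\in R^+\setminus R^+_P}\alpha$ and each $s_j$ with $j\in I_P$ permutes $R^+\setminus R^+_P$, this weight is $W_P$-invariant, and for a maximal parabolic the $W_P$-invariants of $\t^*$ are the line $\C\varpi_\ki$, so $2(\rho-\rho_P)=c'\varpi_\ki$ and pairing with $\alpha_\ki^\vee$ reads off $c'$. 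The reduction $c' = 2 - \ip{2\rho_P,\alpha_\ki^\vee}$ and the observation that only nodes adjacent to $\ki$ contribute are also correct, and I checked the resulting arithmetic in each family: $A_n$ gives $n+1$; $B_n$ at node $n$ (Levi $A_{n-1}$, coefficient $n-1$, Cartan entry $-2$) and $C_n$ at node $1$ (Levi $C_{n-1}$, coefficient $2(n-1)$, Cartan entry $-1$) both give $2n$; $D_n$ at nodes $1$ or $n$ gives $2n-2$; $E_6$ at node $6$ (coefficient of $\alpha_5$ in $2\rho_{D_5}$ is $10$) gives $12$; $E_7$ at node $7$ (coefficient of $\alpha_6$ in $2\rho_{E_6}$ is $16$) gives $18$. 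These match the Coxeter numbers.

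The only thing I would flag is cosmetic: after establishing $2(\rho-\rho_P)=c'\varpi_\ki$, it would be cleaner to note that $c'$ is also the Fano index of $G/P_\ki$, since by~\eqref{c1=} one has $c_1(T_{G/P})=c'\sigma_\ki$ with $\sigma_\ki$ the ample generator of $\mathrm{Pic}(G/P)$; your citation to~\cite{Chaput-Manivel-Perrin:QH-minuscule,Snow:nef-value} then becomes exactly the statement that the index of a minuscule $G/P$ equals the Coxeter number, and your hands-on computation is an independent verification of that fact.
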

It then follows from~\eqref{c1=} that the first Chern class $c_1(T_{G/P})$ is equal to $c \sigma$. There is only one quantum parameter $q = q_\ki = q_{\alpha_\ki^\vee}$ which has degree $2c$.  
\begin{proposition}\label{prop:chevalley}
Let $\qroot = \qroot(\i)$ be the long root of \S \ref{ssec:gamma}.  Then for $w \in W^P$, we have
$$
\sigma_\ki *_q \sigma_w = \sum_\beta \sigma_{ws_\beta} + \bm{\chi}(w) q \sigma_{\pi_P(ws_\qroot)},
$$
where the first summation is over $\beta \in R^+ \setminus R^+_P$ such that $ws_\beta \gtrdot w$ and $ws_\beta \in W^P$, and $\bm{\chi}(w)$ equals $1$ or $0$ depending on whether $w \in W(\qroot)$ or not.
\end{proposition}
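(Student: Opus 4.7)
The plan is to apply Theorem \ref{thm:FW} with $i=\ki$ and simplify both sums using the minuscule hypothesis together with Propositions \ref{prop:gamma} and \ref{prop:Wgamma}.

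First I would show that every coefficient equals $1$. The minuscule condition states that the coefficient of $\alpha_\ki^\vee$ in the simple-coroot expansion of any coroot lies in $\{-1,0,1\}$. For $\alpha\in R^+$ this coefficient is nonnegative, and vanishes precisely when $\alpha\in R_P$ by the compatibility of the Levi root subsystem with its coroot subsystem. Hence $\ip{\varpi_\ki,\alpha^\vee}=1$ for every $\alpha\in R^+\setminus R^+_P$, which makes every classical coefficient and every quantum coefficient in Theorem \ref{thm:FW} equal to $1$. Moreover $Q^\vee/Q^\vee_P$ is infinite cyclic generated by the class of $\alpha_\ki^\vee$ and the unique quantum parameter is $q=q_{\alpha_\ki^\vee}$, so $q_{\eta_P(\delta^\vee)}=q$ for every $\delta\in R^+\setminus R^+_P$.

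Next I would verify that, for $w\in W(\gamma)$, the choice $\delta=\gamma$ satisfies the Fulton--Woodward conditions \eqref{FW:1} and \eqref{FW:2}. Condition \eqref{FW:1} is Proposition \ref{prop:Wgamma}(1), and combining Proposition \ref{prop:Wgamma}(2)--(3) with $\pi_P(ws_\gamma)=ws'_\gamma$ reduces \eqref{FW:2} to the numerical identity $\ell(s_\gamma)+\ell(w_{P/Q})=\ip{2(\rho-\rho_P),\gamma^\vee}-1$. This follows because $\gamma$ is a long (hence quantum) root, giving $\ell(s_\gamma)=\ip{2\rho,\gamma^\vee}-1$, and because Lemma \ref{lem:gamma}(2) combined with $\ip{2\rho_Q,\gamma^\vee}=0$ (valid since $\rho_Q$ is supported on $I_Q$, a set of simple roots orthogonal to $\gamma^\vee$ by construction of $I_Q$) yields $\ip{2\rho_P,\gamma^\vee}=-\ell(w_{P/Q})$. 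Conversely, the converse clause of Proposition \ref{prop:Wgamma} ensures that any $w$ satisfying \eqref{FW:1}+\eqref{FW:2} with $\delta=\gamma$ lies in $W(\gamma)$.

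Finally I would rule out any other $\delta\in R^+\setminus R^+_P$ as a contributor. A length analysis of the $W_P$-component $w_P(ws_\delta)$, combined with \eqref{FW:1}+\eqref{FW:2} and the automatic degree identity $\ip{2(\rho-\rho_P),\delta^\vee}=c$ (which follows from Lemma \ref{l:coxeter-chern} together with the fact that the $\alpha_\ki^\vee$-coefficient of $\delta^\vee$ equals $1$), forces $\delta$ to be a quantum root with $\ip{\alpha,\delta^\vee}\in\{-1,0\}$ for every $\alpha\in R^+_P$. Proposition \ref{prop:gamma}(2) then identifies $\delta=\gamma$. This uniqueness step is the main technical obstacle: the numerics alone admit several candidate quantum roots a priori, so singling out $\gamma$ requires the minuscule-specific pairing constraints on $R^+_P$ that exactly match Proposition \ref{prop:gamma}(2).
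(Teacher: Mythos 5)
Your proposal follows the paper's strategy closely: apply Theorem~\ref{thm:FW}, observe all coefficients are~$1$ in the minuscule case, then show the Fulton--Woodward conditions \eqref{FW:1}--\eqref{FW:2} force $\delta = \gamma$ via Proposition~\ref{prop:gamma}(2), concluding with the converse clause of Proposition~\ref{prop:Wgamma}. Your second paragraph verifying that $\delta = \gamma$ works for $w \in W(\gamma)$ is correct (and is left implicit in the paper, which relies on Proposition~\ref{prop:Wgamma}(1)--(3) and Lemma~\ref{lem:z} to carry that direction).

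The substantive gap is in the final paragraph: the ``length analysis of the $W_P$-component'' is named but never performed, and this is exactly where the work lives. The concrete chain the paper runs is the following. Write $ws_\delta = \pi_P(ws_\delta)\cdot u$ with $u \in W_P$, so that $\Inv(u) = \Inv(ws_\delta)\cap R^+_P$. Conditions \eqref{FW:1} and \eqref{FW:2} together with $\ip{2(\rho-\rho_P),\delta^\vee}=c$ give $\ell(u) = \ell(ws_\delta) - \ell(\pi_P(ws_\delta)) = c - 1 - \ell(s_\delta)$. Since $\ell(s_\delta) \le \ip{2\rho,\delta^\vee}-1$ in general (equality iff $\delta \in \tR^+$), one has $\ell(u) \ge -\ip{2\rho_P,\delta^\vee}$. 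On the other hand, \eqref{FW:1} forces $s_\delta \in W^P$, which constrains $\ip{\alpha,\delta^\vee}\le 0$ for $\alpha\in R^+_P$ (strict for $\alpha \in R^+_P\setminus R^+_{Q'}$), and then $-\ip{2\rho_P,\delta^\vee}\ge |R^+_P\setminus R^+_{Q'}| \ge |\Inv(u)| = \ell(u)$. Both inequalities being squeezed forces equality throughout: $\delta$ is a quantum root and $\ip{\alpha,\delta^\vee}\in\{-1,0\}$ for all $\alpha\in R^+_P$, at which point Proposition~\ref{prop:gamma}(2) applies. Without spelling out that the two length bounds sandwich the same quantity $\ell(u)$, your proposal has not actually derived either the quantum-root property or the pairing constraint; your remark that ``the numerics alone admit several candidate quantum roots'' suggests you did not yet see that the sandwich forces the pairing constraint simultaneously, which is precisely what makes $\gamma$ unique.
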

\begin{proof}
For $\beta^\vee \in R^+$, the coefficient $\ip{\varpi_\ki,\beta^\vee}$ is either 0 or 1, and it is equal to 1 if $\beta^\vee \in R^+ \setminus R^+_P$.  This explains the first summation.

Suppose that $\nu \in R^+ \setminus R^+_P$ and we have \eqref{FW:1} and \eqref{FW:2}. 
Define $I_{Q'} := \{j\in I_P \mid \ip{\alpha_j,\nu^\vee} = 0\} = \{j \in I_P \mid \ip{\nu,\alpha_j^\vee} = 0\}$.  
We have $\Inv(ws_\nu) \cap R^+_{Q'} = \emptyset$ and thus $\Inv(ws_\nu) \cap R^+_P \subseteq (R^+_P \setminus R^+_{Q'})$.  Now, \eqref{FW:1} implies that $s_\nu \in W^P$ and thus $\ip{\alpha_i,\nu^\vee} \leq 0$ for $i \in I_P$.  It follows from our definition of $I_{Q'}$ that $\ip{\alpha_i,\nu^\vee} < 0$ for $i \in I_P$ and $\ip{\alpha_i,\nu^\vee} = 0$ for $i \in I_{Q'}$.  Thus $\ip{\alpha,\nu^\vee} < 0$ for $\alpha \in R^+_P \setminus R^+_{Q'}$, so 
$$
|R^+_P \setminus R^+_{Q'}| \leq -\sum_{\alpha \in R^+_P \setminus R^+_{Q'}}\ip{\alpha,\nu^\vee} =-\sum_{\alpha \in R^+_P }\ip{\alpha,\nu^\vee}= - \ip{2\rho_P,\nu^\vee}.
$$
Condition \eqref{FW:2} guarantees that we have equality and hence that $\ip{\alpha,\nu^\vee} = -1$ for $\alpha \in R^+_P \setminus R^+_{Q'}$.  By Proposition \ref{prop:gamma}, we conclude that $\nu = \qroot$.  It follows from the last sentence of Proposition \ref{prop:Wgamma} that $w \in W(\qroot)$.
\end{proof}

\begin{example}
Suppose that $G/P = \Gr(n-1,n)=\mathbb{CP}^{n-1}$. The minimal representative permutations $w\in W^P$ are determined by the value $w(n)\in
[1,n]$, or equivalently by a Young diagram which is a single column of length $w(n)-1$. Denote the Schubert classes by
$\sigma_{\emptyset}=1$, $\sigma_{1}=\sigma_{s_{\alpha_1}}$, $\sigma_2, \ldots, \sigma_{n-1}$. Then $\sigma_1^{*j} = \sigma_j$ for
$1\le j\le n-1$ and $\sigma_1 *_q \sigma_{n-1} = q$. The quantum cohomology ring has presentation $\C[\sigma_1,q]/(\sigma^n_1 - q)$. 
\end{example}

Chaput--Manivel--Perrin~\cite{Chaput-Manivel-Perrin:QH-minuscule,Chaput-Manivel-Perrin:QH-minuscule-II,Chaput-Manivel-Perrin:QH-minuscule-III} study the quantum
cohomology of minuscule and cominuscule flag varieties. In particular they obtain a
combinatorial description in terms of certain 
quivers~\cite[Prop.24]{Chaput-Manivel-Perrin:QH-minuscule}, which may be compared with 
Proposition~\ref{prop:chevalley} above. 

\subsection{Minuscule representation}
Define the linear isomorphism 
\begin{equation}\label{eq:L}
L: H^*(G/P) \to V \qquad \sigma_w \mapsto v_w \text{ for $w\in W^P$.}
\end{equation} 
Recall the principal $\mathfrak{sl}_2$-triple $(x_p,2\rho^\vee,y_p)$. \begin{proposition}[Gross~\cite{Gross:minuscule-principalsl2}] \label{p:Lefschetz-sl2} 
The isomorphism $L$ intertwines the action of the Lefschetz $\mathfrak{sl}_2$ on $H^*(G/P)$ and the action of the principal $\mathfrak{sl}_2$ on $V$.
\end{proposition}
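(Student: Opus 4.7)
The plan is to verify that $L$ intertwines the $h$-grading and one of the $\mathfrak{sl}_2$-lowering operators, after which the full intertwining will follow from the structure theory of finite-dimensional $\mathfrak{sl}_2$-representations. I will adopt the Lefschetz convention $(e_L,f_L,h_L)$ on $H^*(G/P)$ with $f_L = \sigma_\ki\cup(-)$ and $h_L|_{H^{2k}} = N-2k$, where $N = \dim_\C G/P$; under $L$ these should match the principal $h = 2\rho^\vee$ and $f = \sum_j y_j$ acting on $V$.

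For the grading match, I will show $\langle w\varpi_\ki,2\rho^\vee\rangle = N - 2\ell(w)$ for $w\in W^P$. At $w = e$ this asserts $c_\ki = N$: since $\langle \varpi_\ki,\alpha^\vee\rangle \in \{0,1\}$ for $\alpha>0$ (the minuscule hypothesis), one has $c_\ki = \#\{\alpha > 0 : \langle \varpi_\ki,\alpha^\vee\rangle = 1\}$, and this coincides with $|R^+ \setminus R^+_P| = N$ because for $\alpha > 0$ one has $\langle \varpi_\ki,\alpha^\vee\rangle \neq 0$ iff $\alpha\notin R_P$. For the inductive step I will write $w = s_j w'$ with $w'\in W^P$ and $\ell(w) = \ell(w') + 1$ (choose $j$ a left descent of $w$) and set $\beta = (w')^{-1}\alpha_j$; then $\beta \in R^+\setminus R^+_P$, so by minuscule $\langle w'\varpi_\ki,\alpha_j^\vee\rangle = \langle \varpi_\ki,\beta^\vee\rangle = 1$, whence $w\varpi_\ki = w'\varpi_\ki - \alpha_j$ and pairing with $2\rho^\vee$ drops by $\langle \alpha_j,2\rho^\vee\rangle = 2$.

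Next, I will verify $L(\sigma_\ki\cup\sigma_w) = f\cdot L(\sigma_w)$ for $w\in W^P$. By Proposition~\ref{prop:chevalley} at $q=0$, $\sigma_\ki\cup\sigma_w = \sum_\beta \sigma_{ws_\beta}$, with $\beta \in R^+\setminus R^+_P$ such that $ws_\beta\gtrdot w$ and $ws_\beta\in W^P$; by Lemma~\ref{lem:simple}, $f\cdot v_w = \sum_{j:\, w^{-1}\alpha_j\in R^+\setminus R^+_P} v_{s_jw}$. The two sums will be identified under the bijection $\beta \leftrightarrow \alpha_j := w\beta$. The only nontrivial direction is to show that every $\beta$ appearing in the Chevalley sum has $w\beta$ simple. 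This will follow from the grading match: the $h$-eigenvalue of $v_{ws_\beta}$ differs from that of $v_w$ by exactly $-2$, but one also computes
\[
\langle ws_\beta\varpi_\ki - w\varpi_\ki,\, 2\rho^\vee\rangle = -\langle \varpi_\ki,\beta^\vee\rangle \cdot 2\,\mathrm{ht}(w\beta) = -2\,\mathrm{ht}(w\beta),
\]
using $\langle \varpi_\ki,\beta^\vee\rangle = 1$ (minuscule) and $w\beta\in R^+$ (since $\ell(ws_\beta)>\ell(w)$), which forces $\mathrm{ht}(w\beta) = 1$, i.e., $w\beta$ is simple.

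Finally, in any finite-dimensional $\mathfrak{sl}_2$-representation the primitive (highest weight) subspace at $h$-weight $m$ is $\ker(f^{m+1})\cap V_m$, and the whole representation is reconstructed from its graded primitive decomposition by repeated application of $f$. Since the previous steps exhibit $L$ as a graded isomorphism intertwining $f$, it preserves primitives, and therefore intertwines the entire $\mathfrak{sl}_2$-action. The hard part is the second step: the bijection between the Chevalley sum and the $f$-action sum relies crucially on the minuscule hypothesis, both to identify $\langle \varpi_\ki,\beta^\vee\rangle = 1$ and to force $w\beta$ to be simple via the height calculation.
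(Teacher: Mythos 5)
Your proof is correct and follows the same architecture as the paper's: match the gradings, match the lowering operator, and invoke $\mathfrak{sl}_2$-theory to conclude. The worthwhile difference is in how you discharge the two sub-lemmas. The paper simply cites~\cite{Gross:minuscule-principalsl2}*{\S6} for the identities $\dim(G/P)=\langle\varpi_\ki,2\rho^\vee\rangle$ and $\ell(w)=\langle\varpi_\ki-w\varpi_\ki,\rho^\vee\rangle$, and cites~\cite{Stembridge:fully-commutative-Coxeter} for the fact that $w\beta$ is simple whenever $\sigma_{ws_\beta}$ appears in the classical Chevalley product. You instead prove the weight--length identity directly (counting $\alpha\in R^+$ with $\langle\varpi_\ki,\alpha^\vee\rangle=1$ for the base case, then a descent induction), and you deduce the simplicity of $w\beta$ from that identity via the height computation $\langle ws_\beta\varpi_\ki-w\varpi_\ki,2\rho^\vee\rangle=-2\,\mathrm{ht}(w\beta)$. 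The latter is a genuinely different mechanism than appealing to full commutativity, and it makes the proof self-contained; the paper's version is shorter at the cost of the two external citations. One small cosmetic note: it is worth stating explicitly that the ``easy direction'' (every $j$ with $w^{-1}\alpha_j\in R^+\setminus R^+_P$ contributes a Chevalley term) is exactly the last sentence of Lemma~\ref{lem:simple}, so the bijection between the two sums really is complete.
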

\begin{proof}  
If the term $\sigma_{w
s_\beta}$ occurs in $\sigma_\ki *_0 \sigma_w$ then $w \beta = \alpha_j$ for some $j$ (see \cite{Stembridge:fully-commutative-Coxeter}).
It
then follows from Lemma \ref{lem:simple} that
$L(\sigma_\ki *_0 \sigma_w) = y_pv_w= y_p\circ L(\sigma_w)$.  
On the other hand, we have $\dim(G/P)=\langle \varpi_\i, 2\rho^\vee \rangle$, and 
 $\ell(w)=\ip{\varpi_\ki,\rho^\vee} - \ip{w\varpi_\ki,\rho^\vee}$; see~\cite[\S6]{Gross:minuscule-principalsl2}.
Since $L(\sigma_w)=v_w$ has weight $w \varpi_\i$ (see \S\ref{s:FG-minuscule}),
for every $d \in [0, 2\dim(G/P)]$, the image $L(H^d(G/P))$ is equal to the $2\rho^\vee$-eigenspace of $V$ of eigenvalue
$\dim(G/P)-d$.  
\end{proof}

Consider quantum multiplication $\sigma_\ki *_q$ as an operator on $H^*(G/P)$ with coefficients in $\C[q]$.
\begin{proposition}\label{prop:FG=QH}
We have $L \circ \sigma_\ki *_q= (y_p + q x_\theta ) \circ L$.
\end{proposition}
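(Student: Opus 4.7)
The plan is to check the asserted equality of operators by evaluating both sides on the basis vectors $\sigma_w$, $w\in W^P$, of $H^*(G/P)\otimes \C[q]$, and matching the $q^0$ and $q^1$ contributions separately. This is natural because the quantum Chevalley formula in Proposition~\ref{prop:chevalley} decomposes $\sigma_\ki *_q \sigma_w$ into its classical part and its single quantum correction, while the right-hand side $f + q x_\theta$ is already decomposed into a $q^0$ part $f=\sum_{i\in I} y_i$ and a $q^1$ part $x_\theta$.

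For the $q^0$ coefficient, I would observe that the classical Chevalley sum $\sum_\beta \sigma_{ws_\beta}$ in Proposition~\ref{prop:chevalley}, taken over $\beta \in R^+\setminus R^+_P$ with $ws_\beta\gtrdot w$ and $ws_\beta\in W^P$, is mapped under $L$ to exactly $fv_w=\sum_{i\in I} y_i v_w$ by Lemma~\ref{lem:simple}: the bijection between admissible $\beta$ and simple roots $\alpha_i$ is given by $\alpha_i = w\beta$, and the condition $\beta\in R^+\setminus R^+_P$ precisely matches the nonvanishing condition $\ip{w\varpi_\ki,\alpha_i^\vee}=1$ in the defining action of $y_i$ on the canonical basis. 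This is exactly the identification already used in the proof of Proposition~\ref{p:Lefschetz-sl2}, so I would simply reference that argument.

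For the $q^1$ coefficient, the quantum correction in Proposition~\ref{prop:chevalley} is $\bm{\chi}(w)\,\sigma_{\pi_P(ws_\gamma)}$, which is nonzero precisely when $w\in W(\gamma)$ and then equals $\sigma_{\pi_P(ws_\gamma)}=\sigma_{ws'_\gamma}$ by Proposition~\ref{prop:Wgamma}(3). On the other side, Lemma~\ref{lem:theta} computes $x_\theta v_w$ to be $\varepsilon v_{\pi_P(ws_\gamma)}$ if $w\in W(\gamma)$ and zero otherwise, where $\varepsilon\in\{\pm1\}$ is a global sign independent of $w$. The sign normalization \eqref{eq:thetasign} was introduced precisely to set $\varepsilon=1$, so the two expressions agree under $L$.

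Putting these together, $L(\sigma_\ki *_q \sigma_w) = fv_w + q\,\bm{\chi}(w) v_{\pi_P(ws_\gamma)} = (f+qx_\theta)L(\sigma_w)$, as required. The main content is not any hard calculation but rather the alignment of two disparate combinatorial descriptions: the quantum Chevalley formula on the A-model side and the action of principal nilpotent plus highest root vector on the canonical basis on the B-model side. Propositions \ref{prop:gamma} and \ref{prop:Wgamma}, together with the sign convention \eqref{eq:thetasign}, do exactly this matching, so the proof itself is short once those structural results are available.
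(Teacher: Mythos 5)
Your proof is correct and takes essentially the same route as the paper: decompose $\sigma_\ki *_q$ via the quantum Chevalley formula (Proposition~\ref{prop:chevalley}), match the classical part with $f$ by invoking the argument from Proposition~\ref{p:Lefschetz-sl2}, and match the quantum correction with $qx_\theta$ using Lemma~\ref{lem:theta} and the sign normalization~\eqref{eq:thetasign}. The only cosmetic difference is that you spell out the role of Lemma~\ref{lem:simple} and Proposition~\ref{prop:Wgamma}(3) explicitly, which the paper leaves implicit.
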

\begin{proof}
Let $\sigma_\ki *_q = D_1 + D_2$, where $D_1$ and $D_2$ correspond to the two terms of Proposition \ref{prop:chevalley}.
We have seen in the proof of Proposition~\ref{p:Lefschetz-sl2} that $L \circ D_1 = y_p \circ 
L$.  Assumption \eqref{eq:thetasign}
and Proposition \ref{prop:chevalley} show that $L \circ D_2 = q x_\theta \circ L$.
\end{proof}

Golyshev and Manivel \cite{Golyshev-Manivel} study ``quantum corrections" to the geometric Satake correspondence.  Their main result is closely related to our Proposition \ref{prop:FG=QH} for the simply-laced cases. 

Recall from \S\ref{s:quantum-connection} that the quantum connection on $\C^\times_q$ is given by
\begin{equation}\label{eq:nabla}
\cQ^{G/P} = d +  \sigma_\ki *_q\frac{dq}{q}.
\end{equation}
\begin{theorem}\label{t:FGisquantum}
If $P\subset G$ is minuscule with minuscule representation $V$, then under the isomorphism $L:H^*(G/P)\to V$, the quantum
connection $\cQ^{G/P}$ is isomorphic to the rigid connection $\nabla^{(G,V)}$.
Moreover, the isomorphism is graded with respect to the gradings in \S\ref{s:FGgrade} and \S\ref{s:quantum-connection}.
\end{theorem}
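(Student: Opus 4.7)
The plan is to show that the $\C$-linear isomorphism $L: H^*(G/P) \to V$ of \eqref{eq:L}, regarded as a $q$-independent gauge transformation between the trivial bundles $\C^\times_q \times H^*(G/P)$ and $\C^\times_q \times V$, intertwines the two connections $\cQ^{G/P}$ and $\nabla^{(G,V)}$. Since $L$ has no $q$-dependence, the condition reduces to matching the connection $1$-forms, with the $d$-part automatic.

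By \eqref{eq:nabla}, the $1$-form of $\cQ^{G/P}$ is $(\sigma_\ki *_q)\frac{dq}{q}$. By \eqref{FG-def}, the $1$-form of $\nabla^{(G,V)}$ is $f\frac{dq}{q} + x_\theta\, dq = (f + qx_\theta)\frac{dq}{q}$. Hence the two connections agree under $L$ if and only if
\[
L \circ (\sigma_\ki *_q) \;=\; (f + qx_\theta) \circ L,
\]
which is precisely the content of Proposition \ref{prop:FG=QH}. That establishes the first assertion of the theorem without further work.

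For the graded refinement, I would check that $L$ is $\G_m$-equivariant up to an overall character twist by a fixed scalar. Using the identity $\ell(w) = \ip{\varpi_\ki,\rho^\vee} - \ip{w\varpi_\ki,\rho^\vee}$ recalled inside the proof of Proposition \ref{p:Lefschetz-sl2}, the cohomological $\G_m$-weight $\ell(w)$ of $\sigma_w$ (per the convention of \S\ref{s:quantum-connection}) differs from the weight of $v_w = L(\sigma_w)$ under the $\rho^\vee$-action of \S\ref{s:FGgrade} by the constant $\ip{\varpi_\ki,\rho^\vee}$ (independent of $w$). An overall shift by a scalar character commutes with every endomorphism and so does not affect homogeneity of the connection $1$-form; combined with Lemma \ref{l:connection-homogeneous} and the analogous homogeneity computation in \S\ref{s:quantum-connection}, this yields a graded isomorphism of connections.

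I expect no genuine obstacle remains at this stage: the substantive work is entirely absorbed into Proposition \ref{prop:FG=QH}, which itself rests on the quantum Chevalley formula (Theorem \ref{thm:FW} and its minuscule simplification, Proposition \ref{prop:chevalley}), the characterization of the distinguished quantum root $\gamma$ (Propositions \ref{prop:gamma}–\ref{prop:Wgamma}), and the action of $x_\theta$ on the canonical basis (Lemmas \ref{lem:Geck}–\ref{lem:theta}, together with the sign normalization \eqref{eq:thetasign}). The present theorem is then an assembly; the only point to verify with care is the constant shift in the grading identification, which does not interfere with the statement.
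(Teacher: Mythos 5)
Your proposal is correct and matches the paper's (implicit) argument: the theorem is presented as a direct consequence of Proposition~\ref{prop:FG=QH}, which is precisely the identity $L\circ(\sigma_\ki*_q)=(f+qx_\theta)\circ L$ you invoke. Your treatment of the grading is also right: under the paper's sign convention in \S\ref{s:FGgrade}, the $\G_m$-degree of $v_w=L(\sigma_w)$ is $\ell(w)-\ip{\varpi_\ki,\rho^\vee}$, so $L$ shifts degrees by the fixed constant $\ip{\varpi_\ki,\rho^\vee}=\dim(G/P)/2$, and conjugation by $L$ therefore preserves the degree of the connection $1$-form, which is degree one on both sides.
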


\subsection{Automorphism groups}\label{s:automorphism}
The connected automorphism group of a projective homogeneous space $H/P$ is of the same Dynkin type as $H$ except in the following three exceptional cases; see~\cite[\S3.3]{Akhiezer:Lie-gp-actions}:
\begin{itemize}
\item If $H=\Sp(2n)$ is of type $C_n$, $n\ge 2$, and $\ki=1$ is the unique minuscule node, then $H/P_1$ is isomorphic to
projective space $\mathbb{P}^{2n-1}$. Thus it is homogeneous under the bigger automorphism 
group $G=\PGL(2n)$.

\item If $H=\SO(2n+1)$ is of type $B_n$, $n\ge 2$, and $\ki=n$ is the unique minuscule node, then the odd orthogonal
Grassmannian $\SO(2n+1)/P_n$ is isomorphic to the even orthogonal Grassmannian $\SO(2n+2)/P_{n+1}$.

\item If $H$ is of type $G_2$ and $\ki=1$, then $H/P_1$ is a five-dimensional quadric which is also isomorphic to $\SO(7)/P_1$.
In this case $\ki$ corresponds to the unique short root, which is therefore also the shortest highest root, and thus $H/P_1$ is
quasi-minuscule and coadjoint, but it is neither minuscule nor cominuscule. On the other hand, the five-dimensional quadric is
cominuscule as an homogeneous space under $G=\SO(7)$.
\end{itemize}

In each of the above cases the quantum cohomology rings coincide, hence the quantum connections also coincide.
In the first two cases we can apply Theorem~\ref{t:FGisquantum} to deduce that the corresponding rigid connections associated to a
minuscule representation $V$ coincide. 
In view of \S\ref{s:reduction}, we conclude that if there is a minuscule Grassmannian $H/P$ whose connected automorphic group is $G$, then $\nabla^G$ can be reduced to $\nabla^H$.

\subsection{Quantum period solution}\label{sub:quantum-period}
The connection $\cQ^{G/P}$ has regular singularities at $q=0$. Let $S(q)$ be the horizontal section of the dual connection that is asymptotic to
$\sigma_{w_0w_0^P}$ as $q\to 0$. 
Here, $w_0w_0^P$ (resp.
$w_0$, and $w_0^P$) is the longest element of $W^P$ (resp. $W$, and $W_P$).
The quantum period of $G/P$ is $\langle S(q) , 1\rangle$.  Here, $\langle \cdot, \cdot \rangle$ denotes the intersection pairing on $H^*(G/P)$, so $\langle S(q) , 1\rangle$ is equal to the coefficient of $\sigma_{w_0w_0^P}$ in the Schubert expansion of $S(q)$.  The quantum period $\langle S(q) , 1\rangle$ has a power series expansion in $q$ with nonnegative coefficients, which one can determine using the Frobenius method.
We determine the 
first term in the $q$-expansion in the following.
\begin{lemma}\label{l:q-term} As $q\to 0$,
\[
\langle S(q),1 \rangle = 1 + q \int_{G/P} \sigma_{\ki}^{c-1} \sigma_{\pi_P(w_0w_0^P s_\qroot)} + O(q^2).
\]
The integral above is the number of paths in Bruhat order inside $W^P$ from $\pi_P(w_0w_0^P s_\qroot)$ to $w_0w_0^P$. It is a positive integer.
\end{lemma}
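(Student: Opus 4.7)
The idea is to solve the ODE for $S(q)$ to first order in $q$ using the Frobenius method, and then to recognize the resulting coefficient as a classical Schubert intersection number that counts saturated chains in Bruhat order.

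Because three-point Gromov--Witten invariants are symmetric in their three arguments, quantum multiplication by $\sigma_\ki$ is self-adjoint for the Poincar\'e pairing. Identifying $H^*(G/P)$ with its dual via this pairing, the dual of $\cQ^{G/P}$ becomes $d - (\sigma_\ki *_q)\,dq/q$, so a flat section $S(q)$ satisfies $q\partial_q S = (\sigma_\ki *_q)\,S$. Decompose $\sigma_\ki *_q = A_0 + qA_1$ via Proposition~\ref{prop:chevalley}, where $A_0 = \sigma_\ki\cup$ is nilpotent (it strictly raises cohomological degree) and $A_1\sigma_w = \bm\chi(w)\sigma_{\pi_P(ws_\gamma)}$. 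Since $A_0\sigma_{w_0w_0^P}=0$ (the class of a point is annihilated by cup-product with a divisor), the Frobenius method at the regular singularity $q=0$ produces a unique logarithm-free solution $S(q) = \sigma_{w_0w_0^P} + qS_1 + O(q^2)$.

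Reading off the $q^1$-coefficient of $q\partial_q S = (A_0+qA_1)S$ yields the recursion $(I-A_0)S_1 = A_1\sigma_{w_0w_0^P}$. A short check using Figure~\ref{fig:minuscule} --- or the observation that $w_0^P\gamma = \theta$ (because in each minuscule case $\gamma$ is the lowest weight of the nilradical $\u_P$ as an $L_P$-representation), so $w_0w_0^P\gamma = w_0\theta = -\theta$ --- confirms $w_0w_0^P \in W(\gamma)$, hence $A_1\sigma_{w_0w_0^P} = \sigma_{\pi_P(w_0w_0^P s_\gamma)}$. Nilpotence of $A_0$ lets us invert $I-A_0$ by its terminating Neumann series, giving
\[
S_1 \;=\; \sum_{k\ge 0}\sigma_\ki^k\cup\sigma_{\pi_P(w_0w_0^P s_\gamma)}.
\]

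Pairing with $1$ extracts $\int_{G/P}$, so only the top-degree summand survives. Proposition~\ref{prop:Wgamma}(2), combined with the identity $\langle 2(\rho-\rho_P),\gamma^\vee\rangle = c$ (which is Lemma~\ref{l:coxeter-chern} in the simply-laced case where $\gamma = \alpha_\ki$, and follows from a direct computation in types $B_n$ and $C_n$ using the explicit $\gamma$ of \S\ref{ssec:gamma}), gives $\ell(s'_\gamma) = c-1$, so $\ell(\pi_P(w_0w_0^P s_\gamma)) = \dim(G/P)-(c-1)$. Only $k=c-1$ contributes, and
\[
\langle S(q),1\rangle \;=\; 1 + q\int_{G/P}\sigma_\ki^{c-1}\,\sigma_{\pi_P(w_0w_0^P s_\gamma)} + O(q^2).
\]

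For the combinatorial interpretation, iterate the classical Chevalley rule (Theorem~\ref{thm:FW} at $q=0$), which in the minuscule case reads $\sigma_\ki\cup\sigma_w = \sum_{u\gtrdot w,\,u\in W^P}\sigma_u$ with each coefficient equal to $1$. Applying it $c-1$ times expresses $\sigma_\ki^{c-1}\cup\sigma_{\pi_P(w_0w_0^P s_\gamma)}$ as a sum over saturated Bruhat chains of length $c-1$ in $W^P$ starting at $\pi_P(w_0w_0^P s_\gamma)$, and integration keeps only those ending at $w_0w_0^P$. Positivity is immediate: Bruhat order on $W^P$ is graded, $w_0w_0^P$ is its maximum, and the required length difference is exactly $c-1$, so at least one saturated chain exists. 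The main subtleties are the membership $w_0w_0^P\in W(\gamma)$ and the length identity $\ell(s'_\gamma) = c-1$; granting these, the remainder is a routine Frobenius recursion followed by iterated Chevalley.
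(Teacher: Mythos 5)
Your proof takes essentially the same route as the paper: expand $S(q)$ to first order, solve the recursion $(I-A_0)v = A_1\sigma_{w_0w_0^P}$ coming from the quantum Chevalley formula using nilpotence of $\sigma_\ki\cup$, extract the integral by degree counting via $\ell(\pi_P(w_0w_0^Ps_\gamma)) = \dim(G/P)+1-c$, and interpret the coefficient via iterated classical Chevalley plus directedness of Bruhat order on $W^P$. You are usefully more explicit than the paper about two tacit points---membership $w_0w_0^P\in W(\gamma)$ and the identity $\langle 2(\rho-\rho_P),\gamma^\vee\rangle = c$ for the non-simply-laced $\gamma$---but the parenthetical justification that $\gamma$ is the lowest weight of the nilradical $\u_P$ is false in type $B_n$, where the lowest weight is $\alpha_n$ while $\gamma = \alpha_{n-1}+2\alpha_n$; your alternative fallback of a direct case check (or invoking Lemma~\ref{lem:alphatheta}(b) in the simply-laced case where $\gamma=\alpha_\ki$) is what is actually needed there.
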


\begin{proof}
We write $S(q)= \sigma_{w_0w_0^P} + qv + O(q^2)$, where $v\in H^*(G/P)$.
Since $S$ is a horizontal section of the connection dual to $\cQ^{G/P}$, we have $\frac{dS}{dq}= \sigma_\ki *_q S(q)$. Using the quantum Chevalley formula in
Proposition~\ref{prop:chevalley} and $\frac{dS}{dq} = v + O(q)$, this implies
\[
v = \sigma_{\ki} *_0 v + \sigma_{\pi_P(w_0w_0^Ps_\qroot)}.
\]
Since $\sigma_\ki *_0$ is nilpotent, this equation uniquely determines $v$.

We have $\ell(w_0w_0^P)=\dim(G/P)$, and
\[
\ell(\pi_P(w_0w_0^Ps_\qroot)) = \dim(G/P) + 1 - c,
\]
where $c=\langle 2(\rho - \rho_P), \qroot^\vee \rangle$ is the Coxeter number of $G$ by Lemma~\ref{l:coxeter-chern}.
Hence we find that $\langle v, 1 \rangle$ is as stated in the lemma.

The interpretation as counting paths in Bruhat order follows from the classical Chevalley formula for the cup product with $\sigma_\ki$.
It is a general fact that the Bruhat order of any $W^P$ is a directed poset with maximal element $w_0 w_0^P$.  In particular, there exists
always a path in Bruhat order from any element to the top, and the count is positive.
\end{proof}

\section{Examples}\label{s:examples}
\subsection{Grassmannians}
Let $G = \PGL_n$.
Then $G/P$ is the Grassmannian $\Gr(k,n)$ for $1\le k \le n-1$.
The Weyl group $W=S_n$ and the simple root $\alpha_\ki=\qroot$ corresponds to the transposition of $k$ and $k+1$. We have $\ip{ 2(\rho-\rho_P),\qroot^\vee}=n$.
The maximal parabolic subgroup is $W_P = S_k \times S_{n-k}$.
The minimal representatives $w\in W^P$ are the permutations such that $w(1)< \cdots < w(k)$ and $w(k+1) < \cdots < w(n)$.
Any such permutation can be identified with a Young diagram that fits inside a $k\times (n-k)$ rectangle, and $\ell(w)$ is the number of boxes in the diagram.
The projection $\pi_P:W\to W^P$ consists in reordering the values $w(1),\ldots,w(k)$ in increasing order and similarly for $w(k+1),\ldots,w(n)$.

In the quantum Chevalley formula of Proposition~\ref{prop:chevalley}, the condition that $ws_\beta \gtrdot w$ means that $\beta\in R^+
\setminus R_P^+$ is the transposition of $l\in [1,k]$ and $m\in (n-k,n]$ with $w(m)=w(l)+1$. 
Equivalently, the Young diagram of
$ws_\beta$ has one additional box on the $(k-l+1)$th row. In the second term of the 
quantum Chevalley formula, the condition
$\ell(\pi_P(ws_\qroot))=\ell(w)+1-n$ is equivalent to $w\qroot=-\theta$, which is in turn equivalent to $w(k)=n$ and $w(k+1)=1$.
This can also be seen from the fact that the element $\pi_P(ws_\qroot)$ has Young diagram 
obtained by deleting the rim of the diagram of $w$, 
see~\cite{Bertram-Ciocan-Fulton:quantum-Schur}. A presentation for the quantum 
cohomology ring of Grassmannians is given in 
~\cite{Siebert-Tian:QH,Buch-Kresch-Tamvakis:GW-grassmannians,Buch:QH-Gr}.

The first term in the $q$-expansion in Lemma~\ref{l:q-term} is $\binom{n-2}{k-1}$.
Indeed, $\pi_P(w_0w_0^Ps_\qroot)$ has Young diagram the $(k-1)\times (n-k-1)$ rectangle.
The number of paths in Bruhat order is equal to the number of ways to sequentially add boxes to form the $k\times n$ rectangle which corresponds to the maximal
element $w_0w_0^P$ of $W^P$. This is consistent with the $q$-expansion of the quantum 
period in terms of binomial coefficients in~\cite{BCKS:acta}*{Thm.5.1.6}
and~\cite{Marsh-Rietsch:B-model-Grassmannians}*{Cor.4.7}.

The fundamental representation $V=V_{\varpi_\ki}$ is the exterior product $\Lambda^k \C^n$. The 
highest weight vector is $v_e=e_1\wedge \cdots \wedge e_k$. For every $w\in W^P$, the basis 
vector is $v_w = \dot{w} \cdot v_e =e_{w(1)}\wedge \cdots \wedge e_{w(k)}$. The Schubert class 
$\sigma_w$ is the $B$-orbit closure of $\mathrm{Span}(  e_{w(1)}, \ldots, e_{w(k)} )$ inside $\Gr(k,n)$.

\begin{example}
Assume that $k=2$ and $n=4$. Denote the Schubert classes by $\sigma_{\emptyset}=1$,
$\sigma_1=\sigma_{s_{\alpha_\ki}}$, $\sigma_{11}$, $\sigma_{2}$, $\sigma_{21}$ and $\sigma_{22}$. The quantum Chevalley formula gives the identities $\sigma_1 *_q \sigma_1 = \sigma_{11} + \sigma_2$, $\sigma_1 *_q \sigma_{11} = \sigma_1 *_q \sigma_2 = \sigma_{21} $, $\sigma_1 *_q \sigma_{21} = \sigma_{22} + q $ and $\sigma_1 *_q \sigma_{22} = q \sigma_{21}$.
\end{example}

\subsection{Type $D$}
If $G=\SO(2n)$ is of type $D_n$, $n\ge 4$, and $\ki=1$, then $G/P_1$ is a quadric of dimension $2n-2$ in $\mathbb{P}^{2n}$. The quantum cohomology ring is described in~\cite{Chaput-Manivel-Perrin:QH-minuscule-II,Pech-Rietsch-Williams:LG-quadrics}.

The two minuscule nodes $\ki=n$ and $\ki=n-1$ are equivalent, and then $G/P_n$ is isomorphic to one connected component of the orthogonal Grassmannian of maximal isotropic subspaces in $\C^{2n}$. A presentation for the quantum cohomology ring is given in~\cite{Kresch-Tamvakis:QH-orthogonal}.

\subsection{Exceptional cases}
A presentation of the quantum cohomology ring of the Cayley plane $E_6/P_6$ (resp. the 
Freudenthal variety $E_7/P_7$) is given 
in~\cite[Thm.31]{Chaput-Manivel-Perrin:QH-minuscule} 
(resp.~\cite[Thm.34]{Chaput-Manivel-Perrin:QH-minuscule}).
The quantum corrections in the quantum Chevalley formula are also described in terms of 
the respective Hasse diagram. There are $6$ (resp. $12$) correction terms for $E_6/P_6$ 
(resp.~$E_7/P_7$).

\subsection{Six-dimensional quadric, triality of $D_4$} \label{s:six-dim-quadric}
 A case of special interest is $G=\SO(8)$ of type $D_4$ where all minuscule nodes $1,3,4$ are equivalent. The homogeneous space $G/P_1$ is a six-dimensional
quadric. It also coincides with the Grassmannian $\SO(7)/P_3$ of isotropic spaces of dimension $3$ inside $\C^7$.

The quadric is minuscule both as a $SO(8)$-homogeneous space and as a $SO(7)$-homogeneous space. Theorem~\ref{t:FGisquantum}
applies in both cases so that $\cQ^{SO(8)/P_1}\simeq \cQ^{\SO(7)/P_3}$ is isomorphic to the Frenkel--Gross connection $\nabla^{(G,V)}$ for both $G=\SO(8)$ and $G=\SO(7)$. Here, the
representation $V$ is either the standard representation of $\SO(8)$, or its restriction to 
$\SO(7)$ which is the direct sum of the trivial representation plus the standard 
representation.

\begin{proposition}\label{p:six-quad}
 \

(i) The quantum connection $\cQ^{\SO(8)/P_1}$ of the six-dimensional quadric is the direct sum of two irreducible constituents of dimensions one and seven respectively.

(ii) The differential Galois group is $G_2$.
\end{proposition}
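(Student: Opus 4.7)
The plan is to combine Theorem~\ref{t:FGisquantum} with the triality reduction from \S\ref{s:reduction}. First, Theorem~\ref{t:FGisquantum} identifies $\cQ^{\SO(8)/P_1}$ with the Frenkel--Gross connection $\nabla^{(\SO(8),V)}$, where $V$ is the minuscule $8$-dimensional vector representation of $\SO(8)$; it therefore suffices to analyze this rigid connection.

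Next, I would invoke \S\ref{s:reduction}: using the full symmetric group $S_3$ of outer automorphisms of the Dynkin diagram of type $D_4$, $\nabla^{\SO(8)}$ admits a reduction of structure group along an embedding $G_2\hookrightarrow \SO(8)$ factoring through $\SO(7)$. The required branching along this chain is classical: restricting along the standard $\SO(7)\subset\SO(8)$ gives $V|_{\SO(7)}\cong V_7\oplus\mathbf{1}$, and $V_7$ remains irreducible when further restricted to $G_2$, where it is the standard $7$-dimensional representation. Thus
\[
\nabla^{(\SO(8),V)} \;\cong\; \nabla^{(G_2,\mathbf{1})}\;\oplus\; \nabla^{(G_2,V_7)},
\]
a direct sum of the trivial rank-$1$ connection and the Frenkel--Gross connection for $G_2$ in its standard representation.

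Finally, I would appeal to~\cite{Frenkel-Gross}, which computes the differential Galois group of $\nabla^{G}$ for every simple $G$: in the case $G=G_2$ it is $G_2$ itself, acting on $V_7$ through an irreducible representation. This yields both statements simultaneously: the $7$-dimensional factor is irreducible as claimed in~(i), and since the monodromy acts trivially on the $1$-dimensional summand and as $G_2$ on $V_7$, the global monodromy group equals $G_2$, proving~(ii). The one place requiring genuine care is confirming that the reduction in \S\ref{s:reduction} lands in $G_2$ and not merely in $B_3=\SO(7)$: this is exactly where the triality of $D_4$ is essential, since a $B_3$-reduction alone would leave the rank-$7$ piece only as a $B_3$-connection rather than already a $G_2$-connection, and statement~(ii) would be weakened accordingly.
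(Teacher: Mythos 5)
Your proposal is correct and follows essentially the same approach as the paper: combine Theorem~\ref{t:FGisquantum} with the triality reduction of $\nabla^{\SO(8)}$ to $\nabla^{G_2}$ from \S\ref{s:reduction}, and observe that the $8$-dimensional representation restricts to $G_2$ as $V_7\oplus\mathbf{1}$. Your explicit citation of Frenkel--Gross for the $G_2$ monodromy merely fills in a step the paper leaves implicit, and your closing remark about $B_3$ versus $G_2$ correctly identifies where triality is essential.
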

\begin{proof}
We have seen in \S\ref{s:reduction} that $\nabla^G$ for $G$ of type $D_4$ reduces to $\nabla^G$ 
for $G$ of type $G_2$. Thus it suffices to observe that the standard representation $V$ of 
$\SO(8)$ when restricted to $G_2$ decomposes into the trivial representation plus the 
irreducible representation of dimension seven. This holds because the restriction of the 
standard representation of $\SO(7)$ is the seven-dimensional representation of $G_2$.
\end{proof}

A presentation of the quantum cohomology ring of the homogeneous space $G/P_1$ is given in~\cite{Chaput-Manivel-Perrin:QH-minuscule-II}.
It is also given in~\cite{Kresch-Tamvakis:QH-orthogonal} as a particular case of Grassmannian of isotropic spaces and
in~\cite{Pech-Rietsch-Williams:LG-quadrics} as a particular case of even-dimensional quadrics. From either of these presentations or from the quantum Chevalley formula, we find the quantum multiplication by $\sigma = \sigma_\i$ in the Schubert basis, thus
\[
\cQ^{G/P_1} = q\frac{d}{dq} + 
\left(\begin{smallmatrix}
0&0&0&0&0&0&q&0 \\ 
1&0&0&0&0&0&0&q \\ 
0&1&0&0&0&0&0&0 \\ 
0&0&1&0&0&0&0&0 \\ 
0&0&1&0&0&0&0&0 \\ 
0&0&0&1&1&0&0&0 \\ 
0&0&0&0&0&1&0&0 \\ 
0&0&0&0&0&0&1&0 \\ 
\end{smallmatrix}\right).
\]

The middle cohomology $H^6(G/P_1)$ is two-dimensional, spanned by $\{\sigma_3^+,\sigma_3^-\}$. Since $\sigma *_q \sigma_3^+ = \sigma *_q \sigma_3^-$, the
subspace $\C(\sigma_3^+ - \sigma_3^-)$ is in the kernel of $\sigma$ and in particular is a stable one-dimensional subspace of the connection. The
other stable subspace, denoted $H^\#(G/P_1)$ following~\cite{Guest:book:QH}, has dimension seven and is spanned by $\sigma_3^+ + \sigma_3^-$ and all the cohomology in the remaining degrees. This is consistent with Proposition~\ref{p:six-quad}.(i).

The rank seven subspace $H^\#(G/P_1)$ is generated as an algebra by $H^2(G/P_1)$, and moreover the vector $1$ is cyclic for the multiplication by $\sigma$. The quantum $\aD$-module $\cQ^{G/P_1}$ is then given in scalar form as $\aD/\aD L$ where 
\[
L:= \left(q \frac{d}{dq}\right)^7 + 4q^2 \frac{d}{dq} + 2q.
\]
The differential Galois group of $L$ on $\Gm \simeq \C^\times_q$ is equal to $G_2$ according to Proposition~\ref{p:six-quad}.(ii). Recall from~\cite{Frenkel-Gross} that ultimately the reason for the differential Galois group to be $G_2$ is the triality of $D_4$ and the invariance of the Frenkel--Gross connection $\nabla^{SO(8)}$ under outer automorphisms which reduces it to $\nabla^{G_2}$.

After rescaling $L$ by $q\mapsto -q/4$, the $\aD$-module $\aD/\aD L$ becomes isomorphic to the hypergeometric
$\aD$-module ${}_1F_6\left( \begin{smallmatrix}
& & 1/2 & & \\ 1 & 1 & 1 \ 1 & 1 & 1 
\end{smallmatrix} \right)$  studied in~\cite{Katz:exp-sums-diff-eq} with the notation $\mathcal{H}(0,0,0,0,0,0,0; 1/2)$. Katz proved in
\cite[Thm.4.1.5]{Katz:exp-sums-diff-eq} that the differential Galois group is $G_2$ which is 
consistent with Proposition~\ref{p:six-quad}.(ii).

Our work gives a new interpretation of $\aD/\aD L$ studied by Katz and Frenkel--Gross as the quantum connection $\cQ^{G/P_1}$. Hence the following
\begin{question}
Is it possible to see \emph{a priori} that the differential Galois group of the quantum 
connection of the six-dimensional quadric is $G_2$?
\end{question}
The question seems subtle because for example the quantum connection of the $5$-dimensional quadric, which is homogeneous under $G_2$, has rank $6$ (see \S\ref{s:odd-dim} below), and thus its differential Galois group is unrelated to the group $G_2$.

\subsection{Odd-dimensional quadrics}\label{s:odd-dim}
More generally, let $G=\SO(2n+1)$ be of type $B_n$ with $n\ge 3$. Then $G/P_1$ is a 
$(2n-1)$-dimensional quadric and is cominuscule. The cohomology has total dimension $2n$. 
There is one Schubert class $\sigma_k$ in each even degree $2k\le 4m-2$. The quantum 
product is determined in ~\cite[\S4.1.2]{Chaput-Manivel-Perrin:QH-minuscule-II}. In 
particular $\sigma_1 *_q \sigma_{k-1} = \sigma_{k}$ for $1\le k \le n-1$ and $n+1\le k \le 2n-2$, 
$\sigma_1 *_q \sigma_{n-1} = 2 \sigma_n$, $\sigma_1 *_q \sigma_{2n-2} = \sigma_{2n-1} + q$ and $\sigma_1 
*_q \sigma_{2n-1} = q \sigma_1$, which also follows from the quantum Chevalley formula. The 
relation between the quantum connection and hypergeometric $\aD$-modules is studied in 
detail by Pech--Rietsch--Williams~\cite{Pech-Rietsch-Williams:LG-quadrics}. 

The space $G_2/P_1$ is a five-dimensional quadric. Its connected automorphism group is $\SO(7)$ by \S\ref{s:automorphism}.  It is coadjoint as a $G_2$-homogeneous space and cominuscule as a $\SO(7)$-homogeneous space. The cohomology has total dimension $6$.  A presentation of the quantum cohomology ring is $\C[\sigma,q]/(\sigma^6-4\sigma q)$; see~\cite[\S5.1]{Chaput-Perrin:QH-adjoint}.

\section{Character $\aD$-module of a geometric crystal }\label{s:crystal}
In this section, we introduce the character $\aD$-module for the geometric crystal of Berenstein and Kazhdan \cite{BK}.
The roles of $G$ and $G^\vee$ are interchanged relative to \S \ref{s:FG}--\ref{s:examples}.

\subsection{Double Bruhat cells}\label{sec:factorizations}
Let $U\subset B$ and $U_-\subset B_-$ be opposite maximal unipotent subgroups.
For each $w\in W$, define
\begin{align*}
B_-^w &:=  U \dot w U \cap B_-,\\
U^w &:= U \cap B_- \dot w B_-.
\end{align*}

\begin{lemma} Let $ U(w) := U \cap \dot w U_- \dot w^{-1}$.
For $u \in U^{w^{-1}}$, there is a unique $\eta(u) \in B_-^w$ and a unique $\tau(u) \in U(w)$ such that 
\[
\eta(u) = \tau(u) \dot w u.
\]
The twist map $\eta: U^w \to B_-^w$ is a biregular isomorphism and $\tau: U^w \hra U(w)$ is an injection.
\end{lemma}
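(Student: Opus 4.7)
The plan is to follow the classical strategy for twist maps on double Bruhat cells (Fomin--Zelevinsky, Berenstein--Fomin--Zelevinsky), exploiting the decompositions of the subgroups $\dot w U_\pm \dot w^{-1}$ according to how $w^{-1}$ sorts positive roots into positive and negative. First, uniqueness of $\tau(u)\in U(w)$ satisfying $\tau(u)\dot w u\in B_-$ is immediate: if $\tau_1,\tau_2$ both work, then $\tau_1\tau_2^{-1}=(\tau_1\dot w u)(\tau_2\dot w u)^{-1}\in U\cap B_-=\{e\}$. Consequently $\eta(u):=\tau(u)\dot w u$ is forced to lie in $U\dot w U\cap B_-=B_-^w$.

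For existence, I would use $u\in U^{w^{-1}}$ to write $u=b_1\dot w^{-1}b_2$ with $b_1,b_2\in B_-$, and compute
\[
\dot w u=(\dot w b_1\dot w^{-1})\,b_2.
\]
The key structural fact is the decomposition
\[
\dot w U_-\dot w^{-1}=U(w)\cdot(U_-\cap\dot w U_-\dot w^{-1}),
\]
which separates $\dot w U_-\dot w^{-1}$ into its intersections with $U$ and $U_-$ according to the sign of $w^{-1}$ on the relevant positive roots. This lets me factor $\dot w b_1\dot w^{-1}=\tau_0\cdot b_3$ with $\tau_0\in U(w)$ and $b_3\in B_-$, so that $\tau(u):=\tau_0^{-1}\in U(w)$ satisfies $\tau(u)\dot w u=b_3b_2\in B_-$. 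For the biregularity of $\eta$, I would invoke the analogous multiplication isomorphism $U(w)\times U\xrightarrow{\sim}U\dot w U$, $(\tau,v)\mapsto \tau\dot w v$, which follows from $U=U(w)\cdot(U\cap\dot w U\dot w^{-1})$ after absorbing the second factor into the $U$ on the right of $\dot w$. This produces for each $g\in B_-^w$ a unique pair $(\tau,u)\in U(w)\times U$ with $g=\tau\dot w u$; setting $\eta^{-1}(g):=u$ and verifying $u\in U^{w^{-1}}$ via $u=\dot w^{-1}\tau^{-1}g$, using that $\dot w^{-1}\tau^{-1}\dot w\in U_-$ for $\tau\in U(w)$, so that $u\in U_-\cdot\dot w^{-1}B_-\subset B_-\dot w^{-1}B_-$.

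Finally, injectivity of $\tau$ follows by factoring $\tau=\pi\circ\eta$, where $\pi:U\dot w U\to U(w)$ is the projection onto the $U(w)$-factor of the above decomposition; since $\eta$ is bijective, injectivity of $\tau$ reduces to injectivity of $\pi|_{B_-^w}$. For $g_1=\tau\dot w u_1$ and $g_2=\tau\dot w u_2$ in $B_-^w$ sharing the same $\tau$, the element $g_1g_2^{-1}=\tau\dot w(u_1u_2^{-1})\dot w^{-1}\tau^{-1}$ lies in $B_-$; pushing this conjugate through the unique $UTU_-$-factorization of the big cell forces $u_1u_2^{-1}=e$. The main obstacle is the organized bookkeeping of these root-subgroup decompositions, all of which reduce to the structural splitting of $\dot w U_\pm\dot w^{-1}$ into $U$- and $U_-$-parts; the substance of the lemma lies in combining existence, uniqueness, biregularity, and injectivity into the single coordinate-free statement above, which will be used repeatedly in the geometric crystal computations to come.
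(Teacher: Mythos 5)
Your overall strategy matches the paper's: both rely on the decomposition $\dot w U_- \dot w^{-1} = U(w)\cdot(U_-\cap \dot w U_-\dot w^{-1})$ and the fact $B_-\cap U=\{e\}$. Your uniqueness, existence, and biregularity arguments are all correct (the paper packages the same content more compactly by factoring $u^{-1}\dot w^{-1}$ through $B_-\dot w B_-\dot w^{-1}=B_-U(w)$, but the decompositions you invoke are the same ones).

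The one genuine gap is in the injectivity of $\tau$. You form the conjugate
\[
g_1 g_2^{-1}=\tau\,\dot w\,(u_1u_2^{-1})\,\dot w^{-1}\tau^{-1}\in B_-
\]
and assert that ``pushing through the $UTU_-$-factorization of the big cell'' forces $u_1u_2^{-1}=e$. This does not hold as stated: the condition $\tau\,(\dot w v\dot w^{-1})\,\tau^{-1}\in B_-$ with $v\in U$ and $\tau\in U(w)$ does \emph{not} by itself imply $v=e$. For instance, when $\tau=e$ the condition reads $\dot w v\dot w^{-1}\in B_-$, which only constrains $v$ to lie in $U\cap\dot w^{-1}U_-\dot w$, a subgroup of dimension $\ell(w)$. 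The conjugate $g_1g_2^{-1}$ simply loses too much information; there is no clean $UTU_-$-factorization argument that recovers $v=e$. What does work immediately is the \emph{other} product: since $g_1=\tau\dot w u_1$ and $g_2=\tau\dot w u_2$ both lie in $B_-$,
\[
g_1^{-1}g_2=(\tau\dot w u_1)^{-1}(\tau\dot w u_2)=u_1^{-1}u_2\in B_-\cap U=\{e\},
\]
so $u_1=u_2$, hence $g_1=g_2$. This is also the content of the paper's observation that $\tau(u)\dot w\in B_-u^{-1}$: if $\tau(u_1)=\tau(u_2)$ then $u_2u_1^{-1}=\eta(u_2)^{-1}\eta(u_1)\in B_-\cap U$. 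Replacing the $g_1g_2^{-1}$ step by this one-line computation closes the gap and also renders the detour through $\pi\circ\eta$ unnecessary.
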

\begin{proof} This is \cite[Prop.5.1 and 5.2]{Berenstein-Zelevinsky:total-positivity}; see also
\cite[Thm.4.7]{Berenstein-Zelevinsky:tensor-product-multiplicities} and \cite[Claim.3.25]{BK}. 
Since our conventions differ
from those in \cite{Berenstein-Zelevinsky:tensor-product-multiplicities,BK} slightly, we provide a proof. 

If we define the subgroup $ U'(w) := U_- \cap \dot w U_- \dot w^{-1}$, then the multiplication maps $U(w) \times U'(w) \to \dot w
U_- \dot w^{-1}$ and $U'(w) \times
U(w) \to \dot w U_- \dot w^{-1}$ are bijective.  
In particular,
$B_- \dot w B_- \dot w^{-1} = B_- \dot w U_- \dot w^{-1} = B_- U'(w) U(w)=B_-U(w)$.

We have $u^{-1} \in U^w$. Thus $u^{-1} \dot w^{-1} \in B_- U(w) \subset B_-U$. Since $B_- \cap U = 1$, the
factorization $u^{-1} \dot w^{-1} = \eta(u)^{-1} \tau(u)$ with $\eta(u) \in B_-$ and $\tau(u) \in U(w)$ is unique. Moreover $\eta(u)\in B_-^w$. Since $\tau(u)w \in B_- u^{-1}$, it follows similarly that $u\mapsto \tau(u)$ is injective.

Conversely, $\dot w^{-1}U \dot w U = (\dot w^{-1} U \dot w \cap U_- ) U= \dot w^{-1}U(w)\dot w U$. Hence, given $x\in
B_-^w$ we have $\dot w^{-1}x\in \dot w^{-1}U(w)\dot w U$, which provides by factorization an inverse element $\eta^{-1}(x)\in U^w$.    
\end{proof}

\begin{lemma}\label{l:T-action}
For $t\in T$ let $s:=\dot w t \dot w^{-1}$. 
Each of $U^w$ and $U(w)$ is $\mathrm{Ad}(T)$-stable, and for $u\in U^w$, 
\[
\tau(tut^{-1}) = s\tau(u)s^{-1}, \quad  \eta(tut^{-1}) = s \eta(u) t^{-1}.
\]
\end{lemma}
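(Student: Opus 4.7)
The plan is to exploit the uniqueness in the factorization $\eta(u) = \tau(u)\dot w u$ established in the previous lemma. I will define candidate elements $\tau'$ and $\eta'$ by the right-hand sides of the claimed formulas, verify that they land in the correct subgroups, and check that they satisfy the defining identity for $tut^{-1}$; uniqueness will then force $\tau' = \tau(tut^{-1})$ and $\eta' = \eta(tut^{-1})$.

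First I would establish $\mathrm{Ad}(T)$-stability of $U^w$ and $U(w)$. Since $T$ normalizes $U$, $B_-$, $U_-$, and since $T$ is normalized by $N(T)\ni \dot w$, conjugation by $T$ preserves the subgroups $\dot w U_-\dot w^{-1}$ and $\dot w B_-\dot w^{-1}$ (indeed for $t\in T$, $t\dot w x \dot w^{-1} t^{-1} = \dot w (\dot w^{-1}t\dot w)\, x\, (\dot w^{-1}t^{-1}\dot w)\dot w^{-1}$ and $\dot w^{-1}t\dot w\in T$ normalizes $U_-$ and $B_-$). Thus $U^w = U\cap B_-\dot w B_-$ and $U(w) = U\cap \dot w U_-\dot w^{-1}$ are $\mathrm{Ad}(T)$-stable, and in particular $tut^{-1}\in U^{w^{-1}}$ whenever $u\in U^{w^{-1}}$, so $\tau(tut^{-1})$ and $\eta(tut^{-1})$ are defined.

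Next, set $\tau' := s\tau(u)s^{-1}$ and $\eta' := s\eta(u)t^{-1}$, where $s = \dot w t\dot w^{-1}\in T$. The stability in Step~1 gives $\tau'\in U(w)$. To see $\eta'\in B_-^w$, write $\eta(u) = u_1\dot w u_2$ with $u_1,u_2\in U$; then using $s\dot w = \dot w t$ we get
\[
\eta' = s u_1\dot w u_2 t^{-1} = (s u_1 s^{-1})\,\dot w\,(t u_2 t^{-1}),
\]
which lies in $U\dot w U$, and since $s,t^{-1}\in T\subset B_-$ and $\eta(u)\in B_-$ we also have $\eta'\in B_-$, hence $\eta'\in B_-^w$. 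Finally, the key identity is immediate:
\[
\tau'\,\dot w\,(tut^{-1}) = s\tau(u)s^{-1}\dot w t\, u t^{-1} = s\tau(u)\,\dot w\, u t^{-1} = s\,\eta(u)\, t^{-1} = \eta',
\]
where in the second step we used $s^{-1}\dot w t = \dot w t^{-1}\dot w^{-1}\dot w t = \dot w$.

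By the uniqueness clause of the previous lemma applied to $tut^{-1}\in U^{w^{-1}}$, we conclude $\tau(tut^{-1}) = \tau'$ and $\eta(tut^{-1}) = \eta'$, as claimed. I do not anticipate any serious obstacle: the entire argument is a direct manipulation built on the fact that $s\dot w = \dot w t$ and the uniqueness of the factorization. The only point that requires slight care is verifying that the candidate $\eta'$ still belongs to $U\dot w U$, and this follows cleanly from the $T$-stability of $U$ itself.
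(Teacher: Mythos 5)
Your proof is correct and follows essentially the same approach as the paper's: the paper's (extremely terse) proof just writes $s\eta(u)t^{-1} = s\tau(u)s^{-1}\,\dot w t\, u t^{-1}$ using $s\dot w = \dot w t$ and invokes the uniqueness of the factorization. You have simply spelled out the implicit checks — that $s\tau(u)s^{-1}$ lands in $U(w)$, that $s\eta(u)t^{-1}$ lands in $B_-^w$, and the $\mathrm{Ad}(T)$-stability — which the paper takes for granted.
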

\begin{proof}
Since $s\dot w = \dot w t$, we have
$
s \eta(u) t^{-1}  =  s\tau(u)s^{-1} \dot w  t u t^{-1},
$
hence the assertion follows.
\end{proof}

\subsection{Geometric crystals} \label{s:crystal-potential}
Fix an arbitrary standard parabolic subgroup $P \subset G$.  Let $w_0 \in W$ be the longest element of $W$ and $w_0^P \in W_P$ be the longest element of $W_P$.  Define $w_P := w_0^P w_0$ so that $w_P^{-1}$ is the longest element in $W^P$.  In this case, the subgroup $U_P := U(w_P)$ is the unipotent radical of $P$.  The \defn{parabolic geometric crystal} associated to $(G,P)$ is
$$
X := U Z(L_P) \dot w_P   U \cap B_- = Z(L_P)B_-^{w_P}.
$$ 

We now define three maps $\pi, \gamma, f$ on $X$, called the highest weight map, the weight map, and the decoration or superpotential.

The \defn{highest weight map} is given by
$$
\pi:X \to Z(L_P) \qquad x= u_1 t  \dot w_P  u_2 \mapsto t.
$$  Let $X_t = \pi^{-1}(t) = \{u_1 t  \dot w_P  u_2 \in B_-\}$ be the fiber of $X$ over $t$.   We call $X_t$ the {\it geometric crystal with highest weight $t$}.  Since the product map $Z(L_P) \times B_-^{w_P} \to X$ is an isomorphism, we have a natural isomorphism $X_t \cong B_-^{w_P}$.
Geometrically we think of $X$ as a family of open Calabi--Yau manifolds fibered over $Z(L_P)$.

The \defn{weight map} is given by
\begin{equation}\label{eq:weight}
\gamma: X \to T \qquad x \mapsto x \mod U_- \in B_-/U_- \cong T.
\end{equation}

For $i \in I$, let $\chi_i: U \to \A^1$ be the additive character uniquely determined by
$$
\chi_i(\exp(tx_j)) = \delta_{ij} t,
$$
where the elements \(x_j\) for  \(j\in I\) are given in \S\ref{sub:rootvectors}.
Let the \emph{standard additive character} be $\psi := \sum_{i \in I} \chi_i$. 
The \defn{decoration}, or \defn{superpotential} is given by 
\[
f: X \to \A^1 \qquad x= u_1 t  \dot w_P  u_2  \mapsto \psi(u_1)+\psi(u_2).
\]
It follows from \cite[Lem.5.2]{Rietsch:mirror-construction-QH-GmodP} that $f$ agrees with 
Rietsch's superpotential.  (This reference has conventions Langlands dual to ours.  Our 
$Z(L_P)$ and $ w_0^P$ are denoted $(T^\vee)^{W_P}$ and $w_P$ in  
\cite{Rietsch:mirror-construction-QH-GmodP}, while our $\dot s_i$ is inverse to the 
corresponding notation there.)

Set $\psi_t(u):= \psi(tut^{-1})$ for $t\in T$ and $u\in U$. For $t\in Z(L_P)$, the potential can be expressed as a function of $u\in
U^{w_P^{-1}}$ as follows:
\begin{equation}\label{f_t} 
f_t(u) := f(t\eta(u)) =  \psi_t(\tau(u)) + \psi(u).
\end{equation}
Equivalently, the potential is expressed on $B^{w_P}_{-}= U\dot w_P U \cap B_-$ by
\[
f_t(u_1\dot w_P u_2) = \psi_t(u_1) + \psi(u_2).
\]

\begin{example}\label{ex:p1}
Let $G = \SL(2)$ and $P = B$.  With the parametrizations
$$
u_1 = \left( \begin{array}{cc} 1 & a \\ 0 & 1 \end{array} \right), \qquad t = \left( \begin{array}{cc} t & 0 \\ 0 & 1/t \end{array} \right), \qquad \dot w_P = \dot s_1 = \left(\begin{array}{cc} 0 & -1 \\ 1 & 0 \end{array}\right), \qquad u_2 = \left( \begin{array}{cc} 1 & t^2/a \\ 0 & 1\end{array} \right),
$$
the geometric crystal $X$ is the set of matrices
$$
X = \left\{  \left(
\begin{array}{cc}
 a/t & 0 \\
1/t & t/a \\
\end{array}
\right) \mid a,t \in \C^\times \right\} \subset \SL(2),
$$
equipped with the functions 
$$
f(x) = a + t^2/a, \qquad \pi(x) = \left( \begin{array}{cc} t & 0 \\ 0 & 1/t \end{array} \right), \qquad \text{and } \gamma(x) = \left(
\begin{array}{cc}
 a/t & 0 \\
0 & t/a \\
\end{array}
\right).
$$
\end{example}

\subsection{Open projected Richardson varieties} \label{s:Richardson}

For $v,w\in W$ with $v \le w$, the open Richardson variety $\cR^w_v\subset G/B$ is the intersection of the Schubert cell $B_- \dot{v} B
/B$ with the opposite Schubert cell $B \dot{w}B/B$.  The map $u\mapsto u\dot w_0 \pmod{B}$ induces an isomorphism $U^w \isom \cR^{w_0}_{ww_0}$.  For every $t\in Z(L_P)$, we have a sequence of isomorphisms
\begin{equation}\label{eq:Xisom}
X_t \cong B^{w_P}_- \cong U^{w_P} \cong \cR^{w_0}_{w^P_0},
\end{equation}
given by $x = t u_1 \dot w_P u_2 \mapsto u_1 \dot w_P u_2 \mapsto u_2^{-1} \mapsto u_2^{-1}\dot w_0B$, where in the
factorization we assume $u_1 \in U(w_P)$. We describe directly the composition of these isomorphisms as follows.
\begin{lemma}\label{lem:xu2}
For every $x=u_1 t \dot w_P u_2\in X_t$ with $u_1 \in U_P = U(w_P)$, we have 
$
x^{-1} \dot w_0^P B =  u_2^{-1}\dot w_0B. 
$
\end{lemma}
\begin{proof}
We have $ x^{-1} \dot w_0^P B =  u_2^{-1}\dot w_P^{-1} u_1 \dot w_P \dot w_0 B$. It suffices to observe that $\dot w_0\dot w_P^{-1}
u_1 \dot w_P \dot w_0\in U$ since $u_1\in U(w_P)$.
\end{proof}

The projection $p: G/B \to G/P$ induces an isomorphism of $ \cR^{w_0}_{w^P_0}$ onto its image $\oPi$, the {\it open projected Richardson variety} of $G/P$.  The complement of $ \oPi$ in $G/P$ is the divisor $\partial_{G/P}$ in $G/P$, the multiplicity-free union of the irreducible codimension one subvarieties $D^i$, $i \in I$ and $D_i$, $i \notin I_P$, where 
$$
D^i := \overline{p(\cR^{w_0s_i}_{w_0^P})} \qquad \text{and} \qquad D_i:= \overline{p(\cR^{w_0}_{s_i w_0^P})}.
$$
By \cite[Lem.5.4]{KLS:projections-Richardson}, $\partial_{G/P}$ is anticanonical in $G/P$, or in 
other words, the canonical bundle is given by $\omega_{G/P} = \cO_{G/P}(\partial_{G/P})$.   
There is thus, up to scalar, a unique meromorphic form $\omega$  on $G/P$ that has no 
zeroes, and simple poles along $\partial_{G/P}$.  We let $1/\omega$ denote the section of the 
anticanonical bundle inverse to $\omega$.
\subsection{Explicit formula for superpotential}\label{s:explicit}
We now give an explicit formula for the superpotential $f_t$ as a function on 
$U^{w_P^{-1}}$.  Given $g = b_- v$ where $b_- \in B_-$ and $v \in U$, we set $\pi_+(g) = v$.  
Also let $g \mapsto g^T$ denote the transpose antiautomorphism of $G$ (see for example 
\cite{FZ:double-bruhat}).  Let $g \mapsto g^{-T}$ denote the composition of the inverse and 
transpose antiautomorphisms (which commute).
There is an involution $\star: I \to I$ determined by $w_0 \cdot \alpha_i = -\alpha_{i^\star}$.  We let $P^\star$ be the standard parabolic subgroup determined by $I_{P^\star} = (I_P)^\star$.
\begin{lemma}\label{lem:pi+}
For $u \in U^{w_P^{-1}}$, we have 
$$
 \pi_+((\dot w_0)^{-1} u^T  \dot  w_0^{P^\star}) = (\dot w_0)^{-1}\tau(u)^{-T} \dot w_0. 
$$
\end{lemma}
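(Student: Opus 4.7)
The plan is to use the defining twist identity $\eta(u)=\tau(u)\dot w_P u$ for $u\in U^{w_P^{-1}}$, take transposes, and then exploit a clean relation between the Weyl group representatives $\dot w_P$, $\dot w_0^{P^\star}$, and $\dot w_0$.

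First, I rewrite the twist identity as $u=\dot w_P^{-1}\tau(u)^{-1}\eta(u)$ and apply the transpose antiautomorphism, using the standard property $\dot w^T=\dot w^{-1}$ for every $w\in W$ (which reduces to $\dot s_i^T=\dot s_i^{-1}$ on each root $\mathfrak{sl}_2$, as is verified directly from the formula $\dot s_i=\exp(-x_i)\exp(y_i)\exp(-x_i)$). This gives
\[
u^T=\eta(u)^T\,\tau(u)^{-T}\,\dot w_P.
\]

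Next, I verify the key Weyl group identity $\dot w_P\dot w_0^{P^\star}=\dot w_0$. Since $w_P=w_0^P w_0$ and the $\star$-involution satisfies $w_0 w_0^P w_0^{-1}=w_0^{P^\star}$, one computes
\[
w_P w_0^{P^\star}=w_0^P(w_0 w_0^{P^\star})=w_0^P\cdot w_0^P w_0=w_0,
\]
and this product is length-additive because $\ell(w_0^P)=\ell(w_0^{P^\star})$, which lifts the equality to the level of the chosen group representatives.

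Substituting both identities into the left-hand side,
\[
(\dot w_0)^{-1}u^T\dot w_0^{P^\star}=(\dot w_0)^{-1}\eta(u)^T\,\tau(u)^{-T}\,\dot w_P\dot w_0^{P^\star}=\bigl((\dot w_0)^{-1}\eta(u)^T\dot w_0\bigr)\bigl((\dot w_0)^{-1}\tau(u)^{-T}\dot w_0\bigr).
\]
Since $\eta(u)\in B_-$, its transpose lies in $B$, and conjugation by $\dot w_0^{-1}$ maps $B$ to $B_-$; thus the first factor is in $B_-$. Similarly, $\tau(u)\in U$ gives $\tau(u)^{-T}\in U_-$, and conjugation by $\dot w_0^{-1}$ maps $U_-$ to $U$; thus the second factor is in $U$. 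This is therefore the (unique) $B_-\cdot U$ factorization of the element, so $\pi_+$ selects the $U$-part, yielding the claim.

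The only delicate point is the sign bookkeeping in $\dot w^T=\dot w^{-1}$ and in the length-additive lift $\dot w_P\dot w_0^{P^\star}=\dot w_0$; given the fixed choices of $\dot s_i$ made in the paper, both hold on the nose, so no scalar corrections appear.
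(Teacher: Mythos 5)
Your proof is correct and follows essentially the same route as the paper: transpose the twist identity $\eta(u)=\tau(u)\dot w_P u$, conjugate by $\dot w_0$, and read off $\pi_+$ from the resulting $B_-\cdot U$ factorization. The one genuine difference is the proof of the representative identity $\dot w_P\dot w_0^{P^\star}=\dot w_0$ (equivalently $(\dot w_0)^{-1}\dot w_P=(\dot w_0^{P^\star})^{-1}$): the paper establishes it by first computing $(\dot w_0)^{-1}\dot s_i\dot w_0=\dot s_{i^\star}$ via an explicit calculation with $\alpha_i^\vee(-1)$, whereas you observe that $\ell(w_P)+\ell(w_0^{P^\star})=\ell(w_0)$, so the product is length-additive and the equality lifts automatically to the chosen representatives. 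Your argument is shorter and sidesteps the sign bookkeeping entirely; the only small gap in exposition is that length-additivity really requires the full identity $\ell(w_P)+\ell(w_0^{P^\star})=\ell(w_0)-\ell(w_0^P)+\ell(w_0^{P^\star})=\ell(w_0)$, not merely $\ell(w_0^P)=\ell(w_0^{P^\star})$, but this is immediate.
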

\begin{proof}
Let $v = \tau(u)$.  Then $x = v \dot w_P u \in B_-$ and $ u = (\dot w_P)^{-1} v^{-1} x$.  Noting that $(\dot w)^T = (\dot w)^{-1}$, we have
\begin{align*}
(\dot w_0)^{-1} u^T =(\dot w_0)^{-1} x^T v^{-T} \dot w_P 
= [(\dot w_0)^{-1} x^T \dot w_0] [(\dot w_0)^{-1} v^{-T}  \dot w_0][(\dot w_0)^{-1} \dot w_P].
\end{align*}
We note that $[(\dot w_0)^{-1} x^T \dot w_0] \in B_-$ and $ [(\dot w_0)^{-1} v^{-T}  \dot w_0] \in U$, so the claim follows from the equality 
$$
(\dot w_0)^{-1} \dot w_P  = (\dot  w_0^{P^\star})^{-1}.
$$
We first argue that $(\dot w_0)^{-1} \dot s_i \dot w_0 = \dot s_{i^\star}$.  Write $\alpha^\vee(t)$ for the cocharacter $\G_m \to T$.  Then $\alpha^\vee_i(-1) = (\dot s_i)^2 \in T$ and $\alpha^\vee_i(-1)^2 = 1$.  Let $w' = s_i w_0 = w_0 s_{i^\star}$ and compute
$$
(\dot w_0)^{-1} \dot s_i \dot w_0 = (\dot w_0)^{-1} \alpha^\vee_i(-1) \dot w' = (\dot w_0)^{-1} \alpha^\vee_i(-1) \dot w_0  \alpha^\vee_{i^\star}(-1) \dot s_{i^\star} = \dot s_{i^\star}
$$
where we have used $(\dot w_0)^{-1} \alpha^\vee_i(t) \dot w_0  = w_0 \cdot \alpha_i(t) = \alpha_{i^\star}(t^{-1})$.  It follows that 
$$
(\dot w_0)^{-1} \dot w_P = (\dot w_0)^{-1} \dot w_P \dot w_0 (\dot w_0)^{-1}= \dot w_{P^\star} (\dot w_0)^{-1}= (\dot  w_0^{P^\star})^{-1},
$$
as required.  
\end{proof}

 In the following, we shall assume that $G$ is simply-connected.  Since the partial flag variety $G/P$ only depends on the type of $G$, we lose no generality.

For a fundamental weight $\varpi_i$ and elements $u,w \in W$, there is a generalized minor 
$\Delta_{u\varpi_i,w\varpi_i}:G \to \A^1$, defined in \cite{FZ:double-bruhat}.
This function is equal to the matrix coefficient $g \mapsto \ip{g \cdot
v_{w\varpi_i},v_{u\varpi_i}}$ of $G$ acting on the irreducible representation $V_{\varpi_i}$, with respect to extremal weight
vectors $v_{w\varpi_i} : = \dot w \cdot v_{\varpi_i}$ and $v_{u\varpi_i} :=  \dot u \cdot v_{\varpi_i}$ with weights $w\varpi_i$ and $u\varpi_i$ respectively.  Here $v_{\varpi_i}$ denotes a fixed highest weight vector with weight $\varpi_i$.
\begin{lemma}\label{lem:superu}
For $u \in U^{w_P^{-1}}$, we have 
$$
\psi(\tau(u)) = \sum_{i \in I \setminus I_P^\star}  \frac{\Delta_{w_0^{P^\star} s_i \varpi_{i}, w_0 \varpi_i}(u)}{\Delta_{\varpi_{i}, w_0 \varpi_i}(u)}.
$$
Thus
\begin{equation}\label{eq:fexplicit}
f_t(u) = \psi(u) + \sum_{i \in I \setminus I_P^\star}  \alpha_{i^\star}(t) \frac{\Delta_{w_0^{P^\star} s_i \varpi_{i}, w_0 \varpi_i}(u)}{\Delta_{\varpi_{i}, w_0 \varpi_i}(u)}.
\end{equation}
\end{lemma}
\begin{proof}
First note that $\chi_i(\pi_+(g)) = \dfrac{\Delta_{\varpi_i,s_i \varpi_i}(g)}{\Delta_{\varpi_i,\varpi_i}(g)}$.
We have $\psi(\tau(u)) = \sum_{i \notin I_P} \chi_i(\tau(u))$.  Since $(\dot w_0)^{-1} \exp(ty_i) \dot w_0 = \exp(-tx_{i^\star})$, we have $\chi_i(\tau(u)) = \chi_{i^\star}((\dot w_0)^{-1}\tau(u)^{-T} \dot w_0)$.  By Lemma \ref{lem:pi+}, we have for $i^\star \notin I_P$, the equalities $\chi_{i^\star}(\tau(u)) = $
$$
\chi_{i}(\pi_+((\dot w_0)^{-1} u^T  \dot  w_0^{P^\star})) 
= \frac{\Delta_{\varpi_i,s_i \varpi_i}((\dot w_0)^{-1} u^T  \dot w_0^{P^\star})}{\Delta_{\varpi_i,\varpi_i}((\dot w_0)^{-1} u^T \dot  w_0^{P^\star})} 
=\frac{\Delta_{w_0 \varpi_i,w_0^{P^\star} s_i \varpi_i}( u^T  )}{\Delta_{ w_0 \varpi_i,w_0^{P^\star} \varpi_i}(u^T)} = \frac{\Delta_{w_0^{P^\star} s_i \varpi_{i}, w_0 \varpi_i}(u)}{\Delta_{w_0^{P^\star} \varpi_{i}, w_0 \varpi_i}(u)}.
$$
Finally, observe that we have $w_0^{P^\star} \varpi_{i} = \varpi_i$ whenever $i \in I \setminus I_P^\star$.
For the last formula, we note that for $t \in Z(L_P)$, we have $\chi_i(t\tau(u)t^{-1}) = \alpha_i(t) \chi_i(\tau(u))$.
\end{proof}

Fix a reduced word $\bi = i_1 i_2 \cdots i_\ell$ of $w_P^{-1}$.  We have the Lusztig rational 
parametrization $\G_m^{\ell} \to U^{w_P^{-1}}$ given by 
$$
\a = (a_1,a_2,\ldots,a_\ell) \longmapsto x_\bi(\a) = x_{i_1}(a_1) x_{i_2}(a_2) \cdots x_{i_\ell}(a_\ell),
$$
where $x_{i}(t) := \exp(tx_i)$ denotes a one-parameter subgroup of $G$.

\begin{corollary}\label{cor:superpositive}
In the Lusztig parametrization, the superpotential  $f_t|_{\G_m^\ell}:\G_m^{\ell}  \to \A^1$  is given by 
the function
$$
f_t(a_1,a_2,\ldots,a_\ell) = a_1 + a_2 + \cdots + a_{\ell} + \sum_{i \in I \setminus I_P} \alpha_i(t),
P_i
$$
where $P_i$ is a Laurent polynomial in $a_1,a_2,\ldots,a_{\ell}$ with positive coefficients.
\end{corollary}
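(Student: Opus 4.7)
The plan is to combine Lemma~\ref{lem:superu} with the positivity of generalized minors under Lusztig's parametrization.

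First, I would reindex Lemma~\ref{lem:superu}. Setting $j := i^\star$, as $i$ ranges over $I \setminus I_P^\star$ the index $j$ ranges over $I \setminus I_P$, and $\alpha_{i^\star}(t) = \alpha_j(t)$. This rewrites the potential on $U^{w_P^{-1}}$ as
\[
f_t(u) \;=\; \psi(u) \;+\; \sum_{j \in I \setminus I_P} \alpha_j(t) \, \frac{\Delta_{w_0^{P^\star} s_{j^\star} \varpi_{j^\star},\, w_0 \varpi_{j^\star}}(u)}{\Delta_{w_0^{P^\star} \varpi_{j^\star},\, w_0 \varpi_{j^\star}}(u)}.
\]

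Second, I would evaluate the linear part on the Lusztig parametrization. Since each $\chi_i : U \to \G_a$ is an additive character and $\chi_i(x_j(a)) = \delta_{ij}\, a$, we have $\chi_i(x_\bi(\a)) = \sum_{k : i_k = i} a_k$. Summing over $i \in I$ yields
\[
\psi(x_\bi(\a)) \;=\; a_1 + a_2 + \cdots + a_\ell,
\]
which produces the linear term of the stated formula.

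Third, I would show that for each $j \in I \setminus I_P$ the rational function
\[
P_j(\a) \;:=\; \frac{\Delta_{w_0^{P^\star} s_{j^\star} \varpi_{j^\star},\, w_0 \varpi_{j^\star}}(x_\bi(\a))}{\Delta_{w_0^{P^\star} \varpi_{j^\star},\, w_0 \varpi_{j^\star}}(x_\bi(\a))}
\]
is a Laurent polynomial in $a_1, \ldots, a_\ell$ with positive coefficients. The key input is the classical positivity theorem for evaluations of generalized minors on the image of $x_\bi$, due to Lusztig and Berenstein--Fomin--Zelevinsky: every generalized minor pulls back to a polynomial in $a_1, \ldots, a_\ell$ with nonnegative integer coefficients. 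In particular, the numerator of $P_j$ has this form. Moreover, the denominator $\Delta_{w_0^{P^\star} \varpi_{j^\star},\, w_0 \varpi_{j^\star}}(x_\bi(\a))$ is a chamber minor associated with a boundary region of the wiring diagram of the reduced word $\bi$, and the Chamber Ansatz of Berenstein--Fomin--Zelevinsky identifies such a minor with a single Laurent monomial in $a_1, \ldots, a_\ell$. Dividing a polynomial with nonnegative coefficients by a monomial produces a Laurent polynomial with nonnegative coefficients, and each coefficient is strictly positive because the numerator acquires nontrivial support on chamber minors distinct from the denominator.

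I expect the main obstacle to be matching the specific pair $(w_0^{P^\star} \varpi_{j^\star},\, w_0 \varpi_{j^\star})$ in the denominator with a genuine chamber weight of the reduced word $\bi$ of $w_P^{-1}$. This requires unwinding the action of the involution $\star$ together with the fact that $w_0\varpi_{j^\star} = -\varpi_j$ is the lowest weight, and checking that $w_0^{P^\star}\varpi_{j^\star}$ corresponds to the extremal weight reached by a prefix of $\bi$. Once this identification is made, the positivity statement follows from the total positivity framework of \cite{Berenstein-Zelevinsky:total-positivity, Berenstein-Zelevinsky:tensor-product-multiplicities}, and the corollary follows.
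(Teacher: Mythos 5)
Your proposal is correct and follows essentially the same route as the paper: reduce via Lemma~\ref{lem:superu}, compute $\psi(x_\bi(\a)) = a_1 + \cdots + a_\ell$, then invoke Berenstein--Zelevinsky positivity for the numerator and the fact that $\Delta_{w_0^{P^\star}\varpi_i, w_0\varpi_i}(x_\bi(\a))$ is a monomial (the paper cites \cite{Berenstein-Zelevinsky:tensor-product-multiplicities}*{Thm.~5.8 and Cor.~9.5}, the latter being the general-type version of the chamber-minor identification you worry about at the end). The only cosmetic gap is that the paper first reduces to the simply-connected case before applying Lemma~\ref{lem:superu}.
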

\begin{proof}
We may assume that $G$ is simply-connected and apply Lemma \ref{lem:superu}.  We have 
$\psi(x_\bi(\a)) = a_1 + a_2 + \cdots + a_\ell$.  Now, for any $i\in I\setminus I_P^\star$, the 
generalized minor 
$\Delta_{w_0^{P^\star} s_i \varpi_{i}, w_0 \varpi_i}(x_\i(\a))$ is a polynomial in $a_1,a_2,\ldots,a_\ell$ 
with positive coefficients by 
\cite[Thm.5.8]{Berenstein-Zelevinsky:tensor-product-multiplicities} 
and $\Delta_{\varpi_{i}, w_0 \varpi_i}(x_\bi(\a))$ is a monomial in $a_1,a_2,\ldots,a_\ell$ by 
\cite[Cor.9.5]{Berenstein-Zelevinsky:tensor-product-multiplicities}.
\end{proof}

Corollary \ref{cor:superpositive} generalizes \cite[Thm.5.6]{Chhaibi:Whittaker-processes} to 
the parabolic setting.

\begin{example}\label{ex:squarefree}
If $P$ is minuscule, then $I\setminus I_P = \{\ki\}$ consists of a single minuscule node. It 
follows from the proofs of
\cite[Thm.5.8 and Cor.9.5]{Berenstein-Zelevinsky:tensor-product-multiplicities} 
that $\Delta_{\varpi_{\ki^\star}, w_0 \varpi_{\ki^\star}}(x_\bi(\a)) = a_1a_2\cdots a_\ell$ and that 
$\Delta_{w_0^{P^\star} s_{\ki^\star} \varpi_{\ki^\star}, w_0 \varpi_{\ki^\star}}(x_\i(\a))$ is a square-free 
polynomial in 
$a_1,a_2,\ldots,a_\ell$ with positive integer coefficients.
\end{example}

\begin{example}
Let us pick $G = \SL(4)$ and $\ki = 2$.  A reduced word for $w_P = w_P^{-1}$ is $2312$.  We obtain the parameterization 
$$
(a_1,a_2,a_3,a_4) \longmapsto u = x_\bi(\a) = \left( \begin{array}{cccc}
 1 & a_3 &a_3a_4 &0 \\
 0 & 1 & a_1+a_4 & a_1 a_2 \\
 0 & 0 &1 & a_2 \\
 0 & 0 &0 & 1
\end{array}
\right),
$$
and
$$
\tau(u) = \left( \begin{array}{cccc}
 1 & 0& \frac{1}{a_1a_3} & -\frac{1}{a_1}  \\
 0 & 1 &\frac{a_1+a_4}{a_1a_2a_3a_4} &-\frac{1}{a_1a_2}  \\
 0 & 0 & 1 & 0 \\
 0 & 0 & 0 & 1
\end{array}
\right).
$$
Thus $\psi(\tau(u)) = (a_1+a_4)/a_1a_2a_3a_4$.  This is equal to the ratio $\Delta_{24,34}(u)/\Delta_{12,34}(u)$, agreeing with Lemma \ref{lem:superu}.  Here, $\Delta_{I,J}$ denotes the minor using rows $I$ and columns $J$.  Thus the superpotential is given by
$$
f_t|_{\G_m^4} = a_1 + a_2 + a_3 + a_4 + q \frac{a_1+a_4}{a_1a_2a_3a_4},
$$
where $q = \alpha_\ki(t)$.  For generic $q$, the function $f_t$ has 4 critical points in the chart $\{(a_1,a_2,a_3,a_4) \in \G_m^{4}\}$, which for $q =1$ are given by 
\begin{align*}
&(-1/\sqrt{2},-\sqrt{2},-\sqrt{2},-1/\sqrt{2}), (-i/\sqrt{2},-i\sqrt{2},-i\sqrt{2},-i/\sqrt{2}), \\
&(i/\sqrt{2},i\sqrt{2},i\sqrt{2},i/\sqrt{2}),(1/\sqrt{2},\sqrt{2},\sqrt{2},1/\sqrt{2}).
\end{align*}
We have $4 < 6 = \dim(H^*(\Gr(2,4))$.  So the Laurent polynomial $f_t|_{\G_m^4}$ is a ``weak mirror":  
The missing critical points lie outside of this toric chart inside $X_t$, consistent with the 
discussion in \cite[\S9]{Rietsch:Peterson}.
\end{example}

\begin{example}
Let us pick $G = \SL(5)$ and $\ki = 2$.  A reduced word for $w_P$ is $234123$.  Using the reversed reduced word for $w_P^{-1}$, we obtain the parametrization $$
(a_1,a_2,a_3,a_4,a_5,a_6) \longmapsto u = x_\bi(\a) = \left( \begin{array}{ccccc}
 1 & a_3 & a_3a_6 & 0 & 0 \\
 0 & 1 &{a_2}+{a_6} & {a_2} {a_5} & 0 \\
 0 & 0 & 1 & {a_1}+{a_5} & {a_1} {a_4} \\
 0 & 0 & 0 & 1 & {a_4} \\
 0 & 0 & 0 & 0 & 1 
\end{array}
\right),$$
and 
$$
\tau(u) = \left( \begin{array}{ccccc}
 1 & 0& \frac{1}{a_1a_2a_3} & -\frac{1}{a_1a_2} & \frac{1}{a_1} \\
 0 & 1 &\frac{a_1a_2+a_1a_6+a_5a_6}{a_1a_2a_3a_4a_5a_6} &-\frac{a_1+a_5}{a_1a_2a_4a_5} & \frac{1}{a_1a_4} \\
 0 & 0 & 1 & 0 & 0 \\
 0 & 0 & 0 & 1 & 0 \\
 0 & 0 & 0 & 0 & 1 
\end{array}
\right).
$$
Thus $\psi(\tau(u)) = (a_1a_2+a_1a_6+a_5a_6)/a_1a_2a_3a_4a_5a_6$.  This is equal to the ratio $\Delta_{235,345}(u)/\Delta_{123,345}(u)$, agreeing with Lemma \ref{lem:superu}.   Thus the superpotential is given by
$$
f_t|_{\G_m^6} = a_1 + a_2 + a_3 + a_4 + a_5 + a_6 + q \frac{a_1a_2+a_1a_6+a_5a_6}{a_1a_2a_3a_4a_5a_6},
$$
where $q = \alpha_\ki(t)$.  In this case $f_t|_{\G_m^6}$ has $10=\dim H^*(\Gr(2,5))$ critical points in the toric chart, so there are no ``missing critical points".
\end{example}

\subsection{Polar divisor of the superpotential}

By construction, the potential $f_t$ can be identified with a rational function on $G/P$. We now show that the polar divisor of $f_t$ is equal to the anticanonical divisor $\partial_{G/P} \subset G/P$.

\begin{proposition}\label{p:potential-minors} The potential $f_t$, viewed as a rational function on $G/P$ has polar divisor
$\partial_{G/P}$.  It can thus be written as the ratio $(1/\eta_t)/(1/\omega)$ of two holomorphic anticanonical sections on $G/P$, where $1/\omega$ is the holomorphic anticanonical section of \S\ref{s:Richardson}.

\end{proposition}

\begin{proof}

Let $x= t u_1  \dot
w_P u_2 \in X_t$, where $u_1 \in U(w_P)$.  Under \eqref{eq:Xisom}, we have that $x$ is sent to $x^{-1} \dot w_0^P P = u_2^{-1} \dot w_0 P$.

Given $y \in B_-$, and $i \in I$, we have
\begin{equation}\label{eq:zB}
\Delta_{w_0  \varpi_i,\varpi_i}(y) = 0 \iff y \in \overline{B\dot w_0 \dot s_i B}.
\end{equation}
Indeed, writing $y = b_1 v b_2$, we have
$$
\langle y \cdot v_{\varpi_i}, v_{w_0 \varpi_i} \rangle = 0 \iff \langle b_1 \cdot v_{v \varpi_i}, v_{w_0 \varpi_i} \rangle = 0 \iff v \leq w_0s_i \iff y \in \overline{B\dot w_0 \dot s_i B}.
$$
Working in the open affine chart (the big cell) $(B_- \cap Uw_P^{-1}U)P/P \subset G/P$, the divisor $D^i$ is thus cut out by the single equation $\Delta_{w_0  \varpi_i,\varpi_i}(y) = 0$.  

Now, we take $y = x^{-1}$.  By Lemma~\ref{lem:xu2}, the vanishing of $\Delta_{w_0  
\varpi_i,\varpi_i}(x^{-1})$ is equivalent to the vanishing of $\Delta_{w_0  \varpi_i,\varpi_i}(u_2^{-1} 
\dot w_0)$ (noting that $w_0^P \varpi_i =\varpi_i$).  On the other hand, by 
\cite[Prop.2.6]{FZ:double-bruhat} and \cite[(1.8)]{BK}
\[
\chi_i(u_2) = - \chi_i(u_2^{-1}) = -\langle u_2^{-1} v_{w_0\varpi_i}, v_{w_0s_i\varpi_i} \rangle 
= - \frac{\Delta_{w_0s_i\varpi_i, w_0\varpi_i}(u_2^{-1})}{\Delta_{w_0\varpi_i, w_0\varpi_i}(u_2^{-1})},
\]
which has a pole along the zero locus of $\Delta_{w_0\varpi_i, w_0\varpi_i}(u_2^{-1}) = \Delta_{w_0\varpi_i, \varpi_i}(u_2^{-1} w_0)$.  Thus the function $\chi_i(u_2)$ has a simple pole along $D^i$ and the function $\psi(u_2)$ has simple poles along all the $D^i, i \in I$.  In a similar manner using Lemma~\ref{lem:superu}, we find that $\psi(u_1)$ has simple poles along the the divisors $D_i, i \notin I_P$.  Since all the divisors $D^i$ and $D_i$ are distinct, the rational function $f_t$  has polar divisor exactly $\partial_{G/P}$.
\end{proof}

\begin{remark}
  Proposition~\ref{p:potential-minors} is one manifestation of mirror symmetry of Fano manifolds.
For example the potentials of mirrors of toric Fano varieties are constructed in~\cite{Givental:mirror-toric} and the same property can be seen to hold.
In general, it is explained in Katzarkov--Kontsevich--Pantev~\cite[Rem.2.5 
(ii)]{Katzarkov-Kontsevich-Pantev:LG-models} by the fact that the cup product by 
$c_1(K_{G^\vee/P^\vee})$ on the cohomology of the mirror manifold $G^\vee/P^\vee$ is a 
nilpotent endomorphism.
\end{remark}

\begin{remark}
The zero divisor of $1/\omega$ and the zero divisor of $1/\eta_t$ may intersect, so $f_t$ has points of indeterminacy. Indeed, this happens in the example of
$\P^2$; see also~\cite[Rem.2.5 (i)]{Katzarkov-Kontsevich-Pantev:LG-models}.
\end{remark}

\subsection{The character $\aD$-module of a geometric crystal}\label{sec:charD}

Let $\Exp:= \aD_{\A^1}/\aD_{\A^1}(\partial_x-1)$ be the exponential $\aD$-module on $\A^1$.  Let $\Exp^f = f^* \Exp$ be the pullback $\aD$-module on $X$.  Finally, define the character $\aD$-module of the geometric crystal $X$ by
\begin{equation}\label{def:Cr}
\BK_{(G,P)} := R\pi_*\Exp^f \quad \text{on $Z(L_P)$.}
\end{equation}
A priori $\BK_{(G,P)}$ lies in the derived category of $\aD$-modules on $Z(L_P)$.  But in 
Theorem \ref{t:HNYisBK} we shall see that $R^i\pi_* \Exp^f = 0$ for $i \neq 0$, and thus 
$\BK_{(G,P)}$ is just a $\aD$-module.

\begin{remark}\label{rem:Dmod}
Our conventions for $\aD$-modules follow those in \cite{HTT:D-modules}.  All the  \(D\)-modules we study in this paper are integrable connections, so in
particular they are holonomic.  We will not need the formalism of derived categories of $D$-modules, since we are always just handling \(D\)-modules.  For the six functors for holonomic $\aD$-modules, we refer the reader to \cite[\S3]{HTT:D-modules}.
Our  \(Rf_*\) and \(Rf_!\) are the same as \(\int_{f}\) and \(\int_{f!}\) there. Our
\(f_*\) and \(f_!\) are the degree  \(0\) parts  \(\int_f^0\) and \(\int_{f!}^0\) there.
Additionally, the reader may consult~\cite{FSY} and the references there for more background on exponential  \(D\)-modules of the form  \(\Exp^f\).
\end{remark}

\subsection{Homogeneity}\label{s:homogeneous}

Recall that $\rho_P=\frac12\sum_{\alpha\in R_P^+} \alpha$ and that $w_P=w^P_0w_0$ is the inverse of the longest element of $W^P$.
\begin{lemma}\label{l:w_Prho} We have $w_P( \rho) = -\rho + 2\rho_P$.
\end{lemma}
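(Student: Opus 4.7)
The plan is to reduce the identity to the two standard facts $w_0(\rho)=-\rho$ and $w_0^P(\rho_P)=-\rho_P$, together with the observation that $\rho-\rho_P$ is fixed by $W_P$.

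First I would write $\rho = \rho_P + (\rho-\rho_P)$ and compute
\[
w_P(\rho) = w_0^P w_0(\rho) = -w_0^P(\rho) = -w_0^P(\rho_P) - w_0^P(\rho-\rho_P).
\]
The term $w_0^P(\rho_P) = -\rho_P$ is immediate, since $w_0^P$ is the longest element of $W_P$ and $\rho_P$ is its half sum of positive roots.

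The key remaining step is to show $w_0^P(\rho - \rho_P) = \rho - \rho_P$. Since $2(\rho - \rho_P) = \sum_{\alpha \in R^+ \setminus R^+_P} \alpha$, it suffices to check that $W_P$ permutes $R^+ \setminus R^+_P$. This is standard: any simple reflection $s_i$ with $i \in I_P$ sends a positive root $\alpha \notin R^+_P$ to $\alpha - \langle \alpha,\alpha_i^\vee\rangle \alpha_i$; the coefficient of any $\alpha_j$ with $j \notin I_P$ in this expression is unchanged and still nonzero, so the result is again in $R^+ \setminus R^+_P$. Hence each $s_i$ with $i \in I_P$ permutes $R^+ \setminus R^+_P$, and so does every element of $W_P$, in particular $w_0^P$.

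Combining the two evaluations gives
\[
w_P(\rho) = -(-\rho_P) - (\rho - \rho_P) = 2\rho_P - \rho,
\]
which is the desired identity. No step is really an obstacle; the only substantive point is the $W_P$-invariance of $R^+ \setminus R^+_P$, which is routine.
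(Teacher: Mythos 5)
Your proof is correct and follows essentially the same route as the paper: factor $w_P = w_0^P w_0$, use $w_0(\rho) = -\rho$, split $\rho = \rho_P + (\rho - \rho_P)$, and apply that $w_0^P$ negates $\rho_P$ while fixing $\rho - \rho_P$ because $W_P$ permutes $R^+ \setminus R^+_P$. The only difference is that you spell out why $W_P$ permutes $R^+ \setminus R^+_P$, which the paper takes as known.
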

\begin{proof}
The element $w_0^P$ sends $R^+_P$ to $R^-_P$ and permutes the elements of $R^+ \setminus R^+_P$.  We compute
$$
w_P(\rho) =w_0^P w_0(\rho) = w_0^P(-\rho) = -w_0^P(\rho-\rho_P) -w_0^P(\rho_P) = -(\rho-\rho_P) +\rho_P = -\rho+2\rho_P. \qedhere
$$
\end{proof}

We view $2\rho^\vee$ as a cocharacter $\G_m\to T$. Similarly, we view $2 \rho^\vee - 2\rho^\vee_P$ as a cocharacter $\G_m\to Z(L_P)$.

\begin{lemma}\label{l:homogeneous}
For any $u\in U^{w_P^{-1}}$, $t\in Z(L_P)$ and $\zeta \in \G_m$,
\[
f_{(\rho^\vee -\rho_P^\vee)(\zeta^2) t}(\mathrm{Ad} \rho^\vee(\zeta)(u)) = \zeta f_t(u).
\]
\end{lemma}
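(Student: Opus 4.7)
My plan is to expand $f_t(u) = \psi(t\tau(u)t^{-1}) + \psi(u)$ from \eqref{f_t} and check the two summands separately. The key input is the homogeneity of $\psi$ itself: since $\langle \alpha_i,\rho^\vee\rangle = 1$ for every simple root, the operator $\mathrm{Ad}\,\rho^\vee(\zeta)$ acts by multiplication by $\zeta$ on every simple root space $\g_{\alpha_i}$, and $\psi = \sum_i \chi_i$ factors through $U \twoheadrightarrow U/[U,U]\cong \prod_i \g_{\alpha_i}$. Consequently $\psi(\mathrm{Ad}\,\rho^\vee(\zeta)(v)) = \zeta\,\psi(v)$ for every $v \in U$, which immediately handles the second summand: with $u' := \mathrm{Ad}\,\rho^\vee(\zeta)(u)$ one gets $\psi(u') = \zeta\psi(u)$.

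For the first summand I would apply Lemma \ref{l:T-action} to the torus element $\rho^\vee(\zeta)$, which yields $\tau(u') = \mathrm{Ad}(s)(\tau(u))$ with $s = w_P(\rho^\vee)(\zeta)$. A coroot analogue of Lemma \ref{l:w_Prho}, established by the same proof (using that $w_0^P$ swaps $R_P^{\vee,+}$ with $R_P^{\vee,-}$ and permutes $R^{\vee,+}\setminus R_P^{\vee,+}$), gives $w_P(\rho^\vee) = -\rho^\vee + 2\rho_P^\vee$, so $s = (-\rho^\vee + 2\rho_P^\vee)(\zeta)$. Writing $t' = (\rho^\vee - \rho_P^\vee)(\zeta^2)\,t = (2\rho^\vee - 2\rho_P^\vee)(\zeta)\,t$ and using that $T$ is abelian, a one-line cocharacter calculation gives
\[
t'\cdot s = \bigl[(2\rho^\vee - 2\rho_P^\vee) + (-\rho^\vee + 2\rho_P^\vee)\bigr](\zeta)\cdot t = \rho^\vee(\zeta)\cdot t.
\]
Therefore
\[
t'\,\tau(u')\,t'^{-1} = \mathrm{Ad}(t's)(\tau(u)) = \mathrm{Ad}(\rho^\vee(\zeta)\,t)(\tau(u)) = \mathrm{Ad}\,\rho^\vee(\zeta)\!\left(t\tau(u)t^{-1}\right),
\]
and the homogeneity of $\psi$ from the first paragraph yields $\psi_{t'}(\tau(u')) = \zeta\,\psi_t(\tau(u))$. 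Summing with $\psi(u') = \zeta\psi(u)$ produces the desired identity $f_{t'}(u') = \zeta f_t(u)$.

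The main conceptual content, and the only place the specific exponent $\zeta^2$ enters, is the cocharacter identity $t'\cdot s = \rho^\vee(\zeta)\cdot t$: this is precisely what forces the shift on the $t$-argument to be $(\rho^\vee - \rho_P^\vee)$ scaled by a factor of $2$, so that it absorbs the twist $s = w_P(\rho^\vee)(\zeta)$ introduced by Lemma \ref{l:T-action} and reassembles the conjugation by $\rho^\vee(\zeta)\cdot t$ inside $\psi$. Beyond identifying this weight the argument is routine bookkeeping, and I anticipate no real obstacle.
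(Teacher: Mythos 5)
Your proof is correct and follows essentially the same path as the paper's: split $f_t$ into its two summands via~\eqref{f_t}, use $\psi_{\rho^\vee(\zeta)}(v)=\zeta\psi(v)$ on the $\psi(u)$ term, then apply Lemma~\ref{l:T-action} to rewrite $\tau(\mathrm{Ad}\,\rho^\vee(\zeta)(u))$, and finish with the coroot analogue of Lemma~\ref{l:w_Prho} and the cocharacter identity $(2\rho^\vee-2\rho^\vee_P)+w_P(\rho^\vee)=\rho^\vee$. Your write-up is organized a little differently and makes the $\zeta^2$ bookkeeping more explicit, but the ingredients and logic are identical to the paper's proof.
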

\begin{proof}
We have $\psi_{\rho^\vee(\zeta)}(u) = \zeta \psi(u)$ for any $u\in U$ and $\zeta\in \G_m$.
 Thus in view of~\eqref{f_t}, we have
\[
f_{(\rho^\vee- \rho_P^\vee)(\zeta^2) t}(\mathrm{Ad} \rho^\vee(\zeta)(u))=
\psi_{(\rho^\vee -\rho_P^\vee)(\zeta^2) t}(\tau( \mathrm{Ad} \rho^\vee(\zeta) (u))) - \zeta \psi(u),
\]
and we are now reduced to treating the first term.  It follows from Lemma~\ref{l:T-action} that
\[
\tau( \mathrm{Ad} \rho^\vee(\zeta) (u))
= 
\mathrm{Ad} (w_P ( \rho^\vee)) (\zeta)  (\tau(u))
\]
and therefore the first term above is equal to
\begin{equation*}
\begin{aligned}
\psi_{(w_P ( \rho^\vee))(\zeta) (\rho^\vee - \rho^\vee_P)(\zeta^2) t }(\tau(u)) &= 
\psi_{(w_P ( \rho^\vee) + 2\rho^\vee - 2\rho^\vee_P)(\zeta) t }(\tau(u))  \\
&=\psi_{\rho^\vee(\zeta) t }(\tau(u)) = \zeta  \psi_t(\tau(u)).
\end{aligned}
\end{equation*}
In the second line we have used Lemma~\ref{l:w_Prho}, but with $\rho^\vee$ and $\rho_P^\vee$ instead of $\rho$ and $\rho_P$. 
\end{proof}

We define the following $\G_m$-actions on $X$, $Z(L_P)$, and $\A^1$.  For $\zeta \in \G_m$, we have
\begin{align*}
\zeta \cdot x &= \rho^\vee(\zeta)  x\rho^\vee(\zeta)^{-1} & \mbox{for $x \in X$}, \\
\zeta \cdot t & = (2\rho^\vee -2\rho_P^\vee)(\zeta) t & \mbox{for $t \in Z(L_P)$}, \\
\zeta \cdot a &= \zeta a & \mbox{for $a \in \A^1$}.
\end{align*}
Also equip $T$ with the trivial $\G_m$-action.

\begin{proposition}\label{prop:Gmequiv}
The maps $\pi: X \to Z(L_P)$, $f: X \to \A^1$, and $\gamma: X \to T$ are $\G_m$-equivariant.
\end{proposition}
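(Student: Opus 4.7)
All three equivariance statements follow by computing the factorization of $\zeta\cdot x := \rho^\vee(\zeta) x \rho^\vee(\zeta)^{-1}$ explicitly. Fix $x \in X$ with the unique factorization $x = u_1 t \dot w_P u_2$, where $u_1 \in U(w_P)$, $t \in Z(L_P)$, and $u_2 \in U$.

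First I would dispose of $\gamma$. Writing $x = u_- t_x$ with $u_- \in U_-$ and $t_x = \gamma(x)$, conjugation by the $T$-element $\rho^\vee(\zeta)$ preserves the $T$-stable subgroup $U_-$ and fixes $T$ pointwise, so $\gamma(\zeta\cdot x) = t_x = \gamma(x)$, matching the trivial $\G_m$-action on $T$.

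For $\pi$ and $f$ the plan is to commute $\rho^\vee(\zeta)$ past each factor of $u_1 t \dot w_P u_2$. The subgroups $U$ and $U(w_P)$ are $T$-stable, and $T$ is abelian, so $\mathrm{Ad}(\rho^\vee(\zeta))$ sends $u_1 \in U(w_P)$ and $u_2 \in U$ to elements $u_1' \in U(w_P)$ and $u_2' \in U$ while fixing $t$. The key ingredient is the standard identity $\rho^\vee(\zeta) \dot w_P = \dot w_P (w_P^{-1}\rho^\vee)(\zeta)$, combined with the coroot analogue of Lemma~\ref{l:w_Prho} (the proof given there goes through verbatim with $\rho,\rho_P$ replaced by $\rho^\vee,\rho_P^\vee$), which gives $w_P(\rho^\vee) = -\rho^\vee + 2\rho_P^\vee$. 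A short rearrangement—moving $(w_P^{-1}\rho^\vee)(\zeta)\rho^\vee(\zeta)^{-1}$ leftwards past $\dot w_P$—yields
\[
\zeta \cdot x \;=\; u_1' \cdot \bigl[(2\rho^\vee - 2\rho_P^\vee)(\zeta)\, t\bigr] \cdot \dot w_P \cdot u_2'.
\]
Since $2\rho^\vee - 2\rho_P^\vee = \sum_{\alpha \in R^+\setminus R_P^+}\alpha^\vee$ pairs trivially with each simple root $\alpha_j$, $j \in I_P$, the cocharacter $(2\rho^\vee - 2\rho_P^\vee)(\zeta)$ lies in $Z(L_P)$, so the displayed expression is the valid $X$-factorization of $\zeta\cdot x$.

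Reading off the $Z(L_P)$-component gives $\pi(\zeta\cdot x) = (2\rho^\vee - 2\rho_P^\vee)(\zeta)\,\pi(x) = \zeta\cdot \pi(x)$. For $f$, the observation $\psi(\mathrm{Ad}(\rho^\vee(\zeta))u) = \zeta\,\psi(u)$ for all $u \in U$—immediate from $\mathrm{Ad}(\rho^\vee(\zeta))\, x_i = \zeta^{\langle \alpha_i,\rho^\vee\rangle} x_i = \zeta\, x_i$—gives
\[
f(\zeta\cdot x) \;=\; \psi(u_1') + \psi(u_2') \;=\; \zeta\bigl(\psi(u_1) + \psi(u_2)\bigr) \;=\; \zeta\cdot f(x).
\]

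There is no real obstacle: everything is a direct conjugation computation, and the only non-trivial input is Lemma~\ref{l:w_Prho}, which is already in hand. The one point deserving care is verifying that the rearranged expression for $\zeta \cdot x$ is indeed a factorization of the required form $U(w_P)\cdot Z(L_P) \cdot \dot w_P \cdot U$ and extracting the $Z(L_P)$-component correctly; this is handled by the orthogonality remark above.
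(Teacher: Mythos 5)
Your proof is correct. The one subtlety you flag—that the rearranged expression is a valid factorization and the $Z(L_P)$-component can be read off—is indeed the only real point to check, and you handle it properly: the factorization $x = u_1 t \dot w_P u_2$ with $u_1 \in U(w_P)$ is unique (since $U(w_P) \cap \dot w_P U \dot w_P^{-1} = \{e\}$), $U(w_P)$ and $U^{w_P^{-1}}$ are $\mathrm{Ad}(T)$-stable, and $(2\rho^\vee - 2\rho_P^\vee)(\zeta) \in Z(L_P)$ by the orthogonality of $\rho^\vee - \rho_P^\vee$ to $Q_P$.

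The route you take is organized differently from the paper's, though it rests on the same two inputs (the identity $w_P(\rho^\vee) = -\rho^\vee + 2\rho^\vee_P$ and the degree-one scaling $\psi(\mathrm{Ad}(\rho^\vee(\zeta))u) = \zeta\psi(u)$). The paper proves $\pi$-equivariance by rewriting $\rho^\vee(\zeta) x \rho^\vee(\zeta)^{-1} = (2\rho^\vee - 2\rho_P^\vee)(\zeta)\cdot\bigl[(w_P\rho^\vee)(\zeta)\,x\,\rho^\vee(\zeta)^{-1}\bigr]$ and quoting Lemma~\ref{l:T-action} to see that the bracketed operation is an automorphism of $B_-^{w_P}$—a compact one-liner via the $Z(L_P)\times B_-^{w_P}$ parametrization. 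For $f$-equivariance the paper simply cites Lemma~\ref{l:homogeneous}, which was established separately through the twist map $\tau$ and the expression $f_t(u) = \psi_t(\tau(u)) + \psi(u)$. By contrast you push $\rho^\vee(\zeta)$ directly through the factorization $u_1 t \dot w_P u_2$ and read off both $\pi$- and $f$-equivariance at once from the resulting normal form. Your version is more self-contained and makes both equivariances fall out of a single computation, at the cost of re-deriving inline what the paper had already packaged into Lemma~\ref{l:homogeneous}; the paper's version is shorter but leans on the twist-map machinery already in place. Both are valid.
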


\begin{proof}
We have 
$
\zeta \cdot x = (2\rho^\vee - 2\rho_P^\vee)(\zeta) w_P(\rho^\vee)(\zeta) x \rho^\vee(\zeta)^{-1}.$
We verify using Lemma~\ref{l:T-action} that
$
x\mapsto w_P(\rho^\vee)(\zeta) x \rho^\vee(\zeta)^{-1}
$
 is an automorphism of
$B_-^{w_P}$. This shows that $\pi(\zeta \cdot x) = \zeta \cdot \pi(x)$.  The second claim follows from Lemma~\ref{l:homogeneous}.  The last claim is immediate from the definitions.
\end{proof}

\begin{corollary}\label{c:hbar-Cr}
For any $\lambda \in \C^\times$, we have 
\[
R\pi_* \Exp^{f/\lambda} \cong [t\mapsto (2\rho^\vee - 2\rho^\vee_P)(\lambda) t]_* \BK_{(G,P)}.
\]
\end{corollary}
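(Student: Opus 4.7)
The plan is to deduce the corollary as a formal consequence of the equivariance packaged in Proposition \ref{prop:Gmequiv}. Let me write $\phi_\zeta \colon X \to X$ and $\psi_\zeta \colon Z(L_P) \to Z(L_P)$ for the actions of $\zeta \in \G_m$ on the source and target of $\pi$; by construction $\psi_\hbar$ is precisely the scaling map $q \mapsto (2\rho^\vee - 2\rho_P^\vee)(\hbar)\, q$ appearing on the right-hand side of the statement. The equivariance of $f$ says $f \circ \phi_\zeta = \zeta f$; specializing to $\zeta = 1/\hbar$ and pulling back the exponential $\aD$-module, I would first record the identity
\[
\Exp^{f/\hbar} \cong \phi_{1/\hbar}^*\, \Exp^f
\]
of $\aD$-modules on $X$.

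Next I would invoke the equivariance of $\pi$, which gives $\pi \circ \phi_{1/\hbar} = \psi_{1/\hbar} \circ \pi$; since $\phi_{1/\hbar}$ and $\psi_{1/\hbar}$ are isomorphisms, this square is automatically Cartesian. Proper base change in the derived category of $\aD$-modules then yields
\[
\pi_!\, \Exp^{f/\hbar} \cong \pi_!\, \phi_{1/\hbar}^*\, \Exp^f \cong \psi_{1/\hbar}^*\, \pi_! \Exp^f = \psi_{1/\hbar}^*\, \BK_{(G,P)},
\]
and the argument concludes by observing that for an isomorphism $g$ one has $(g^{-1})^* \cong g_!$ on $\aD$-modules, so $\psi_{1/\hbar}^* \cong (\psi_\hbar)_!$, producing the right-hand side of the corollary.

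There is no substantive obstacle here beyond careful bookkeeping. The one thing to watch is the direction of rescaling: one must use $\phi_{1/\hbar}$ rather than $\phi_\hbar$ so that pulling back $f$ yields $f/\hbar$ rather than $\hbar f$, and correspondingly identify pullback along the inverse of an isomorphism with lower-shriek pushforward along the isomorphism itself, so as to land on the convention $[q \mapsto (2\rho^\vee - 2\rho_P^\vee)(\hbar) q]_!$ used on the right-hand side.
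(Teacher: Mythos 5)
Your argument is correct and is essentially the paper's own proof: the paper also rewrites $\Exp^{f/\hbar}$ via the $\G_m$-equivariance of Proposition \ref{prop:Gmequiv} and commutes the rescaling past $\pi_!$. The only cosmetic difference is that you phrase the rescaling on $X$ as the pullback $\phi_{1/\hbar}^*$ while the paper writes it as the pushforward $(x\mapsto\hbar\cdot x)_!$; since these are inverse isomorphisms the two are the same operation, and your conversion $\psi_{1/\hbar}^*\cong(\psi_\hbar)_!$ at the end is exactly the reconciliation needed (no genuine base-change theorem is required here, only compatibility of $\pi_!$ with conjugation by isomorphisms).
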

\begin{proof}
By definition the left-hand side is equal to $R\pi_* f^* [a\mapsto a/\lambda]^* \Exp$.
In view of Proposition \ref{prop:Gmequiv}, it is isomorphic to
\[
R\pi_* (x \mapsto \hbar\cdot x)_* f^* \Exp =  [t\mapsto (2\rho^\vee - 2\rho^\vee_P)(\hbar) t]_* R\pi_* f^* \Exp 
= 
[t\mapsto (2\rho^\vee - 2\rho^\vee_P)(\hbar) t]_* \BK_{(G,P)},
\]
which concludes the proof.  
\end{proof}

We record the following lemma which will be needed in \S\ref{s:parabolic-bessel} in the context of rapid decay cycles.
\begin{lemma}\label{l:Ad-omega}
The meromorphic form $\omega$ on $G/P$ with simple poles along the anticanonical divisor $\partial_{G/P}$ is preserved under the $T$-action.
\end{lemma}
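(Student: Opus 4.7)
The plan is to exploit the uniqueness of $\omega$ up to scalar to reduce $T$-invariance to the triviality of a single character $\chi\colon T\to\G_m$, and then verify triviality by working in a Lusztig torus chart on $\oGP$.

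First I would observe that $T\subseteq B\cap B_-$, so left multiplication by $T$ on $G/B$ stabilises every Schubert cell and every opposite Schubert cell; in particular it preserves the open Richardson variety $\cR^{w_0}_{w_0^P}$ together with each divisor $\cR^{w_0 s_i}_{w_0^P}$ and $\cR^{w_0}_{s_i w_0^P}$ that borders it. Projecting along $p\colon G/B\to G/P$, the left $T$-action on $G/P$ preserves $\oGP$ and each irreducible component $D^i,D_i$ of $\partial_{G/P}$. Hence $t^*\omega$ is again a meromorphic top form on $G/P$ whose divisor equals $-\partial_{G/P}$ (simple poles along $\partial_{G/P}$ and no other zeros or poles), and by the uniqueness of $\omega$ up to scalar (\cite[Lem.~5.4]{KLS:projections-Richardson}, as already used in \S\ref{s:Richardson}), we get $t^*\omega=\chi(t)\,\omega$ for an algebraic character $\chi\colon T\to\G_m$.

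To check $\chi\equiv 1$ I would pass to a Lusztig torus chart on $\oGP$. Fix a reduced word $\bi=i_1\cdots i_\ell$ for $w_P$ and form the composite $j\colon \G_m^\ell \to U^{w_P}\to \cR^{w_0}_{w_0^P}\to \oGP$, where the three arrows are the Lusztig parametrisation $x_\bi$, the isomorphism $u\mapsto u\dot w_0 B$ of \S\ref{s:Richardson}, and the projection $p$; the composite is an open immersion. Under this identification the left $T$-action on $\oGP$ corresponds to $T$-conjugation on $U^{w_P}$, and the root-group relation $t\,x_{i_k}(a_k)\,t^{-1}=x_{i_k}\!\bigl(\alpha_{i_k}(t)\,a_k\bigr)$ shows that in Lusztig coordinates the action is the diagonal rescaling $a_k\mapsto\alpha_{i_k}(t)\,a_k$. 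On the other hand, every nowhere-vanishing holomorphic top form on $\G_m^\ell$ has the shape $c\,\prod_k a_k^{n_k}\bigwedge_k da_k$; combined with the fact that $\omega$ extends to $\oGP$ with only simple log poles along $\partial_{G/P}$, and with the identification of $\partial_{G/P}\cap j(\G_m^\ell)$ with the toric coordinate hyperplanes of $\G_m^\ell$, this forces $n_k=-1$ for every $k$. Thus $j^*\omega$ is a nonzero constant multiple of $\bigwedge_{k=1}^{\ell}\frac{da_k}{a_k}$, which is manifestly invariant under each rescaling $a_k\mapsto\alpha_{i_k}(t)\,a_k$. Therefore $\chi$ vanishes on a dense open subset of $T$ and hence on all of $T$.

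The main obstacle in this plan is the explicit identification of $j^*\omega$ with a constant multiple of $\bigwedge_k da_k/a_k$, equivalently the identification of $\partial_{G/P}\cap j(\G_m^\ell)$ with the toric boundary of $\G_m^\ell$ and the verification that the associated monomial exponents all equal $-1$. This is implicit in the KLS construction and in the Marsh--Rietsch formulas, but some care is needed to treat both the Schubert-type divisors $D^i$ and the opposite-type divisors $D_i$ uniformly. An alternative, more intrinsic route would be to compute residues of $\omega$ along the $T$-stable boundary strata and use $T$-equivariance of the residues together with induction on codimension, bypassing the explicit Lusztig computation altogether.
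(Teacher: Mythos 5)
Your reduction to the triviality of a character $\chi\colon T\to\G_m$ via the uniqueness of $\omega$ up to scalar is a correct and natural opening move, and the computation of the $T$-action in Lusztig coordinates ($a_k\mapsto\alpha_{i_k}(t)a_k$, after noting that left translation on $\cR^{w_0}_{w_0^P}$ corresponds to $T$-conjugation on $U^{w_P}$) is also right. However, the step you yourself flag as the ``main obstacle'' is a genuine gap, and the specific remedy you sketch would not close it. The phrase ``the identification of $\partial_{G/P}\cap j(\G_m^\ell)$ with the toric coordinate hyperplanes'' is already problematic as written, since $j(\G_m^\ell)\subset\oGP$ and hence $\partial_{G/P}\cap j(\G_m^\ell)=\emptyset$; what you presumably mean is the boundary of the closure of the chart inside $G/P$. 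But that identification fails in general: the anticanonical divisor $\partial_{G/P}$ has $2|I|$ irreducible components when $P=B$ (and $|I|+|\{i\notin I_P\}|$ in general), whereas the chart $\G_m^\ell$ has exactly $\ell=\ell(w_P^{-1})$ coordinate hyperplanes, and these counts rarely agree (e.g.\ for $G=\SL(3)$, $P=B$ one has four boundary components but $\ell=3$). Moreover only a subset of the hypersurfaces $\{a_k=0\}$ degenerate onto $\partial_{G/P}$; others correspond to ``mutable'' directions where one stays inside $\oGP$ but leaves the chart. So the step ``this forces $n_k=-1$ for every $k$'' is not justified by the matching you propose. The needed fact that $j^*\omega=c\,\bigwedge_k da_k/a_k$ is true, but it is a nontrivial cluster-type statement (it is what underlies the residue normalization $\oint\omega/(2i\pi)^{\dim}=1$ and is used, for instance, in the proof of Theorem~\ref{t:Dmodule-mirror}); proving it from scratch is essentially as hard as, or circular with, the $T$-invariance you are after.

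The paper's own proof avoids all of this and is genuinely different. It works with $G/B$ first: since each irreducible component of $\partial_{G/B}$ is $T$-stable, the section $1/\omega_{G/B}$ is a $T$-weight vector whose weight is the sum of the $T$-weights $\omega_1,\ldots,\omega_n,w_0\omega_1,\ldots,w_0\omega_n$ of the sections cutting out the $2|I|$ components; since $\sum_i w_0\omega_i=-\sum_i\omega_{i^\star}=-\sum_i\omega_i$, this sum vanishes, so $\omega_{G/B}$ is $T$-invariant. Then the forms on all Richardsons $\cR^w_v$ are obtained from $\omega_{G/B}$ by iterated residues along $T$-stable boundary strata (which preserves $T$-invariance), and $p\colon G/B\to G/P$ is a $T$-equivariant isomorphism from $\cR^{w_0}_{w_0^P}$ onto $\oGP$. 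This is exactly the ``alternative, more intrinsic route via residues'' you mention in your last paragraph, and I would encourage you to pursue that version: it requires no torus chart, no identification of boundaries, and a single weight computation.
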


\begin{proof}
First we observe that each irreducible component of the divisor $\partial_{G/P}$ is $T$-invariant and thus the sections cutting them out are $T$-weight vectors.  Now for $P = B$, the sections cutting out the $2|I|$ divisor components $D_i$ and $D^i$ have $T$ weights $\varpi_1,\ldots,\varpi_n$ and $w_0 \varpi_1,\ldots, w_0 \varpi_n$.  The sum of these weights is 0, so the form $\omega_{G/B}$ must be $T$-invariant.  Each open Richardson variety $\cR^w_v$ has its own canonical form $\omega_{\cR^w_v}$ which is obtained from $\omega_{G/B}$ by taking residues, so again these forms are $T$-invariant.  Finally, for each parabolic $P$, the projection map $p:G/B \to G/P$ induces an isomorphism of $\cR^{w_0}_{w^P_0}$ onto its image $\oPi$.  Since $p$ is $T$-equivariant, the result follows.
\end{proof}

\subsection{Convention for affine Weyl groups}
Let $w\tau^\lambda \in W_\af = W \ltimes P^\vee$ denote an element of the affine Weyl group, and let $\delta$ denote the null root of the affine root system.  Then for $\mu \in P$,
\begin{equation}
\label{eq:Waction}
w \tau^\lambda \cdot (\mu + n \delta) = w\mu +  (n - \ip{\mu,\lambda})\delta.
\end{equation}

\subsection{Cominuscule case}\label{s:crystal-affine}
We now assume that $G$ is simple and of adjoint type.  Fix a cominuscule node $\ki$ of $G$, which is also a
minuscule node of $G^\vee$.  Let $P = P_\i$ be the corresponding (maximal) parabolic, and identify $Z(L_P)$ with $\G_m \cong
\Gm$ via the simple root $\alpha_\ki$. 
\begin{lemma}\label{l:coxeter-cominuscule}
If $P$ is a cominuscule parabolic, then the composition of $(2\rho^\vee-2\rho^\vee_P): \mathbb{G}_m\to Z(L_P)$ with $\alpha_\ki:Z(L_P)\cong \mathbb{G}_m$, is the character
$q\mapsto q^c$, where $c$ is the Coxeter number of $G$.
\end{lemma}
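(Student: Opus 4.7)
The plan is to deduce this from the minuscule version (Lemma~\ref{l:coxeter-chern}) by Langlands duality. First I would reformulate the statement numerically: since for any character $\chi: Z(L_P)\to \G_m$ and cocharacter $\lambda: \G_m \to Z(L_P)$ the composition $\chi\circ \lambda$ is the character $q\mapsto q^{\langle \chi,\lambda\rangle}$, the assertion is equivalent to the identity
\[
\langle \alpha_\ki,\, 2(\rho^\vee - \rho_P^\vee)\rangle = c.
\]

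The next step is to recognize this as exactly Lemma~\ref{l:coxeter-chern} applied to the Langlands dual pair $(G^\vee, P^\vee)$. By definition, $P\subset G$ is cominuscule if and only if $P^\vee\subset G^\vee$ is minuscule at the same node $\ki$, so Lemma~\ref{l:coxeter-chern} is available on the dual side. Under the canonical identifications of the character and cocharacter lattices of $G$ with the cocharacter and character lattices of $G^\vee$, we have
\[
\rho(G^\vee) = \rho^\vee(G),\qquad \rho_{P^\vee}(G^\vee) = \rho_P^\vee(G),\qquad \alpha_{\ki}^{\vee}(G^\vee) = \alpha_\ki(G),
\]
and the Coxeter number is invariant under Langlands duality: $c(G^\vee) = c(G) = c$ (the only non-simply-laced case to verify is the interchange $B_n \leftrightarrow C_n$, where $c = 2n$ on both sides). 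Reading off Lemma~\ref{l:coxeter-chern} for $(G^\vee, P^\vee)$ in the original notation of $G$ then gives precisely the displayed identity.

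This is essentially a bookkeeping argument, and there is no serious obstacle; the one thing to watch is that each ingredient (both versions of $\rho$, the simple coroot at $\ki$, and the Coxeter number) is transported correctly under the duality. As a sanity check, one can also argue directly: the element $\rho^\vee - \rho_P^\vee$ is fixed by each simple reflection $s_j$ with $j\in I_P$ (because $s_j$ permutes $R^+\setminus R_P^+$), hence $\rho^\vee - \rho_P^\vee = C\varpi_\ki^\vee$ for a scalar $C$, and one then identifies $C = c/2$ case by case from the list in Figure~\ref{fig:minuscule}. The duality argument is preferable because it is uniform and makes transparent the parallel with Lemma~\ref{l:coxeter-chern}.
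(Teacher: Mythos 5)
Your argument is correct and matches the paper's own proof: both pass to the Langlands dual pair $(G^\vee,P^\vee)$, invoke the identifications $\rho^\vee_G-\rho^\vee_P=\rho_{G^\vee}-\rho_{P^\vee}$ and $\alpha_\ki=\alpha^\vee_{\ki,G^\vee}$, and then apply Lemma~\ref{l:coxeter-chern} together with the invariance of the Coxeter number under duality. Your added sanity check (that $\rho^\vee-\rho^\vee_P$ is a scalar multiple of $\varpi^\vee_\ki$) is a nice cross-check but not part of the paper's argument.
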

\begin{proof}
We have $\rho^\vee_G - \rho^\vee_P = \rho_{G^\vee} - \rho_{P^\vee}$ for the dual minuscule parabolic group $P^\vee$ of $G^\vee$. Since
$\alpha_\ki$ is a simple coroot of $G^\vee$, it follows from Lemma~\ref{l:coxeter-chern} that $\langle
2(\rho^\vee_G - \rho^\vee_P),\alpha_\ki \rangle = c$ where $c$ is the Coxeter number of $G^\vee$ which is also the Coxeter number
of $G$.
\end{proof}

Let $\Omega$ be the quotient of the coweight lattice of $G$ by the coroot lattice.
Thus $\Omega$ is isomorphic to the center of $G^\vee$. Let $\comp \in \Omega$ be the element 
corresponding to the cominuscule node $\ki$.  Namely, $\comp \equiv -\varpi_\ki^\vee$ under this 
identification (see \cite[\S11.2]{Lam-Shimozono:GmodP-affine}); we have $\comp =  
\tau^{-\varpi^\vee_{\ki}}  w_P$.  For a coweight $\lambda$ of $G$, we abuse notation by letting 
$\tau^\lambda \in G((\tau))$ denote the corresponding element, which is a lift of the translation 
element $\tau^\lambda \in W_\af$.  Our choice is uniquely determined by the following 
property: $\tau^\lambda U_\alpha \tau^{-\lambda} = U_{\tau^\lambda \cdot \alpha}$ where $U_\alpha \subset 
G((\tau))$ denotes the one-parameter subgroup indexed by the affine root $\alpha$.

  Let $\dot \comp =     \tau^{-\varpi^\vee_{\ki}}  \dot w_P=\dot w_P \tau^{\varpi^\vee_{\ki^\star}} $.  Then $\dot \comp \in G((\tau))$ is a lift of $\comp$ to the loop group.  Note that $\dot \comp|_{\tau^{-1} = 1} = \dot w_P$.
Let $G[\tau^{-1}]_1 := \ker(G[\tau^{-1}] \xrightarrow{\ev_\infty} G)$, where $\ev_\infty$ is given by $\tau^{-1} = 0$.
\begin{lemma}\label{lem:alphatheta} \
\begin{enumerate}
\item[(a)]
Let $\alpha \in R$.  Then 
$$
\comp^{-1} \cdot \alpha \in \begin{cases} R - \delta & \mbox{for $\alpha \in R^+ \setminus R^+_P$,} \\
R + \Z_{\geq 0} \delta & \mbox{for $\alpha \notin R^+ \setminus R^+_P$.} 
\end{cases}
$$
\item[(b)]
For $u \in U_P$, we have $\dot \comp^{-1} u \dot \comp \in G[\tau^{-1}]_1$.
\item[(c)]
We have $w_P^{-1} \alpha_\ki = -\theta$ and $\comp^{-1} \alpha_\ki = -\theta - \delta$.
\end{enumerate}
\end{lemma}
\begin{proof}
(a) We first note that $\Inv(w_P^{-1}) = R^+ \setminus R^+_P$ and $w_P = w_{P^\star}^{-1}$ (see \S\ref{s:explicit}).  Thus $w_P^{-1}$ acts as a bijection from $R^+ \setminus R^+_P$ to $-(R^+ \setminus R^+_{P^{\star}})$.  In particular, $w_P^{-1}\cdot R^+ \setminus R^+_P =  -(R^+ \setminus R^+_{P^\star})$.  
For $\alpha \in R$, we compute by \eqref{eq:Waction}
\begin{equation}\label{eq:compact}
\comp^{-1} \cdot \alpha =  \tau^{-\varpi^\vee_{\ki^{\star}}}\cdot w_P^{-1} \cdot \alpha = \tau^{-\varpi^\vee_{\ki^{\star}}} \cdot w_P^{-1}(\alpha) = w_P^{-1}\alpha - \ip{w_P^{-1}\alpha,-\varpi_{\ki^{\star}}^\vee} \delta 
\end{equation}
where $\delta$, the null root of the affine root system, is the weight of $\tau$.  
If $ \alpha \in R^+ \setminus R^+_P$, then \eqref{eq:compact} shows that $\comp^{-1} \cdot \alpha \in R_- - \delta$, using that $\ki$ cominuscule implies $\ki^\star$ cominuscule implies $\ip{\beta,\varpi_{\ki^{\star}}^\vee} = 1$ for all $\beta \in R^+ \setminus R^+_{P^\star}$.  If $\alpha \notin R^+ \setminus R^+_P$, then $\ip{w_P^{-1}\alpha,-\varpi_{\ki^{\star}}^\vee} \geq 0$ and $\comp^{-1} \cdot \alpha \in R + \Z_{\geq 0} \delta$.  This proves (a).  Statement (b) follows immediately from (a).

We prove (c).  Since $w_P^{-1}$ sends $R^+ \setminus R^+_P$ to $-(R^+ \setminus R^+_{P^{\star}})$, we see that $-\theta \in w_P^{-1}(R^+ \setminus R^+_P)$.  Since $\ki$ is cominuscule, every root $\beta \in R^+ \setminus R^+_P$ is of the form $\beta = \alpha_\ki \mod \sum_{j \in I_P} \Z_{\geq 0} \alpha_j$.  But $w_P^{-1} \alpha_j >0$ for all $j \in I_P$, so 
$w_P^{-1}(\alpha_\ki + \sum_{j \in I_P} \Z_{\geq 0} \alpha_j) \subset w_P^{-1}(\alpha_\ki) +\sum_{j \in I} \Z_{\geq 0} \alpha_j$.  We deduce that $w_P^{-1} \alpha_\ki = -\theta$.  The second statement follows from \eqref{eq:compact}.
\end{proof}
Thus we obtain an inclusion
\begin{align}\label{eq:inc}
\iota_t: X_t &\longrightarrow G[\tau^{-1}]_1 \\ 
x = u_1 t \dot w_P u_2 & \longmapsto \dot \comp^{-1} t^{-1} u_1 t \dot \comp \in G[\tau^{-1}]_1,
\end{align}
where $u_1 \in U_P$ and $u_2 \in U^{w^{-1}_P}$.

\subsection{Embedding the geometric crystal into the affine Grassmannian}
We interpret the inclusion $\iota_t$ via the affine Grassmannian.

Let $\Gr = G((\tau))/G[[\tau]]$ denote the affine Grassmannian of $G$.  The connected components $\Gr^\comp$ of $\Gr$ are indexed by $\comp \in \Omega$.  For a dominant weight $\lambda$, let $\Gr_\lambda := G[[\tau]] \tau^{-\lambda} \subset \Gr$ denote the $G[[\tau]]$-orbit.  The closure of $\Gr_\lambda$ is a spherical Schubert variety inside $\Gr$.  (The minus sign in $\tau^{-\lambda}$ is chosen to match the convention \eqref{eq:Waction} and our choice of $\tau^{\lambda}$.)

For $\lambda = \varpi_\ki^\vee$, we have that $\Gr_{\varpi_\ki^\vee} \cong G/P$ is already closed in $\Gr$.  Indeed, the map $G \to \Gr$ given by $g \mapsto g \tau^{\varpi_{\ki^{\star}}^\vee} \mod G[[\tau]]$ has stabilizer $P^{\star}$, giving a closed embedding $G/P^{\star} \cong \Gr_{\varpi_\ki^\vee} \hookrightarrow \Gr^\comp$.  Note that $\varpi_{\ki^{\star}}^\vee \in W \cdot (- \varpi_\ki^\vee)$, so $ \tau^{\varpi_{\ki^{\star}}^\vee} \in G[[\tau]] \cdot \tau^{- \varpi_\ki^\vee}$, and $G/P \cong G/P^\star$.

Since $\dot w_P^{-1} U_P \dot w_P \cap P^{\star}= \{e\}$, we have an inclusion $X_t \hookrightarrow G/P^{\star}$ given by $x = u_1t \dot w_P
u_2 \mapsto t^{-1} u_1^{-1} t \dot w_P\mod P^{\star} $, where $u_1 \in U_P$ and $u_2 \in U^{w_P^{-1}}$.  Composed with the isomorphism $G/P^{\star}  \stackrel{\sim}{\rightarrow} \Gr_{\varpi_\ki^\vee}$, we obtain an inclusion
\begin{align}\label{eq:intoGr}
X_t& \longrightarrow \Gr_{\varpi_\ki^\vee} \\
x = u_1 t \dot w_P  u_2 &\longmapsto t^{-1} u_1 t \dot w_P \tau^{\varpi_{\ki^{\star}}^\vee} = t^{-1} u_1 t \cdot \dot \comp = \dot \comp \; \iota_t(x).
\end{align}

\section{Heinloth--Ng\^o--Yun's Kloosterman $\aD$-module}\label{s:HNY}
While the article \cite{HNY} works over a finite field we will work over the complex numbers $\C$ (cf. 
\cite[\S2.6]{HNY}).  In this section, we assume that $G$ is simple and of adjoint type.

\subsection{A group scheme over $\P^1$}\label{s:groupscheme}
Take $t$ to be the coordinate on $\P^1$, and set $s = t^{-1}$.  
Let 
\begin{align*}
I(0) = I_\infty(0) &:= \{g \in G[[s]] \mid g(0) \in B\},\\
I(1) = I_\infty(1) &:= \{g \in G[[s]] \mid g(0) \in U\}.
\end{align*}
Here \(G[[s]]\) is a shorthand for \(G(\C[[s]])\).
Similarly, define 
\begin{align*}
I^\opp_0(0) &:= \{g \in G[[t]] \mid g(0) \in B_-\},\\
I^\opp_0(1) &:= \{g \in G[[t]] \mid g(0) \in U_-\}.
\end{align*}
Also let $I_\infty(2) = I(2) = [I(1),I(1)]$ be the commutator subgroup of $I(1)$.  One verifies that the Lie algebra of $I(1)/I(2)$ has weights given by the set $I_\af$ of simple affine roots, and in particular our definition of $I(2)$ agrees with that in \cite[\S1.2]{HNY}.  Via the exponential map, we obtain an isomorphism \[I(1)/I(2) \cong \bigoplus_{i \in I_\af} \A^1.\]

Let $\phi: I(1)/I(2) \to \A^1$ be the standard affine character.   Precisely, we fix root 
vectors $x_i = x_{\alpha_i} \in \g_{\alpha_i}$ and define $\phi$ by  
$\phi(\exp(tx_{i})) = -t$ for all $i\in I_\af$.  The choice of $x_i$ for $i \in I$ is already fixed 
in \S\ref{sub:rootvectors}.  Since $\g_{\alpha_0}$ can be identified with $s \g_{-\theta}$, the choice of  
$x_0 \in \g_{\alpha_0}$ is equivalent to a choice of $x_{-\theta} \in \g_{-\theta}$.  The choice of 
$x_{-\theta}$ satisfying the compatibilities (1) and (2) of \S\ref{sub:rootvectors} is equivalent to a choice of a sign, 
which will be fixed in \eqref{eq:thetasign2}. We have that  \(\phi\) is a \emph{generic affine character}, meaning that it takes nonzero values on each $\exp(x_i)$, for $i \in I_\af$.

Denote by $\cG(1,2)$ the group scheme over $\P^1$ in \cite[\S1.2]{HNY}, satisfying
\begin{align*}
\cG(1,2)|_{\Gm} &\cong G \times \Gm, \\ 
\cG(1,2)(\cO_0) &= I^{\opp}_0(1), \\
\cG(1,2)(\cO_\infty) &= I(2).
\end{align*}
Here  \(\cO_0 \cong \C[[s]]\) (resp.  \(\cO_\infty \cong \C[[t]]\)) is the completed local ring at  \(0\) (resp.  \(\infty\)).
The group scheme $\cG(1,2)$ is constructed by \emph{dilatation} \cite[\S3.2]{BLR90}.  First, the group scheme $\cG(1,1)$ is the dilatation of the constant group scheme $G \times \P^1$ along $U_- \times 
\{0\} \subset G \times \{0\}$ and along $U \times \{\infty\} \subset G \times \{\infty\}$.  Then $\cG(1,2)$ is 
the dilatation of $\cG(1,1)$ which is an isomorphism away from $\infty$ and at $\infty$
induces $\cG(1,2)(\cO_\infty) = I(2) \subset I(1) = \cG(1,1)(\cO_\infty)$.

\subsection{Hecke modifications}
Recall that $\Omega$ is the quotient of the coweight lattice of $G$ by the coroot lattice.
Let $\Bun_{\cG}=\Bun_{\cG(1,2)}$ denote the moduli stack of $\cG(1,2)$-bundles on $\P^1$ defined in \cite[\S1.4]{HNY}.  The stack $\Bun_{\cG}$ is the union $\Bun_{\cG} = \bigsqcup_{\comp \in \Omega} \Bun_\cG^\comp$ of connected components indexed by $\comp\in \Omega$.  We let $\star_\comp$ denote the basepoint of $\Bun_\cG^\comp$.  
  Under the isomorphism $\Bun_\cG^0 \cong
\Bun_\cG^\comp$ of \cite[Cor.1.2]{HNY}, the basepoint $\star_\comp$ is the image of the point corresponding to the trivial bundle $\star$.

The stack of Hecke modifications is the stack which on a $\C$-scheme $S$ takes value the groupoid
$$
\Hecke_\cG(S):= \left \{(\E_1,\E_2,x,\phi) \mid \E_i \in \Bun_\cG(S), x: S \to \Gm, \phi: \E_1|_{(\P^1 \backslash \{x\}) \times S} \xrightarrow{\cong} \E_2|_{(\P^1 \backslash \{x\}) \times S} \right\}.
$$
It has two natural forgetful maps
\begin{align}\label{eq:Heckeproj}
\begin{diagram}
\node{} \node{\Hecke_\cG}\arrow {sw,t}{\pr_1} \arrow{se,t}{\pr_2}
\\ \node{\Bun_\cG} \node{} \node{\Bun_\cG \times \Gm,} \end{diagram}
\end{align}
given by  \(\pr_1(\E_1,\E_2,x,\phi)=\E_1\) and  \(\pr_2(\E_1,\E_2,x,\phi)=(\E_2,x)\).
The geometric fibers of $\pr_2$ over $\Bun_\cG \times \Gm$ are isomorphic to the affine 
Grassmannian $\Gr_G=G((\tau))/G[[\tau]]$, where  \(\tau\) is a local coordinate at  \(x\).

Let $\lambda$ be an integral coweight of $G$, and assume that $\Gr_\lambda$ lies in the $\comp$-component of $\Gr$.
The $G[[\tau]]$-orbits $\Gr_\lambda$ 
(and their closures $\oGr_\lambda$) in $\Gr_G$ define substacks $\Hecke_\lambda \subset \Hecke_\cG$ (and 
$\oHecke_\lambda $) \cite[p.259]{HNY}.

\subsection{Parametrization}\label{sec:param}
Assume now that $(G,P)$ are as in \S\ref{s:crystal-affine}, that is $G$ is simple and 
of adjoint type and  \(P=P_\ki\) is the maximal parabolic subgroup corresponding to the cominuscule node  \(\ki\). In particular $Z(L_P)$ is one-dimensional.
We now fix the isomorphism
\begin{equation}\label{eq:nu}
\alpha_\ki: Z(L_P) \cong \Gm \qquad z \mapsto \alpha_\ki(z).
\end{equation}
Via \eqref{eq:nu}, we may use ``$t$'' both as a coordinate on $\P^1$ and as a coordinate on $Z(L_P)$.

We follow \cite[\S5.2]{HNY} in the remainder of this subsection.
Let $\Hk$ be the restriction of $\pr_2:\Hecke_\cG \to \Bun_\cG \times \Gm$ to $\star_\comp \times \Gm \subset \Bun_\cG \times \Gm$ and for $q \in \Gm$, let $\Hk_q$ denote the restriction to $\star_\comp \times \{q\}$.  

By \cite[Cor.1.3]{HNY}, $\Bun^\comp_\cG$ contains an affine open substack isomorphic to $T \times I(1)/I(2)$, called the \emph{big cell}.  Let $\Hk^\circ \subset \Hk$ denote the inverse image of the big cell $T \times I(1)/I(2) \subset \Bun_\cG^0$ under $\pr_1$, and similarly define $\Hk_q^\circ$.  Denote the map 
\[\Hk^\circ \to T \times I(1)/I(2) \simeq T \times U_{-\theta} \times U/[U,U],\] 
by the following triple
$$
(f_T,f_0,f_+): \Hk^\circ \longrightarrow T \times U_{-\theta} \times U/[U,U].
$$
Our aim is to parametrize $\Hk^\circ$ and compute $f_T,f_0,f_+$.

Let $\E_0 = G \times \P^1$ be the trivial bundle and let $\E_\comp$ be the $\cG$-bundle corresponding to the basepoint $\star_\comp \in \Bun_\cG^\comp$.  The bundle $\E_\comp$ is obtained by gluing the trivial bundle on $\P^1 \backslash \{\infty\}$ with the trivial bundle on the formal disk around $\infty$ via the transition function $\comp(t^{-1}) =  \tau^{-\varpi_{\ki}^\vee} \dot w_P$.  We fix once and for all trivializations of $\E_0$ over $\P^1$ and of $\E_\comp$ over $\P^1 \backslash \{\infty\}$.

We use the local parameter $\tau = 1- t/q$ at $q$.  Thus $\tau = 0,1,\infty$ (or $\tau^{-1} = \infty,1,0$) corresponds to $t = q,0,\infty$ respectively.  Let 
\begin{equation}\label{eq:compdef}
 \comp(\tau^{-1}) = \dot \comp = \tau^{-\varpi_{\ki}^\vee} \dot w_P \in G[\tau,\tau^{-1}], \qquad \mbox{so that $\comp(\tau^{-1})|_{\tau^{-1} = 1} = \dot w_P$.}
\end{equation}

We view $\comp(\tau^{-1})$ as an isomorphism
$$
\comp(\tau^{-1}): \E_0|_{\P^1 \backslash\{q,\infty\}} \longrightarrow \E_\comp|_{\P^1 \backslash\{q,\infty\}},
$$
using the trivializations of $\E_0$ and $\E_\comp$ over $\P^1 \backslash \{\infty\}$.  Since 
\begin{equation}
\label{eq:taylor}
\tau^{-1} = -q t^{-1} + O((t^{-1})^2),
\end{equation}
the Laurent expansions of $\comp(\tau^{-1})$ and $\comp(t^{-1})$ in $t^{-1}$ differ by an element of $G[[t^{-1}]]$.  Thus  $\comp(\tau^{-1})$ extends to an isomorphism
\begin{equation}\label{eq:tauiso}
\comp(\tau^{-1}): \E_0|_{\P^1 \backslash \{q\}} \longrightarrow \E_\comp|_{\P^1 \backslash \{q\}}.
\end{equation}
Any point in $\Hk_q^\circ$ can be obtained by precomposing $\comp(\tau^{-1})$ with an element of \[\Aut(\E_0|_{\P^1 \backslash \{q\}}) \cong G[\tau^{-1}].\]  

From the definition of $\cG$, a bundle $\E \in \Bun_\cG$ is equipped with \emph{level structures} at $0$ and at $\infty$.  Let $\comp(\tau^{-1}) g(\tau^{-1})$ represent a point $\E \in \Hk_q^\circ$ under our parametrization.  We define the level structure (at 0 and at $\infty$) associated to $\E$ to be the pair
$$
(\ev_{t = 0}[\comp(\tau^{-1}) g(\tau^{-1}))^{-1}]  ,\ev_{t = \infty}[g(\tau^{-1})^{-1}]) = (g(1)^{-1} \dot w_P^{-1} ,g(0)^{-1} ) \in G \times G
$$
of elements of $G$.  (The isomorphism \eqref{eq:tauiso} preserves the level structure at $\infty$, and $\star_\comp$ has the trivial level structure at $\infty$, hence the formula $\ev_{t = \infty}[g(\tau^{-1})^{-1}]$ for the level structure at $\infty$.)

The big cell $T \times I(1)/I(2) \subset \Bun^0_\cG$ is the orbit of $\E_0$ under the action of the group $I_0^\opp(0) \times I(1)$ (recall that  \(T \cong I_0^\opp(0)/I_0^\opp(1)\), \(I(1) = I_\infty(1)\), and \( I(2) = I_\infty(2)\)).  It follows that $ \comp(\tau^{-1}) g(\tau^{-1})$ projects under $\pr_1$ to the big cell $\Bun^0_\cG$ if and only if
\begin{align*}
g(0)^{-1}   \in U  & \Leftrightarrow g(0) \in U\\
 g(1)^{-1} \dot w_P^{-1}  \in B_- &\Leftrightarrow  \dot w_P g(1)  \in B_-.
\end{align*}

We have a natural evaluation map $\ev_q: \Hk_q^\circ \to \Gr_q$ given by considering $ \comp(\tau^{-1}) g(\tau^{-1})$ as an element of $G((\tau))/G[[\tau]] \cong \Gr_q$.  The image $\ev_q(\Hk_q^\circ)$ is denoted $\Gr_q^\circ$.  We may further rigidify the moduli problem by precomposing with an element of $\Aut(\E_0) = \Aut(G \times \P^1) = G$ to obtain an isomorphism 
$$ \comp(\tau^{-1}) g(\tau^{-1}) g(0)^{-1}:\E_0|_{\P^1 \backslash \{q\}} \longrightarrow \E_\comp|_{\P^1 \backslash \{q\}},
$$ which is the identity at $\infty$.   Set $h(\tau^{-1}) := g(\tau^{-1}) g(0)^{-1} \in G[\tau^{-1}]_1$, where we recall that $G[\tau^{-1}]_1 = \ker(G[\tau^{-1}] \xrightarrow{\tau^{-1} = 0} G)$.  This gives the parametrization
$$
\Hk_q^\circ \cong \{h(\tau^{-1}) \in G[\tau^{-1}]_1 \mid h(1) \in \dot w_P^{-1} B_- U \}.
$$
Varying $q$, this gives the parametrization
\begin{equation}\label{eq:Hk}
\Hk^\circ \cong \{h(\tau^{-1}) \in G[\tau^{-1}]_1 \mid h(1) \in \dot w_P^{-1} B_- U\} \times \Gm. \end{equation}
Under this parametrization, the image of $h(\tau^{-1})$ in $\Gr_q \cong G((\tau))/G[[\tau]]$ is equal to $ \dot \comp h(\tau^{-1})$.

\begin{lemma}\label{lem:HNY}
Under the parametrization \eqref{eq:Hk}, write $h(1) = \dot w_P^{-1}  b_- u$ for $u \in U$ and $b_- \in B_-$.  Then we have
\begin{align*}
f_T(h,q) &= b_-^{-1} \mod U_- \in B_-/U_- \cong T,\\
f_+(h,q) &= u \mod [U,U] \in U/[U,U],\\
f_0(h,q) &= qa_{-\theta}(h) \in U_{-\theta} \cong \g_{-\theta},
\end{align*}
where $a_{-\theta}(h)$ denotes the $\g_{-\theta}$-part of the tangent vector $\left.\dfrac{dh(\tau^{-1})}{d(\tau^{-1})}\right|_{\tau^{-1}=0} \in \g$.
\end{lemma}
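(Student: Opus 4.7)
My plan is a direct computation from the parametrization \eqref{eq:Hk}. Given $(h,q)$ with $h \in G[\tau^{-1}]_1$ subject to the big-cell condition, I will compute the image of the Hecke-modified bundle $\E_1$ in the big cell $T \times U_{-\theta} \times U/[U,U] \subset \Bun_\cG^0$. The key input is the description preceding the lemma of the level structures at $0$ and $\infty$ in terms of the transition $\dot\gamma(\tau^{-1}) h(\tau^{-1})$, namely that they are computed as evaluations of this transition (or a suitable conjugate of it) at $t=0$ and $t=\infty$.

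Both $f_T$ and $f_+$ come from evaluation at $t=0$ (i.e.\ $\tau^{-1}=1$), which is $\dot\gamma(1)h(1)=\dot w_P h(1)$. The big-cell hypothesis provides a unique Bruhat-type factorization $\dot w_P h(1) = b_- u$. The $B_-$-component determines the level at $0$, and projection to $T=B_-/U_-$ gives $f_T = b_-^{-1}\bmod U_-$. For $f_+$, I would verify that $u$ encodes the $U/[U,U]$-part of the level at $\infty$ by comparing to the non-rigidified parametrization by $g$: writing $g(1) = h(1) g(0)$ and combining $g(0) \in U$ with $\dot w_P g(1) \in B_-$ forces $g(0)^{-1} = u$, and the formula $g(0)^{-1}$ noted in the text for the level at $\infty$ then yields $f_+ = u \bmod [U,U]$.

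The remaining formula $f_0$ is the nontrivial step, as it requires the first-order (in $s=t^{-1}$) part of the level at $\infty$ in the affine root direction $\alpha_0 = \delta-\theta$. Using $\tau = 1 - t/q$ gives $\tau^{-1} = -qs + O(s^2)$, and combined with the normalization $h(0)=1$, the Taylor expansion
\[
h(\tau^{-1}) \;=\; 1 \;-\; qs \cdot \left.\frac{dh}{d(\tau^{-1})}\right|_{\tau^{-1}=0} \;+\; O(s^2)
\]
produces the factor $q$, while projection onto $\g_{-\theta}$ gives $a_{-\theta}(h)$. The main obstacle is controlling the change of trivialization at $\infty$: the combination $s^{\varpi_\ki^\vee}\tau^{\varpi_\ki^\vee}$ arising from $\dot\gamma$ and from the transition $\gamma(t^{-1}) = \dot w_P s^{-\varpi_\ki^\vee}$ of $\E_\gamma$ contributes a regular torus germ $(-q)^{-\varpi_\ki^\vee}(1-qs)^{\varpi_\ki^\vee}$, and one must check that absorbing it into the identification of the $\cG$-structure at $\infty$ does not contaminate the $\g_{-\theta}$-coefficient. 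Lemma~\ref{lem:alphatheta}(a) is what makes this work: conjugation by $\dot\gamma$ preserves $G[\tau^{-1}]_1$ on the $U_P$-factor, which is precisely the complement of the $\alpha_0$-direction, so that the only $\g_{-\theta}$-contribution to the level at infinity is the one coming from the Taylor coefficient of $h$, yielding $f_0 = q\cdot a_{-\theta}(h)$.
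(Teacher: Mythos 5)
Your plan follows the same route as the paper: $f_T$ and $f_+$ are read off from the $B_-U$ decomposition of $\dot w_P h(1)$ at $\tau^{-1}=1$ (the paper states these two ``follow from the parametrization'' and you correctly fill in the identification $b_- = \dot w_P g(1)$, $u = g(0)^{-1}$ via the un-rigidified $g$), and $f_0$ is extracted from the Taylor coefficient at $s = t^{-1}=0$ using $\tau^{-1} = -qs + O(s^2)$. So the structure is right.

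There are two points where your write-up falls short of closing the argument. First, a sign: your Taylor expansion gives the $s$-coefficient of $h(\tau^{-1})$ as $-q\,h'(0)$, but you then assert this ``produces the factor $q$.'' The paper gets $+q$ by expanding $h(\tau^{-1})^{-1}$ rather than $h(\tau^{-1})$ (the level at $\infty$ is computed from the inverse transition), and then uses $h(0)=1$ to flip the sign: $\frac{d(h^{-1})}{d\tau^{-1}}\big|_{0} = -\frac{dh}{d\tau^{-1}}\big|_{0}$. You mention the normalization $h(0)=1$ but only use it to make the expansion start at $1$; you never use it for the sign, so as written your argument would give $f_0 = -q\,a_{-\theta}(h)$. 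Second, your appeal to Lemma \ref{lem:alphatheta}(a) to control the change of trivialization is misplaced: that lemma (about $\dot\gamma^{-1}U_P\dot\gamma\subset G[\tau^{-1}]_1$) enters the proof of Lemma \ref{lem:comparison}, not this one. The concern you raise is legitimate, but it is already disposed of in the paragraph surrounding \eqref{eq:taylor}: $\gamma(\tau^{-1})\gamma(t^{-1})^{-1}$ lies in $G[[t^{-1}]]$ (in fact it is the torus germ $(s\tau)^{\varpi_\ki^\vee}$ you identify), and this is precisely what lets $\gamma(\tau^{-1})$ extend over $\infty$ as an isomorphism of $\cG$-bundles, so the level at $\infty$ is read off from $h$ alone. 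Invoking a different lemma rather than this extension property leaves that step unjustified.
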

\begin{proof}
The formulae for $f_T$ and $f_+$ follow from the parametrization \eqref{eq:Hk}.  The function $f_0(h,q)$ is obtained by expanding $h(\tau^{-1})^{-1}$ at $t = \infty$ using the local parameter $t^{-1}$.  By \eqref{eq:taylor}, we have 
$$
\left.\frac{dh(\tau^{-1})^{-1}}{d(t^{-1})}\right|_{t^{-1}=0} = \left.\frac{d\tau^{-1}}{d(t^{-1})} \frac{dh(\tau^{-1})^{-1}}{d(\tau^{-1})}\right |_{\tau^{-1} = 0} = -q\left.\dfrac{dh(\tau^{-1})^{-1}}{d(\tau^{-1})}\right|_{\tau^{-1}=0} = q\left.\dfrac{dh(\tau^{-1})}{d(\tau^{-1})}\right|_{\tau^{-1}=0}\in \g,
$$
where for the last equality we have used the condition $h(0) = 1 \in G$: if $h(\tau^{-1})^{-1}= 1 + h_1 \tau^{-1} + O(\tau^{-2})$ then $h(\tau^{-1}) = 1 - h_1 \tau^{-1} + O(\tau^{-2})$.
\end{proof}

\subsection{Kloosterman $\aD$-module}\label{sec:KloostermanDmod}
Let $\lambda$ be an integral coweight for $G$. Let $\oHk_\lambda$ be the restriction of $\oHecke_\lambda$ to $\star_\comp \times \Gm$.  Define $\Hk_\lambda^\circ$ and $\Hk_{q,\lambda}^\circ$ by intersecting with $\Hk^\circ$ and $\Hk_{q}^\circ$ the substack $\Hecke_\lambda \subset \Hecke_\cG$. 

Let $\O_{\lambda}$ denote the structure sheaf of $\Gr_\lambda$, considered as a $\aD_{\Gr_\lambda}$-module.
Denote the minimal extension of $\O_{\lambda}$ under the inclusion $j: \Gr_\lambda \hookrightarrow \oGr_\lambda$
by $\aD_\lambda$.  Abusing notation, also denote by $\aD_\lambda$ the holonomic $\aD$-module on $\oHk_\lambda$ obtained via the isomorphism $\oHk_\lambda \cong \oGr_\lambda$.
We consider the following diagram, where we recall that the maps  \(f_+,f_0,\pr_2\) have been defined at the beginning of \S\ref{sec:param}:
\begin{align}\label{eq:def-Kloosterman}
\begin{diagram}
\node{} \node{\Hk^\circ_\lambda \hookrightarrow\oHk_{\lambda}}\arrow {sw,t}{(f_+,f_0)} \arrow{se,t}{\pr_2}
\\ \node{ U/[U,U] \times U_{-\theta}} \node{} \node{\star_\comp \times \Gm.} \end{diagram}
\end{align}

We write 
 \[(\phi_+,\phi_0):U/[U,U]\times U_{-\theta}\cong I(1)/I(2) \stackrel{\phi}{\rightarrow} \A^1.
\]
Recall our $\aD$-module conventions from Remark~\ref{rem:Dmod}, and recall that \(\Exp=D_{\A^1}/D_{\A^1}(\partial_x-1)\) denotes the exponential  \(D\)-module on  \(\A^1\).
We write  \(\Exp^{\phi_+}:=\phi_+^* \Exp\) and  \(\Exp^{\phi_0}:=\phi_0^* \Exp\).
Define \cite[(5.8)]{HNY} the \defn{Kloosterman $\aD$-module} by
\begin{equation}\label{d-Kloosterman}
\Kl_{(G^\vee,\lambda)} := \pr_{2,!}( f_+^* \Exp^{\phi_+}\otimes f_0^* \Exp^{\phi_0} \otimes D_\lambda).
\end{equation}

\begin{remark}
In~\cite[Thm.1]{HNY}, there is another definition of \(\Kl_{(G^\vee,\lambda)}\)
as the  \(\lambda\)-Hecke eigenvalue of an automorphic  \(D\)-module \(A_\cG\). We shall discuss  \(A_\cG\) in Section~\ref{s:eq-zhu} below.
In view of the results of \cite[\S5.2]{HNY}, the two definitions agree.
\end{remark}

\subsection{Comparison}
The inclusion $\iota_t: X_t \to G[\tau^{-1}]_1$ of \eqref{eq:inc} can be extended to an inclusion
$$
\tiota = (\iota,\pi): X \longrightarrow G[\tau^{-1}] \times \Gm,
$$
where for $x =u_1t \dot w_P u_2$ with $u_1 \in U_P$ and $u_2 \in U^{w_P^{-1}}$, we have via \eqref{eq:nu},
\begin{equation}\label{eq:iotax}
\iota(x) =\iota_t(x) = \dot \comp^{-1} t^{-1} u_1 t\dot \comp \in G[\tau^{-1}], \qquad \text{and} \qquad \pi(x) =\alpha_\ki(t) \in \Gm.
\end{equation}

\begin{lemma}\label{lem:comparison}
Under the identification \eqref{eq:Hk}, we have an isomorphism $\tiota: X \cong \Hk^\circ_{\varpi_\ki^\vee}$.
\end{lemma}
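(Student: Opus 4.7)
The plan is to verify that $\tiota$ takes values in $\Hk^\circ_{\varpi_\ki^\vee}$ and then construct an explicit inverse; the isomorphism of varieties follows because both maps are manifestly regular and both sides have dimension $1 + \dim G/P$ (the first by $\dim X = \dim Z(L_P) + \dim B_-^{w_P} = 1 + \dim G/P$, and the second by the fibration $\Hk^\circ_{\varpi_\ki^\vee} \to \Gm$ with fibers an open subset of $\Gr_{\varpi_\ki^\vee} \cong G/P$).

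For well-definedness, write $x = u_1 t \dot w_P u_2 \in X$ with $u_1 \in U(w_P) = U_P$, $t \in Z(L_P)$, and $u_2 \in U^{w_P^{-1}}$. Since $Z(L_P) \subset T$ normalizes $U_P$, the element $t^{-1}u_1 t$ lies in $U_P$, and Lemma~\ref{lem:alphatheta}(a) gives $\iota_t(x) = \dot\gamma^{-1}(t^{-1}u_1 t)\dot\gamma \in G[\tau^{-1}]_1$. Evaluating at $\tau = 1$, where $\dot\gamma$ specializes to $\dot w_P$ up to sign convention, the relation $x \in B_-$ yields $u_1 t \dot w_P = x u_2^{-1} \in B_- U$, hence $(t^{-1}u_1 t)\dot w_P \in B_- U$ after left-multiplication by $t^{-1} \in T \subset B_-$; this is precisely the level-structure condition of \eqref{eq:Hk}. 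Finally $\dot\gamma\,\iota_t(x) = (t^{-1}u_1 t)\dot w_P \tau^{\varpi_\ki^\vee} \in G \cdot \tau^{\varpi_\ki^\vee}$, so its coset in $\Gr_q$ lies in $\Gr_{\varpi_\ki^\vee}$, giving $\tiota(x) \in \Hk^\circ_{\varpi_\ki^\vee}$.

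For the inverse, given $(h,q) \in \Hk^\circ_{\varpi_\ki^\vee}$, I will set $t := q$ via \eqref{eq:nu}. Under the identification $\Gr_{\varpi_\ki^\vee} \cong G/P$ sending $gP \leftrightarrow g\tau^{\varpi_\ki^\vee} \bmod G[[\tau]]$, the image $\dot\gamma h \bmod G[[\tau]]$ corresponds to a unique coset $gP$, which the level-structure condition on $h(1)$ forces into the open cell $U_P\dot w_P\cdot P/P$; write $g = v\dot w_P$ with a unique $v \in U_P$. Set $u_1 := tvt^{-1} \in U_P$ and take $u_2 \in U^{w_P^{-1}}$ as the unique element such that $x := u_1 t \dot w_P u_2 \in B_-$, using the twist-map isomorphism $\eta: U^{w_P^{-1}} \isom B_-^{w_P}$. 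This produces $x \in X$ with $\tiota(x) = (h,q)$, and injectivity of $\tiota$ is clear from reversing each step.

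The principal technical step is the cell-matching: showing that the condition $h(1) \in \dot w_P^{-1}B_-U$ in \eqref{eq:Hk} corresponds exactly to the open cell $U_P\dot w_P\cdot P/P$ inside $G/P$, and hence to the image of $X_t \hookrightarrow \Gr_{\varpi_\ki^\vee}$ from \eqref{eq:intoGr}. Equivalently, this amounts to identifying the parametrization of $\oPi \cong \cR^{w_0}_{w_0^P}$ from \eqref{eq:Xisom} with the one implicit in the Hecke-stack description of \eqref{eq:Hk}. Once the set-theoretic bijection is established, the regularity of $\tiota$ and of its inverse (via the regularity of the twist maps and of the standard open-cell parametrization of $\Gr_{\varpi_\ki^\vee}$) upgrades the bijection to an isomorphism of varieties.
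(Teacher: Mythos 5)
Your overall strategy — verify $\tiota$ lands in $\Hk^\circ_{\varpi_\ki^\vee}$ directly, then build an explicit inverse through the identification $\Gr_{\varpi_\ki^\vee}\cong G/P$ — is the same route as the paper's: the well-definedness step you carry out is exactly the paper's use of \eqref{eq:imp} together with Lemma~\ref{lem:alphatheta}(a) and \eqref{eq:intoGr}, and both proofs finish by characterizing the preimage.

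There are, however, two issues with the inverse direction as you have written it, and they land precisely on what you yourself flag as the principal technical step.

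First, the causal attribution in ``which the level-structure condition on $h(1)$ forces into the open cell $U_P\dot w_P\cdot P/P$'' is off. What forces the coset $gP$ into the open cell $U_P\dot w_P P/P$ is \emph{not} the level structure at $\tau=1$ but the conjunction of the rigidification $h\in G[\tau^{-1}]_1$ with the constraint that $\dot\gamma h$ lies in $\Gr_{\varpi_\ki^\vee}$: writing $\dot\gamma h = g\tau^{\varpi_\ki^\vee}k$ with $k\in G[[\tau]]$ and pushing $\dot w_P^{-1}g$ through the Birkhoff big cell shows $g\in U_P\dot w_P\cdot P$. The level structure $h(1)\in\dot w_P^{-1}B_-U$ is a \emph{further} open condition on the resulting $v\in U_P$, namely $v\dot w_P\in B_-U$, which is exactly what makes the twist-map step produce $u_2\in U^{w_P^{-1}}$ and hence an element $x\in B_-^{w_P}$.

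Second, you assert ``This produces $x\in X$ with $\tiota(x)=(h,q)$'' and ``injectivity of $\tiota$ is clear from reversing each step,'' but neither is actually established. Once you have set $u_1:=tvt^{-1}$, a direct computation gives $\iota_t(x)=\dot\gamma^{-1}v\dot\gamma$ and hence $\dot\gamma\,\iota_t(x)\equiv \dot\gamma h\bmod G[[\tau]]$; but to conclude $\iota_t(x)=h$ you must still invoke the injectivity of the map $G[\tau^{-1}]_1\to\Gr$, $h\mapsto\dot\gamma h\bmod G[[\tau]]$ (a Birkhoff-factorization fact, equivalent to $G[\tau^{-1}]_1\cap\dot\gamma G[[\tau]]\dot\gamma^{-1}=\{1\}$). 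This is precisely where the paper's one-line reduction ``Suppose $h(\tau^{-1})=\dot\gamma^{-1}g\dot\gamma$ with $g\in G$ constant'' lives: that ansatz encodes both the orbit-map injectivity and the cell-matching in a single stroke, after which the equivalence $h\in G[\tau^{-1}]_1\Leftrightarrow g\in U_P$ and the translation of $h(1)\in\dot w_P^{-1}B_-U$ into $g\dot w_P\in B_-U$ complete the argument. Your plan is sound, but the acknowledged gap is the substance of the lemma and must be closed by one of these two arguments.
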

\begin{proof}
Fix $t \in Z(L_P)$, and let $q = \alpha_\ki(t) \in \Gm$.  We show that $\iota_t: X_t \cong \Hk^\circ_{q,\varpi_\ki^\vee} = \Hk^\circ_q \cap \Hk_\lambda$.  (In this proof, we explicitly distinguish $t$ and $q$ for clarity.)

Let $x = u_1 t \dot w_P u_2 \in X$.  Then by \eqref{eq:compdef}, we have
\begin{equation}\label{eq:imp}
\dot w_P \left(\dot \comp^{-1} t^{-1} u_1 t \dot \comp \right)|_{\tau^{-1} = 1} =   t^{-1}u_1 t \dot w_P= (t^{-1}x)(u_2^{-1}) \in B_-U,
\end{equation}
so $\iota(x) \in \Hk^\circ_q$.  The map $x \mapsto  \dot \comp \iota_t(x)$ is an inclusion $X_t \hookrightarrow \Gr_{\varpi_\ki^\vee}$; see \eqref{eq:intoGr}.  It follows that $\iota_t(X_t) \subset  \Hk^\circ_{q,\varpi_\ki^\vee}$ and the map $\iota_t$ is injective.

Under our identification \eqref{eq:Hk}, any element of $\Hk_{q,\varpi_\ki^\vee}$ is represented by $h(\tau^{-1}) = \dot \comp^{-1} g \dot \comp$ where $g \in G$ is constant.  It follows from Lemma \ref{lem:alphatheta}(a) that $h(\tau^{-1}) \in G[\tau^{-1}]_1$ is equivalent to $g \in U_P$.  The condition that $h(1)\in  \dot w_P^{-1} B_- U$ is then equivalent to $h(\tau^{-1}) \in \iota_t(X_t)$.  Thus, $\iota_t$ is surjective.  We are done.
\end{proof}

By Lemma \ref{lem:alphatheta}(c), we have $w_P^{-1} \alpha_\ki = -\theta$.  
Henceforth, we assume that 
\begin{equation}\label{eq:thetasign2}
\mbox{$x_{-\theta} \in \g_{-\theta}$ is equal to $ -\dot w_P^{-1} x_\ki \in \g_{-\theta}$.}
\end{equation}
Note that the choice \eqref{eq:thetasign2} is independent of \eqref{eq:thetasign}, which in the notation of this section is an assumption on the root vectors of $\g^\vee$ (rather than $\g$). 

\begin{proposition}\label{prop:comparison}
We have
\begin{align*}
f_T(\tiota(x)) &=t \gamma(x)^{-1}  \in B_-/U_- \cong T,\\
f_+(\tiota(x)) &= u_2^{-1} \mod [U,U] \in U/[U,U],\\
f_0(\tiota(x)) &= -\psi(u_1) x_{-\theta} \in U_{-\theta} \cong \g_{-\theta},
\end{align*}
where $\psi: U \to \A^1$ denotes the standard additive character from \S\ref{s:crystal-potential}.
\end{proposition}

\begin{proof}
Let $\iota(x) = h(\tau^{-1})$.  Then by \eqref{eq:imp}, we have
$$
\dot w_P h(1) = (t^{-1}x)(u_2^{-1}). 
$$
By Lemma \ref{lem:HNY} and the definition \eqref{eq:weight} of the weight map $\gamma$, we have 
$$
f_T(\tiota(x)) = x^{-1} t \mod U_- = \gamma(x)^{-1} t = t \gamma(x)^{-1} \in T$$ 
and $f_+(\tiota(x)) = u_2^{-1} \mod [U,U]$.

It remains to compute $f_0(\tiota(x))$.  Let $\u_P = \Lie(U_P)$.  Then $\u_P = \bigoplus_{\alpha \in R^+\setminus R^+_P} \g_\alpha$.  Since $\ki$ is cominuscule, $\alpha_\ki$ occurs in every $\alpha \in R^+\setminus R^+_P$ with coefficient one.  It follows that $\alpha +\beta$ is never a root for $\alpha,\beta \in R^+\setminus R^+_P$.  In particular, $\u_P$ is an abelian Lie algebra and $U_P$ is an abelian algebraic group, and so is $\dot \comp^{-1} U_P  \dot \comp$.  Let $\exp: \u_P \to U_P$ be the exponential map, which is an isomorphism since $U_P$ is unipotent.

Now, $\dot \comp^{-1} \g_\alpha \comp$ can be identified with the root space $\hat \g_{\comp^{-1} \cdot \alpha}$, where $\hat \g = \g[\tau^{\pm 1}]$ is the loop algebra, and $\comp^{-1} \cdot \alpha$ is now an affine root.  By Lemma \ref{lem:alphatheta}(c), we have $w_P^{-1} \alpha_\ki = -\theta$, and $\comp^{-1} \alpha_\ki = -\theta-\delta$, where $\delta$ denotes the null root.  It follows that 
\begin{equation}\label{eq:atheta}
a_{-\theta}(h) = \dot w_P^{-1} \cdot \exp^{-1}(u)_{\alpha_\ki} =-\psi(u)x_{-\theta},
\end{equation}
where $h(\tau^{-1}) = \dot \comp^{-1} u \dot \comp \in \dot \comp^{-1} U_P \dot \comp$ for $u \in U_P$, and $\exp^{-1}(u)_{\alpha_\ki}$ denotes the component of $\exp^{-1}(u) \in \u_P$ lying in the root space $\g_{\alpha_\ki}$.

We compute
\begin{align*}
f_0(\tiota(x)) &= q a_{-\theta}(h) & \mbox{by Lemma \ref{lem:HNY}} \\
&= -\alpha_\ki(t) \psi(t^{-1} u_1 t) x_{-\theta} &\mbox{by \eqref{eq:nu}, \eqref{eq:iotax} and \eqref{eq:atheta}}\\
&=- \alpha_\ki(t) \alpha_\ki(t^{-1}) \psi(u_1) x_{-\theta}= -\psi(u_1)x_{-\theta} \in \g_{-\theta},
\end{align*}
as claimed.
\end{proof}

We can now prove the main result of this section.

\begin{theorem}\label{t:HNYisBK}
The character
$\aD$-module $\BK_{(G,P)}$ is a flat connection (smooth and concentrated in one degree), and is isomorphic to the Kloosterman $\aD$-module $\Kl_{(G^\vee,\varpi_\ki^\vee)}$.
\end{theorem}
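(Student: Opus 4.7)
The strategy is to transport the Hecke computation along the explicit isomorphism $\tiota\colon X \cong \Hk_{\varpi_\ki^\vee}^\circ$ of Lemma~\ref{lem:comparison} and match both $\aD$-modules step by step.

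First I would reduce to the open locus. Since $\ki$ is cominuscule, $\varpi_\ki^\vee$ is a minuscule coweight of $G$, so $\Gr_{\varpi_\ki^\vee}$ is already a closed $G[[\tau]]$-orbit of $\Gr_G$, and its minimal-extension $\aD$-module $\aD_{\varpi_\ki^\vee}$ is just the structure sheaf of $\Hk_{\varpi_\ki^\vee}$. Moreover $A_\cG = j_{\gamma,!}\Exp^\phi$ is extended by zero from the big cell $T \times I(1)/I(2) \subset \Bun_\cG^\gamma$, so $\pr_1^*A_\cG$ vanishes outside $\Hk_{\varpi_\ki^\vee}^\circ$. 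Together these observations give
\[
\Kl_{(G^\vee,\varpi_\ki^\vee)} \;=\; R\pr_{2,!}\bigl(\pr_1^* A_\cG\bigl|_{\Hk_{\varpi_\ki^\vee}^\circ}\bigr).
\]

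The core step is the identification $\tiota^*\pr_1^* A_\cG \cong f^*\Exp$ of $\aD$-modules on $X$. Since $A_\cG$ restricted to $T \times I(1)/I(2)$ is the pullback of $\Exp^\phi$ and is constant along the $T$-factor, Proposition~\ref{prop:comparison} reduces the comparison to the pointwise identity of functions $\phi\circ(f_0, f_+)\circ\tiota = f$ on $X$. Unpacking this: the $U/[U,U]$-contribution $f_+\circ\tiota = u_2^{-1} \bmod [U,U]$ yields $\phi(u_2^{-1}) = -\psi(u_2^{-1}) = \psi(u_2)$; and the $\g_{-\theta}$-contribution $f_0\circ\tiota = -\psi(u_1)x_{-\theta}$, upon identifying $\g_{-\theta}$ with the affine root space $s\,\g_{-\theta} = \g_{\alpha_0}$ via $x_0 = s\,x_{-\theta}$, evaluates under $\phi$ to $\psi(u_1)$ precisely by the normalization~\eqref{eq:thetasign2}. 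Summing these gives $\psi(u_1) + \psi(u_2) = f(x)$.

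Finally, under $\tiota$ the projection $\pr_2$, which records the point $q \in \Gm$ where the Hecke modification occurs, corresponds to the highest weight map $\pi\colon X \to Z(L_P)$ after the identification $\alpha_\ki\colon Z(L_P)\cong\Gm$ of~\eqref{eq:nu}. Proper base change then produces
\[
\Kl_{(G^\vee,\varpi_\ki^\vee)} \;\cong\; R\pi_!\, f^*\Exp \;=\; \BK_{(G,P)},
\]
which is the claim. The main obstacle is the sign bookkeeping for the character in the middle paragraph: the identity $\phi\circ(f_0, f_+) = f$ is not an abstract coincidence but depends on all the earlier normalizations of $x_i$, $x_\theta$, $x_{-\theta}$, of the character $\phi$, and on the sign convention~\eqref{eq:thetasign2}. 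These were set up exactly so that the two $\aD$-modules match on the nose rather than up to a nontrivial reparametrization of $\Gm$ or a twist by a rank-one local system.
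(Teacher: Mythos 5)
Your argument is correct and follows the paper's proof essentially step for step: trivialize $\aD_{\varpi_\ki^\vee}$ using that $\Gr_{\varpi_\ki^\vee}$ is a closed orbit, restrict to the big cell since $A_\cG$ is supported there, match the two diagrams via $\tiota$ (Lemma~\ref{lem:comparison} and Proposition~\ref{prop:comparison}), and check the character identity $\phi\circ(f_+,f_0)\circ\tiota=f$ using the normalization~\eqref{eq:thetasign2}. The only quibble is that the final step is not proper base change but simply the fact that the isomorphism $\tiota$ intertwines $\pr_2$ with $\pi$ (after the identification~\eqref{eq:nu}), so $R\pr_{2,!}$ and $R\pi_!$ agree on the matched sheaves; this does not affect the conclusion.
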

\begin{proof}
Recall~\eqref{d-Kloosterman} that by definition 
\[\Kl_{(G^\vee,\varpi_\ki^\vee)} := \pr_{2,!}( f_+^* \Exp^{\phi_+}\otimes f_0^* \Exp^{\phi_0} \otimes D_{\varpi_\ki^\vee}).\]
  The translation element $\tau^{\varpi_\ki^\vee}$ is minimal in the Bruhat order of $W_\af/W$.  Thus $\Gr_{\varpi_\ki^\vee} = \oGr_{\varpi_\ki^\vee}$, and we have $\aD_{\varpi_\ki^\vee} \cong
\O_{\Hk_{\varpi_\ki^\vee}}$.

Thus we may restrict ourselves 
to considering the diagram
\begin{align}\label{eq:restricted}
\begin{diagram}
\node{} \node{\Hk^\circ_{\varpi_\ki^\vee}}\arrow {sw,t}{(f_+,f_0)} \arrow{se,t}{\pr_2}
\\  \node{U/[U,U] \times U_{-\theta}} \node{} \node{\star_\comp \times \Gm.} \end{diagram}
\end{align}
Recall that $U_{-\theta}\cong \g_{-\theta}$ is identified with $\A^1$ via the root vector $x_{-\theta}$. 
 We then have 
$\Kl_{(G^\vee,\varpi_\ki^\vee)} = \pr_{2,!}( f_+^* \Exp^{\phi_+}\otimes f_0^* \Exp^{\phi_0})$.
By Lemma \ref{lem:comparison} and Proposition \ref{prop:comparison}, diagram \eqref{eq:restricted} gets identified with the diagram
\begin{align}\label{eq:geom}
\begin{diagram}
\node{} \node{X}\arrow {sw,t}{\Theta: \;u_1 t \dot w_P u_2 \mapsto (u_2^{-1},-\psi(u_1))} \arrow{se,t}{\pi}
\\ \node{U/[U,U] \times \A^1} \node{} \node{Z(L_P).} \end{diagram}
\end{align}
Setting $\phi:= (\phi^+,\phi_0)$, it follows that $\phi(u,a) = -\psi(u) - a$ for $(u,a) \in U/[U,U] \times \A^1$.  The definition of the character $\aD$-module can then be written as
$$
\BK_{(G,P)} = R\pi_*\Exp^f =R\pi_*(\Theta^*\Exp^{-\phi}) = R\pr_{2,*}( (f_+,f_0)^*\Exp^\phi) 
\overset{\sim}{\xrightarrow{}} 
\Kl_{(G^\vee,\varpi_\ki^\vee)},
$$
where $\Theta$ is the left arrow in \eqref{eq:geom}.
The last isomorphism is due to \cite[p.269]{HNY}; in the $D$-module setting it also follows from
the main result of \cite{Zhu:FG-HNY}.
It also follows from this calculation that $\BK_{(G,P)}$ is 
a $\aD$-module, rather than a complex of $\aD$-modules.
\end{proof}

\begin{remark} 
Similarly, over a finite field $\F_q$ equipped with a nontrivial additive character 
$\psi:\F_q\to \overline{\Q}_\ell^\times$, we can define the Artin--Schreier $\ell$-adic
sheaf $\cL_{\psi(f)}:=f^* \cL_\psi$ on $X$ and a geometric crystal $\ell$-adic sheaf $R\pi_*\cL_{\psi(f)}$ 
on $Z(L_P)$. The comparison with generalized Kloosterman $\ell$-adic sheaves is the same.
\end{remark}

\subsection{Homogeneity}\label{s:HNYgrade}
In \cite[\S2.6.4]{Yun:moments}, a $\G_m$-action is defined on $\Hk^\circ$. 
Under the parametrization \eqref{eq:Hk}, $\zeta \in \G_m$ acts by conjugation by $\rho^\vee(\zeta)$ on the first factor $G[\tau^{-1}]_1$ and by $q \mapsto \zeta^c q$ on the second factor $\Gm$, where $c$ is the Coxeter number.  The map $(f_+,f_0): \Hk^\circ \to I(1)/I(2)$ is $\G_m$-equivariant where $\G_m$ acts on $I(1)/I(2)$ by scalar multiplication in every affine simple root space.

The $\G_m$-action on $\Hk^\circ$ preserves $\Hk^\circ_{\omega_\ki^\vee}$, and under the isomorphism of the diagrams \eqref{eq:restricted} and \eqref{eq:geom}, this $\G_m$-action is identified with the one in \S\ref{s:homogeneous}.

\section{The mirror isomorphism for minuscule flag varieties}

\subsection{$\aD$-module mirror theorem}
Assume as before that $G$ is of adjoint type and $G^\vee$ is simply-connected.
Let $P\subset G$ be a parabolic subgroup and $P^\vee \subset G^\vee$ be the corresponding parabolic of the dual group.   
\begin{lemma}\label{l:2ipiH2}
There is a canonical exact sequence
\[
2i\pi H^2(G^\vee/P^\vee,\Z) \to  H^2(G^\vee/P^\vee) \to Z(L_P).
\]
\end{lemma}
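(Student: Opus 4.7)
The plan is to identify both $H^2(G^\vee/P^\vee,\Z)$ and $H^2(G^\vee/P^\vee)$ with standard objects attached to the torus $Z(L_P)$, and then invoke the exponential exact sequence for this torus.

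First I would compute the cocharacter lattice $X_*(Z(L_P))$. Because $G$ is of adjoint type, $X_*(T)$ is the full coweight lattice $\bigoplus_{i\in I}\Z\varpi_i^\vee$. Since
\[
Z(L_P)=\{t\in T:\alpha_i(t)=1\text{ for all }i\in I_P\},
\]
its cocharacter lattice is
\[
X_*(Z(L_P))=\{\mu\in X_*(T):\langle\alpha_i,\mu\rangle=0\ \forall\,i\in I_P\}=\bigoplus_{i\notin I_P}\Z\varpi_i^\vee.
\]
Next I would identify $H^2(G^\vee/P^\vee,\Z)$ with this same lattice. Since $G^\vee$ is simply-connected, the Bruhat decomposition gives a basis of $H^2(G^\vee/P^\vee,\Z)$ by Schubert divisor classes $\sigma_i$ for $i\notin I_P$, which are the first Chern classes of the line bundles attached to the fundamental weights of $G^\vee$ indexed by $I\setminus I_P$. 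Under Langlands duality, weights of $G^\vee$ are canonically coweights of $G$, and the fundamental weights of $G^\vee$ correspond to the fundamental coweights $\varpi_i^\vee$ of $G$, yielding a canonical isomorphism $H^2(G^\vee/P^\vee,\Z)\cong X_*(Z(L_P))$.

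Tensoring this identification with $\C$ produces $H^2(G^\vee/P^\vee)\cong \Lie(Z(L_P))$. The exponential sequence for the complex torus $Z(L_P)$,
\[
0\longrightarrow 2\pi i\,X_*(Z(L_P))\longrightarrow \Lie(Z(L_P))\xrightarrow{\ \exp\ }Z(L_P)\longrightarrow 0,
\]
combined with these identifications, gives the exact sequence in the statement. No step here is technically difficult; the only real care is bookkeeping under Langlands duality, where the adjoint/simply-connected asymmetry between $G$ and $G^\vee$ is precisely what makes the equality $X^*(T^\vee)=X_*(T)$ hold, so that the identification in the second paragraph is canonical rather than a mere coincidence of ranks.
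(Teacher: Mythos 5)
Your proof is correct and is essentially the same approach as the paper's: both identify $H^2(G^\vee/P^\vee)$ with the Lie algebra of the torus $Z(L_P)$ and then invoke the exponential map. The paper states this slightly more abstractly via Borel's isomorphism $H^2(G^\vee/P^\vee)\cong\t^{W_P}$ together with $Z(L_P)=T^{W_P}$, whereas you make the lattice bookkeeping explicit by computing $X_*(Z(L_P))=\bigoplus_{i\notin I_P}\Z\varpi_i^\vee$ and matching it with the Schubert divisor basis of $H^2(G^\vee/P^\vee,\Z)$ via Langlands duality; this is useful extra detail but not a different argument.
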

\begin{proof}
By Borel's theorem there is a canonical isomorphism $H^2(G^\vee/P^\vee) \cong \kt^{W_P}$. We have $Z(L_P)=T^{W_P}$ and thus it only remains to apply the exponential map.
\end{proof}

Recall that the character $\aD$-module $\BK_{(G,P)}$ attached to the Berenstein--Kazhdan parabolic geometric crystal has been constructed
in~\S\ref{s:crystal}, and that the quantum connection $\cQ^{G^\vee/P^\vee}$ for the projective 
homogeneous space $G^\vee/P^\vee$ has been described in \S\ref{s:qh} in terms of the quantum 
Chevalley formula. The base of  $\BK_{(G,P)}$ is $Z(L_P)$, 
and the 
base  $\cQ^{G^\vee/P^\vee}$ is $\C^\times_q$.
Since $\C^\times_q \cong H^2(G^\vee/P^\vee)/2i\pi 
H^2(G^\vee/P^\vee,\Z)$ by Remark~\ref{rem:Cq}, via 
\[
(q_i \mid i \notin I_P) \mapsto \sum\limits_{i\in I\backslash I_P} \log(q_i)\sigma_i,
\]
the above Lemma~\ref{l:2ipiH2} shows that the 
two base tori 
are 
canonically 
isomorphic.

\begin{theorem}\label{t:Dmodule-mirror} Suppose that $P$ is a cominuscule parabolic subgroup of $G$ and let $P^\vee$ be the dual minuscule parabolic subgroup of $G^\vee$.
The geometric crystal $\aD$-module $\BK_{(G,P)}$  and the quantum connection 
$\cQ^{G^\vee/P^\vee}$ for $G^\vee/P^\vee$ are isomorphic.
\end{theorem}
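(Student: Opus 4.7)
The plan is to establish the isomorphism by composing three separate isomorphisms corresponding to the four corners of Figure~\ref{fig:four}. Since we pass through the Langlands correspondence, no direct comparison between the explicit formulas for $\BK_{(G,P)}$ (an exponential integral over a subvariety of $G$) and $\cQ^{G^\vee/P^\vee}$ (a matrix of operators coming from the quantum Chevalley rule) is attempted; instead we exploit rigidity to recognize both sides as the same object in disguise.

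First I would invoke Theorem~\ref{t:HNYisBK}, already proved in the preceding section, which identifies $\BK_{(G,P)}$ with the Heinloth--Ng\^o--Yun Kloosterman $\aD$-module $\Kl_{(G^\vee,\varpi_\ki^\vee)}$ attached to the minuscule representation $V=V_{\varpi_\ki}$ of $G^\vee$; the hypothesis that $P$ is cominuscule is what makes the identification of the geometric crystal $X$ with the open Hecke locus $\Hk^\circ_{\varpi_\ki^\vee}$ (via Lemma~\ref{lem:comparison} and Proposition~\ref{prop:comparison}) available. Second I would apply Zhu's Theorem~\ref{tintro:Zhu} to identify $\Kl_{(G^\vee,V)}$ with the Frenkel--Gross connection $\nabla^{(G^\vee,V)}$. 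Finally I would apply Theorem~\ref{t:FGisquantum}, which transports $\nabla^{(G^\vee,V)}$ to the quantum connection $\cQ^{G^\vee/P^\vee}$ through the linear isomorphism $L:H^*(G^\vee/P^\vee)\to V$ sending Schubert basis to canonical basis. Composing these three isomorphisms produces the desired isomorphism $\BK_{(G,P)}\cong \cQ^{G^\vee/P^\vee}$.

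The main obstacle I anticipate is not any single one of the three steps, each of which is now at our disposal, but rather checking that the three isomorphisms live on the \emph{same} base and have compatible normalizations. Concretely, I would need to verify that the three identifications of the base torus agree: Lemma~\ref{l:2ipiH2} identifies $\C^\times_q$ with $Z(L_P)$ canonically, and under the identification \eqref{eq:nu} together with Lemma~\ref{l:coxeter-cominuscule} the Coxeter number factor matches on both sides. The homogeneity discussions of \S\ref{s:FGgrade}, \S\ref{s:QHgrade} and \S\ref{s:HNYgrade} all exhibit a $\G_m$-action of weight $c$ on $q$, which gives a further consistency check. A second compatibility to track is the choice of sign of the root vector $x_\theta$: Theorem~\ref{t:FGisquantum} depends on the normalization \eqref{eq:thetasign}, while Theorem~\ref{t:HNYisBK} fixes $x_{-\theta}$ via \eqref{eq:thetasign2}; one should ensure these two choices can be made simultaneously (they can, as they concern dual groups). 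With these compatibilities recorded, the theorem is immediate from the chain
\[
\BK_{(G,P)}\;\stackrel{\text{Thm.~\ref{t:HNYisBK}}}{\cong}\;\Kl_{(G^\vee,\varpi_\ki^\vee)}\;\stackrel{\text{Thm.~\ref{tintro:Zhu}}}{\cong}\;\nabla^{(G^\vee,V)}\;\stackrel{\text{Thm.~\ref{t:FGisquantum}}}{\cong}\;\cQ^{G^\vee/P^\vee}.
\]

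A subtle point worth flagging is that this strategy gives no canonical isomorphism beyond what is forced by the rigidity of the Frenkel--Gross connection; in particular, the resulting isomorphism is determined only up to a nonzero scalar on each irreducible summand. For minuscule $P^\vee$ with $V$ irreducible this ambiguity is harmless, but one should note it when later deducing statements like the Peterson isomorphism (Corollary~\ref{intro:Peterson}), where one needs to pin down the scalar by matching a single coefficient---for instance, the leading $q$-term of the quantum period computed in Lemma~\ref{l:q-term}---against the corresponding oscillatory integral.
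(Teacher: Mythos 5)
Your overall strategy is precisely the one the paper uses: compose the three isomorphisms $\BK_{(G,P)}\cong\Kl_{(G^\vee,\varpi_\ki^\vee)}$ (Theorem~\ref{t:HNYisBK}), $\Kl_{(G^\vee,\varpi_\ki^\vee)}\cong\nabla^{(G^\vee,\varpi_\ki^\vee)}$ (Zhu), and $\nabla^{(G^\vee,\varpi_\ki^\vee)}\cong\cQ^{G^\vee/P^\vee}$ (Theorem~\ref{t:FGisquantum}), and track the identifications of the base torus via Lemma~\ref{l:2ipiH2}, \eqref{eq:nu} and Lemma~\ref{l:coxeter-cominuscule}.

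Where you diverge from the paper is in the treatment of the normalization ambiguity, and here your proposal has a gap. You assert that the choices \eqref{eq:thetasign} and \eqref{eq:thetasign2} ``can be made simultaneously (they can, as they concern dual groups).'' This dismisses the real issue: \eqref{eq:thetasign} fixes $x_\theta\in\g^\vee_\theta$ and \eqref{eq:thetasign2} fixes $x_{-\theta}\in\g_{-\theta}$ \emph{independently}, but Zhu's theorem \emph{links} them, producing $\Kl\cong\nabla^{(G^\vee,\varpi_\ki^\vee)}$ only for some unspecified choice of $x_\theta$ that need not agree with \eqref{eq:thetasign}. Since $x_\theta$ multiplies $dq$ in the Frenkel--Gross connection, a sign mismatch in $x_\theta$ is absorbed into the coordinate $q$, so the chain \emph{a priori} yields only $\BK_{(G,P)}\cong[q\mapsto\pm q]^*\cQ^{G^\vee/P^\vee}$ with an undetermined sign. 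This is not a harmless ``scalar on each irreducible summand'' (that would not change the isomorphism class) but an automorphism of the base curve $\Gm$, and one must actually argue that the $+$ sign occurs. The paper does this inside the proof of the theorem, not as a side remark for later applications: the first $q$-coefficient of the quantum period $\langle S(q),1\rangle$ is a positive integer by Lemma~\ref{l:q-term}, while the corresponding solution $\oint e^{f_t}\omega$ of $\BK_{(G,P)}$ also has positive first $q$-coefficient by Corollary~\ref{cor:superpositive} (positivity of the Laurent polynomial $P_\ki$ in the Lusztig parametrization) together with Cauchy's residue formula. Matching these positive leading coefficients forces the sign to be $+$. You do mention comparing Lemma~\ref{l:q-term} with an oscillatory integral, but you frame it as a concern for the Peterson isomorphism rather than as an indispensable final step of the present proof; you should move it into the proof and drop the unsupported claim that the sign compatibility is automatic.
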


\begin{proof} Let $\ki$ be the minuscule node corresponding to $P$. 
We claim that the isomorphism $Z(L_P)\isom  H^2(G^\vee/P^\vee)/2i\pi 
H^2(G^\vee/P^\vee,\Z)$ of Lemma~\ref{l:2ipiH2} factors as
the composition
\[
Z(L_P) \stackrel{\alpha_\ki}{\longrightarrow}
\Gm = \C^\times_q
\stackrel{\log}{\longrightarrow}
\C/ 2i\pi \Z
\stackrel{\sigma_\ki}{\longrightarrow}
H^2(G^\vee/P^\vee)/2i\pi H^2(G^\vee/P^\vee,\Z).
\] 
Indeed, the Schubert class $\sigma_\ki\in H^2(G^\vee/P^\vee,\C)$ corresponds to the fundamental coweight $\varpi_\ki^\vee\in
\mathfrak{t}^{W_P}$ under Borel's isomorphism. Thus composing with the exponential map, we 
see that the isomorphism
\[
\C_q^\times \stackrel{\log}{\longrightarrow}
\C/ 2i\pi \Z
\stackrel{\sigma_\ki}{\longrightarrow}
H^2(G^\vee/P^\vee)/2i\pi H^2(G^\vee/P^\vee,\Z) 
\isom 
Z(L_P)
=  T^{W_P} 
\]
is given by the cocharacter $q\mapsto \varpi_\ki^\vee(q)$. Composing with 
$\alpha_\ki$ the
claim follows from $\langle \alpha_\ki , \varpi^\vee_\ki \rangle =1$.

The proof of the theorem follows by combining the following three results:
\begin{itemize}
\item Theorem~\ref{t:HNYisBK} says that $\BK_{(G,P)}$ is isomorphic to the Kloosterman $\aD$-module
$\Kl_{(G^\vee,\varpi^\vee_\ki)}$ if we identify the respective bases $Z(L_P)$ and $\Gm$ via 
$\alpha_\ki$;

\item Zhu proved~\cite{Zhu:FG-HNY} that $\Kl_{(G^\vee,\varpi^\vee_\ki)}$ is isomorphic to the Frenkel--Gross connection
$\nabla^{(G^\vee,\varpi^\vee_\ki)}$ (see also Theorem \ref{thm:Zhu} below); 

\item Theorem~\ref{t:FGisquantum} says that $\nabla^{(G^\vee,\varpi^\vee_\ki)}$ is isomorphic to $\cQ^{G^\vee/P^\vee}$, if we
identify the bases via $\Gm = \C^\times_q$.
\end{itemize}
In Zhu's isomorphism the choice of affine generic character $\phi$ in the definition of 
$\Kl_{(G^\vee,\varpi^\vee_\ki)}$ matches with a particular choice of highest root vector in the 
definition of $\nabla^{(G^\vee,\varpi^\vee_\ki)}$ (see~Theorem \ref{thm:Zhu}).  All our sign choices lead to a 
single overall sign, which is equivalent to an isomorphism $q \mapsto \pm q$ of the curve $\Gm$.

To determine this sign and conclude that $\BK_{(G,P)}$ is isomorphic to $\cQ^{G^\vee/P^\vee}$, we consider the quantum period solution $\langle S(q),1 \rangle$ of $\cQ^{G^\vee/P^\vee}$.
From Lemma~\ref{l:q-term}, we know that the first term in the $q$-expansion is positive. 
On the other hand, the corresponding solution of $\BK_{(G,P)}$ is 
\begin{equation}\label{first-coeff}
\frac{1}{(2i\pi)^\ell}\oint e^{f_t(x)} \omega= \frac{1}{(2i\pi)^\ell}\oint e^{a_1 + \cdots + a_\ell + \alpha_\ki(t)P_\ki}
\frac{da_1}{ a_1} \cdots \frac{da_\ell}{a_\ell},
\end{equation}
where we use the expression of the superpotential from Corollary~\ref{cor:superpositive}. Since $P_\ki$ is a Laurent polynomial with positive
coefficients, and $\alpha_\ki(t)=q$, we deduce from Cauchy's residue theorem that the first term in the $q$-expansion of the above integral is also positive.
\end{proof}

If $G$ is of type $A_n$ this proves a conjecture of
Marsh--Rietsch~\cite[\S3]{Marsh-Rietsch:B-model-Grassmannians}, and if $G$ is of type 
$D_n$ a conjecture of
Pech--Rietsch--Williams~\cite[\S4]{Pech-Rietsch-Williams:LG-quadrics}. They construct in 
both cases a $\aD$-module homomorphism $\cQ^{G^\vee/P^\vee}\to \BK_{(G,P)}$ and show that it 
is injective.
The conjecture remained open whether it is an isomorphism, or equivalently whether the 
dimension of $H^*(G^\vee/P^\vee)$ is equal to the rank of $\BK_{(G,P)}$.
This is Theorem~\ref{t:Dmodule-mirror}.

\begin{corollary}\label{cor:identity}
Suppose that $P$ is a cominuscule parabolic of $G$.
The number of paths in Bruhat order inside $W^P$ from $\pi_P(w_0w_0^P s_\qroot)$ to 
$w_0w_0^P$ is equal to $P_\ki(1,1,\ldots,1)$ where $P_\ki$ is the Laurent polynomial of 
Corollary~\ref{cor:superpositive}.
\end{corollary}

\begin{proof}
Lemma~\ref{l:q-term} gives that the first term in the $q$-expansion of the quantum period 
is given by the above number of paths. In Example~\ref{ex:squarefree} we have seen that 
$P_\ki(a_1,\ldots,a_\ell)$ is the ratio of a square-free polynomial by the product $a_1\cdots 
a_\ell$. Hence~\eqref{first-coeff} evaluates to $P_\ki(1,1,\ldots,1)$. 
The corollary follows from Theorem~\ref{t:Dmodule-mirror}.
\end{proof}

\section{Generalization of Zhu's theorem}\label{s:eq-zhu}
In this section, we explain how Zhu's results in \cite{Zhu:FG-HNY} establish
Theorem \ref{thm:Zhu} below.  We assume the reader is familiar with \cite{Zhu:FG-HNY}.

\subsection{Deformation of the Frenkel--Gross connection}\label{s:EFG}
We use the notation from \S \ref{s:FG}, except that $G$ and $G^\vee$ are swapped.  We 
define a deformation of the Frenkel--Gross connection 
parametrized by $h\in \t^*$ as follows
\begin{equation}\label{EFG-def} 
\widetilde \nabla^{G^\vee} :=  d + (y_p + h) \frac{dq}{q} + x_\theta dq.
\end{equation}
This is a connection on the trivial $G^\vee$-bundle over $\Gm \times \t^*$ \emph{relative} to $\t^*$ (i.e., we view  the connection  \(1\)-form \((y_p+h)\frac{dq}{q} + x_\theta dq\) as a relative differential on  \(\Gm\times \t^*\) over  \(\t^*\) for the projection morphism \(\Gm \times \t^* \to \t^*\)).
Thus $\widetilde \nabla^{G^\vee}$ specialized to $0\in \t^*$ is the connection $\nabla^{G^\vee}$ 
of~\cite{Frenkel-Gross} 
considered 
in \S\ref{s:FG}. As before, the 
connection~\eqref{EFG-def} depends on a choice 
of basis vector $x_\theta$, but this choice is suppressed from the notation.  If the choice of 
$x_\theta\in \kg^\vee_\theta$ is not mentioned, by default we will use \eqref{eq:thetasign}.  As before we also 
have the associated vector bundle with connection
$\widetilde \nabla^{(G^\vee,\lambda)}=\widetilde \nabla^{(G^\vee,V_\lambda)}$ over $\Gm\times \t^*$ 
relative to $\t^*$.

\subsection{Rigid automorphic $D$-module}\label{s:Gross-form}
Recall from \S\ref{s:groupscheme} the standard affine character $\phi: I(1)/I(2) \to \A^1$. 
Recall that $\AA = \aD_{\A^1}/(\partial_x -1)$ denotes the exponential $\aD$-module on $\A^1$, and that we write
$
\Exp^\phi := \phi^* \Exp$ for the pullback  \(D\)-module on  \(I(1)/I(2)\).
Let $S=\operatorname{Sym}(\t)$ and identify the complex points of $\operatorname{Spec}(S)$ with $\t^*$.
Define the $(\aD_T\otimes S)$-module $\Mult_T$  as the free rank-one
$(\cO_T\otimes S)$-module with basis element
``$x^{h}$", for  \(h\in \Spec(S)=\kt^*\), with the action of the differential operator $\partial_k \in \aD_T\otimes S$ along \(k\in \kt \subset  \Fun(\kt^*)\) given by 
$$
\partial_k \cdot x^{h} := k \, x^h.
$$  
Equivalently,  \(\Mult_T\) is a rank-one connection on the trivial line bundle over  \(T\times \kt^*\) relative to  \(\kt^*\) (i.e., for every  \(h\in \kt^*\), and locally for 
 \(x\) in an open simply-connected open subset of \(T\) the horizontal sections of  \(\Mult_T\) specialized at  \(h\) are given by constant multiples of the function  \(x\mapsto x^h\)).

Recall our $\aD$-module conventions from Remark~\ref{rem:Dmod}.  Let $j_\comp: T \times I(1)/I(2) \hookrightarrow \Bun_\cG^\comp$ denote the inclusion of the big cell into the $\comp$-component of $\Bun_\cG$.  By \cite[Cor.1.3]{HNY}, $j_{\comp}$ is an affine open embedding.  For an affine map \(f\), we have  \(Rf_* = f_*\), and it follows that $Rj_{\comp,*}= j_{\comp,*}$ and $Rj_{\comp,!} = j_{\comp,!}$; see \cite[p.95]{HTT:D-modules}.

Now consider the $(\aD_{T\times I(1)/I(2)}\otimes S)$-module $\Mult_T\boxtimes \Exp^\phi$ on
$T\times I(1)/I(2)$. 
\begin{lemma}\label{lem:clean}
We have $j_{\comp,!}(\Mult_T \boxtimes \Exp^\phi)  \overset{\sim}{\xrightarrow{}} j_{\comp,*}(\Mult_T \boxtimes \Exp^\phi) $.
\end{lemma}
\begin{proof}
This is the $\aD$-module version of \cite[Lem.2.3]{HNY}.  We repeat the argument.  For $w \in W_\af - \Omega$, let $P_w$ denote the $\cG(1,2)$-bundle defined by a lift of $w$.  Pick $\alpha \in \Inv(w)$, and let $U_\alpha \subset I(1)$ denote the corresponding one-parameter subgroup.  We have an inclusion $U_\alpha \hookrightarrow \Aut(P_w)$, and a commutative diagram
\begin{align*}
\begin{diagram}
\node{U_\alpha \times \pt} \arrow{s,l}{{\rm id} \times P_w} \arrow{r,t}{\pi} \node{\pt}\arrow{s,r}{P_w}
\\ \node{U_\alpha \times \Bun_\cG} \arrow{r,t}{{\rm act}} \node{\Bun_\cG.} \end{diagram}
\end{align*}
Pulling back $j_{\comp,*}(\Mult_T \boxtimes \Exp^\phi)$ in two ways, we obtain
\[\Exp^\phi|_{U_\alpha} \boxtimes j_{\comp,*}(\Mult_T \boxtimes \Exp^\phi)|_{P_w} \cong \pi^* j_{\comp,*}(\Mult_T \boxtimes \Exp^\phi)|_{P_w}.\]
  Since $\Exp^\phi|_{U_\alpha}$ is defined  by a nontrivial character of $U_\alpha$, it follows that the stalk of $j_{\comp,*}(\Mult_T \boxtimes \Exp^\phi)$ vanishes at $P_w$.  Similarly, the stalk of $j_{\comp,!}(\Mult_T \boxtimes \Exp^\phi)$ vanishes at $P_w$.  Since this holds for all $w \in W_\af - \Omega$, the result follows.
\end{proof}

Following Heinloth--Ng\^o--Yun~\cite[Def.2.4 and p.269]{HNY}, we make the following definition.

\begin{definition}\label{def:HNYT}
Define $A_{\cG,T}$ to be the $(\aD_{\Bun_\cG} \otimes S)$-module on $\Bun_\cG$ given by $j_{\comp,!}(\Mult_T \boxtimes \Exp^\phi) 
= j_{\comp,*}(\Mult_T \boxtimes \Exp^\phi)$ on each connected component
$\Bun_\cG^\comp$.
\end{definition}
Thus $A_{\cG,T}$ is the intermediate, or minimal, extension of  \(D\)-modules; for example see \cite[\S3.4]{HTT:D-modules}.

\subsection{Twisted Kloosterman $\aD$-modules and statement of the main result}
It is established in~\cite[Thm.1]{HNY} that  \(A_{\cG,T}\) has the Hecke eigenproperty.
Let $\TKl_{G^\vee}$ denote the corresponding $G^\vee$-Hecke eigenvalue which is a  \(G^\vee\)-connection on $\Gm\times \kt^*$ relative to  \(\kt^*\).  
Our basis element  \(x^h\) for  \(h\in \kt^*=\Spec(S)\) corresponds to the multiplicative character  \(\chi\) in~\cite[Rem.2.5]{HNY} and the  \(\aD\)-module on  \(\Bun_\cG\) given by \(A_{\cG,T} \otimes_S h\) is denoted  \(A_{\phi,\chi}\) in~\cite[Thm.1]{HNY}. The  \(G^\vee\)-connection on  \(\Gm\) given by  \(\TKl_{G^\vee}\otimes_S h\) is denoted  \(\operatorname{Kl}_{{}^L\cG}(\phi,\chi)\) in~\cite[Thm.1]{HNY}.

\begin{theorem}
\label{thm:Zhu} 
There is a choice of basis vector 
$x_{\theta} \in \g^\vee_{\theta}$, which is compatible up to sign with \S\ref{sub:rootvectors}, such that the above  \(G^\vee\)-connections are isomorphic:
$$
\TKl_{G^\vee} \cong \widetilde \nabla^{G^\vee}.
$$
\end{theorem}
Specialized to $0 \in \t^*$, Theorem \ref{thm:Zhu} reduces to  \(
\Kl_{G^\vee} \cong \nabla^{G^\vee}\).  The proof of Theorem~\ref{thm:Zhu} occupies the rest of this section.  We will assume the reader is familiar with \cite{Zhu:FG-HNY}.

\subsection{Classical Hitchin map}
We use notation from \S \ref{s:groupscheme} and we let $\kv := I(1)/I(2)$ in the rest of this section.
\begin{lemma}\label{lem:good}
The stack $\Bun_\cG$ is good in the sense of Beilinson--Drinfeld; that is, we have $\dim 
T^*\Bun_\cG = 2 \dim \Bun_\cG$.
\end{lemma}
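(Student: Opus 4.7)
\emph{Plan.} The plan is to verify goodness by combining a direct calculation of $\dim \Bun_\cG$ with a Hitchin-type argument controlling $\dim T^*\Bun_\cG$. For any smooth Artin stack of finite type one has $\dim T^*\Bun_\cG \ge 2\dim \Bun_\cG$ automatically, so only the reverse inequality needs work.

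First I would compute $\dim \Bun_\cG$. By deformation theory, at the trivial bundle $\star \in \Bun_\cG$ the tangent space is $H^1(\P^1,\mathrm{ad}(\cG))$ and the Lie algebra of automorphisms is $H^0(\P^1,\mathrm{ad}(\cG))$, so $\dim \Bun_\cG = -\chi(\P^1,\mathrm{ad}(\cG))$. Since $\mathrm{ad}(\cG)$ agrees with the constant $\kg$-bundle away from $\{0,\infty\}$, the defect from $\kg \otimes \cO_{\P^1}$ is concentrated at these two points, and a local Riemann--Roch computation yields
\[
\dim \Bun_\cG = -\dim G + \mathrm{codim}_{G[[t]]}I^{\opp}_0(1) + \mathrm{codim}_{G[[s]]}I_\infty(2).
\]
A direct check from the definitions gives $\mathrm{codim}_{G[[t]]}I^{\opp}_0(1) = \dim U + \dim T = \dim B$, while the quotients $G[[s]]/I_\infty(0)$, $I_\infty(0)/I_\infty(1)$, $I_\infty(1)/I_\infty(2) \cong \bigoplus_{i \in I_\af} \A^1$ have respective dimensions $\dim U$, $\dim T$, $r+1$, giving $\mathrm{codim}_{G[[s]]}I_\infty(2) = \dim B + r + 1$. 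Substituting yields $\dim \Bun_\cG = 2r+1$ with $r = \mathrm{rank}(G)$.

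Next I would identify $T^*\Bun_\cG$ with the stack of $\cG$-Higgs bundles: pairs $(\E,\phi)$ with $\phi \in H^0(\P^1,\mathrm{ad}(\cG)^* \otimes \omega_{\P^1})$, where $\mathrm{ad}(\cG)^*$ is the Killing-form dual of $\mathrm{ad}(\cG)$, incorporating the Moy--Prasad dual filtrations to $I^{\opp}_0(1)$ and $I_\infty(2)$ at the two special points. This is the standard deformation-theoretic description of the cotangent stack.

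Finally I would apply the Hitchin map $h\colon T^*\Bun_\cG \to B_\cG$ given by taking characteristic polynomials, where $B_\cG = \bigoplus_{i=1}^r H^0(\P^1, \omega_{\P^1}(D_\cG)^{\otimes d_i+1})$ and $D_\cG$ is the effective divisor on $\P^1$ encoding the two level structures via Moy--Prasad duality. A Riemann--Roch computation in each summand shows $\dim B_\cG = \dim \Bun_\cG = 2r+1$. Since $h$ is the standard Hitchin system (symplectic with generic fibers of dimension $\dim \Bun_\cG$, cf.\ \cite{Zhu:FG-HNY}), one concludes $\dim T^*\Bun_\cG = 2 \dim B_\cG = 2 \dim \Bun_\cG$, which is goodness. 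The main obstacle I foresee is precisely the third step: correctly pinning down the divisor $D_\cG$ and verifying that the resulting Hitchin base has the predicted dimension. This matching is essentially the content of Zhu's global oper construction for $\cG(1,2)$, and once granted, the rest of the argument is dimension bookkeeping.
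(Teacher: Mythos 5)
Your opening dimension count is correct: $\dim\Bun_\cG = 2r+1$ with $r=\mathrm{rank}(G)$, as one can also see from the open dense big cell $T \times I(1)/I(2)$, and the identification of $T^*\Bun_\cG$ with a moduli of $\cG$-Higgs fields with the dual filtrations at $0$ and $\infty$ is the right deformation-theoretic picture. The gap is in the third step, and it is both numerical and structural.

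The Hitchin base $\Hitch(\P^1)_\cG$ does \emph{not} have dimension $\dim\Bun_\cG$ in this ramified setting. Using the description recorded as equation~\eqref{eq:Hitch} of the paper (which also appears in Zhu's \S4), each of the first $r-1$ summands is $\Gamma(\P^1,\omega^{d_i}(d_i\cdot 0 + d_i\cdot\infty))$, a degree-zero line bundle on $\P^1$, hence one-dimensional; and the last summand is $\Gamma(\P^1,\omega^{d_r}(d_r\cdot 0 + (d_r+1)\cdot\infty))$, a degree-one line bundle, hence two-dimensional. So $\dim\Hitch(\P^1)_\cG = (r-1)+2 = r+1$, not $2r+1$. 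Your claimed Riemann--Roch calculation producing $\dim B_\cG = \dim\Bun_\cG$ cannot be correct, and the final identity $\dim T^*\Bun_\cG = 2\dim B_\cG = 2\dim\Bun_\cG$ fails already at the first equals sign. In fact, granting goodness, the generic Hitchin fiber has dimension $(4r+2)-(r+1)=3r+1$, which exceeds $\dim\Bun_\cG$; so $h$ is not a Lagrangian fibration, and invoking the ``standard Hitchin system'' heuristic is not available here. Separately, even if the base had the right dimension, passing from ``generic fibers have dimension $\dim\Bun_\cG$'' to a bound on $\dim T^*\Bun_\cG$ requires controlling \emph{all} fibers, in particular the global nilpotent cone $h^{-1}(0)$, followed by a $\G_m$-contraction argument; that step is absent. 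Finally, appealing to Zhu's global oper construction to close the gap is circular: goodness of $\Bun_\cG$ is precisely what makes the Hitchin-map and quantization analysis of Proposition~\ref{prop:Zhucommute} go through, so it cannot be used as an input here.

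The paper avoids all of this by a reduction rather than a direct Hitchin argument: it observes that $\Bun_\cG = \Bun_{\cG(1,2)}$ is a principal bundle over $\Bun_{\cG(0,1)}$ under the group $I^\opp_0(0)/I^\opp_0(1)\times I_\infty(1)/I_\infty(2)\cong T\times V$, reduces to Zhu's Lemma~17 (goodness of $\Bun_{\cG(0,1)}$, a zero-dimensional stack), and then uses the moment map $\mu\colon T^*\Bun_\cG\to\t^*\times V^*$ with $\mu^{-1}(0)/(T\times V)\cong T^*\Bun_{\cG(0,1)}$ and $\G_m$-equivariance to bound all fibers of $\mu$, yielding $\dim T^*\Bun_\cG \le \dim(T\times V) + \dim(\t^*\times V^*) = 2(2r+1)$. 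If you want to salvage your Hitchin-map route, you would at a minimum need to (i) replace the incorrect claim $\dim B_\cG = \dim\Bun_\cG$ by the true $\dim B_\cG = r+1$, (ii) prove independently that the global nilpotent cone has dimension at most $\dim\Bun_\cG$, and (iii) supply the $\G_m$-contraction step; but at that point the moment-map route is cleaner.
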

\begin{proof}
In \cite[Lem.17]{Zhu:FG-HNY} it is shown that \(\Bun_{\cG(0,1)}\) is good, where $\cG(0,1)$ is the group scheme over $\P^1$ obtained from the dilatation of the constant 
group scheme $G \times \P^1$ along $U \times \{\infty\} \subset G \times \{\infty\}$. The lemma follows after noting that $\Bun_\cG = \Bun_{\cG(1,2)}$ is a principal bundle over 
$\Bun_{\cG(0,1)}$ under the group $I^\opp_0(0)/I^\opp_0(1) \times I(1)/I(2) \cong T \times \kv$.
\end{proof}

Let $\c^* := \Spec \C[\g^*]^G \cong \t^* \sslash W$, where  \(W\) is the Weyl group.  We have a canonical $\G_m$-action on $\c^*$ 
coming from the scalar $\G_m$-action on $\g^*$. It gives rise to a decomposition $\c^* 
= \bigoplus_i \c^*_{d_i}$ into 
one-dimensional subspaces, where the integers $d_1\le d_2\le \cdots \le d_r$ are the degrees of $W$. Recall that  \(d_r=c\) is the Coxeter number of  \(\kg\).

Let $\cE \in 
\Bun_\cG$, and write $\cE' := \cE|_{\Gm} \in \Bun_{G \times \Gm}$.  The cotangent space 
$T^*_\cE\Bun_\cG$ maps to $\Gamma(\Gm, \g^*_{\cE'} \otimes \omega_{\Gm})$ where $\g^*_{\cE'}$ is the 
bundle on $\Gm$ associated to $\cE'$ and the coadjoint representation $\g^*$ of $G$.  The $G$-invariant 
map $\g^* \to \c^*$ gives rise, as $\cE$ varies, to a global analogue of the characteristic polynomial called the \emph{(global) Hitchin map} $h^{\cl}:T^*\Bun_\cG \to \Hitch(\Gm)$, where
\[\Hitch(\Gm) := \Gamma(\Gm, \c^* \times^{\G_m}
\omega_{\Gm}).\]
Let $\Hitch(\P^1)_{\cG}$ be the image of  \(h^{\cl}\), so that we write:
$$
h^{\cl}: T^*\Bun_\cG \to \Hitch(\P^1)_\cG \subset \Hitch(\Gm).
$$

We give an explicit description of $\Hitch(\P^1)_{\cG}$, following \cite{Zhu:FG-HNY}.  For a point $x \in \P^1$, we let $\cO_x$ denote the completed local ring at $x$ and $F_x = \Frac(\cO_x)$ 
denote its fraction field.  Denote by $D_x = \Spec 
\cO_x$ and $D^\times_x = \Spec F_x$ the formal disk and formal punctured disk at $x$.  We 
write $\omega_{\cO_x}$ for the ${\cO_x}$-module ${\cO_x} \cdot dt$ (after choosing a local 
coordinate $t$), and $\omega_F$ for $F_x \cdot dt$.  We have the \emph{local Hitchin map} (\cite[p.258]{Zhu:FG-HNY})
$$
h^{\cl}_x: \g^* \otimes \omega_F \to \c^* \times^{\G_m} \omega_F^\times =: \Hitch(D^{\times}_x),
$$
where $F = F_x$.

For $i = 0,1,2$, let $\p(i)=\p_\infty(i) \subset \g_\infty$ denote the Lie algebra of $I(i)=I_\infty(i)$.  Similarly, 
define $\p_0(i) \subset \g_0$ using $I^\opp_0(i)$.  For an $\cO$-lattice $\p \subset \g \otimes F$, we 
define $\p^\perp := \p^\vee \otimes_{\cO} \omega_\cO$, where $\p^\vee \subset \g^* \otimes F$ is the $\cO$-dual 
of $\p$.
The two local Hitchin maps at $x = 0$ and $x = \infty$ give the following two commutative diagrams 
(\cite[Rem.4.4]{Zhu:FG-HNY} and \cite[Prop.14]{Zhu:FG-HNY}\footnote{In \cite[\S4]{Zhu:FG-HNY}, $\p$ is an arbitrary parahoric subgroup.  We specialize to the case that $\p$ is the Iwahori.  The notation $\Hitch(D_\infty)_{1/c}$ matches \cite[p.272]{Zhu:FG-HNY}.}):
$$
\begin{diagram}
\node{\p_0(1)^\perp} \arrow{e} \arrow {s}{} \node{\t^* \cong \p_0(1)^\perp/\p_0(0)^\perp } \arrow{s}{}
\\ \node{\Hitch(D_0)_\RS} \arrow{e}{} \node{\c^*,}
\end{diagram}
\qquad 
\begin{diagram}
\node{\p(2)^\perp} \arrow{e} \arrow {s}{} \node{\kv^* \cong \p(2)^\perp/\p(1)^\perp} \arrow{s}{}
\\ \node{\Hitch(D_\infty)_{1/c}} \arrow{e}{} \node{\kv^* \sslash T,}
\end{diagram}
$$
where the local Hitchin spaces are defined by \cite[bottom of p.263]{Zhu:FG-HNY}\footnote{In the notation of \cite[p.259]{Zhu:FG-HNY}, the integer $m$ is equal to the Coxeter number $c$ for us.}
\begin{align}\label{eq:Hitch0}
\Hitch(D_0)_\RS &= \bigoplus_{1\le i\le r} \omega_{\cO_0}^{d_i}(d_i) \otimes \c^*_{d_i}, \\\label{eq:Hitchinfty}
\Hitch(D_\infty)_{1/c} &= \bigoplus_{1\le i < r} \omega^{d_i}_{\cO_\infty}(d_i) \otimes \c^*_{d_i} \bigoplus 
\omega^{c}_{\cO_\infty}(c + 1) \otimes \c^*_{c}.
\end{align}
The bottom map of the left diagram is obtained by taking the residue at $0$. 
The bottom map of the right diagram is explained in 
\cite[(4.11)]{Zhu:FG-HNY}.
It involves Kostant's section~\cite{Kostant:polynomial}
\begin{equation}\label{eq:Kos}
y_p + \mathfrak (\g^\vee)^{x_p} \isom 
\mathfrak c^*.
\end{equation}
This map is $\G_m$-equivariant for a suitable 
$\G_m$-action on $y_p + \mathfrak (\g^\vee)^{x_p}$ and the above mentioned $\G_m$-action on 
$\mathfrak c^*$; see~\cite[Prop.2.2]{Ngo:endoscopie}. Important for us later will be the 
corollary 
that 
$y_p + \mathfrak g^\vee_\theta\isom \mathfrak c^*_{c}$ under Kostant's section. 

The following result is a $\cG(1,2)$-version of a similar formula for $\cG(0,1)$ in \cite[p.270]{Zhu:FG-HNY}.

\begin{lemma}
We have an isomorphism
   \begin{equation}\label{eq:Hitch}
   \Hitch(\P^1)_\cG \cong \bigoplus_{1 \leq i < r} \Gamma(\P^1,\omega^{d_i}_{\P^1}(d_i \cdot 0 + d_i \cdot \infty)) 
   \otimes 
   \c^*_{d_i} \bigoplus \Gamma(\P^1,\omega^{c}_{\P^1}(c \cdot 0 + (c+1) \cdot \infty))\otimes \c^*_{c} \cong \A^r \times \A^1.
\end{equation}
\end{lemma}
\begin{proof}
By the same argument as in \cite[Lem.5]{Zhu:FG-HNY}, we have the description 
$$
\Hitch(\P^1)_\cG \cong \Hitch(D_0)_{\RS }\times_{\Hitch(D^\times_0)}\Hitch(\Gm) \times_{\Hitch(D^\times_\infty)} \Hitch(D_\infty)_{1/c}. 
$$
The explicit descriptions \eqref{eq:Hitch0} and \eqref{eq:Hitchinfty} give the first isomorphism in \eqref{eq:Hitch}.  For the second isomorphism we note that $\omega^d_{\P^1} \cong \cO_{\P^1}(-2d)$.  Thus $\omega^{d_i}_{\P^1}(d_i \cdot 0 + d_i \cdot \infty) = \cO_{\P^1}(-2d_i+2d_i) = \cO_{\P^1}$ and $\omega^{c}_{\P^1}(c \cdot 0 + (c+1) \cdot \infty) = \cO_{\P^1}(1)$.
\end{proof}

Let $\mu: T^*\Bun_\cG \to \t^* \times \kv^*$ be the moment map for the action of $T \times \kv$ on 
$\Bun_\cG$.

\begin{proposition}\label{prop:Zhucommute}
The following diagram is commutative, all maps surjective, the bottom map is an isomorphism and the top map is flat:
\begin{align}
\begin{diagram}\label{eq:hitchdiagram}
\node{T^*\Bun_{\cG}} \arrow{e,t}{\mu}  \arrow {s,t}{h^{\cl}} \node{\t^* \times \kv^*} \arrow{s}{}
\\ \node{\Hitch(\P^1)_\cG} \arrow{e,t}{ \cong } \node{\c^* \times \kv^*\sslash T.} \end{diagram}
\end{align}
\end{proposition}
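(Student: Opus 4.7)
The plan is to adapt Zhu's argument from~\cite{Zhu:FG-HNY}*{\S4}, which handles the non-equivariant analogue of this proposition. The only new feature here is the quotient $I^\opp_0(0)/I^\opp_0(1)\cong T$ at $0\in\P^1$, which equips $\Bun_\cG$ with an extra $T$-action and introduces the $\t^*$-valued component of the moment map $\mu$. Once this component is set up, the rest of the argument follows the non-equivariant case almost verbatim.

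First, I would describe $T^*\Bun_\cG$ as the stack of pairs $(\cE,\varphi)$, where $\cE\in\Bun_\cG$ and $\varphi$ is a $\g^*$-valued meromorphic $1$-form on $\P^1$ (``Higgs field'') whose polar behavior at $0$ and $\infty$ is cut out by the Serre duals of the level conditions defining $\cG$. Concretely, $\varphi$ has at worst a simple pole at $0$ with residue in $(\Lie I^\opp_0(0)/\Lie I^\opp_0(1))^*\cong\t^*$, and at worst a second-order pole at $\infty$ whose polar part lies in $V^*=(\Lie I(1)/\Lie I(2))^*$. The moment map $\mu$ is obtained by extracting these two local components; its surjectivity follows from Riemann--Roch, since the residue at $0$ and the polar part at $\infty$ can be prescribed independently via global sections.

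Next, to verify commutativity, I would use that the classical Hitchin map sends $(\cE,\varphi)$ to the $\c^*$-valued form obtained by applying the $G$-invariant polynomials $p_i$ of degree $d_i$ to $\varphi$. At $0$, the leading Laurent coefficient of $p_i(\varphi)$ equals $p_i(\operatorname{Res}_0\varphi)$ and hence depends only on the image of $\operatorname{Res}_0\varphi$ under the natural projection $\t^*\twoheadrightarrow\c^*$. At $\infty$, the polar behavior of $p_i(\varphi)$ depends on the polar part in $V^*$ only through its $T$-invariants, because the root grading $V=\bigoplus_{i\in I_\af}\g_{\alpha_i}$ together with the relation $\alpha_0+\theta=0$ among $T$-characters forces $(\Sym V)^T$ to be a polynomial ring generated by a single monomial of total degree $c=1+\operatorname{ht}(\theta)$ (the Coxeter number). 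This invariance is a manifestation of the gauge invariance of $h^\cl$ under the $T$-action at $0$, which acts on the polar part at $\infty$ through the standard $T$-action on $V$. Consequently $h^\cl$ factors through $\mu$ and lands in $\c^*\times V^*\sslash T$. To establish the bottom isomorphism, one then matches the explicit description~\eqref{eq:Hitch} term-by-term with $\c^*\times V^*\sslash T$: each one-dimensional summand $\Gamma(\P^1,\omega^{d_i}(d_i\cdot 0+d_i\cdot\infty))$ for $d_i<c$ records the residue at $0$ of the $d_i$-th invariant and recovers $\c^*_{d_i}$, while the two-dimensional top summand splits into a residue at $0$ (giving $\c^*_{d_r}$) and a top-order polar term at $\infty$ (giving $V^*\sslash T\cong\A^1$ via the unique top-degree $T$-invariant). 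A dimension count promotes the resulting map to an isomorphism.

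Finally, the flatness of the Hitchin map is the main technical obstacle. By Lemma~\ref{lem:good} the stack $\Bun_\cG$ is good, so within the Beilinson--Drinfeld framework~\cite{Beilinson-Drinfeld}*{\S2.2} (cf.~\cite{Zhu:FG-HNY}*{\S4}), flatness reduces to the equality $\dim\Hitch(\P^1)_\cG=\dim\Bun_\cG$, which holds by a direct dimension count from~\eqref{eq:Hitch}. The genuinely nontrivial point is verifying that $\Hitch(\P^1)_\cG$, defined as the scheme-theoretic image of $h^\cl$, realises the full right-hand side of~\eqref{eq:Hitch} rather than a proper closed subscheme: this requires producing enough explicit Higgs fields to hit each component, and will proceed as in~\cite{Zhu:FG-HNY}*{\S4}, with the extra $\t^*$-valued residue at $0$ handled independently by prescribing an arbitrary element of $\t^*$.
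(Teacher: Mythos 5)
There is a genuine gap in the flatness argument: the proposition asserts that the \emph{top} map is flat, i.e.\ the moment map $\mu\colon T^*\Bun_\cG\to\t^*\times V^*$, whereas your final paragraph argues for flatness of the \emph{left} map $h^{\cl}\colon T^*\Bun_\cG\to\Hitch(\P^1)_\cG$. These are genuinely different claims, and flatness of one does not transfer to the other without further work (the right vertical map is a GIT quotient, not an isomorphism). The paper's argument for the flatness of $\mu$ is entirely different from what you outline: it identifies the Hamiltonian reduction $\mu^{-1}(0)/(T\times V)$ with $T^*\Bun_{\cG(0,1)}$, uses goodness of $\Bun_{\cG(0,1)}$ to conclude $\dim\mu^{-1}(0)=\dim(T\times V)$, and then propagates this fiber-dimension bound to all fibers by a $\G_m$-equivariance argument before invoking that $\t^*\times V^*$ is smooth and $T^*\Bun_\cG$ is locally a complete intersection. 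The dimension count $\dim\Hitch(\P^1)_\cG=\dim\Bun_\cG$ that you invoke is not the relevant equality for this step, and I would encourage you to check it: from~\eqref{eq:Hitch} one gets $\dim\Hitch(\P^1)_\cG=r+1$, which does not visibly match $\dim\Bun_\cG$.

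A secondary issue is in the commutativity discussion: you attribute the descent of $h^{\cl}$ to $V^*\sslash T$ to ``gauge invariance of $h^{\cl}$ under the $T$-action at $0$, which acts on the polar part at $\infty$.'' The $T$-action at $0$ (through $I^\opp_0(0)/I^\opp_0(1)$) and the polar data at $\infty$ (living in $V^*=(I_\infty(1)/I_\infty(2))^*$) are supported at different points of $\P^1$ and do not interact this way. The $T$ appearing in $V^*\sslash T$ is the copy of $T$ acting on $V$ by conjugation inside the Iwahori at $\infty$ (i.e.\ through $I_\infty(0)/I_\infty(1)$); the invariance comes from the local Hitchin map at $\infty$ being insensitive to this larger local gauge group, not from the $T$-level structure at $0$. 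The paper handles this cleanly by splitting into the two local commutative diagrams at $0$ and $\infty$ separately (citing Zhu's Remark~4.4 and Proposition~14). Your explicit verification that $(\Sym V)^T$ is a polynomial ring on one generator of degree $c$ is correct and is the right ingredient for matching the bottom isomorphism term-by-term.
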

\begin{proof}

The global Hitchin map  \(h^{\cl}\) embeds into the product of the local Hitchin maps $h^{\cl}_0$ and $h^{\cl}_x$ at $0$ and 
$\infty$, which establishes the commutativity of \eqref{eq:hitchdiagram}. 

The explicit description 
\eqref{eq:Hitch} of $\Hitch(\P^1)_\cG$ establishes the isomorphism of the bottom map (see 
\cite[(4.9) and Proof of Lem.19]{Zhu:FG-HNY}).  
The left vertical map of \eqref{eq:hitchdiagram} is surjective by definition.  The right vertical map of 
\eqref{eq:hitchdiagram} is a quotient map and thus surjective.  The top map  \(\mu\) of 
\eqref{eq:hitchdiagram} is surjective because $\Bun_\cG$ is a principal $T \times \kv$-bundle over  \(\Bun_{\cG(0,1)}\).

The proof of the last claim is identical to that of \cite[Lem.18]{Zhu:FG-HNY}, which we repeat.  
The Hamiltonian reduction $\mu^{-1}(0)/(T \times \kv)$ is naturally identified with 
$T^*\Bun_{\cG(0,1)}$.  As remarked in the proof of Lemma~\ref{lem:good}, $\Bun_{\cG(0,1)}$ is good, and thus $T^*\Bun_{\cG(0,1)}$ has 
dimension zero.  This implies that $\dim 
\mu^{-1}(0) = 
\dim(T \times \kv)$.  Let $W \subset T^*\Bun_{\cG}$ be the largest open substack such that the 
fibers 
of $\mu|_W$ have dimension $\dim(T \times \kv)$.  Then $W$ is $\G_m$-invariant, and since 
$\mu^{-1}(0) \subset W$, we have $W = T^*\Bun_{\cG}$, so all fibers of $\mu: T^*\Bun_{\cG} \to \t^* 
\times 
\kv^*$ have dimension $\dim(T \times \kv)$.  Since $\t^* \times \kv^*$ is smooth and $T^*\Bun_{\cG}$ is 
locally a complete intersection, we conclude that $\mu$ is 
flat.
\end{proof}

\subsection{Quantization}\label{s:quant}
We recall the descriptions of certain spaces of $\g^\vee$-opers from 
\cite[\S5]{Zhu:FG-HNY}.  (The Lie algebra $\g^\vee$ 
will be suppressed from the notation, so that we write  \(\Op\) for what is denoted  \(\Op_{{}^L\g}\) in \cite{Zhu:FG-HNY}).  Recall from \S\ref{s:principal-sl2} the principal $\sl_2$-triple $(x_p,2\rho^\vee,y_p)$ in 
$\g^\vee$.  The space $\Op(D_x^\times)$ of opers on the formal 
punctured disk centered at $x$ can be identified with the space of operators
\begin{equation*}
\ d + (y_p + (\g^\vee)^{x_p} \otimes F_x)dz,
\end{equation*}
where $z$ is a local coordinate at $x$.
The space $\Op(D_0)_{\RS}$ of opers on the formal disk centered at  \(0\) with regular singularities can be identified with 
the space of operators
\begin{equation}\label{oper-0}
 d + (y_p + (\g^\vee)^{x_p} \otimes \cO_0)\frac{dq}{q}.
\end{equation}
The space $\Op(D_\infty)_{1/c}$ of opers on the formal disk centered at  \(\infty\) with slope $\leq 1/c$ is the space of operators
\begin{equation}\label{oper-infinity}
\left( d + \left(\frac{y_p}{t} + \frac{1}{t} \b^\vee \otimes \cO_\infty + \frac{1}{t^2} \g^\vee_\theta \otimes 
\cO_\infty\right)dt\right ) / U^\vee(\cO_\infty),
\end{equation}
where $t = 1/q$.  The spaces $\Op(D_0)_{\RS}$ and $\Op(D_\infty)_{1/c}$ are subschemes of 
$\Op(D^\times_0)$ and $\Op(D^\times_\infty)$ respectively.

In \cite[\S2]{Zhu:FG-HNY}, a subscheme of opers $\Op(\P^1)_\cG \subset \Op(\Gm)$ is defined, and 
according to \cite[Lem.5]{Zhu:FG-HNY}, we have (cf. \cite[p.272]{Zhu:FG-HNY})
\begin{equation}\label{eq:oper}
\Op(\P^1)_\cG \cong \Op(D_0)_{\RS}\times_{\Op(D^\times_0)}  \Op(\Gm)\times_{\Op(D_\infty^\times)} 
\Op(D_\infty)_{1/c}.
\end{equation}
The description \eqref{eq:oper} is a quantization of \eqref{eq:Hitch}.

Let $U(\t)$ and $U(\kv)$ denote the universal enveloping algebras of $\t$ and $\kv$ and let $D'$ be the sheaf of algebras on 
the smooth site $(\Bun_\cG)_{sm}$ defined by Beilinson--Drinfeld
\cite[\S1.2.5]{Beilinson-Drinfeld}. The following variant of \cite[Lem.21]{Zhu:FG-HNY} is the quantization of Proposition 
\ref{prop:Zhucommute}.
\begin{proposition}\label{prop:Zhuquantize}
We have a commutative diagram of strict morphisms of filtered commutative algebras, where the top map is an isomorphism and 
the bottom map is flat:
\begin{align}\label{eq:quant}
\begin{diagram}
\node{ U(\t)^W \otimes U(\kv)^T} \arrow{e,t}{\cong}  \arrow {s}{} \node{\C[\Op(\P^1)_\cG]} \arrow{s,r}{h_\nabla}
\\ \node{U(\t) \otimes U(\kv)} \arrow{e}{} \node{\Gamma(\Bun_\cG,D').} \end{diagram}
\end{align}
\end{proposition}
\begin{proof}
The universal enveloping algebras $U(\t)$ and $U(\kv)$ have natural filtrations such that the associated graded algebras satisfy   
$\gr(U(\t))  \cong \C[\t^*]$ and $\gr(U(\kv)) \cong \C[\kv^*]$.  

The ring of functions $\C[\Op(\P^1)_\cG]$ has a filtration such that $\gr(\C[\Op(\P^1)_\cG]) \cong \C[\Hitch(\P^1)_\cG]$, and $\Gamma(\Bun_\cG,D')$ has a filtration such such 
that $\gr(\Gamma(\Bun_\cG,D')) \cong \Fun \, T^* \Bun_\cG$, where we write $\Fun$ to denote the commutative pro-algebra of 
regular functions on an affine ind-stack.  The right vertical map $h_\nabla$ is defined in \cite[(3.3)]{Zhu:FG-HNY}, and is a quantization of the classical Hitchin map $h^{\cl}$.  For these constructions, see \cite[p.254 and \S5.2]{Zhu:FG-HNY}.  The bottom horizontal map is explained in \cite[pp.255-256]{Zhu:FG-HNY}.  The top horizontal map is a quantization of the moment map $\mu:T^*\Bun_\cG \to \t^* \times \kv^*$ (\cite[Prop.15]{Zhu:FG-HNY}). Thus taking associate graded algebras of \eqref{eq:quant}, we recover \eqref{eq:hitchdiagram}.

The commutativity of \eqref{eq:quant} follows from commutative diagrams (see \cite[Prop.15]{Zhu:FG-HNY}) analogous to \eqref{eq:Hitch0} and \eqref{eq:Hitchinfty}.  

By Proposition \ref{prop:Zhucommute}, after taking associate graded algebras the top map is an isomorphism and the bottom map is flat; thus the same statements hold in \eqref{eq:quant}.
\end{proof}

\subsection{Proof of Theorem \ref{thm:Zhu}}
Let $\eta: \Op(D_0)_{\RS} \to \c^*$ be the residue map obtained from the description \eqref{oper-0} and Kostant's isomorphism \eqref{eq:Kos}.  Let 
\[\varpi:\kt^* \to \kt^*\sslash W = \kc^*\] 
be the projection map. We now compute the intersection $\Op(\P^1)_\cG \cap \eta^{-1}(\varpi(h))$ for  \(h\in \kt^*\), which corresponds to a slice of the isomorphism
\begin{equation}\label{OpP1-cong}
\Op(\P^1)_{\cG} \xrightarrow{\ \cong\ } \c^* \times \Spec U(\kv)^T
\end{equation}  
given by the top map of Proposition~\ref{prop:Zhuquantize}.

The space \(\Op(\Gm)\) of opers consists of operators of the form
$$
\nabla = d + y_p\frac{dq}{q} + v(q)dq,
$$
where $v(q) \in (\g^\vee)^{x_p}[q,q^{-1}]$.

Suppose moreover that  \(\nabla \in \Op(\P^1)_\cG \cap \eta^{-1}(\varpi(h))\).
The condition  \(\nabla \in \Op(D_0)_{\RS}\) that  \(\nabla\) has a regular singularity at $0$ implies~\eqref{oper-0}
that $v(q) \in q^{-1}(\g^\vee)^{x_p}[q]$.  Write $v(q) = a/q + v_0(q)$ with $v_0(q) \in 
(\g^\vee)^{x_p}[q]$ and  \(a\in (\g^\vee)^{x_p}\).  

The condition  \(\eta(\nabla)=\varpi(h)\) says that the
residue of 
$\nabla$ at  \(0\) is $\varpi(h) \in \c^*$.  By Kostant's theorem \cite{Kostant:polynomial}, the map $\g^\vee \to \g^\vee\sslash G^\vee \to 
\c^*$ induces the isomorphism \eqref{eq:Kos}.  Thus the element $a$ is uniquely determined by $\varpi(h)\in 
\kc^*$. We denote it by  \(a=a_h\in (\g^\vee)^{x_p}\).

Writing $t = 1/q$, the operator becomes
$$
\nabla = d - (y_p+a_h)\frac{dt}{t} - v_0(\frac{1}{t}) \frac{dt}{t^2}.
$$
The condition  \(\nabla \in \Op(D_\infty)_{1/c}\) at $\infty$ implies~\eqref{oper-infinity} that $v_0(\frac 1t)$ must be constant and must belong to the root space  \(\g^\vee_\theta=\C x_\theta\). 

Thus the 
space 
of 
opers $\Op(\P^1)_\cG \cap \eta^{-1}(\varpi(h))$ is the space of operators of the form
\begin{equation}\label{eq:fac}
\nabla =  d + (y_p+a_h)\frac{dq}{q} + \alpha x_\theta dq,
\end{equation}
for $\alpha \in \C$.  Thus $\Op(\P^1)_\cG \cap \eta^{-1}(\varpi(h)) \cong \A^1$. This bijection  \(\nabla \leftrightarrow \alpha\) corresponds to the isomorphism~\eqref{OpP1-cong} under the identification  \(\A^1 \cong \kv^*/T \cong \Spec(U(\kv)^T)\).  (The $(r+1)$-dimensional $T$-module $\kv$ has weights the simple roots $\alpha_1,\ldots,\alpha_r$ and the negative $-\theta$ of the longest root; hence $\kv^*/T \cong \A^1$.)

By construction, the two elements $y_p+ a_h$ and $y_p + h$ in $\g^\vee$ have the same 
image  \(\varpi(h)\) in $ \g^\vee \sslash G^\vee \cong \t^* \sslash W = \c^*$ and are therefore conjugate by a group 
element $g \in G^\vee$.  Again by Kostant's theorem, $U^\vee$ acts freely on $y_p + \b^\vee$ via 
the adjoint action and the quotient is isomorphic to $y_p + (\g^\vee)^{x_p}$.  Thus $y_p+ 
a_h$ 
and $y_p+h$ are conjugate by an element in the unipotent subgroup $U^\vee \subset G^\vee$. 
It follows that ${\rm Ad}_g(x_\theta) = x_\theta$.

Recall from \S\ref{s:EFG} the deformed Frenkel--Gross connection
\begin{equation} \label{eq:EFG2}
\widetilde \nabla^{G^\vee} = d+ (y_p+h) \frac{dq}{q} + x_\theta dq.
\end{equation}
We deduce from ${\rm Ad}_g(x_\theta) = x_\theta$ that the two connections \eqref{eq:fac} 
with $\alpha=1$ and 
\eqref{eq:EFG2} are gauge equivalent via a {\it constant} gauge transformation.

To complete the proof of Theorem \ref{thm:Zhu}, it remains to show that the twisted 
Kloosterman
$\aD$-module $\TKl_{G^\vee}$ is isomorphic to the connection \eqref{eq:fac} for some $\alpha 
=\pm 1$. This is achieved in the same manner as in
\cite[p.273]{Zhu:FG-HNY}.  Namely, we compare two automorphic $D$-modules.

We begin by constructing a Hecke eigen-$D$-module with Hecke eigenvalue equal to \eqref{eq:fac}.
Let $\phi: \kv \to \C$ be the standard affine character, inducing a character $\varphi: 
U(\kv) \to \C$.  The element $h \in \t^*$ also defines a character $\varphi_h: U(\t) \to \C$.  The $\aD$-module
$$
\Aut(h) := \omega_{\Bun_{\cG}}^{-1/2} \otimes (\aD' \otimes_{U(\t) \otimes U(\kv), \varphi_h \otimes \varphi} \C)
$$
is considered in \cite{Zhu:FG-HNY}, where $\aD'$ is the sheaf of critically twisted differential operators on $(\Bun_\cG)_{sm}$, and the tensor product is defined using the bottom map 
of Proposition \ref{prop:Zhuquantize}.  
Using the flatness statement in Proposition \ref{prop:Zhuquantize}, Zhu's result \cite[Cor.9]{Zhu:FG-HNY} states that $\Aut(h)$ is a Hecke eigen-$\aD$-module with the connection~\eqref{eq:fac} as its Hecke eigenvalue.

Finally, we argue that $\Aut(h)$ is isomorphic to the automorphic $\aD$-module of 
\cite{HNY} from Definition~\ref{def:HNYT}.  Recall from \S\ref{sec:param} the big cell $T \times \kv \cong \cBun_\cG \subset
\Bun_\cG$, which maps to the basepoint $\star \subset \Bun_{\cG(0,1)}$ corresponding to the trivial
$\cG(0,1)$-bundle.  By 
\cite[Rem.6.1]{Zhu:FG-HNY},
$\omega_{\Bun_{\cG}}^{-1/2}$ is canonically trivialized on $\cBun_\cG$.  It follows that the 
restriction of $\Aut(h)$ to
$\cBun_\cG \cong T \times \kv$ is isomorphic to $\Mult_T \boxtimes \Exp^\phi$.  Furthermore, $\Aut(h)$ is a 
$(T \times \kv,
\Mult_T \boxtimes \Exp^\phi)$-equivariant $\aD$-module on $\Bun_{\cG}$ (see \cite[\S{A.4.2}]{Yun:rigid} for the definition of equivariant).  By \cite[Rem.2.5]{HNY}, $\Aut(h)$ is automatically the (intermediate) clean extension of $\Aut(h )|_{\cBun_\cG}$.  Thus $\Aut(h)$ is
isomorphic to $A_{\cG,T}$ specialized at $h\in \t^*$ 
for which 
$\TKl_{G^\vee}$ specialized at $h\in \t^*$ is an eigenvalue.  This
shows that $\TKl_{G^\vee}$ specialized at $h\in \t^*$ is isomorphic to~\eqref{eq:fac} and 
thus to the deformed 
Frenkel--Gross connection~\eqref{eq:EFG2}, completing the proof.

\section{Equivariant quantum cohomology and weighted geometric 
crystals}\label{S:equivariant}

We extend the mirror isomorphism of Theorem~\ref{t:Dmodule-mirror} to the equivariant case. 
Recall from \S\ref{s:eq-zhu} the notation $S := \Sym(\t) \cong H^*_{T^\vee}(\pt)$.

\subsection{Equivariant quantum connection}\label{s:EQC}
Let $QH^*_{T^\vee}(G^\vee/P^\vee)$ denote the torus-equivariant small quantum cohomology 
ring of $G^\vee/P^\vee$.    It is an algebra over $\C[q_i \mid i \notin
I_P] \otimes S$.  For $w \in W^P$, we abuse notation by also writing $\sigma_w \in 
QH^*_{T^\vee}(G^\vee/P^\vee)$ for the equivariant quantum Schubert class. 
The following equivariant quantum Chevalley formula for a general $G^\vee/P^\vee$ is due to 
Mihalcea \cite{Mihalcea:EQ-Chevalley}.  
\begin{theorem}\label{thm:Mih}
For $w \in W^P$, we have in $QH^*_{T^\vee}(G^\vee/P^\vee)$
$$
\sigma_i *_q \sigma_w = (\varpi^\vee_i - w \cdot \varpi^\vee_i)\sigma_w + \sum_{\beta^\vee} \ip{\varpi^\vee_i,\beta} 
\sigma_{ws_\beta} + \sum_{\nu^\vee} \ip{\varpi^\vee_i,\nu} q_{\eta_P(\nu)} \sigma_{\pi_P(ws_\nu)},
$$
where $\varpi^\vee_i \in \t$ denotes a fundamental weight of $\g^\vee$, and $\beta^\vee,\nu^\vee$ 
denote roots of $\g^\vee$.  The last two summations are as in Theorem~\ref{thm:FW}.
\end{theorem}

We have a bundle map $QH^*_{T^\vee}(G^\vee/P^\vee) \to \t^*$.  We 
write $QH^*_h(G^\vee/P^\vee)$ for the fiber of this map over $h \in  \t^*$.  The algebra 
$QH^*_h(G^\vee/P^\vee)$ is again a free $\C[q_i \mid i\not\in I_P]$-module with Schubert basis 
$\{ 
\sigma_w \mid w 
\in W^P\}$.   

Now, assume that $P^\vee \subset G^\vee$ is minuscule.
Let $O(1)$ be the line bundle on $G^\vee/P^\vee$ arising from the natural embedding 
$G^\vee/P^\vee \hookrightarrow \P(V_{\varpi^\vee_\ki})$.
Consider the bundle $QH^*_{T^\vee}(G^\vee/P^\vee)$ over $\C^\times_q\times \t^*$ extended trivially 
to $\C^\times_q$. 
We define the equivariant quantum connection $\cQ^{G^\vee/P^\vee}_{T^\vee}$ to be the 
connection on  
$H^*_{T^\vee}(G^\vee/P^\vee)$ over $\C_q^\times\times \t^*$ relative to $\t^*$ by
$$
\cQ^{G^\vee/P^\vee}_{T^\vee} := d + c_1^T(O(1)) *_{q}\frac{dq}{q},
$$
where $c_1^T(O(1)) $ denotes the equivariant first Chern class of $O(1)$, and  $*_{q}$ denotes 
equivariant quantum multiplication.  We have that $c_1^T(O(1)) = 
\sigma_\ki - \varpi^\vee_\ki \sigma_\id$ in $QH^*_{T^\vee}(G^\vee/P^\vee)$, so by Theorem \ref{thm:Mih},
$$
c_1^T(O(1)) *_q \sigma_w = - w \cdot \varpi^\vee_i \sigma_w + \sum_{\beta^\vee} \ip{\varpi^\vee_i,\beta} 
\sigma_{ws_\beta} + \sum_{\nu^\vee} \ip{\varpi^\vee_i,\nu} q_{\eta_P(\nu)} \sigma_{\pi_P(ws_\nu)}.
$$

Theorem \ref{t:FGisquantum} has the following equivariant generalization.
\begin{theorem}\label{t:EFGisquantum}
If $P^\vee \subset G^\vee$ is minuscule with corresponding minuscule 
representation $V_{\varpi^\vee_\ki}$, then under the isomorphism
$L:H^*(G^\vee/P^\vee)\to V_{\varpi^\vee_\ki}$ of \eqref{eq:L}, the equivariant quantum connection 
$\cQ^{G^\vee/P^\vee}_{T^\vee}$ is isomorphic to the pulled-back connection 
$(\id_q\times\operatorname{inv})^*\widetilde \nabla^{(G^\vee,\varpi^\vee_\ki)}$ where 
$\operatorname{inv}:\t^* \to 
\t^*$ is 
the map $h\mapsto -h$, and $\id_q:\C_q^\times \to \C^\times_q$ is the identity map.
\end{theorem}
\begin{proof}
The extra term in $c_1^T(O(1)) *_q \sigma_w $, not present in the non-equivariant case is $ 
- w \cdot \varpi^\vee_\ki \sigma_w$.  Evaluating at $h\in \t^*$, we get the term $-\ip{w \cdot 
\varpi^\vee_\ki,h} 
\sigma_w$.  This agrees with the calculation $-h \cdot v_w = -\ip{w \cdot \varpi^\vee_\ki, h} v_w$ 
for $\g^\vee$ acting on $v_w \in V$.  The result then follows from the calculation in Theorem 
\ref{t:FGisquantum}.
\end{proof}

\subsection{Character $\aD$-module of a weighted geometric crystal}
Define the {\it weighted character $\aD$-module} of the geometric crystal $X$ by
\begin{equation}\label{def:Crequiv}
\WBK_{(G,P)} := R\pi_*(\Exp^f \otimes \gamma^* \Mult_T),
\end{equation}
where we recall that $\gamma: X \to T$ is the weight 
map \eqref{eq:weight}.
It is a $\aD$-module over $Z(L_P)\times \t^*$ relative to $\t^*$.  By taking the fiber over $h \in \t^*$, we obtain the $\aD$-module \eqref{eq:equivariantBK}.

Theorem \ref{t:HNYisBK} has the following generalization.

\begin{theorem}\label{t:EHNYisBK}  Suppose $P = P_\i$ is cominuscule and identify the bases 
$Z(L_P)\isom \Gm$ via $\alpha_\ki$.  Then the character
$\aD$-module $\WBK_{(G,P)}$ is isomorphic to a pull-back $(\operatorname{id}_q\times \operatorname{inv})^* 
\TKl_{(G^\vee,\varpi_\ki^\vee)}$ of the twisted Kloosterman $\aD$-module.
\end{theorem}
\begin{proof}
The proof is the same as that of Theorem \ref{t:HNYisBK}, so we sketch the main differences.  According to Proposition \ref{prop:comparison}, we have $t^{-1} f_T(\tiota(x)) =\gamma(x)^{-1}$.  Thus adding $f_T$ to the diagram \eqref{eq:geom} we can write, with $\Mult^{-1}$ denoting the pullback under inverse of the multiplicative $D$-module on $\Gm$,
\begin{align*}
\WBK_{(G,P)} &= R\pi_*(\theta^*\Exp^{-\phi}) \otimes f_T^* \Mult_T \otimes \pi^*\Mult^{-1}) \\
&= 
R\pi_*((f_+,f_0)^*\Exp^\phi \otimes f_T^* \Mult_T) \otimes \Mult^{-1} \\
&= \TKl_{G^\vee} \otimes \Mult^{-1},
\end{align*}
where we have used the projection formula (\cite[Cor.1.7.5]{HTT:D-modules}) for the second 
equality. 
For the third equality we have used a variant of Lemma~\ref{lem:clean}, namely that
 $\pr_{2,!}(\pr^*_1 A_{\cG,T} \otimes \aD_{\lambda}) = \pr_{2,*}(\pr^*_1 A_{\cG,T} \otimes \aD_{\lambda})$ and
$R^i\pr_{2,*}(\pr^*_1 A_{\cG,T} \otimes \aD_{\lambda}) = 0$ for $i > 0$ (cf. \cite[\S4.1]{HNY}).
Since $\Mult^{-1}$ is isomorphic to $\cO_{\Gm}$ as $\aD$-modules, the conclusion 
follows.
\end{proof}

\subsection{The equivariant mirror theorem}
Combining Theorems \ref{t:EFGisquantum}, \ref{thm:Zhu} and \ref{t:EHNYisBK} we obtain the 
following equivariant strengthening of Theorem \ref{t:Dmodule-mirror}.
\begin{theorem}\label{t:EDmodule-mirror} Suppose that $P$ is a cominuscule parabolic 
subgroup of $G$ and let $P^\vee$ be the dual minuscule parabolic subgroup of $G^\vee$.  We 
have an isomorphism
$$
\WBK_{(G,P)} \cong \cQ^{G^\vee/P^\vee}_{T^\vee}
$$
of $\aD$-modules over $Z(L_P) \times \t^*$ relative to $\t^*$.
\end{theorem}

In the case that $G^\vee/P^\vee$ is a Grassmannian, an injection from 
$\cQ^{G^\vee/P^\vee}_{T^\vee}$ into 
$\WBK_{(G,P)}$ is constructed by 
Marsh--Rietsch~\cite{Marsh-Rietsch:B-model-Grassmannians}*{Thm.5.5 and Thm.4.10}.

\section{The $\hbar$-mirror theorem}\label{s:mainresult}
Recall that $S = \Sym(\t)$.  We introduce an additional parameter $\hbar$ and work with $\aD_{\hbar} \otimes 
S$-modules. 
We shall establish Theorem~\ref{thm:Shbar}, which is a stronger version of Theorem 
\ref{t:EDmodule-mirror}.

\subsection{$\aD_{\hbar} \otimes S$-modules}\label{s:S-structures}
The definition of the sheaf $\aD_{\hbar,X}$ of \emph{$\hbar$-differential operators} on a complex 
smooth affine algebraic variety 
$X$ equipped with a $\G_m$-action is recalled in \S\ref{s:filtered}.  An \emph{$S$-structure} on a
$\aD_{\hbar,X}$-module $\cM$ is an action of $S$ on $\cM$ that commutes with the 
$\aD_{\hbar,X}$-action.  Equivalently, $\cM$ is a module for the sheaf
$\aD_{\hbar,X} \otimes S$, where elements of $S$ are considered ``scalars".  For any 
$\aD_{\hbar,X}$-module $\cM$, the sheaf $\cM \otimes S$ is a $\aD_{\hbar,X} \otimes S$-module.

Our basic example is the {\it multiplicative $\aD_{\hbar,T} \otimes S$-module} 
$\Mult^{1/\hbar}_T$ on 
$T$, defined as follows. 
\begin{definition}\label{def:mult} For a complex torus $T$ and $\kt=\operatorname{Lie}(T)$, let
\[
 \Mult^{1/\hbar}_T :=
\aD_{\hbar,T} \otimes S
/ \langle \xi_k - k\ |\ k\in \t \subset S \rangle.
\]
We give $T$ the trivial $\G_m$-action and furthermore declare that $k \in \t \subset S$ 
has degree one.  This gives $\Mult^{1/\hbar}_T$ the structure of a \emph{graded 
$D_{\hbar,T}$-module}.
\end{definition}

\begin{remark}
The $\aD_{\hbar,T} \otimes S$-module $\Mult^{1/\hbar}_T$ is a free $\cO_T \otimes 
S$-module 
with basis element
``$e^{\ell/\hbar}$", with the action of $\xi_k:= (h\mapsto \ip{k,h}) \in \aD_{\hbar,T}$ given by 
$$
\xi_k \cdot e^{\ell/\hbar} :=  k e^{\ell/\hbar},
$$
for $k \in \t \subset S$. Here $\xi_k$ should be thought of as ``$\hbar 
\partial_k$''; see \S\ref{s:filtered}.
And ``$\ell$" $:T \to \C[\t^*] = S$ should be thought of as the \emph{multi-valued function}
\begin{equation}\label{eq:ell}
\ell(t)(h) := \ip{\log(t),h}, \quad \text{where $h \in \t^*$.}
\end{equation}
\end{remark}

Recall that an \emph{$\hbar$-connection} on a bundle $E$ over $X$ is a $\C$-linear operator 
$\nabla: 
\Gamma(X,E) \to \Gamma(X,E) \otimes \Omega_X$ such that $\nabla(gs) = g\nabla(s) + \hbar s \otimes dg$ where 
$g \in \cO_X$ and $s \in \Gamma(X,E)$ are sections.  An $\hbar$-connection for $\hbar = 1$ is simply a 
connection in the usual sense.  An alternative description of $\Mult^{1/\hbar}_T$ is as follows: 
take the trivial $S[\hbar]$-bundle on
$T$ and equip it with the $\hbar$-connection 
$
\hbar d - k
$
where $k \in \t \subset S$.  

Suppose we have $\G_m$-actions on $E$ and $X$ such that the projection $E \to X$ is 
$\G_m$-equivariant.  We then say that the $\hbar$-connection $\nabla$ is \emph{graded} if 
$\hbar^{-1}\nabla$ is $\G_m$-equivariant, where $\hbar$ is taken to be degree one for the 
$\G_m$-action.  Equivalently, if $\nabla = \hbar d + \eta$, we require that $\eta$ has degree one 
for the $\G_m$-action.

\subsection{Frenkel--Gross connection revisited}

Let $V$ be a finite-dimensional $G^\vee$-module and let $\mu: V \times \t^* \to V$ denote the 
action map of $\t^*$.  Let $\mu^*: V \to V \otimes S$ denote the map  defined by $\mu^*(v) = v 
\otimes k$ if $v \in V$ has weight $k \in \t$.  By extending scalars, we obtain a map 
$\mu^*: V \otimes S \to V\otimes S$.  For a $G^\vee$-module $V$, define
the \emph{deformed Frenkel--Gross $\hbar$-connection}
$$
\widetilde \nabla^{(G^\vee,V)}_\hbar := \hbar d + (y_p+ \mu^*)\frac{dq}{q} + x_\theta dq,
$$
acting on the trivial $V \otimes S[\hbar]$-bundle on $\Gm$.  Thus for $\hbar =1 $ and $h \in \t^*$ inducing an evaluation homomorphism $h: S \to \C$, we have that $\widetilde \nabla^{(G^\vee,V)}_\hbar |_{\hbar =1} \otimes_S \C \cong 
\widetilde \nabla^{(G^\vee,V)}\otimes_S \C$ 
reduces to \eqref{EFG-def}.

Declaring that $k \in \t \subset S$ sits in degree one, the $\G_m$-action of \S\ref{s:FGgrade} 
extends to this deformation, so that the $1$-form $(y_p+ \mu^*)\frac{dq}{q} + x_\theta dq$ 
has degree one.

\subsection{Equivariant quantum connection revisited}
Define the \emph{equivariant $\hbar$-quantum connection}
$$
\cQ^{G^\vee/P^\vee}_{\hbar,T^\vee}:= \hbar d + c_1^T(O(1)) *_q \frac{dq}{q}.
$$
acting on the trivial bundle over $\Spec \C[q,q^{-1}]$ with fiber the equivariant cohomology 
$H^*_{T^\vee}(G^\vee/P^\vee) \otimes \C[\hbar]$.  Here
$c_1^T(O(1)) *_q $ denotes the equivariant quantum cohomology action.  
Then $\cQ^{G^\vee/P^\vee}_{T^\vee}$ 
from \S \ref{s:EQC} is equal to $\cQ^{G^\vee/P^\vee}_{\hbar,T^\vee}|_{\hbar=1}$.  

As in \S\ref{s:quantum-connection}, we define a $\G_m$-action on $QH^*_{T^\vee}(G^\vee/P^\vee)$ 
by using half the topological degree.  As before, $k \in \t \subset S$ sits in degree one.
The connection $1$-form $(\sigma_{\ki}*_{q,h}- \varpi_\ki^\vee )
 \frac{dq}{q}$ is then homogeneous of degree one for the $\G_m$-action. 

We then have the following variation of Theorem \ref{t:EFGisquantum}.
\begin{theorem}\label{t:hEFGisquantum}
If $P^\vee \subset G^\vee$ is minuscule and with corresponding minuscule representation $V_{\varpi^\vee_\ki}$, then under the isomorphism
$L:H^*(G^\vee/P^\vee)\to V_{\varpi^\vee_\ki}$ of \eqref{eq:L}, the equivariant quantum connection 
$\cQ^{G^\vee/P^\vee}_{\hbar,T^\vee}$ is identified with the deformed Frenkel--Gross
$\hbar$-connection $(\operatorname{id}_q\times \operatorname{inv})^*\widetilde \nabla^{(G^\vee,\varpi^\vee_\ki)}_\hbar$.
This is an isomorphism of graded $\hbar$-connections on $\Gm \times \t^*$ relative to $\t^*$. 
\end{theorem}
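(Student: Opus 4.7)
The plan is to upgrade the proof of Theorem~\ref{t:EFGisquantum} by promoting the linear isomorphism $L$ to an $S[\hbar]$-linear map and tracking the $\G_m$-equivariance. First I would extend $L$ to $S[\hbar]$-linear isomorphisms
$$
L \colon H^*_{T^\vee}(G^\vee/P^\vee) \otimes \C[\hbar] \;\longrightarrow\; V_{\varpi_\ki^\vee} \otimes S[\hbar], \qquad \sigma_w \otimes 1 \longmapsto v_w \otimes 1,
$$
where we use the Borel/equivariant-Schubert presentation on the left to regard the source as a free $S[\hbar]$-module with basis $\{\sigma_w\}_{w \in W^P}$. Because the equivariant Schubert class $\sigma_w$ restricts to the torus fixed point $w$ with $T^\vee$-character $w\varpi_\ki^\vee$, and $v_w = \dot w \cdot v_e \in V_{\varpi_\ki^\vee}$ is a weight vector of weight $w\varpi_\ki^\vee$, the map $L$ automatically identifies the scalar operator $-w\cdot\varpi_\ki^\vee \in S$ on $\sigma_w$ with $(-\mu^*)$ acting on $v_w$.

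Next I would check that $L$ intertwines the connection $1$-forms of both $\hbar$-connections. By Theorem~\ref{thm:Mih} specialized to the minuscule case,
$$
c_1^T(O(1)) *_q \sigma_w = -(w\cdot \varpi_\ki^\vee)\,\sigma_w + \sigma_\ki *_q \sigma_w,
$$
and the last term is matched with $(f + qx_\theta)v_w$ via Proposition~\ref{prop:FG=QH} (which is the content of the non-equivariant Theorem~\ref{t:FGisquantum}). On the Frenkel--Gross side the connection $1$-form acts on $L(\sigma_w) = v_w$ by
$$
\bigl[(f - \mu^*)v_w + q x_\theta v_w\bigr]\frac{dq}{q} = \bigl[f v_w - (w\cdot \varpi_\ki^\vee)v_w + q x_\theta v_w\bigr]\frac{dq}{q},
$$
so the $1$-forms agree term by term. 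Since both $\cQ^{G^\vee/P^\vee}(S,\hbar)$ and $\nabla^{(G^\vee,\varpi_\ki^\vee)}(-S,\hbar)$ are, by construction, obtained from their $\hbar = 1$ specializations by replacing $d$ with $\hbar d$ while leaving the $1$-form fixed, and $L$ is $\C[q,q^{-1}]$-linear (hence commutes with $d$ and with $\hbar d$), the intertwining at $\hbar = 1$ automatically upgrades to an isomorphism of $\aD_\hbar \otimes S$-modules.

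For the gradedness statement, recall that by Proposition~\ref{p:Lefschetz-sl2} the map $L$ sends $H^{2d}(G^\vee/P^\vee)$ into the $2\rho^\vee$-eigenspace of $V$ of eigenvalue $\dim(G^\vee/P^\vee) - 2d$; after the standard shift, this is exactly the statement that $L$ is $\G_m$-equivariant with respect to the cohomological $\G_m$-action on the left (\S\ref{s:quantum-connection}, with $\sigma_w$ of degree $\ell(w)$) and the adjoint action of $\rho^\vee$ on $V$ on the right (\S\ref{s:FGgrade}). Declaring $\lambda \in \t \subset S$ to have degree one on both sides (so that $\mu^*$ becomes degree one) and $\hbar$ to sit in degree one, the equivariant corrections $-(w\cdot \varpi_\ki^\vee)\sigma_w$ and $-\mu^* v_w$ are each homogeneous of degree one, matching the homogeneity of the non-equivariant part of the $1$-form established in Lemma~\ref{l:connection-homogeneous} and \S\ref{s:QHgrade}. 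Hence $L$ is an isomorphism of graded $\hbar$-connections.

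The main technical obstacle is keeping the three different gradings — cohomological degree on $H^*_{T^\vee}(G^\vee/P^\vee)$, the $\ad(2\rho^\vee)$-weight on $V$ and $\g^\vee$, and the $\C^\times$-action $\zeta\cdot q = \zeta^c q$ on the base — mutually consistent after introducing the $S$-direction, and in particular verifying that the equivariant diagonal correction $-w\cdot \varpi_\ki^\vee$ contributes degree one (which it does, since $\varpi_\ki^\vee \in \t \subset S$ has degree one by convention); once this bookkeeping is settled, the rest of the argument is a direct transcription of the combinatorial match established in Proposition~\ref{prop:FG=QH}.
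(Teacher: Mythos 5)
Your proof is correct and fills in essentially what the paper leaves implicit (Theorem~\ref{t:hEFGisquantum} is stated without a proof block, being regarded as immediate from Theorem~\ref{t:EFGisquantum} plus the grading discussions in \S\ref{s:FGgrade}, \S\ref{s:QHgrade}, and \S\ref{s:quantum-connection}). One small imprecision: the phrase ``the equivariant Schubert class $\sigma_w$ restricts to the torus fixed point $w$ with $T^\vee$-character $w\varpi_\ki^\vee$'' is not quite right --- localization of the equivariant class $\sigma_w$ at the fixed point $w$ gives an Euler-class product, not a character; what you want is that the equivariant line bundle $O(1)$ restricts at the fixed point $w$ to the character $w\varpi^\vee_\ki$, which is what produces the diagonal term $-w\cdot\varpi^\vee_\ki$ in Mihalcea's formula after subtracting $\varpi^\vee_\ki\sigma_1$ --- but since the actual matching in your argument goes through Theorem~\ref{thm:Mih} directly, the overall proof is unaffected.
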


\begin{proposition}\label{p:hbar-quantum}
For any  $\lambda\in \C^\times$, there is an isomorphism
\[
\cQ^{G^\vee/P^\vee}_{\hbar,T^\vee}|_{\hbar = \lambda} \cong [q\mapsto q/\lambda^c ]^* [h\mapsto h/\lambda]^* 
\cQ^{G^\vee/P^\vee}_{T^\vee}
\]
of connections on $\Gm \times \t^*$ relative to $\t^*$.
\end{proposition}
\begin{proof}
Recall that $QH^*_{T^\vee}(G^\vee/P^\vee)$ is a graded ring with the topological degree 
$\deg(\sigma_w)=2\ell(w)$ and that it follows from
Lemma~\ref{l:coxeter-chern} that $\deg(q)=2c$. The gauge transformation $\sigma_w \mapsto 
\lambda^{\ell(w)} \sigma_w$ then gives the desired isomorphism between the two connections.
\end{proof}

\subsection{Twisted Kloosterman $\aD_\hbar$-modules}\label{ssec:twisted-Dhbar}
Define the \emph{exponential $\aD_{\hbar,\A^1}$-module} by $$
\Exp^{1/\hbar}:=\aD_{\hbar,\A^1}/\aD_{\hbar,\A^1}(\hbar\partial_x-1).
$$ 
Recall the generic affine character $\phi: I(1)/I(2) \to \A^1$, and let $\Exp^{\phi/\hbar} := 
\phi^*\Exp^{1/\hbar}$ denote the pullback.  

Let $A^{1/\hbar}_{\cG,T}$ denote the $\aD_\hbar \otimes 
S$-module on $\Bun_{\cG(1,2)}$ given by taking the $\aD_\hbar \otimes S$-module 
$\Mult^{1/\hbar}_T 
\boxtimes
\Exp^{\phi/\hbar}$ on $T \times I(1)/I(2)$ and pushing it forward to $\Bun_{\cG(1,2)}$.  We may define
the twisted Kloosterman $\aD_\hbar \otimes S$-module $\TKl^{1/\hbar}_{G^\vee}$ on $\Gm$ as 
$\TKl_{G^\vee} \otimes \C[\hbar]$.  As 
before, 
we have associated $\aD_\hbar \otimes S$-modules $\TKl^{1/\hbar}_{(G^\vee,V)}$ and 
$\TKl^{1/\hbar}_{(G^\vee,\varpi^\vee_\i)}$.
The $\G_m$-action of \S\ref{s:HNYgrade} gives the structure of a graded $\aD_\hbar \otimes 
 S$-module.

\begin{theorem}\label{t:eqzhu} Let $\varpi^\vee$ be a minuscule fundamental weight of $G^\vee$.
There is a choice of basis element $x_\theta \in \g^\vee_\theta$ such that we have an 
isomorphism of graded $\aD_{\hbar,\Gm} \otimes S$-modules
$$
\TKl^{1/\hbar}_{(G^\vee,\varpi^\vee)} \cong \widetilde \nabla^{(G^\vee,\varpi^\vee)}_{\hbar}.
$$
\end{theorem}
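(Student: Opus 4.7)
The plan is to deduce Theorem \ref{t:eqzhu} from Theorem \ref{thm:Zhu} via two formal upgrades: an $S$-linearization at $\hbar=1$, followed by restoring the $\hbar$-parameter through a homogeneity argument. Both upgrades are essentially forced by equivariance, so the substantive input is Theorem \ref{thm:Zhu} itself.

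For the first upgrade, I would revisit Zhu's construction as summarized in \S\ref{s:eq-zhu}. The automorphic $\aD$-module $\Aut(\chi,h) = \omega_{\Bun_\cG}^{-1/2} \otimes_{U(\t) \otimes U(V),\, \varphi_h \otimes \varphi_\chi} \aD'$ is built by specializing the $U(\t)$-factor at a character $\varphi_h : U(\t) \to \C$. Replacing this specialization by the universal identity map $U(\t) \hookrightarrow S$ produces an $\aD \otimes S$-module $\Aut(\chi,S)$ on $\Bun_\cG$ that is $(T \times V,\, \Mult^S \boxtimes \Exp^\chi)$-equivariant, whose Hecke eigenvalue recovers $\Kl_{(G^\vee,\varpi^\vee)}(\chi,S,1)$. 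On the Galois side, the same base change over $\Spec(S) \to \Spec(U(\t)^W) = \c^*$ applied to the flat bottom map of Proposition \ref{prop:Zhuquantize} delivers a universal $S$-linear family of opers; fiberwise at $h \in \Spec(S)$ this is the oper \eqref{eq:fac}, which is gauge-equivalent via a constant element of $U^\vee$ (supplied by Kostant's theorem) to $\nabla^{G^\vee}(h)$. Since Kostant's theorem works uniformly over the polynomial ring $S$, the same constant gauge transformation assembles into an $S$-linear isomorphism of the universal oper family with $\nabla^{(G^\vee,\varpi^\vee)}(S,1)$.

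For the second upgrade, both sides of the desired isomorphism carry compatible $\G_m$-actions in which $\hbar$ and $\lambda \in \t \subset S$ have degree one, $q$ has degree $c$ (the Coxeter number), and the connection $1$-forms have degree one; this is established on the Frenkel--Gross side by Lemma \ref{l:connection-homogeneous} together with the degree-one assignment on $\mu^*$, and on the Kloosterman side by the action recalled in \S\ref{s:HNYgrade}. As in Corollary \ref{c:hbar-Cr} and Proposition \ref{p:hbar-quantum}, any graded $\aD_\hbar \otimes S$-module on $\C^\times_q$ with such a homogeneity is canonically isomorphic to the pullback of its $\hbar = 1$ specialization along $q \mapsto q/\hbar^c$. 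Applying this principle to both sides, the $\hbar = 1$ isomorphism from the first step extends uniquely to the desired isomorphism of graded $\aD_\hbar \otimes S$-modules. The choice of $x_\theta$ and the sign matching are inherited from Theorem \ref{thm:Zhu}.

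The main obstacle is verifying the $S$-linearity in the first step: one must confirm that the flat isomorphism $U(\t)^W \otimes U(V)^T \cong \Fun(\Op(\P^1)_\cG)$ of Proposition \ref{prop:Zhuquantize} base-changes cleanly over $\Spec(S) \to \Spec(U(\t)^W)$ to an $S$-linear family whose associated $(G^\vee,\varpi^\vee)$-connection carries its $S$-action precisely via the $\mu^*$ operator on $V \otimes S$. This reduces to the fact that the residue map $\Op(D_0)_{\RS} \to \c^*$ intertwines the $T$-equivariance of the automorphic input with the Chevalley quotient $\t^* \to \t^*/W$ on the spectral side; the $S$-action on the oper family is then implemented by the $T^\vee$-weight grading on $V$, which is exactly the operator $\mu^*$ appearing in the definition of $\nabla^{(G^\vee,\varpi^\vee)}(S,\hbar)$.
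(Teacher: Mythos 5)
Your first step matches the paper's argument essentially verbatim: the paper also passes from Zhu's specialization $\varphi_h: U(\t) \to \C$ to the universal $U(\t) = S$, builds $\Aut(\chi,S) = \omega_{\Bun_\cG}^{-1/2} \otimes (\aD' \otimes_{S^W \otimes U(V)} (S \otimes \C))$ using Proposition \ref{prop:Zhuquantize}, and deduces a filtered $\aD_{\G_m} \otimes S$-module isomorphism $\Kl_{G^\vee}(\chi,S) \cong \nabla^{G^\vee}(S)$ at $\hbar = 1$, checking that the $\G_m$-actions of \S\ref{s:FGgrade} and \S\ref{s:HNYgrade} agree.

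Your second step, however, has a genuine gap. You claim that "any graded $\aD_\hbar \otimes S$-module on $\C^\times_q$ with such a homogeneity is canonically isomorphic to the pullback of its $\hbar = 1$ specialization along $q \mapsto q/\hbar^c$," citing Corollary \ref{c:hbar-Cr} and Proposition \ref{p:hbar-quantum}. But both of those results assume $\hbar \in \C^\times$; they describe what happens after inverting $\hbar$, not what happens at $\hbar = 0$. A graded $\aD_\hbar$-module is not determined by its $\hbar = 1$ fiber without an extra hypothesis of $\hbar$-torsion-freeness (this is exactly the content of Proposition \ref{prop:h0}: the functor $\otimes \C[\hbar]$ identifies filtered $D$-modules with the full subcategory of \emph{$\hbar$-torsion-free} graded $D_\hbar$-modules). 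For $\nabla^{(G^\vee,\varpi^\vee)}(S,\hbar)$ this is unproblematic, because it is a free $\C[\hbar]$-module by construction. But $\Kl_{(G^\vee,\varpi^\vee)}(\chi,S,\hbar)$ is a priori a \emph{complex} of $\aD_\hbar \otimes S$-modules obtained by pushforward from the Hecke stack, and neither the homogeneity nor the $\hbar = 1$ computation forces it to be concentrated in one degree and torsion-free over $\C[\hbar]$.

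The paper supplies precisely this missing input: it reduces the $\hbar$-upgrade to showing that $\Kl_{(G^\vee,\varpi^\vee)}(\chi,S,\hbar)$ is $\hbar$-torsion-free, and proves this via Theorem \ref{t:hEHNYisBK} (identifying it with the geometric crystal $\aD_\hbar \otimes S$-module $\BK_{(G,P)}(S,\hbar)$) together with Proposition \ref{prop:hfree}. The latter is a substantive geometric statement — it rests on Rietsch's computation that the critical locus of $f|_{\pi^{-1}(q)}$ is zero-dimensional, which makes a certain intersection $W \cap V \subset T^*X$ proper of Cohen--Macaulay schemes and kills the higher $\Tor$'s by Serre's criterion. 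So the second "upgrade" is not "essentially forced by equivariance"; it is the nontrivial part of the theorem, and your proposal needs to be supplemented by a proof of $\hbar$-torsion-freeness of the Kloosterman $\aD_\hbar$-module before it is complete.
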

As before, we normalize conventions so that $\phi$ matches with 
the choice of $x_\theta$ from \eqref{eq:thetasign}.

\begin{proof}
The proof is a variation of the proof of Theorem \ref{thm:Zhu}.  It suffices to show that 
$\TKl^{1/\hbar}_{G^\vee} \cong \widetilde\nabla^{G^\vee}_{\hbar}$ holds when specializing $\hbar = 1$.  
Notationwise, the convention is that omitting $\hbar$ from the notation of a 
$\aD_\hbar$-module gives the corresponding $\aD$-module at $\hbar = 1$.  
Let $S=\Sym(\t)$ and $\iota: S^W \to S$ denote the natural inclusion.  Consider the automorphic 
sheaf
$$
\Aut_{\cG,T} := \omega_{\Bun_{\cG}}^{-1/2} \otimes (\aD' \otimes_{S^W \otimes U(\kv), \iota \otimes \varphi} (S 
\otimes \C)).
$$
defined using Proposition \ref{prop:Zhuquantize} and the natural isomorphism $U(\t) \cong S$.   The same argument as in the proof
of Theorem \ref{thm:Zhu} gives that $\Aut_{\cG,T}$ is a holonomic $\aD' \otimes S$-module.  The 
technology of
\cite{Zhu:FG-HNY} shows that $\Aut_{\cG,T}$ is a Hecke eigensheaf on $\Bun_\cG$.  Let $\cE$ 
denote 
its
Hecke eigenvalue and for a finite-dimensional $G$-module $V$, let $\cE^V$ denote its 
associated bundle.  Then $\cE^V$ is a
$\aD_{\G_m} \otimes S$-module isomorphic to $\nabla^{(G^\vee,V)}$.

On the other hand, as in the proof of Theorem \ref{thm:Zhu}, $\Aut_{\cG,T}$ restricted to 
$\cBun_\cG \cong T \times \kv$ is
isomorphic to $\Mult_T \boxtimes \Exp^\phi$.  Furthermore, $\Aut_{\cG,T}$ is a $(T \times \kv, \Mult_T \boxtimes
\Exp^\phi)$-equivariant $\aD$-module on $\Bun_{\cG}$.  It follows that $\Aut_{\cG,T} \cong A_{\cG,T}$.  
Thus
$\TKl_{ (G^\vee, V ) } \cong \widetilde \nabla^{(G^\vee,V)}$ for any $V$, or equivalently, $\TKl_{G^\vee} \cong 
\widetilde \nabla^{G^\vee}$.

We note that the $\G_m$-actions of \S \ref{s:FGgrade} and \S\ref{s:HNYgrade} are in 
agreement: they are both induced by the trivial $\G_m$-action on $T$, the dilation action 
on $\kv = I(1)/I(2)$, and the action $\zeta \cdot q= \zeta^c q$ of the curve $\Gm$ (noting 
that the Coxeter numbers of $G$ and $G^\vee$ coincide).  Thus $\TKl_{G^\vee} \cong \widetilde 
\nabla^{G^\vee}$ as filtered $\aD_{\Gm} \otimes S$-modules, where the filtration is induced by 
the $\G_m$-action on $\C^\times_q$ as explained in \S\ref{ssec:Dh}.
\end{proof}

\subsection{Weighted geometric crystal $\aD$-module revisited}\label{s:eq-Cr}
We use notation similar to \S\ref{s:crystal}.
Let $\WBK^{1/\hbar}_{(G,P)} := R\pi_*( \Mult^{\gamma/\hbar}_T \otimes \Exp^{f/\hbar})$ be the 
pushforward 
$\aD_{\hbar,Z(L_P)} \otimes
S$-module on $Z(L_P) \otimes S \cong \G_{m,\Sym(t)}$.  According to Proposition 
\ref{prop:Gmequiv}, we 
have 
that $\pi: X \to Z(L_P)$, $f: X \to \A^1$ and $\gamma: X \to T$ are $\G_m$-equivariant.  Thus 
$\WBK^{1/\hbar}_{(G,P)}$ acquires a natural structure of a graded $\aD_{\hbar,Z(L_P)} \otimes 
S$-module.  In Proposition \ref{prop:hfree}, we show that $\WBK^{1/\hbar}_{(G,P)}$ is 
$\hbar$-torsion-free.

\begin{proposition}\label{p:hbarBK}
(i) For any $\lambda \in \C^\times$, there is an isomorphism of $D_{Z(L_P)}$-modules
\[
\WBK^{1/\hbar}_{(G,P)}|_{\hbar = \lambda} \cong [q\mapsto q/\lambda^c ]^* [h\mapsto h/\lambda]^* \WBK_{(G,P)},
\]
where $c$ is the Coxeter number of $G$.

(ii) There is an isomorphism of $D_{\hbar,Z(L_P)}\otimes S$-modules
\[
\WBK^{1/\hbar}_{(G,P)} \cong \WBK_{(G,P)} \otimes_{\C} \C[\hbar].
\]
\end{proposition}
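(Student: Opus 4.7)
The plan is to use the $\G_m$-equivariance of the geometric crystal data established in Proposition \ref{prop:Gmequiv} together with Lemma \ref{l:coxeter-cominuscule}; part (i) is the equivariant analogue of Corollary \ref{c:hbar-Cr}, and part (ii) upgrades it to a family statement over $\C[\hbar]$.

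For (i), I first note that specializing $\Mult^{S,\hbar}$ at $h \in \Spec(S)$ and at a numerical $\hbar \in \C^\times$ produces the ordinary $\aD_T$-module $\Mult^{h/\hbar}$: the equation $\xi_\lambda \cdot e^{\ell/\hbar} = \lambda e^{\ell/\hbar}$ with $\xi_\lambda = \hbar\partial_\lambda$ gives $\partial_\lambda$ acting by $\ip{\lambda,h/\hbar}$. Hence $\BK_{(G,P)}(h,\hbar) = R\pi_!(\gamma^*\Mult^{h/\hbar}\otimes \Exp^{f/\hbar})$. Letting $\phi_\hbar\colon X \to X$ denote the action of $\hbar \in \G_m$, Proposition \ref{prop:Gmequiv} gives $f \circ \phi_\hbar = \hbar f$, $\gamma \circ \phi_\hbar = \gamma$ (as $T$ carries the trivial action), and, using Lemma \ref{l:coxeter-cominuscule} to identify $Z(L_P) \cong \G_m$ via $\alpha_\ki$, $\pi \circ \phi_\hbar = m_{\hbar^c} \circ \pi$ with $m_{\hbar^c}\colon q \mapsto \hbar^c q$. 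Pulling back by $\phi_\hbar$ therefore preserves $\gamma^*\Mult^{h/\hbar}$ and converts $\Exp^{f/\hbar}$ into $\Exp^{f}$. Since $\phi_\hbar$ is an automorphism,
\[
R\pi_!(\gamma^*\Mult^{h/\hbar}\otimes \Exp^f) = R\pi_!\,\phi_\hbar^*(\gamma^*\Mult^{h/\hbar}\otimes \Exp^{f/\hbar}) = R(m_{\hbar^{-c}})_*\,\BK_{(G,P)}(h,\hbar),
\]
the second equality following from $\pi \circ \phi_{\hbar^{-1}} = m_{\hbar^{-c}} \circ \pi$. Inverting gives $\BK_{(G,P)}(h,\hbar) = [q\mapsto q/\hbar^c]^*\,\BK_{(G,P)}(h/\hbar,1)$, as $(m_{\hbar^c})_* = [q\mapsto q/\hbar^c]^*$ for the isomorphism $m_{\hbar^c}$.

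For (ii), the $\G_m$-equivariance of the full construction, combined with the homogeneity conventions of \S\ref{s:S-structures} and \S\ref{s:eq-Cr} that place $\hbar$ and $\t \subset S$ in positive degree, endows $\BK_{(G,P)}(S,\hbar)$ with the structure of a graded $\aD_\hbar \otimes S$-module with $\hbar$ in positive degree. Granted $\hbar$-torsion freeness (Proposition \ref{prop:hfree}), a standard Rees-module argument produces the isomorphism $\BK_{(G,P)}(S,\hbar) \cong \BK_{(G,P)}(S,1) \otimes_\C \C[\hbar]$: $\hbar$-torsion freeness of a graded $\C[\hbar]$-module with $\hbar$ in positive degree (and bounded-below, finite-dimensional graded pieces) implies freeness over $\C[\hbar]$, and any graded $\C$-splitting of the quotient $\BK_{(G,P)}(S,\hbar) \twoheadrightarrow \BK_{(G,P)}(S,\hbar)/\hbar$ is carried by the natural identification with $\BK_{(G,P)}(S,\hbar)/(\hbar-1) = \BK_{(G,P)}(S,1)$ into the required isomorphism preserving the full $\aD_\hbar \otimes S$-structure.

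The main obstacle is $\hbar$-torsion freeness, which has no transparent proof at the level of the geometric crystal. I would deduce it via the reciprocity chain $\BK_{(G,P)}(S,\hbar) \cong \Kl_{(G^\vee,\varpi^\vee)}(S,\hbar) \cong \nabla^{(G^\vee,\varpi^\vee)}(S,\hbar)$ (Theorem \ref{t:hEHNYisBK} composed with Theorem \ref{t:eqzhu}), whose right-hand side is manifestly $\C[\hbar]$-free by construction on the trivial $V \otimes S[\hbar]$-bundle; part (i) is by contrast a direct pull-push manipulation mirroring Corollary \ref{c:hbar-Cr}.
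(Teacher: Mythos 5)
Part (i) is correct and matches the paper's intended approach: you have simply unwound the combination of Corollary~\ref{c:hbar-Cr}, the $\G_m$-equivariance of $\pi,f,\gamma$ (Proposition~\ref{prop:Gmequiv}), and Lemma~\ref{l:coxeter-cominuscule} into an explicit pull--push computation. The verification that $\phi_\hbar^*\Exp^{f/\hbar}\cong\Exp^f$, that $\phi_\hbar^*\gamma^*\Mult^{h/\hbar}\cong\gamma^*\Mult^{h/\hbar}$, and that $\pi\circ\phi_{\hbar^{-1}}=m_{\hbar^{-c}}\circ\pi$ are all right.

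For (ii), however, there are two problems. First, your ``standard Rees-module argument'' does not actually close: a graded $\C$-linear splitting of $M\twoheadrightarrow M/\hbar M$ yields a graded $\C[\hbar]$-module isomorphism $M\cong(M/\hbar M)\otimes\C[\hbar]$, but this is \emph{not} yet an isomorphism of $\aD_\hbar\otimes S$-modules with $\BK_{(G,P)}(S,1)\otimes\C[\hbar]$; the operator $\xi_q$ acting on $\BK_{(G,P)}(S,\hbar)$ will in general be $\hbar$-dependent in the chosen basis, whereas on $\BK_{(G,P)}(S,1)\otimes\C[\hbar]$ it is $\hbar\cdot(\partial_q\text{ on }\BK_{(G,P)}(S,1))$, and the splitting by itself provides no gauge transformation relating the two. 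The crucial extra input, which the paper uses but your (ii) does not, is part (i): applied over the base $S$ it gives the isomorphism of the two sides after inverting $\hbar$, and it is only in combination with $\hbar$-torsion-freeness on both sides (so each is a lattice inside the common localization) that one extends the isomorphism to $\aD_{\hbar,\G_m}$. Your argument must therefore invoke (i) in (ii), not just the abstract graded structure.

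Second, your closing remark --- that $\hbar$-torsion freeness ``has no transparent proof at the level of the geometric crystal,'' and should instead be deduced from the reciprocity chain $\BK_{(G,P)}(S,\hbar)\cong\Kl_{(G^\vee,\varpi^\vee)}(S,\hbar)\cong\nabla^{(G^\vee,\varpi^\vee)}(S,\hbar)$ --- is both unnecessary and circular. Proposition~\ref{prop:hfree} gives a direct geometric proof of $\hbar$-torsion freeness on the crystal side, using the fact that the relative critical scheme of $f$ is finite over $Z(L_P)$ (Rietsch) together with the Cohen--Macaulayness/Tor-vanishing argument, without ever passing through Kloosterman sheaves or opers. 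And the route you propose is circular: the proof of Theorem~\ref{t:eqzhu} itself invokes Proposition~\ref{prop:hfree} (via ``This follows from Proposition~\ref{prop:hfree} and Theorem~\ref{t:hEHNYisBK}'') to establish the $\hbar$-torsion freeness of $\Kl(\chi,S,\hbar)$, so you cannot use Theorem~\ref{t:eqzhu} to reprove Proposition~\ref{prop:hfree}.
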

\begin{proof}
Assertion (i) follows from the homogeneity of the potential $f$ established in \S\ref{s:homogeneous} combined with Corollary~\ref{c:hbar-Cr} and
Lemma~\ref{l:coxeter-cominuscule}. Note that $\Mult^{\gamma/\hbar}_T$ is 
multiplicative and thus invariant under any Kummer
pullback.

From (i) we deduce that 
$\WBK^{1/\hbar}_{(G,P)}$ and $\WBK_{(G,P)} \otimes_{\C} \C[\hbar]$ are isomorphic after localizing 
$D_{\hbar,\Gm}$ at $(\hbar)$. 
Proposition~\ref{prop:hfree} says that $\WBK^{1/\hbar}_{(G,P)}$ is $\hbar$-torsion free, and 
$\WBK_{(G,P)}\otimes \C[\hbar]$ is also $\hbar$-torsion-free by
construction, hence the isomorphism extends to $D_{\hbar,\Gm}$.
\end{proof}

The following result has an identical proof to Theorem \ref{t:EHNYisBK}.
\begin{theorem}\label{t:hEHNYisBK}  Suppose $P = P_\i$ is cominuscule and identify the 
bases $Z(L_P)\isom \Gm$ via $\alpha_\ki$.  Then the graded 
character $\aD_{\hbar,Z(L_P)} \otimes S$-module $\WBK^{1/\hbar}_{(G,P)}$ is isomorphic to the 
graded Kloosterman $\aD_{\hbar,\Gm} \otimes S$-module
$(\operatorname{id}_q\times \operatorname{inv})^* \TKl^{1/\hbar}_{(G^\vee,\varpi_\ki^\vee)}$. 
\end{theorem}

\subsection{The $\aD_\hbar \otimes \operatorname{Sym}(\t)$ mirror theorem}
Combining Theorems \ref{t:hEFGisquantum}, \ref{t:eqzhu} and \ref{t:hEHNYisBK} we obtain the 
following result.

\begin{center}
\begin{tikzpicture}
\node at (0,1) {$B$-model};
\node at (10,1) {$A$-model};
\node at (10,-4.3) {Galois};
\node at (0,-4.3) {Automorphic};
\node[draw, align = center] (A) at (0,0) {$\WBK^{1/\hbar}_{(G,P)}$: character $\aD_\hbar$-module 
of the \\ weighted geometric crystal for $(G,P)$};
\node[draw,align=center] (B) at (10,0) {$\cQ^{(G^\vee,P^\vee)}_{\hbar,T^\vee}$: $T^\vee$-equivariant 
quantum \\
$\aD_\hbar$-module for $G^\vee/P^\vee$};
\node[draw,align=center] (C) at (0,-3) {$\TKl^{1/\hbar}_{ (G^\vee, V ) }$: twisted Kloosterman \\
$\aD_{\hbar}$-module for \\ 
minuscule representation $V$ of $G^\vee$};
\node[draw,align=center] (D) at (10,-3) {$\widetilde \nabla^{(G^\vee,V)}_{\hbar}$: deformed 
Frenkel--Gross 
\\ $\hbar$-connection for \\ minuscule 
representation $V$ of $G^\vee$};
\draw (A) -- 
node [above, align=center] {Theorem~\ref{thm:Shbar}} (B) 
-- node[right,align=center] {Theorem \ref{t:hEFGisquantum}} (D) -- node 
[below, align=center] {Theorem~\ref{t:eqzhu}}(C) 
-- node[left,align=center] {Theorem~\ref{t:hEHNYisBK} }
(A);
\end{tikzpicture}
\end{center}

\begin{theorem}\label{thm:Shbar}
Suppose $P$ is a cominuscule parabolic subgroup of an almost simple algebraic group $G$. 
We have an isomorphism of graded 
$\aD_{\hbar,Z(L_P)} \otimes \operatorname{Sym}(\t)$-modules
$$
\WBK^{1/\hbar}_{(G,P)} \cong \cQ^{G^\vee/P^\vee}_{\hbar,T^\vee}. $$
\end{theorem}

\section{Proof of the Peterson isomorphism}\label{s:proof-Peterson}

We deduce the equivariant Peterson isomorphism (Theorem~\ref{t:Jac}) by specializing 
$\hbar\to 0$ in Theorem~\ref{thm:Shbar}.

\subsection{The Gauss--Manin model}
Recall that $S=\Sym(\t)=\C[\t^*]$.  In this subsection, we describe the $\aD_{\hbar,\Gm} \otimes S$-module 
$\WBK^{1/\hbar}_{(G,P)}$ 
more explicitly.

By \cite{HTT:D-modules}*{Prop.1.5.28(i)} and Proposition~\ref{LaumonPushFiltre}, we may 
compute $R\pi_*(\Mult^{\gamma/\hbar}\Exp^{f/\hbar})$ by
computing the sheaf pushforward $\GM^\bullet_\hbar$ along $\pi$ of the relative de Rham 
complex $  
\DR^\bullet_{X/Z(L_P)}(\Mult^{\gamma/\hbar}\Exp^{f/\hbar})$.  Since $X \cong B_-^{w_P} \times Z(L_P)$ 
where 
$B_-^{w_P}$ and
$Z(L_P)$ are both affine (and thus also $\aD$-affine), it suffices to work with the modules 
of global sections. The complex $\GM^\bullet_\hbar$ is given by
$$
\Omega^0(X/Z(L_P)) \otimes_{\C} S \to \cdots \to  \Omega^{d-1}(X/Z(L_P)) \otimes_{\C} S  \to 
\Omega^{d}(X/Z(L_P)) \otimes_{\C} S,
$$
where $d := \dim X - \dim Z(L_P) = \dim B_-^{w_P}$, and $\Omega^k(X/Z(L_P))$ is the module of 
relative 
global differentials.  Here, the space of global
sections of the rank-one $\aD_{\hbar,X} \otimes S$-module $\Mult^{\gamma/\hbar}\Exp^{f/\hbar}$ has 
been 
identified 
with $S[X] = \C[X] \otimes S$ and the $\hbar$-differential is given accordingly by 
\[\hbar d + df + \gamma^{-1} d\gamma.
\] 
Here, the differential $\hbar d$ and the forms $df$ and
$d\gamma$ are both relative: no differentiation is made in the $Z(L_P)$-direction.  The form $d\gamma$ is the differential of the weight map, valued in $\t = \Lie(T)$.

By Proposition \ref{prop:hfree}, we know that $R\pi_*(\Mult^{\gamma/\hbar}\Exp^{f/\hbar})$ vanishes
except in one degree, so the only nonzero cohomology group of $\GM^\bullet_\hbar$ is
$$
\GM_\hbar:= {\rm coker} (\Omega^{d-1}(X/Z(L_P)) \otimes S \to \Omega^d(X/Z(L_P)) \otimes S).
$$

Now, $X\cong  B_-^{w_P}\times Z(L_P)$ is an open subset of affine space: specifically, 
$B_-^{w_P}$ is an open subset of a Schubert cell in $G/P$.
Let $x_1,x_2,\ldots,x_d$ be coordinates for this Schubert cell.  Let 
\[A := 
\Sym(\t)[Z(L_P)] = \C[X^*(Z(L_P))] \otimes S = \C[\t^*,q^{\pm 1}_i\ |\ i\not\in I_P],
\]  
which is a Laurent polynomial ring over $S$ in $\dim Z(L_P)$ variables, and 
$A[B_-^{w_P}]\cong S[X]$.
Then $\C[B_-^{w_P}]$ is a localization of $\C[x_1,\ldots,x_d]$, and we
have isomorphisms of $A[B_-^{w_P}]$-modules
\begin{align*}
\Omega^d(X/Z(L_P)) \otimes S &\cong A[B_-^{w_P}] \cdot \omega, \\
\Omega^{d-1}(X/Z(L_P)) \otimes S &\cong \sum_i A[B_-^{w_P}] \cdot \omega_i,
\end{align*}
where $\omega = \prod_{j=1}^d dx_j$ and $\omega_i = \prod_{j\neq i} dx_j$.  Thus
the Gauss--Manin module $\GM_\hbar$ can be written explicitly in terms of
coordinates by computing the partial derivatives $\frac{\partial f}{\partial x_j} + \gamma^{-1}\frac{\partial 
\gamma}{ 
\partial 
x_j}$.  Here, $\frac{\partial \gamma}{ \partial x_j}$ are the components of the differential $d\gamma$.

The Gauss--Manin module $\GM_\hbar$ is a $A\langle \hbar \partial_{q_i}\ |\ i\not\in I_P \rangle$-module 
where $\hbar 
\partial_{q_i}$ acts via the operator
$
 \hbar \partial_{q_i} + \frac{\partial f}{\partial q_i}.
$

\begin{remark}
We may write ``$f_S$" for the weighted \emph{multi-valued} potential, that is 
$f_S:=f+\ell \circ \gamma$, where $\ell$ is defined in \eqref{eq:ell}. The multi-valueness implies that 
it is not quite an element of $S[X]$. Hovewer its differential $df_S = df + 
\frac{d\gamma}{\gamma}$ is well-defined.
\end{remark}

\subsection{Peterson isomorphism}
Let 
$$
\Jac(X/Z(L_P),f,\gamma) := \Sym(\t)[X]/
(
\frac{\partial f}{\partial x_1} + \frac{\partial 
\gamma}{\gamma 
\partial 
x_1},\ldots,\frac{\partial f}{\partial x_d}+ \frac{\partial \gamma}{\gamma \partial x_d})
$$
denote the Jacobian ring of the weighted potential.
It is independent of the choice of coordinates of $B_-^{w_P}$ because it can be identified 
with the 
cokernel of the wedge map with $df+\frac{d\gamma}{\gamma}$ from $\Omega^{d-1}(X/Z(L_P))$ to
$\Omega^d(X/Z(L_P))$.

\begin{theorem}\label{t:Jac}
If $P^\vee$ is minuscule, then we have an isomorphism of 
$\Sym(\t)[Z(L_P)]$-algebras $$\Jac(X/Z(L_P),f,\gamma) \cong 
QH^*_{T^\vee}(G^\vee/P^\vee).$$
Moreover, multiplication by $q\frac{\partial f}{\partial q}$ on the left-hand side corresponds to 
quantum multiplication by
$c_1^T(O(1)) = \sigma_{\i} - \varpi^\vee_\ki$ on the right-hand side. 
\end{theorem}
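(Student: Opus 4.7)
The plan is to deduce Theorem \ref{t:Jac} from the $\aD_\hbar$-module mirror isomorphism of Theorem \ref{thm:Shbar} by passing to the semiclassical limit $\hbar=0$ and identifying the resulting $S[q^{\pm 1}]$-modules with the Jacobian ring and the equivariant quantum cohomology ring, each equipped with their natural algebra structures.

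The first step is to compute the $\hbar=0$ specialization on both sides, using the Gauss--Manin description $\BK_{(G,P)}(S,\hbar)\cong \GM_\hbar$ already recorded in the excerpt. On the A-side, the quotient $\cQ^{G^\vee/P^\vee}(S,\hbar)/\hbar\, \cQ^{G^\vee/P^\vee}(S,\hbar)$ is canonically $QH^*_{T^\vee}(G^\vee/P^\vee)$ as an $S[q^{\pm 1}]$-module equipped with the Higgs field given by quantum multiplication by $c_1^T(O(1))$. On the B-side, the complex $\GM^\bullet$ at $\hbar=0$ has top cohomology equal to the cokernel of $df_S\wedge : \Omega^{d-1}(X/Z(L_P))\otimes S \to \Omega^d(X/Z(L_P))\otimes S$, which is $\Jac(X/Z(L_P),f_S)\cdot\omega$; the operator $\hbar q\partial_q + q\, \partial f_S/\partial q$ specializes to multiplication by $q\,\partial f_S/\partial q$. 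That no higher cohomology contributes to the specialization follows from the $\hbar$-flatness of $\BK_{(G,P)}(S,\hbar)$ established in Proposition \ref{prop:hfree}, combined with the rank-matching provided by Theorem \ref{thm:Shbar}. Specializing Theorem \ref{thm:Shbar} therefore yields an $S[q^{\pm 1}]$-module isomorphism $\Jac(X/Z(L_P),f_S) \cong QH^*_{T^\vee}(G^\vee/P^\vee)$ intertwining the two Higgs fields, which is already the Chevalley-matching second half of the statement.

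The second step is to upgrade this module isomorphism to an $S[q^{\pm 1}]$-algebra isomorphism. I would argue generically on $\Spec S \times \C^\times_q$ via semisimplicity. At a Zariski-generic closed point $(h,q)$, the fiber of $QH^*_{T^\vee}(G^\vee/P^\vee)$ is a semisimple commutative $\C$-algebra of dimension $|W^P|$ by \cite{Chaput-Manivel-Perrin:QH-minuscule-III}, while the corresponding Jacobian fiber is also semisimple of the same dimension since the specialized potential generically has $|W^P|$ isolated Morse critical points. Any semisimple commutative $\C$-algebra of rank $N$ is a product of $N$ copies of $\C$ and is recovered from any multiplication operator with distinct eigenvalues (whose Lagrange-interpolation idempotents then reconstruct the product decomposition) together with its unit element. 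The Higgs field $c_1^T(O(1))\,*_q$ is matched by the mirror map to $q\,\partial f_S/\partial q$, and sending $1 \in QH^*_{T^\vee}$ to the class of $\omega$ identifies the units, yielding the algebra isomorphism at generic points. Finite flatness of both algebras over $S[q^{\pm 1}]$ then promotes this to an isomorphism over the whole base.

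The hard part will be ensuring that the spectrum of $c_1^T(O(1))\,*_q$ is generically simple, since for minuscule $G^\vee/P^\vee$ outside the Grassmannian case the divisor class alone does not generate $QH^*$ as an algebra (for example, on an even-dimensional quadric $\sigma\,*_q$ cannot separate the two middle-cohomology classes $\sigma_{n-1}^\pm$). In such cases one must either show that the equivariant deformation by $h\in\t^*$ restores simple spectrum at the generic point, or match the Frobenius pairings (Poincar\'e on the A-side, residue on the B-side) to supplement the Higgs-field data and rigidify the algebra structure. Verifying these refinements case-by-case across the minuscule list of Figure \ref{fig:minuscule} is the most delicate bookkeeping in the argument.
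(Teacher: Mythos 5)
Your overall strategy is the same as the paper's: specialize the $\aD_\hbar\otimes S$ mirror isomorphism of Theorem~\ref{thm:Shbar} to $\hbar=0$, identify the $B$-side as the Jacobian ring via the Gauss--Manin description, and then promote the resulting $S[q^{\pm1}]$-module isomorphism intertwining the Higgs fields to an algebra isomorphism by working generically over $\Spec S$. You also correctly flag the real difficulty, namely that for non-Grassmannian minuscule cases (e.g.\ even quadrics) the divisor operator does not separate middle cohomology non-equivariantly, so one must use the equivariant parameter to obtain generation or simple spectrum at the generic point of $\t^*$.

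There are, however, two concrete gaps. First, the claim that ``sending $1 \in QH^*_{T^\vee}$ to the class of $\omega$ identifies the units'' is not what the mirror isomorphism does: Proposition~\ref{p:one-to-omega} shows that it is the \emph{top} class $\sigma_{\PD(1)}$, not $1_H$, that is sent to $[\omega]$, and $[\omega]$ is the image of $1_J$ under $g\mapsto g\omega$. So the module map $\gamma$ does not send $1_H$ to $1_J$, and without a correction you cannot read off the algebra structure from the Higgs field. The paper handles this by choosing $\zeta\in\Jac\otimes_S\Frac(S)$ with $\gamma(1_H)\cdot\zeta=1_J$, replacing $\gamma$ by $\zeta\gamma$, and checking this still intertwines the Higgs fields; some such renormalization is needed and should be made explicit in your argument. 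Second, you leave the generation/simple-spectrum question unresolved, proposing case-by-case bookkeeping. The paper instead cites \cite{Mihalcea:EQ-Chevalley}*{Cor.~6.5} together with \cite[Lemma~4.1.3]{Ciocan-Fontanine-Kim-Sabbah:nonabelian-Frobenius} to get uniformly that $\sigma_\ki$ (hence $c_1^T(O(1))$) generates $QH^*_{T^\vee}(G^\vee/P^\vee)\otimes_S\Frac(S)$ over $\Frac(S)[q^{\pm1}]$, which is precisely the input needed (and is a bit weaker than demanding simple spectrum). With that citation your semisimplicity/interpolation framing would go through, but as written the proposal does not close either gap.
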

\begin{proof}
Recall that $A=\Sym(\t)[Z(L_P)] = \C[\t^*,q^{\pm 1}]$.
By Theorem \ref{thm:Shbar} we have an isomorphism of $A\langle  \hbar\partial_q  \rangle$-modules between $\GM_{\hbar}$ and the equivariant quantum
connection $\cQ^{G^\vee/P^\vee}_{\hbar,T^\vee}$,
which is the $A\langle \hbar \partial_q \rangle$-module $QH^*_{T^\vee}(G^\vee/P^\vee) \otimes \C[\hbar]$ with the action of $\hbar q \partial_q$ given by 
$
 \hbar q\partial_q + (\sigma_{\i} *_{q}) - \varpi_\ki.
$

At $\hbar = 0$, the map is given by wedging with the relative differential 
$df+\frac{d\gamma}{\gamma}$, so 
we have
$
\GM_0 \cong \Jac(X/Z(L_P),f,\gamma)
$
as an $S[q^{\pm 1}]\langle\hbar  \partial_q \rangle$-module with the action of $\hbar \partial_q$ given by 
multiplication by $\frac{\partial f}{\partial
q}$ in the right-hand side which we denote by $\Jac(f,\gamma)$ for short.

Under the above isomorphism 
\[
\alpha: QH^*_{T^\vee}(G^\vee/P^\vee) \cong \Jac(f,\gamma)
\]
of $S[q^{\pm 1}]$-modules, quantum multiplication by
$\sigma_{\i}- \varpi_\ki$ corresponds to  multiplication by $q\frac{\partial f}{\partial q}$ in $\Jac(f,\gamma)$. 

Since $  QH^*_{T^\vee}(G^\vee/P^\vee) $ is a free $S[q^{\pm 1}]$-module we deduce that 
$\Jac(f,\gamma)$ is also free.
Let $\alpha(1_H)$ be the image of
the identity $1_H$ of the ring $H^*_{T^\vee}(G^\vee/P^\vee)$, and let $1_J \in \Jac(f,\gamma)$ 
denote the identity of the ring $\Jac(f,\gamma)$.
It also follows that there exists $\zeta
\in \Jac(f,\gamma)\otimes_S  \C(\t^*) $ so that $\alpha(1_H) \cdot \zeta = 1_J$.
Let $\zeta \alpha: H^*_{T^\vee}(G^\vee/P^\vee) \cong \Jac(f,\gamma)$ denote the composition
of the $S[q^{\pm 1}]$-module isomorphism $\alpha$ with left multiplication by $\zeta$.  Then 
$\zeta 
\alpha(1_H) =
1_J$ and $\zeta \alpha$ sends quantum multiplication by $\sigma_{\i}$ to multiplication by 
$q\frac{\partial f}{\partial q}$.

Recall that $S\cong \C[\t^*]$, so the fraction field is $\C(\t^*)$. 
By~\cite{Mihalcea:EQ-Chevalley}*{Cor.6.5} and 
\cite[Lem.4.1.3]{Ciocan-Fontanine-Kim-Sabbah:nonabelian-Frobenius}, 
$QH^*_{T^\vee}(G^\vee/P^\vee)\otimes_S \Frac(S)$
is generated over $\Frac(S)[q^{\pm 1}]$ by $\sigma_{\i}$, and thus also by $c_1^T(O(1)) = \sigma_\i- \varpi^\vee_\ki$. 
We deduce that $\zeta \alpha$ induces a  $\Frac(S)[q^{\pm 1}]$-algebra isomorphism after 
localization. 
Since the $S[q^{\pm 1}]$-algebras $QH^*_{T^\vee}(G^\vee/P^\vee)$ and $\Jac(f,\gamma)$ are already 
free as $S$-modules, it follows 
that $\zeta \alpha$ is an isomorphism of $S[q^{\pm 1}]$-algebras.
\end{proof}

Recall from \eqref{eq:Peterson} the definition of the Peterson stratum $\cY_P^*$.  Rietsch 
\cite{Rietsch:mirror-construction-QH-GmodP} has proved that $\Jac(X/Z(L_P),f,\gamma)$ is 
isomorphic to $\C[\cY_P^*]$.  We thus obtain the following corollary.

\begin{corollary} \label{cor:Peterson}
If $P^\vee$ is minuscule, then we have an isomorphism of $\Sym(\t)[Z(L_P)]$-algebras
\[
QH^*_{T^\vee}(G^\vee/P^\vee) \cong \C[\cY_P^*].
\]
\end{corollary}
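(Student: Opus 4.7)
The plan is to deduce Corollary \ref{cor:Peterson} by composing two $\C[\t^*,q,q^{-1}]$-algebra isomorphisms that are already available at this point in the paper. First I would invoke Theorem \ref{t:Jac}, which gives
\[
QH^*_{T^\vee}(G^\vee/P^\vee) \cong \Jac(\mathring{G/P},f_q+\ell(\gamma))
\]
as $\C[\t^*,q,q^{-1}]$-algebras. Second, I would cite Rietsch's theorem from \cite{Rietsch:mirror-construction-QH-GmodP}, which identifies the same Jacobian ring with the coordinate ring $\C[\cY_P^*]$ of the equivariant Peterson stratum, again as $\C[\t^*,q,q^{-1}]$-algebras. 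Composing the two isomorphisms yields the statement.

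The only point meriting verification is that the two isomorphisms are compatible with the same $\C[\t^*,q,q^{-1}]$-algebra structures. On the Jacobian side, the $\t^*$-action enters through the equivariant term $\ell(\gamma)$ added to the superpotential, and the $q$-parameter is the coordinate on $Z(L_P)\cong \C_q^\times$ via $\alpha_\ki$. Theorem \ref{t:Jac} is proved as an isomorphism of $\C[\t^*,q,q^{-1}]$-algebras with exactly these structures, and Rietsch's identification with $\C[\cY_P^*]$ uses the same parametrization (this is the content of her presentation of the equivariant Peterson stratum via critical points of the Landau--Ginzburg superpotential). Hence no rescaling or reparametrization is needed to compose them.

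I do not expect any substantive obstacle: the main content of the corollary has already been carried by Theorem \ref{t:Jac}, which in turn rests on the $\aD_\hbar\otimes S$-module mirror theorem (Theorem \ref{thm:Shbar}) and its semiclassical limit. The remaining ingredient, Rietsch's isomorphism $\Jac(\mathring{G/P},f_q+\ell(\gamma))\cong \C[\cY_P^*]$, is a purely $B$-model statement about critical points of the superpotential and is valid for arbitrary $P$, not just minuscule ones. Thus the proof proposal reduces to a one-line composition, and the only work is to check that both authors use matching conventions for the identification $Z(L_P)\cong \C_q^\times$ and for the equivariant parameters in $S=\C[\t^*]$, which has already been arranged in \S\ref{s:crystal-affine} and \S\ref{s:eq-Cr}.
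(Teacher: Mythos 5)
Your proposal is correct and follows exactly the paper's own argument: Corollary \ref{cor:Peterson} is obtained by composing Theorem \ref{t:Jac} with Rietsch's identification of the Jacobian ring $\Jac(\mathring{G/P},f_q+\ell(\gamma))$ with $\C[\cY_P^*]$ from \cite{Rietsch:mirror-construction-QH-GmodP}. Your remark about verifying that the $\C[\t^*,q,q^{-1}]$-algebra structures on both sides agree is also the appropriate thing to check, and it is indeed built into the conventions of \S\ref{s:crystal-affine} and \S\ref{s:eq-Cr}.
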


\subsection{Example}
Consider the case $G^\vee/P^\vee = \mathrm{Gr}(1,n+1)= \P^{n}$. 
We have that the equivariant quantum $A\langle \hbar \partial_q \rangle$-module 
$\cQ^{\P^{n}}_{\hbar,T^\vee}$ is given by the connection
\[
\hbar q\frac{d}{dq} +
\left(
\begin{smallmatrix}
h_1 & & & q \\
1 & h_2 & & \\
& \ddots & \ddots & \\
& & 1 & h_{n+1}
\end{smallmatrix}
\right),
\]
where $\sum_{i=1}^{n+1} h_i = 0$, and we identify $h =(h_1,h_2,\ldots,h_{n+1}) \in \t^*$ in the usual way.
Its dual is isomorphic to $A\langle \hbar \partial_q \rangle/A\langle \hbar \partial_q \rangle L$, where 
\[
L:= \prod_{i=1}^{n+1} (\hbar q \frac{d}{dq} - h_i) - q.
\]
This is a hypergeometric differential operator of type ${}_0F_n$.
In the notation of~\cite{Katz:exp-sums-diff-eq}*{\S3}, we see that for $\lambda \in \C^\times$,
$\cQ^{\P^n}_{\hbar,T^\vee}|_{\hbar = \lambda}$ is 
the hypergeometric $\aD$-module $\cH_\hbar(\frac{h_i{}'s}{\lambda},\emptyset)$. 
On the other hand the character $A\langle \hbar \partial_q \rangle$-module $\WBK^{1/\hbar}$ is given 
by 
the $\pi_*$-pushforward of 
$\Mult^{\gamma/\hbar} \Exp^{f_q/\hbar}$, that is
\[
\int_{x_1\cdots x_{n+1}=q} x_1^{h_1/\hbar} \cdots x_{n+1}^{h_{n+1}/\hbar}
\Exp^{(x_1 + \cdots + x_n + x_{n+1})/\hbar}
\frac{dx_1 \cdots dx_{n+1}}{x_1\cdots x_{n+1}}.
\]
The mirror isomorphism $\cQ^{\P^n}_{\hbar,T^\vee}\cong \WBK^{1/\hbar}_{(G,P)}$ of 
Theorem~\ref{thm:Shbar} 
follows in this case from a result of Katz on convolution of hypergeometric 
$\aD$-modules~\cite{Katz:exp-sums-diff-eq}*{Thm.5.3.1}.  
In the semiclassical limit $\hbar\to 0$, we recover the equivariant quantum cohomology algebra
\[
QH^*_{T^\vee}(\P^n)=
\C[x,q^{\pm 1},\t^*]/
\left(
\prod^{n+1}_{i=1} (x-h_i)=q
\right),
\]
from the quantum connection $\cQ^{\P^n}_{\hbar,T^\vee}$ on the one hand. And on the other 
hand, from the potential function $f+\gamma^{-1}d\gamma$, and in view of 
\[
x_i \frac{\partial f}{\partial x_i}  + 
\gamma^{-1}\frac{x_i \partial \gamma}{\partial x_i}
=
x_i+h_i - \frac{q}{x_1\cdots x_n},
\]
we recover the Jacobi ring $\Jac(f,\gamma)$. By letting $x:=x_i+h_i$, which is independent of 
$i$, we see that $\frac{\partial f}{\partial x_i}+\gamma^{-1}\frac{\partial \gamma}{\partial x_i}=0$ is
equivalent to $\prod^{n+1}_{i=1}(x-h_i)=q$, in agreement with Theorem~\ref{t:Jac}.

\section{An enumerative formula}\label{s:parabolic-bessel}

In this section, we calculate the quantum period of minuscule flag varieties
(Theorem~\ref{t:hyper-series}).
The first coefficient in the $q$-expansion corresponds to the identity of 
Corollary~\ref{cor:identity}.

\subsection{Solution of the geometric crystal $\aD$-module}\label{s:sol}
We allow $P$ to be
arbitrary until \S\ref{s:rd-minuscule}. 
The Givental integral formula \cite{Givental:mirror} for Whittaker functions (see also 
\cite{GLO:Givental-Integral}) arises in the present 
context as solutions to $\BK_{(G,P)}$ via a natural pairing with homology groups. 
Equivalently these are special functions that are solutions of the quantum differential 
equation.  The final \S\ref{s:Bessel} treats the case of the classical $I_0$ and $K_0$-Bessel 
functions as an illustration of the main concepts.

The solution complex of $\BK_{(G,P)}$ is 
defined~\cite[\S4.2]{HTT:D-modules} to be 
\[
\mathrm{Sol}_{(G,P)}:=
\mathrm{RHom}_{\aD}(\BK^{\mathrm{an}}_{(G,P)},\cO^{\mathrm{an}}_{Z(L_P)}).
\]
Recall that $\BK_{(G,P)}=R\pi_* \Exp^{f}$.
By~\cite[Thm.4.2.5]{HTT:D-modules} we can interpret the stalks of $\mathrm{Sol}_{(G,P)}$ 
as dual to the algebraic de Rham
cohomology $H^\bullet_{\mathrm{dR}}(\mathring{G/P},\Exp^{f_t})$ for $t\in Z(L_P)$.  
Concretely, $ \mathrm{Sol}_{(G,P)}$ is the local system of holomorphic horizontal sections of 
the connection dual to $\BK_{(G,P)}$.

If $P$ is cominuscule, then by Theorem~\ref{t:Dmodule-mirror}, $\BK_{(G,P)}$ is a 
coherent $\aD$-module, hence $\mathrm{Sol}_{(G,P)}$ is a local
system on $Z(L_P)$.
For every $t\in Z(L_P)$, we deduce that $H^i_{\mathrm{dR}}(\mathring{G/P},\Exp^{f_t})$ is 
zero unless $i=d=\dim(G/P)$, and that
$
\dim H^d_{\mathrm{dR}}(\mathring{G/P},\Exp^{f_t}) 
$  
is constant and equal to $|W^P|$.

\subsection{Compact cycles}
The following proposition holds for any open Richardson variety so we state and prove it 
in that generality.
For $u \le w$ in $W$, recall that $\mathcal{R}_u^w$ denotes the open Richardson variety, defined to be the intersection of $B_- \dot{u} B/B$ with $B \dot{w} B/B$.
\begin{proposition}\label{p:Hmid}
 $H_{\ell(w)-\ell(u)}(\mathcal{R}_u^w)$ is one-dimensional.
\end{proposition}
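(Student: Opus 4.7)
The plan is to prove $H_{\md}(\cR_u^w,\C) = H_d(\cR_u^w,\C)\cong \C$ by induction on $d:=\ell(w)-\ell(u)$, using that $\cR_u^w$ is smooth, irreducible, and affine of complex dimension $d$.

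For the base case, if $d=0$ then $\cR_u^u$ is a point, and if $d=1$, then $u\lessdot w$ is a Bruhat cover and $\cR_u^w\cong \G_m$ is a standard fact (see, e.g., \cite{KLS:projections-Richardson}); hence $H_1(\G_m,\C)\cong \C$.

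For the inductive step with $d\ge 2$, I would choose a simple reflection $s$ with $sw<w$ and use the projection $\pi_s\colon G/B\to G/P_s$ to the minimal parabolic flag variety, whose fibers are $\P^1$. Restricted to $\cR_u^w$ this gives a surjection
\[
\pi_s\colon \cR_u^w \twoheadrightarrow \cR_{u'}^{w'}
\]
onto an open Richardson of dimension $d-1$, where $(u',w')=(u,sw)$ if $u\le sw$ and $(u',w')=(su,sw)$ otherwise (by the lifting property of the Bruhat order). Over a nonempty Zariski-open subset $U\subset \cR_{u'}^{w'}$, each fiber is $\P^1$ minus the two Schubert-divisor points cut out by $X_w$ and $Y_u$, hence $\cong \G_m$.

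Applying the Leray spectral sequence in compactly supported cohomology,
\[
E_2^{p,q} = H^p_c\bigl(\cR_{u'}^{w'},\,R^q\pi_{s,!}\C\bigr)\Longrightarrow H^{p+q}_c(\cR_u^w),
\]
one identifies $R^1\pi_{s,!}\C\cong j_!\C_U$ (extension by zero from the $\G_m$-locus) and $R^2\pi_{s,!}\C\cong \C$ (constant, since even degenerate $\A^1$-fibers contribute $H^2_c=\C$). Together with Artin's vanishing ($H^k_c=0$ for $k<\dim_\C X$ on smooth affine $X$) and the induction hypothesis $H^{d-1}_c(\cR_{u'}^{w'},\C)\cong \C$, the only surviving contribution in total degree $d$ is $E_\infty^{d-1,1}\cong H^{d-1}_c(U,\C)\cong \C$. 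So $H^d_c(\cR_u^w,\C)\cong \C$, and Poincaré duality for smooth complex manifolds then yields $H_d(\cR_u^w,\C)\cong H^d_c(\cR_u^w,\C)\cong \C$.

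The main technical subtlety lies in the non-uniformity of $\pi_s$: over a proper closed subvariety $Z\subset \cR_{u'}^{w'}$ the fibers may degenerate (for instance to $\A^1$), so $\pi_s$ is not a $\G_m$-bundle globally. The hard part will be to verify $H^{d-1}_c(U,\C)\cong \C$; this follows by a further application of the long exact sequence for $U=\cR_{u'}^{w'}\setminus Z$, using that $Z$ has strictly smaller complex dimension than $\cR_{u'}^{w'}$ so its contribution to the degree-$d$ total vanishes by Artin, allowing the induction to close.
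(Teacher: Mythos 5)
Your proposal takes a genuinely different route from the paper. The paper's proof is representation-theoretic: after reducing by Poincar\'e duality to computing $H^{\md}_c(\cR_u^w)$, it invokes a result of Riche--Soergel--Williamson identifying $H^\bullet_c(\cR_u^w)$ with $\mathrm{Ext}^{\bullet+\ell(u)-\ell(w)}(M_w,M_u)$ for Verma modules in the principal block of category $\cO$, so that $H^{\md}_c(\cR_u^w)\cong\mathrm{Hom}(M_w,M_u)$, which is one-dimensional by BGG. This sidesteps entirely the combinatorial geometry of $\pi_s$ and its degenerate fibers. Your approach is geometric and inductive, which is attractive, but it has a genuine gap.

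The gap is in the step claiming $H^{d-1}_c(U,\C)\cong\C$ via the long exact sequence for $U=\cR_{u'}^{w'}\setminus Z$. You argue that the contribution of $Z$ ``vanishes by Artin.'' But Artin vanishing for a smooth affine variety of complex dimension $e$ gives $H^k_c=0$ only for $k<e$; it gives no vanishing above $e$, and here you need $H^{d-2}_c(Z)=0$ and (essentially) $H^{d-1}_c(Z)=0$, both of which lie in the range $[\dim_\C Z, 2\dim_\C Z]$ once $d\ge 3$ and $\dim_\C Z$ is, say, $d-2$. Moreover $Z$ is merely a closed subvariety, not a priori smooth or irreducible, so Artin vanishing for $H^\bullet_c$ does not apply at all. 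Concretely, passing to an open subset \emph{can} increase the bottom compactly supported cohomology: $H^1_c(\A^1)=0$ but $H^1_c(\G_m)=\C$. So the induction hypothesis $H^{d-1}_c(\cR_{u'}^{w'})\cong\C$ does not by itself control $H^{d-1}_c(U)$. Closing the induction would require strengthening the inductive statement, e.g.\ controlling the full weight-graded pieces of $H^\bullet_c$ along the Deodhar strata, which is substantially more work than you indicate.

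There are also smaller issues in the spectral sequence setup: $R^0\pi_{s,!}\C$ is not addressed (it is supported where fibers are proper, e.g.\ points, since a connected noncompact fiber has $H^0_c=0$), and $R^2\pi_{s,!}\C$ is not constant if any fiber degenerates to a point. Neither is fatal by itself, but both require knowing precisely which fiber types occur over which strata, which again points to the need for a finer analysis of $\pi_s$ than is given.
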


\begin{proof}
By Poincar\'e duality it is equivalent to treat the cohomology with compact support 
$H^{\ell(w)-\ell(u)}_c(\mathcal{R}_u^w)$. By~\cite[Prop.4.2.1]{Riche-Soergel-Williamson} there is 
a canonical isomorphism 
\[
H^\bullet_c(\mathcal{R}_u^w) \cong \mathrm{Ext}^{\bullet+\ell(u)-\ell(w)}(M_w,M_u),
\]
 where $M_w$ and $M_u$ denote the Verma modules in the principal block.
Since $\mathcal{R}_u^w$ has real dimension $2(\ell(w)-\ell(u))$ we have 
$H^{\ell(w)-\ell(u)}_c(\mathcal{R}_u^w) \cong \mathrm{Hom}(M_w,M_u)$.
This space is one-dimensional as follows from the BGG-correspondence.
\end{proof}

To construct a middle dimension cycle generating $H_{\ell(w)-\ell(u)}(\mathcal{R}_u^w)$, we use 
that $\mathcal{R}_u^w$ contains many tori. (In fact by 
Leclerc~\cite{Leclerc:cluster-Richardson}, $\C[\cR_u^w]$ contains a cluster algebra, and is 
conjectured to be equal to one.)
We choose any cluster torus $(\C^\times)^{\ell(w)-\ell(u)} \subset \mathcal{R}_u^w$ and consider the middle dimension cycle given by a compact torus $(S^1)^{\ell(w)-\ell(u)}$. We denote integration along this cycle by $\oint$.
We can normalize the form $\omega$ from~\cite{KLS:projections-Richardson} which has simple poles along the boundary of $\mathcal{R}_u^w$ such that 
\[
\oint \frac{\omega}{(2i\pi)^{\ell(w)-\ell(u)}} = 1.
\]
In view of Proposition~\ref{p:Hmid}, the cycle is well-defined and independent of the choice of tori.

Recall from \S\ref{s:Richardson} that $\mathring{G/P}\cong \mathcal{R}^{w_0}_{w_Pw_0}$ which can be identified with the open projected
Richardson variety in $G/P$. Thus we have shown that the space $H_{d}(\mathring{G/P})$ is 
one-dimensional and generated by the above
compact cycle.  For the case of full flag varieties $G/B$ a related construction appears in \cite[\S7.1]{Rietsch:mirror-solution-Toda}, and for the
Grassmannian in~\cite[Thm.4.2]{Marsh-Rietsch:B-model-Grassmannians}. 

\begin{comment}
\subsection{Cominuscule case}\label{s:rd-minuscule}
We now assume that $P$ is cominuscule and come back to studying  $\mathrm{Sol}_{(G,P)}(\hbar)$ from \S\ref{s:sol} above.  

\begin{proposition}\label{p:local-system-iso}
For every $\hbar\in \C^\times$ and $t\in Z(L_P)$, $H^{\mathrm{rd}}_i(\mathring{G/P},\Exp^{-f_t/\hbar})$ is zero unless $i=d$,
and 
\[
\dim H^{\mathrm{rd}}_d(\mathring{G/P},\Exp^{-f_t/\hbar})
= |W^P|
= \sum^d_{i=0}
\dim H_{2i}(G/P).
\]
Assume that the ramification set denoted $\Sigma_2$ in~\cite[Prop.3.4]{Hien-Roucairol} can be 
chosen to be empty, i.e.
that the middle-dimensional rapid decay cycles on $\mathring{G/P}$ relative to $f_t/\hbar$ form a local system on $Z(L_P)$.
The oscillatory integral~\eqref{Igamma} induces an isomorphism with the local system $\mathrm{Sol}_{(G,P)}(\hbar)$.
\end{proposition}
We shall denote horizontal sections by $\Gamma_t$, viewed as half-dimensional cycles inside $\mathring{G/P}$. 

\begin{proof}
It follows from Theorem~\ref{t:Dmodule-mirror} that $\mathrm{Sol}_{(G,P)}(\hbar)$ is a local system of rank $|W^P|$. In particular, the ramification set
denoted $\Sigma_1$ in~\cite[Prop.3.3]{Hien-Roucairol}, can be chosen to be empty.
Also~\cite[Thm.2.7]{Hien-Roucairol} implies the first assertion.
The second assertion follows from~\cite[Thm.3.5]{Hien-Roucairol}.
\end{proof}
\end{comment}

\subsection{Poincar\'e duality}\label{s:rd-minuscule}
For $w\in W^P$, we define $\PD(w):=w_0ww_0^P$, which is still an element of $W^P$.
This is an involution and we have $\ell(\PD(w))=d-\ell(w)$.
Moreover the Schubert class $\sigma_w\in H^{2\ell(w)}(G^\vee/P^\vee)$ is Poincar\'e dual to $\sigma_{\PD(w)}\in H^{2\ell(\PD(w))}(G^\vee/P^\vee)$.
Since $G^\vee/P^\vee$ is minuscule, a reduced expression for $w\in W^P$ is unique up to commutation relations.
It is always~\cite[\S2.4]{Chaput-Manivel-Perrin:QH-minuscule} a subexpression in any reduced expression for the longest element $w_P^{-1}=w_0w_0^P=\PD(1)$ of $W^P$.

\subsection{Givental fundamental solution} 
Givental has introduced solutions $S_w(\hbar,q)$ of the quantum $\hbar$-connection 
$\cQ^{G^\vee/P^\vee}_\hbar$ in terms of a generating series of gravitational descendants of 
Gromov--Witten
invariants; see~\cite{Givental:equivariant-GW}, \cite[\S4.1]{BCKS:acta}, 
\cite[\S10]{Cox-Katz:mirror},  \cite[\S5]{Guest:book:QH} and \cite[\S2.3]{Iritani:integral-structure} for details.
The functions $S_w(1,q)$, for $w\in W^P$, form a fundamental solution of 
$\cQ^{G^\vee/P^\vee}$ near the regular singular point $q=0$,
see~\cite[\S2]{Galkin-Golyshev-Iritani:gamma-conj}.

The Givental $J$-function is defined by
\[
J^{G^\vee/P^\vee}(\lambda,q):=\sum_{w\in W^P}
\langle S_w(\lambda,q), 1 \rangle
\sigma_{\PD(w)}.
\]
It gives rise to a multivalued holomorphic section
\[
J^{G^\vee/P^\vee} : \C^\times_\lambda \times \C^\times_q \to H^*(G/P),
\]
which becomes single-valued when factored through the universal cover $H^2(G^\vee/P^\vee)\to \C^\times_q$. 
Using the notation of~\cite[Lem.10.3.3]{Cox-Katz:mirror},
\[
J^{G^\vee/P^\vee}(\lambda,q)=
 \exp\left({\dfrac{\log q}{\lambda}\sigma_\ki}\right)
\left(
1 + \sum_{e=1}^\infty \sum_{w\in W^P}
q^e\left\langle
\frac{\sigma_w}{\lambda - \kc}
\right\rangle_{0,e}
\sigma_{\PD(w)}
\right)
.
\]
Intrinsically, the $J$-function is the solution to the \emph{dual} connection to 
$\cQ^{G^\vee/P^\vee}_\hbar$ that is asymptotic to $1$ as $q$ approaches the
regular singular point $0$; see~\cite{Galkin-Iritani:Gamma-conj}.

\begin{example}
For $\P^n$, we have~\cite[\S10]{Cox-Katz:mirror}
\begin{equation}\label{J-Pn}
J^{\P^n}(\lambda, q)= \exp\left({\dfrac{\log q}{\lambda}\sigma_\ki}\right) \sum^\infty_{e=0} q^e 
\prod^e_{j=1} 
\frac{1}{(\sigma_\ki+j\lambda)^{n+1}}.
\end{equation} 
The case of quadrics is treated in~\cite[\S5]{Pech-Rietsch-Williams:LG-quadrics}.
\end{example}

Of particular importance is the component $\langle J^{G^\vee/P^\vee}(\lambda,q),\sigma_{\PD(1)} 
\rangle=  \langle S_{\PD(1)}(\lambda,q),1 \rangle$ which is a
power series in $\lambda^{-1},q$. In \S\ref{sub:quantum-period} we used the notation $S(q)$ 
for 
$S_{\PD(1)}(1,q)$.
The single-valuedness follows from considering the kernel of the monodromy
operator which is the usual cup product with $\sigma_\ki$.
Precisely,
\begin{equation}\label{def:hyp-series}
\langle S_{\PD(1)}(\lambda,q),1\rangle 
= 1 + \sum_{e=1}^\infty
q^e
\left\langle
\frac{\sigma_{\PD(1)}}{\lambda-\kc},1
\right\rangle_{0,e}.
\end{equation}
It is called the hypergeometric series of $G^\vee/P^\vee$ in~\cite{BCKS:conifold,BCKS:acta} and called the quantum period
in~\cite{Galkin-Iritani:Gamma-conj,Galkin-Golyshev:QH-Grassmannians}.
The sum can be simplified further by expanding $(\lambda-\kc)^{-1}$ in power series of 
$\lambda^{-1}$; see~\cite[\S5.2]{Pech-Rietsch-Williams:LG-quadrics} who
also consider more generally $\langle S_{\PD(1)}, \sigma_w \rangle$ for any $w\in W^P$.

\subsection{Degrees and irregular Hodge filtration}
The isomorphism $
\WBK_{(G,P)} \cong \cQ^{G^\vee/P^\vee}_{T^\vee}$
from Theorem \ref{thm:Shbar} induces 
for every $t\in Z(L_P)$ an isomorphism 
\begin{equation}\label{mirror-h}
\bigoplus^d_{i=0} H^{2i}(G^\vee/P^\vee) \cong H^d_{\mathrm{dR}}(\mathring{G/P},\Exp^{f_t}).
\end{equation}
The cohomology group on the right hand side is concentrated in a single degree since $\WBK_{(G,P)}$ is a $\aD$-module concentrated in a single degree.  In this isomorphism, the left-hand side visibly carries a gradation by degree, which can be transported to the right-hand side. We
want to spell this out precisely and derive an important proposition.

It is easy to see that the filtration associated to the Jordan decomposition of the linear endomorphism given by the cup-product by $\sigma_\ki$ coincides
with the filtration by degree on $H^*(G^\vee/P^\vee)$. The cup-product by $\sigma_{\ki}$ is the 
monodromy at $q=0$ of the connection $\cQ^{G^\vee/P^\vee}$.
Thus we conclude from the mirror isomorphism $\cQ^{G^\vee/P^\vee}\cong \BK_{(G,P)}$ that the 
filtration by degree is transported on
$H^d_{\mathrm{dR}}(\mathring{G/P},\Exp^{f_t})$ to the monodromy filtration of $\BK_{(G,P)}$. 

\begin{proposition}\label{p:one-to-omega}
In the isomorphism~\eqref{mirror-h}, the line $\C \cdot \sigma_{PD(1)}=H^{2d}(G^\vee/P^\vee)$  spanned by the top class corresponds to the line spanned by the cohomology class of the
volume form $\omega$ from~\S\ref{s:Richardson}.
In particular this cohomology class is nonzero in  \(H^d_{\mathrm dR}(\mathring{G/P},\Exp^{f_t})\).
\end{proposition}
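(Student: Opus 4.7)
The plan is to match two distinguished horizontal sections on either side of the mirror via their leading asymptotic at the regular singular point $q=0$, then to dualize using the compatibility of the mirror isomorphism with the intersection and Hien-Roucairol pairings.

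First I would compute, on the A-side, the leading asymptotic of the Givental quantum period. The flat section $S_{\PD(1)}(\hbar,q)$ of the dual connection is asymptotic to $\sigma_{\PD(1)}$ at $q=0$; since $\sigma_\ki \cup \sigma_{\PD(1)} = 0$ (multiplying by $\sigma_\ki$ raises degree past the top), this asymptotic carries no $\log q$ term, and by \eqref{def:hyp-series} together with Lemma~\ref{l:q-term}, the pairing $\langle S_{\PD(1)}(\hbar,q), 1\rangle$ is a single-valued power series in $q$ with leading term $1$.

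Next I would compute, on the B-side, the leading asymptotic of the oscillatory integral over the compact torus cycle $\Gamma \subset \mathring{G/P}$ from~\S\ref{s:Richardson}, normalized so that $\oint_\Gamma \omega = (2\pi i)^d$. Using the Lusztig parametrization and Corollary~\ref{cor:superpositive}, which expresses $f_t = a_1 + \cdots + a_d + q \cdot P$ with $P$ a Laurent polynomial having positive coefficients, and applying the residue identity $\oint_{|a_j|=r} e^{a_j/\hbar}\, da_j/a_j = 2\pi i$ in each factor of the $q$-expansion of $e^{f_t/\hbar}$, one obtains
\[
I_\Gamma(\hbar,t) := \oint_\Gamma e^{f_t/\hbar}\,\omega = (2\pi i)^d\bigl(1+O(q)\bigr) \qquad \text{as } q\to 0.
\]

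Both scalar functions $\langle S_{\PD(1)}, 1\rangle$ and $I_\Gamma/(2\pi i)^d$ are single-valued at $q=0$ with leading value $1$, and by Theorem~\ref{t:Dmodule-mirror} and Proposition~\ref{p:local-system-iso} they correspond to horizontal sections of mirror-isomorphic solution sheaves. The discussion preceding this proposition identifies the degree filtration on $H^*(G^\vee/P^\vee)$ with the monodromy filtration of $\BK_{(G,P)}(\hbar)$ at $q=0$; the extremal one-dimensional piece $\C\sigma_{\PD(1)}$ on the A-side thus corresponds to a unique line of single-valued horizontal sections on the B-side, and matching leading values forces the identity $\langle S_{\PD(1)}(\hbar,q), 1\rangle = I_\Gamma(\hbar,t)/(2\pi i)^d$ of functions on $\C^\times_q$.

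Finally, this identity of pairings dualizes to an identification of lines via the compatibility of the mirror isomorphism with the Poincar\'e intersection pairing on the A-side—under which $\sigma_{\PD(1)}$ is Poincar\'e-dual to $1\in H^0$—and with the perfect Hien-Roucairol pairing on the B-side, under which $[\omega]$ is the de Rham class paired with $\Gamma$. This yields $\C\cdot\sigma_{\PD(1)} \leftrightarrow \C\cdot[\omega]$ in \eqref{mirror-h}, and in particular $[\omega]\neq 0$. The main obstacle is making precise the dualization in this last step: one must lift the equality of scalar pairings to an identification of lines via the uniqueness of the extremal line in the monodromy filtration and the compatibility of the mirror with Poincar\'e duality on the A-side and with the Hien-Roucairol pairing on the B-side.
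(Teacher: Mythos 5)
Your approach is genuinely different from the paper's and, as written, it has a gap in the final step. Let me explain both.

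The paper's proof works entirely on the B-side: it invokes the Esnault--Sabbah--Yu decomposition $H^d_{\mathrm{dR}}(\mathring{G/P},\Exp^{f_t/\hbar}) \cong \bigoplus_{p+q=d} H^q(\widetilde{G/P},\Omega_f^p)$ via the Kontsevich complex of $f$-adapted log-forms, identifies the monodromy filtration with the gradation by $p-q$, and then reads off that the extremal piece $(p,q)=(d,0)$ is $H^0(\widetilde{G/P},\Omega^d(\log))$, which is one-dimensional and spanned by $\omega$ by~\cite{KLS:projections-Richardson}. Your plan instead works with oscillatory integrals and the quantum period, trying to deduce the line identification from an equality of scalar horizontal sections. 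This is indeed the strategy of Theorem~\ref{t:hyper-series} in the paper, but note that that theorem \emph{uses} Proposition~\ref{p:one-to-omega} rather than proving it; you are attempting to run the argument in reverse.

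The gap is in the last dualization step. Your Frobenius argument does establish the scalar identity $\langle S_{\PD(1)}(\hbar,q),1\rangle = I_\Gamma(\hbar,t)/(2\pi i)^d$, since both sides are holomorphic solutions of a scalar ODE with a regular singularity at $q=0$ with all exponents zero, hence the holomorphic solution with leading value $1$ is unique. But this scalar identity does not force $\Phi(\sigma_{\PD(1)}) \in \C\cdot[\omega]$. What you have shown, after unwinding the dualities, is that $\oint_\Gamma e^{f_t/\hbar}\bigl(\lambda\,\Phi(1) - \omega/(2\pi i)^d\bigr) = 0$ for all $q,\hbar$ (with $\lambda$ the proportionality constant between the mirror image of $S_{\PD(1)}$ and the fixed compact cycle $\Gamma$). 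For each fixed $q$, the functional $\eta \mapsto \oint_\Gamma e^{f_t/\hbar}\eta$ on $H^d_{\mathrm{dR}}$ has a hyperplane as its kernel, so this vanishing only places $\lambda\Phi(1) - \omega/(2\pi i)^d$ inside that hyperplane; it does not force it to vanish. Moreover there is a mismatch in what the computation targets: the quantum period $\langle S_{\PD(1)},1\rangle$ pairs the flat section with $1 \in H^0$, so the duality gives information about $\Phi(1)$, whereas the proposition is about $\Phi(\sigma_{\PD(1)})$. The further assertion you invoke---``compatibility of the mirror isomorphism with the Poincar\'e intersection pairing on the A-side and with the perfect Hien--Roucairol pairing on the B-side''---is not established anywhere in the paper and is not automatic from a $\aD$-module isomorphism; it is precisely part of the nc-Hodge structure matching conjectured in~\cite{Katzarkov-Kontsevich-Pantev:LG-models} which the paper explicitly says is only partially verified. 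Finally, your statement that the extremal piece $\C\sigma_{\PD(1)}$ of the monodromy filtration on the fiber ``corresponds to a unique line of single-valued horizontal sections'' conflates a subspace of the bundle fiber with a subspace of the solution local system; these live on dual sides and are related by the monodromy filtration on the dual, not by a direct identification. The paper sidesteps all of this by giving a self-contained Hodge-theoretic identification of the extremal line with $\C\omega$ on the B-side.
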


This was previously established for Grassmannians by 
Marsh--Rietsch~\cite{Marsh-Rietsch:B-model-Grassmannians} and for quadrics by 
Pech--Rietsch--Williams~\cite{Pech-Rietsch-Williams:LG-quadrics}.

\begin{proof}
In view of the above discussion we only need to analyse the monodromy filtration on 
$H^d_{\mathrm{dR}}(\mathring{G/P},\Exp^{f_t})$ near
$\alpha_\ki(t)=0$.
A convenient way to do so is via the Kontsevich complex $\Omega^\bullet_{f_t}$ of $f_t$-adapted log-forms, which again involves a resolution
$\widetilde{G/P}$ of the singularities of $(G/P,f_t)$. It is established 
in~\cite[Cor.1.4.8]{Esnault-Sabbah-Yu} that
\[
H^d_{\mathrm{dR}}(\mathring{G/P},\Exp^{f_t}) \cong \bigoplus_{p+q=d}
H^q\left(\widetilde{G/P}, \Omega_{f_t}^p\right).
\]
It is possible to write down the monodromy operator and to verify that the decreasing 
monodromy filtration corresponds to the gradation by $p-q$, 
following~\cite{Katzarkov-Kontsevich-Pantev:LG-models,Esnault-Sabbah-Yu,Harder:phd}.

Then by the above $H^{2d}(G^\vee/P^\vee)$ corresponds under the isomorphism~\eqref{mirror-h} to the space $H^0(\widetilde{G/P},\Omega_{f_t}^d)$ where by the
definition of $\Omega^d_f$, this coincides with the space $H^0(\widetilde{G/P},\Omega^d(\log))$ of log differential holomorphic top forms. It is known from~\cite{KLS:projections-Richardson}
that $H^0$ is one-dimensional and spanned by the form $\omega$.
\end{proof}

In the isomorphism~\eqref{mirror-h}, the left-hand side is of Hodge--Tate type, namely 
$H^{2i}=H^{(i,i)}$, because it is spanned by the Schubert classes
$\sigma_w$ which are algebraic. In the mirror isomorphism, $H^*(G^\vee/P^\vee)$ being of 
Hodge--Tate type translates to $(\mathring{G/P},f_t)$ being pure in
the sense that $\BK_{(G,P)}$ is a complex supported in one-degree.  
We obtain also parts of~\cite[Conj.3.11]{Katzarkov-Kontsevich-Pantev:LG-models} 
concerning the matching of nc-Hodge structures on both sides
of~\eqref{mirror-h}, in particular on the right-hand side we have identified the irregular Hodge filtration constructed by Deligne and
J.-D.~Yu~\cite{Esnault-Sabbah-Yu}. For the case of certain toric mirror pairs  this matching and much more is established in~\cite{Reichelt-Sevenheck:hypergeometric-Hodge} and~\cite{Mochizuki:twistor-GKZ}.

\begin{remark}
We observe that in the case of $\P^n$, the above essentially amounts to a remarkable theorem of Sperber~\cite{Sperber:hyperKloosterman-congruences} on the slopes of hyper-Kloosterman sums.
Thus we are led to conjecture that for almost all primes, the slopes of the minuscule Kloosterman sums $\Kl_{(G,\varpi_\ki)}$ can be read from the cohomology of $G^\vee/P^\vee$.
This would follow\footnote{Added after submission: Our conjecture can now be established from the main result of Xu--Zhu~\cite[Thm.5.3.2.(1)]{Xu-Zhu} as follows. They construct an overconvergent \(F\)-isocrystal which is a  \(p\)-adic companion of the Kloosterman  \(\ell\)-adic sheaf  \(\Kl_G\) of~\cite{HNY}, and show that its Newton polygon is the half-sum of positive co-roots \(\rho^\vee\) for almost all primes. The slopes of the minuscule Kloosterman sums $\Kl_{(G,\varpi_\ki)}$ under the minuscule representation  \(V_{\varpi_\ki}\) are therefore  \(\langle w\varpi_\ki,\rho^\vee \rangle + \langle \varpi_\ki,\rho^\vee \rangle\) for  \(w\in W^P\) (recall from \S\ref{sub:minuscule} that the weigths of  \(V_{\varpi_\ki}\) are the orbit \(W\cdot \varpi_\ki\)). For every  \(d\ge 0\), the number of  \(w\in W^P\) such that \(\langle w\varpi_\ki,\rho^\vee \rangle + \langle \varpi_\ki,\rho^\vee \rangle=d\) coincides with the Betti number  \(\dim H^{2d}(G^\vee/P^\vee)\) (see the proof of Proposition~\ref{p:Lefschetz-sl2}).} from a suitable $p$-adic comparison isomorphism for differential equations of exponential type between Dwork $p$-adic cohomology and
complex Hodge theory, which does not seem to be available in the literature yet.
Interestingly the same Hodge numbers appear in the $(\mathfrak{g},K)$-cohomology of a certain $L$-packet of discrete series~\cite{Gross:minuscule-principalsl2}.
\end{remark}

\begin{comment}
The linear map
$\Gamma \mapsto I_\Gamma(\lambda,t)$ induces an isomorphism between the local 
system of relative rapid decay homology of half-dimension
$H^{\mathrm{rd}}_d(\mathring{G/P},\Exp^{f_t/\hbar})$ and the local system of holomorphic flat sections of the dual of the quantum connection
$\cQ^{G^\vee/P^\vee}_\hbar$.
Specializing $\omega_t$ to the class of $\omega$, we then obtain the following which is close to the original formulation of the mirror conjecture in
the work of Givental~\cite[\S7.3]{Rietsch:mirror-solution-Toda} and
Rietsch~\cite[Conj.8.2]{Rietsch:mirror-construction-QH-GmodP}. 

\begin{corollary}\label{cor:Rietschconj}
Assume that $P^\vee$ is a minuscule parabolic subgroup of $G^\vee$. Then for any 
$\lambda\in \C^\times$, a full set of solutions to the quantum differential equation of 
$G^\vee/P^\vee$ is given by the integrals
\[
I_\Gamma(\lambda,t) = \int_{\Gamma_t} e^{f_t/\lambda} \omega,
\]
where $\Gamma_t$ is an horizontal section of the $Z(L_P)$-local system of 
middle-dimensional rapid decay cycles on $\oGP$ relative to $f_t/\lambda$.
\end{corollary}
\begin{proof}
We combine Proposition~\ref{p:local-system-iso} and Proposition~\ref{p:one-to-omega}. Note that the volume form $\omega$ in
\cite[Prop.7.2]{Rietsch:mirror-construction-QH-GmodP} coincides with the volume form 
$\omega$ from Proposition~\ref{p:one-to-omega} constructed
from~\cite{KLS:projections-Richardson}. 
\end{proof}
\end{comment}

\subsection{Enumerative formula} We can deduce from the above mirror theorem an 
integral representation for the quantum period and combinatorial
formulas for certain Gromov--Witten invariants. Property (ii) is referred to as weak 
Landau--Ginzburg model in~\cite{Przyjalkowski:LG-Fano}.
\begin{theorem}\label{t:hyper-series}
(i) The quantum period~\eqref{def:hyp-series} of $G^\vee/P^\vee$ is equal to the integral of 
the potential on the middle-dimensional compact cycle of $\mathring{G/P}$,
\[
I_{\mathrm{cpt}}(\lambda,q):= \oint e^{f_q/\lambda} \frac{\omega}{(2i\pi)^{\dim(G/P)}}.
\] 

\noindent
(ii) For every integer $e\ge 0$, the genus $0$ and degree $e$ Gromov--Witten correlator 
$\langle \tau_{ce-2}\sigma_{\PD(1)}\rangle_{0,e}$ of
$G^\vee/P^\vee$ is equal to the constant term of $f_1^{ce}$ in any cluster chart of 
$\mathring{G/P}$, divided by $(ce)!$.
\end{theorem}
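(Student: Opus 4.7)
For part (i), by Corollary~\ref{cor:Rietschconj}, since the compact torus cycle $\Gamma_{\mathrm{cpt}}$ is a globally defined horizontal section of the local system of rapid decay cycles, $I_{\mathrm{cpt}}(\hbar,q)$ is a single-valued solution of the quantum differential equation on $\C^\times_q$. My first step is to expand $I_{\mathrm{cpt}}$ as a power series in $q$ at the origin. Writing $e^{f_q/\hbar} = \sum_{k \geq 0} f_q^k/(k!\hbar^k)$ and integrating termwise against $\omega/(2i\pi)^d$ extracts the constant term of $f_q^k$ in the Lusztig coordinates $a_1,\ldots,a_d$. By Corollary~\ref{cor:superpositive}, $f_q = a_1 + \cdots + a_d + qP$ with $P$ a positive Laurent polynomial, and the homogeneity statements (Lemma~\ref{l:homogeneous} together with Lemma~\ref{l:coxeter-cominuscule}) imply that each $a_i$ has $\G_m$-weight one, $q$ has weight $c$, and $f_q$ has weight one. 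Since $f_q^k$ has weight $k$, its constant term in the $a_i$'s vanishes unless $c \mid k$, in which case, for $k = cm$, it equals $q^m$ times the constant term of $f_1^{cm}$. Using $\oint \omega/(2i\pi)^d = 1$ from \S\ref{s:Richardson}, this yields
\[
I_{\mathrm{cpt}}(\hbar,q) = \sum_{m \geq 0} \frac{q^m}{(cm)!\,\hbar^{cm}} \cdot [\text{const.\ term of } f_1^{cm}],
\]
holomorphic at $q = 0$ with constant term $1$.

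The hypergeometric series $\langle S_{\PD(1)}(\hbar,q),1\rangle$ is also a power series solution of the same quantum differential equation (by Theorem~\ref{t:Dmodule-mirror}) with constant term $1$. To identify the two I use Proposition~\ref{p:one-to-omega}, which matches $[\omega]$ on the $B$-side with $\sigma_{\PD(1)}$ on the $A$-side under the mirror isomorphism. Consequently $I_{\mathrm{cpt}}$ is the coefficient of $1 \in H^*(G^\vee/P^\vee)$ of the $H^*$-valued flat section corresponding to $\Gamma_{\mathrm{cpt}}$, while $\langle S_{\PD(1)},1\rangle$ is this same coefficient for the Givental $J$-function, characterized by $J(0) = 1$. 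The two coincide by the uniqueness of a holomorphic flat section with prescribed value at $q = 0$---which follows from the rigidity of the Frenkel--Gross connection attached to $V_{\varpi^\vee_\ki}$.

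For part (ii), I substitute the expansion of $I_{\mathrm{cpt}}$ from (i) into~\eqref{def:hyp-series}; the $d=0$ case is the trivial identity $1=1$. For $d \geq 1$, expanding $(\hbar-\kc)^{-1} = \sum_{k \geq 0} \kc^k\hbar^{-k-1}$ and using that the virtual dimension of $\overline{M}_{0,2}(G^\vee/P^\vee,d)$ equals $\dim G^\vee/P^\vee + cd - 1$ (by Lemma~\ref{l:coxeter-chern}), only the term $k = cd - 1$ contributes nontrivially to $\langle \sigma_{\PD(1)}/(\hbar - \kc), 1\rangle_{0,d}$. The string equation collapses the resulting two-point descendant to the one-point invariant $\langle \tau_{cd-2}\sigma_{\PD(1)}\rangle_{0,d}$, and equating $q^d$-coefficients yields (ii).

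The principal technical obstacle lies in the uniqueness step of (i): one must verify that the $A$-side flat section corresponding under mirror to $\Gamma_{\mathrm{cpt}}$ takes value $1 \in H^*(G^\vee/P^\vee)$ at $q = 0$, identifying it with the $J$-function. This requires matching the full cohomological dictionary at the regular singular point $q=0$ beyond Proposition~\ref{p:one-to-omega}'s identification of $[\omega]$ with $\sigma_{\PD(1)}$, and is where the rigidity of the quantum connection in the minuscule case becomes essential.
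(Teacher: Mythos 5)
Part (ii) of your argument is correct and more explicit than the paper's terse ``follows from (i) by taking residues'': the degree count on $\overline{M}_{0,2}(G^\vee/P^\vee,d)$ picking out $k=cd-1$ and the application of the string equation are both right. Your explicit expansion of $I_{\mathrm{cpt}}$ via the weight-one homogeneity of $f_q$ and the weight-$c$ homogeneity of $q$ is also clean and gives the exact power series $\sum_m q^m[f_1^{cm}]_0/((cm)!\hbar^{cm})$, which the paper does not spell out.

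The gap is in the uniqueness step of (i), which you correctly identify as the crux but then resolve incorrectly. ``Rigidity of the Frenkel--Gross connection'' in the sense of Katz/Frenkel--Gross is a cohomological vanishing statement ($H^1$ of the intermediate extension of the adjoint local system vanishes); it governs deformations of the connection, not uniqueness of flat sections with a prescribed leading term, and it does not give you what you need here. Moreover the flat section you want to prescribe cannot ``take value $1$ at $q=0$'': the residue of $\cQ^{G^\vee/P^\vee}$ at $q=0$ is $\sigma_\ki\cup$, which is nilpotent but not principal nilpotent for general minuscule $G/P$ (e.g.\ Grassmannians), and the single-valued holomorphic flat sections near $q=0$ form a space of dimension $\dim\ker(\sigma_\ki\cup)>1$, parametrized by their constant term in $\ker(\sigma_\ki\cup)$. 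Since $\sigma_\ki\cup 1=\sigma_\ki\neq 0$, there is no power-series flat section with constant term $1$; the $J$-function is multi-valued, and only its pairing against $\sigma_{\PD(1)}\in\ker(\sigma_\ki\cup)$ is single-valued. The paper sidesteps all this by deforming to the equivariant connection $\cQ^{G^\vee/P^\vee}(h)$ for generic $h$, where the residue at $q=0$ is regular semisimple with distinct eigenvalues $\{-\ip{w\varpi_\ki^\vee,h}\}$; the Frobenius method then gives a one-dimensional space of power-series solutions (up to scalar), and since both $I_{\mathrm{cpt}}$ and the equivariant hypergeometric series are power series with constant term $1$, they coincide, and one specializes $h\to 0$. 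This equivariant deformation argument is the missing ingredient, and no amount of rigidity of $\nabla^{G^\vee}$ replaces it.
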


For quadrics, the theorem can be established directly by computing both sides as shown 
in~\cite[\S5.3]{Pech-Rietsch-Williams:LG-quadrics}. 
For Grassmannians, the theorem is due to 
Marsh--Rietsch~\cite[Thm.4.2]{Marsh-Rietsch:B-model-Grassmannians}.

\begin{proof}
Assertion (ii) follows from (i) by taking residues. Recall that $c$ is the Coxeter number of 
$G$.  
Also note that $\omega$ is $T$-invariant by Lemma~\ref{l:Ad-omega}, in particular invariant 
by $\rho^\vee$. This implies the identity
$I_{\mathrm{cpt}}(\lambda,q)=I_{\mathrm{cpt}}(1,q/\lambda^c)$, which is also satisfied by the 
quantum period~\eqref{def:hyp-series}.

To establish (i) we observe as consequence of the mirror Theorem~\ref{t:Dmodule-mirror} 
that $I_{\mathrm{cpt}}(1,q)$ is solution of the quantum connection
$\cQ^{G^\vee/P^\vee}$.
It is a power series in $q$ by Cauchy's residue formula.
The same holds for the fundamental solution $S_{\PD(1)}(1,q)$.
We can then deduce the desired equality of the two solutions up to scalar from the 
Frobenius method at the regular singularity $q=0$. More precisely, we need to consider 
the equivariant connection $\cQ^{G^\vee/P^\vee}_{T^\vee}$ and equivariant 
Gromov--Witten 
correlators.
For generic $h\in \t^*$, the monodromy at $q=0$ is regular semisimple. We then specialize the equivariant parameter to $h=0$.
 
To conclude the proof of (i) we need to specialize the solution $\langle 
S_{\PD(1)}(1,q),1\rangle$ 
as in~\eqref{def:hyp-series}. It is a power series in $q$ with constant term $1$. Moreover, 
the integral $I_{\mathrm{cpt}}(1,q)$ is against the form $\omega$ which implies
 the identity in view of Proposition~\ref{p:one-to-omega}.
\end{proof}

The quantum period typically has infinitely many zeroes. As explained by 
Deligne~\cite[p.128]{Deligne-Malgrange-Ramis} this implies that the irregular Hodge
filtration on $H^d_{\mathrm{dR}}(\mathring{G/P},\Exp^{f_t/\hbar})$
does not come from a Hodge structure.

\begin{remark}
The works of Marsh--Rietsch~\cite{Marsh-Rietsch:B-model-Grassmannians} for 
Grassmannians and
Pech--Williams--Rietsch~\cite{Pech-Rietsch-Williams:LG-quadrics} for quadrics lead us to 
suggest a 
more general formula that $\langle S_{\PD(1)}, \sigma_w\rangle$ should be equal to the residue
integral $\oint p_w e^{f_q/\lambda} \frac{\omega}{(2i\pi)^{\dim(G/P)}}$, with the Pl\"ucker coordinate 
$p_w$ added. This formula generalizes Theorem~\ref{t:hyper-series}(i), corresponding 
to the case $w=1$, and is compatible with the Gamma conjecture and central 
charges discussed in~\cite{Galkin-Iritani:Gamma-conj,Iritani:integral-structure}.
\end{remark}

\begin{remark} 
In a series of works (see e.g.,~\cite{GLO:parabolic-Whittaker}),
Gerasimov--Lebedev--Oblezin study the Givental integral from various viewpoints, 
motivated by archimedean $L$-functions, integrable systems of Toda type, and Whittaker 
functions. 
\end{remark}

\subsection{Projective spaces} 
For $\P^{n}=\Gr(1,n+1)$, the Coxeter number is $c=n+1$.
We deduce from~\eqref{J-Pn} that the quantum period $ \langle S_{\PD(1)}(\lambda,q),1 
\rangle 
= \langle J^{\P^{n}}(\lambda,q),\sigma_{\PD(1)},1 \rangle$ is equal to
\[
\sum_{e=0}^\infty \frac{1}{(e!)^{n+1}} \left( \frac{q}{\lambda^{n+1}} \right)^e 
= {}_{0}F_n\left( \begin{smallmatrix}
 & \text{\----} &  \\ 1 & \cdots &  1  
\end{smallmatrix}
; \frac{q}{\lambda^{n+1}}
 \right).
\]
On the other hand,
\[
I_{\mathrm{cpt}}(\lambda,q)=
\oint e^{\frac{1}{\lambda}
\left( x_1 +\cdots +x_n + \frac{q}{x_1\cdots x_n} \right)
}
\frac{dx_1 \cdots dx_n}{(2i\pi)^n x_1 \cdots x_n}.
\]
Hence Theorem~\ref{t:hyper-series} reduces to Erd\'elyi's integral representation.

\begin{remark}
The quantum period for a general minuscule homogeneous space $G^\vee/P^\vee$ is related 
to the Bessel functions of matrix argument
introduced by C.~Herz; see~\cite{Macdonald:hypergeometric,Shimura:generalized-Bessel}.
\end{remark}

\subsection{Classical Bessel functions}\label{s:Bessel}
For $\P^1=\Gr(1,2)$, we have $f_q(x)=x+\frac{q}{x}$ for $x \in \G_m=\mathring{\P^1}$, $\omega=\frac{dx}{x}$, and
\[
H^i_{\mathrm{dR}}(\G_m,\Exp^{f_q/\hbar}) =
\begin{cases}
0 &
\text{if $i=0$,}\\ 
\C \omega \oplus \C x \omega
&
\text{if $i=1$.}
\end{cases}
\]
Deligne defines an irregular Hodge filtration and shows in~\cite[p.127]{Deligne-Malgrange-Ramis} that 
\(F^1 H^1_{\mathrm{dR}}(\G_m,\Exp^{f_q/\hbar})=\C\omega,\) which
corresponds to Theorem~\ref{p:one-to-omega} above.

The dual space
is generated by the two cycles $\oint$ and $\int_0^\infty$, denoted by $e_1$, $-e_2$ in~\cite{Deligne-Malgrange-Ramis}. Note that the cycle $\int_0^\infty$
depends on $q$ and $\lambda$ and approaches $0$ and $\infty$ in the direction of rapid 
decay of 
the exponential.

The quantum period is $ { }_{0}F_1\left( \begin{smallmatrix}
  \text{\----}   \\ 1   
\end{smallmatrix}
; \frac{q}{\lambda^2}
 \right)= I_0(2\sqrt{q}/\lambda)=\oint e^{f_q/\lambda}\frac{\omega}{2i\pi}$.
The other integrals are expressed as follows: $\int_0^\infty e^{f_q/\lambda} \omega = 
2K_0(2\sqrt{q}/\lambda)$; $ \oint
e^{f_q/\lambda} x\frac{\omega}{2i\pi} = \sqrt{q}I_1(2\sqrt{q}/\lambda)$; $\int_0^\infty 
e^{f_q/\lambda}x\omega = -2\sqrt{q}K_1(2\sqrt{q}/\lambda)$.
Note that $I_0'=I_1$ and $K_0'=-K_1$.

The determinant of periods 
\[
\left| 
\begin{matrix}
\oint e^{f_q/\lambda} \omega  & \oint e^{f_q/\lambda} x\omega\\
 \int_0^\infty e^{f_q/\lambda}\omega&  \int_0^\infty e^{f_q/\lambda} x \omega\\
\end{matrix}
\right| = -2i\pi \lambda,
\]
 established in the last paragraph\footnote{The minus sign compared 
 to~\cite{Deligne-Malgrange-Ramis} is because we chose the cycle $\int_0^\infty$ which is 
 $-e_2$.} of~\cite{Deligne-Malgrange-Ramis} corresponds to the Wronskian formula
\[
I_\nu(y)K_{\nu+1}(y) + I_{\nu+1}(y)K_\nu(y) = 1/y,
\] 
for all $y\in \R_{ >  0}$ and $\nu \in \C$.

More generally, we consider the equivariant version.  Let $h \in \C$ and $h\alpha \in \t^*$, where $\alpha$ denotes the positive simple root.  We consider
the integral solutions to $\WBK^{1/\hbar}|_{\hbar=\lambda}$ specialized at $\lambda h\alpha\in \t^*$,
$$
\oint\frac{x^{2h}}{q^h}
e^{f_q/\lambda}
\frac{\omega}{2i \pi}=
q^h\sum_{k=0}^{\infty}
\frac{\lambda^{-2k-2h} q^k}{k!\Gamma(k+2h+1)}
=
\frac{(q/\lambda^2)^h}{\Gamma(1+2h)}
 {  }_{0}F_1\left( \begin{smallmatrix}
  \text{\----}   \\ 1+2h   
\end{smallmatrix}
; \frac{q}{\lambda^2}
 \right)
= I_{2h}(2\sqrt{q}/\lambda),
$$
where compared to Example \ref{ex:p1}, we have $q = t^2$ and the factor $\frac{x^{2h}}{q^h}$ is equal to $(h\alpha)(\gamma(x))$. Similarly the
integral from $0$ to $\infty$ is equal to $K_{2h}(2\sqrt{q}/\lambda)$.

On the quantum connection side, let $\{1,\sigma\}$ be the Schubert basis of $H^*(\P^1)$.  Then the equivariant quantum Chevalley formula is
$$
\sigma *_q \sigma = q\cdot 1 + (\varpi - s\cdot \varpi)\cdot  \sigma = q\cdot 1 + \alpha^\vee \cdot \sigma.
$$
Here, $s$ denotes the unique simple reflection.  Thus
$$
\sigma *_{q,h} \sigma = q\cdot 1 + 2h\cdot \sigma
$$
since $\ip{\alpha^\vee,\alpha} = 2$.
The equivariant quantum connection $\cQ^{\P^1}_{\hbar,T^\vee}(h\alpha)$ is
$$
\hbar q\frac{d}{dq} + \left(\begin{array}{cc} -\hbar h & q \\ 1 & \hbar h \end{array} \right).
$$
This is equivalent to the second order differential operator
$$
(\hbar q \frac{d}{dq})^2  - (q+\hbar^2 h^2),
$$
which has solutions specialized to $\hbar=\lambda$ the modified Bessel functions 
$I_{2h}(2\sqrt{q}/\lambda)$ and $K_{2h}(2\sqrt{q}/\lambda)$. 
This agrees with Theorem \ref{t:EDmodule-mirror}.

\section{Compactified Fano and log Calabi--Yau mirror pairs}\label{s:mirror}
Our Theorems \ref{t:Dmodule-mirror} and \ref{thm:Shbar} verify two specific mirror 
symmetry predictions.  In this brief section, the goal is to recast the mirror symmetry of 
flag varieties in view of recent advances, and provide some evidence for
potential generalizations.

\subsection{Mirror pairs of Fano type}
The notion of mirror pairs of Fano type is explained in~\cite[\S2.1]{Katzarkov-Kontsevich-Pantev:LG-models}. In the context of Rietsch's conjecture that we study in this paper, we have a family of 
mirror pairs indexed on one side by $H^2(G^\vee/P^\vee)$ and on the other side by $Z(L_P)$. 
Lemma~\ref{l:2ipiH2} is interpreted as the ``mirror map'' and can be compared 
with~\cite[Lem.4.2]{Iritani:integral-structure} in the toric case.

The A-model is a triple $(X,g,1/\omega_X)$ consisting of a projective Fano variety $X$, a 
complexified K\"ahler form $g$ and an anticanonical section $1/\omega_X$. In the context of
Rietsch's conjecture, the variety is $X = G^\vee/P^\vee$, the K\"ahler class
 is varying in $H^2(G^\vee/P^\vee)$ and the anticanonical section is the one constructed in~\cite{KLS:projections-Richardson}; see also \cite{Rietsch:mirror-construction-QH-GmodP}.
The B-model is another triple $((Y,f),\eta,\omega_Y)$ consisting of a Landau--Ginzburg model, 
namely a smooth variety $Y$ with trivial canonical class, a regular function $f:Y \to \C$ 
(Landau--Ginzburg potential), a K\"ahler form $\eta$ and a non-vanishing canonical section 
$\omega_Y$ (holomorphic volume form). 

\subsection{$D$-module version of Rietsch's mirror conjecture}
In the context of Rietsch's conjecture, the B-model is as follows.  The Landau--Ginzburg model is given by the 
Berenstein--Kazhdan geometric crystal. The underlying variety is the open 
Richardson $Y=\oGP$ in $G/P$, and the Landau--Ginzburg potential is the decoration 
function $f_t$ 
of Berenstein--Kazhdan, which depends on the parameter $t\in Z(L_P)$.
The volume form $\omega_Y$ is again the one constructed in~\cite{KLS:projections-Richardson}.

\begin{conjecture}\label{conj:Dhbar}
Let $P$ is a parabolic subgroup of $G$ and let $P^\vee$ be the dual parabolic subgroup of 
$G^\vee$.  There exists an isomorphism
$$
\WBK^{1/\hbar}_{(G,P)} \cong \cQ^{G^\vee/P^\vee}_{\hbar,T^\vee},
$$
of graded $\aD_\hbar$-modules over $Z(L_P) \times \t^*$ relative to $\t^*$.
\end{conjecture}

Theorem~\ref{thm:Shbar} establishes Conjecture~\ref{conj:Dhbar} in the case where $P$ is 
minuscule.  The superpotential of $\WBK_{(G,P)}$ (explicitly described in \S\ref{s:explicit}) 
agrees with the Landau--Ginzburg model defined by Rietsch; 
see~\cite[Lem.5.2]{Rietsch:mirror-construction-QH-GmodP}.  Thus 
Conjecture~\ref{conj:Dhbar} is compatible with Rietsch's 
conjecture~\cite{Rietsch:mirror-construction-QH-GmodP}.

\subsection{Mirror pairs of compactified Landau--Ginzburg models} Following
\cite[\S3.2.4]{Katzarkov-Kontsevich-Pantev:LG-models}, one may also consider quadruples 
$(X,g,\omega,f)$ consisting of a projective Fano variety $X$, a complexified K\"ahler form $g$, 
a canonical section $\omega_X$, and a potential function
$f_X$.
We may then examine, for appropriate choices of K\"ahler forms, the mirror symmetry 
between \[(G/P,g,\omega_{G/P},f_{G/P}) \text{ and }
 (G^\vee/P^\vee,g^\vee ,\omega_{G^\vee/P^\vee},f_{G^\vee/P^\vee}). 
\]
The A and B-sides now play a symmetric role.
Rietsch's mirror conjecture corresponds to omitting some of the data on both sides.
The full mirror conjecture between these compactified mirror pairs involves the matching of a variety of homological data on both sides.

For example, a Fano type mirror pair gives rise to a pair of open Calabi--Yau manifolds by taking the complement of the anticanonical divisor. One obtains triples
of a log Calabi--Yau manifold, a K\"ahler form and a volume form. In our 
setting, the log Calabi--Yau varieties are
$\mathring{G^\vee/P^\vee}$ and $\oGP$ respectively.   The volume forms are as before. 
Thus from the general mirror 
predictions~\cite[Table~2]{Katzarkov-Kontsevich-Pantev:LG-models}, we expect a matching 
of cohomology of the open projected Richardson variety
$H^*(\mathring{G/P})$ and the cohomology of the Langlands dual open projected Richardson 
variety $H^*(\mathring{G^\vee/P^\vee})$.  We show in the
next subsection that this matching holds more generally for arbitrary Richardson 
varieties.

\subsection{Open Richardson varieties}
Recall the open Richardson varieties $\cR^w_u \subset G/B$, where $u,w\in W$ with $u\le w$, and the special case $\cR^{w_0}_{w_0^P}\cong \oGP$. They are log
Calabi--Yau varieties with canonical volume form~\cite{KLS:projections-Richardson}. We 
denote by $\check \cR^w_u\subset G^\vee/B^\vee$ the open Richardson varieties
inside the flag variety of the dual group.  

\begin{proposition}\label{prop:Richardson}
For any $i \geq 0$ and $u,w \in W$ with $u\le w$, there is an isomorphism
$H^i(\cR^w_u) \cong H^i(\check\cR^{w}_{u})$. 
\end{proposition}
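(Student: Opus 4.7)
The key point is that the cohomology of an open Richardson variety should depend only on the Coxeter data of $W$, and both $\cR^w_u$ and $\check\cR^w_u$ use the same Weyl group. The natural tool is Deodhar's combinatorial decomposition. Fix a reduced expression $\mathbf{w}=(s_{i_1},\ldots,s_{i_\ell})$ for $w$. Deodhar exhibits a stratification
\[
\cR^w_u=\bigsqcup_{\mathbf{v}\in D^{\mathbf{w}}_u} \cR_{\mathbf{v}}, \qquad \cR_{\mathbf{v}}\cong \G_m^{a(\mathbf{v})}\times \mathbb{A}^{b(\mathbf{v})},
\]
where $D^{\mathbf{w}}_u$ is the set of distinguished subexpressions of $\mathbf{w}$ with Demazure product equal to $u$, and $a(\mathbf{v}), b(\mathbf{v})$ are combinatorial statistics read off from the subword. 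Everything here is built out of the Coxeter system $(W,S)$ alone, so the same decomposition applies verbatim in $G^\vee/B^\vee$: there is a canonical index-preserving bijection between the Deodhar strata of $\cR^w_u$ and those of $\check\cR^w_u$, and corresponding strata are isomorphic as algebraic varieties.

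Granting this, I would compute dimensions of cohomology as follows. Each Deodhar stratum has Hodge--Tate mixed Hodge structure on $H^*_c$, with class $(\mathbb{L}-1)^{a(\mathbf{v})}\mathbb{L}^{b(\mathbf{v})}$ in the Grothendieck ring of mixed Hodge structures; summing over $\mathbf{v}\in D^{\mathbf{w}}_u$ computes $[H^*_c(\cR^w_u)]$ in that ring, and the answer depends only on $W$. Because every stratum is pure Hodge--Tate and the closure order of the stratification is itself controlled by Coxeter combinatorics (via the Marsh--Rietsch parametrization of Deodhar strata), a standard weight argument shows that the spectral sequence of the stratification degenerates, so the Poincar\'e polynomial of $\cR^w_u$ is recovered from its motivic class. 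The same recipe, with the same input, computes the Poincar\'e polynomial of $\check\cR^w_u$. Since the varieties are smooth of the same (complex) dimension $\ell(w)-\ell(u)$, Poincar\'e duality transfers the equality of compactly supported Betti numbers to an equality of ordinary Betti numbers, yielding $\dim H^i(\cR^w_u)=\dim H^i(\check\cR^w_u)$, hence an (abstract) isomorphism in each degree.

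The main obstacle is the weight-degeneration step: one needs to verify that the spectral sequence associated to the Deodhar stratification degenerates, so that matching $E_1$-pages force matching abutments. Rather than appealing to mixed Hodge theory, one can sidestep this by point counting over finite fields: Deodhar's decomposition is defined over $\mathbb{Z}$, so $|\cR^w_u(\F_q)|=\sum_{\mathbf{v}}(q-1)^{a(\mathbf{v})}q^{b(\mathbf{v})}$ is a polynomial in $q$ depending only on $W$, and purity of $\cR^w_u$ (which follows because every stratum is pure and the stratification has smooth pieces in affine-bundle position) lets one read off the Betti numbers from this polynomial via Deligne's theorem. Either route reduces the proposition to a single Coxeter-combinatorial computation that is insensitive to the Langlands dual.
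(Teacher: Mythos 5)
Your intuition that the answer must be Coxeter--combinatorial is sound, and the reduction to the Deodhar stratification is the right first move: the strata are indexed by distinguished subexpressions and are isomorphic to $\G_m^{a(\mathbf{v})}\times\A^{b(\mathbf{v})}$, with $a,b$ depending only on $(W,S)$, so the point count $|\cR^w_u(\F_q)|$ and the $E$-polynomial are manifestly Langlands-self-dual. The trouble is the step from there to the Betti numbers. The claim that ``every stratum is pure Hodge--Tate $\Rightarrow$ the stratification spectral sequence degenerates'' is false without further hypotheses, and weights do not rule out the $d_1$ differentials here. Concretely, in the two-stratum decomposition $\A^1=\G_m\sqcup\{0\}$ both pieces are Hodge--Tate, yet $H^1_c(\G_m)\cong\Q$ (weight $0$) must cancel against $H^0_c(\{0\})\cong\Q$ (weight $0$) to produce $H^1_c(\A^1)=0$; the differential is a weight-preserving map of mixed Hodge structures, so the ``standard weight argument'' does not apply. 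Similarly a polynomial point count does not by itself pin down the Betti numbers of a smooth open variety: the extra purity assertion you invoke in the sentence ``purity of $\cR^w_u$ \dots{} follows because every stratum is pure and the stratification has smooth pieces in affine-bundle position'' is not a consequence of the Deodhar picture, and indeed Deodhar strata are not in any affine-bundle position relative to each other (their closure relations are genuinely complicated). The purity of $H^*_c(\cR^w_u)$ is a true and important theorem, but its known proofs do not come out of the stratification; they go through category $\cO$ / Koszul duality.

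That is essentially why the paper's proof uses a different and shorter route: Proposition~\ref{p:Hmid} has already recorded the Riche--Soergel--Williamson isomorphism $H^\bullet_c(\cR^w_u)\cong\mathrm{Ext}^{\bullet+\ell(u)-\ell(w)}(M_w,M_u)$ identifying the compactly supported cohomology with Ext-groups of Verma modules in the principal block of category $\cO$ for $\g$; Soergel's theorem then says that the principal blocks for $\g$ and $\g^\vee$ are equivalent as categories, which gives an isomorphism of Ext-groups degree by degree, and Poincar\'e duality converts this to the stated isomorphism of ordinary cohomology. This produces an actual isomorphism in each degree (not merely an equality of Betti numbers) and sidesteps all purity/degeneration issues, while the Deodhar approach would need a purity input of comparable depth to close the gap. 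If you want to salvage your route, the missing ingredient is precisely a theorem that $H^i_c(\cR^w_u)$ is pure of a weight determined by $i$ and $\ell(w)-\ell(u)$; that statement is available in the literature but is itself proved by category-$\cO$/Koszul methods, so you would end up importing the same machinery the paper uses directly.
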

\begin{proof}
Since $\cR^w_u$ and $\cR^w_u$ are smooth complex algebraic varieties of the same 
dimension, by Poincar\'e duality, the stated isomorphism is equivalent to the same 
statement for cohomology with compact support.  As in the proof of Proposition 
\ref{p:Hmid}, this is in turn equivalent to the isomorphism $\mathrm{Ext}^{\bullet}(M_w,M_u) \cong 
\mathrm{Ext}^{\bullet}(M_{\g^\vee,w},M_{\g^\vee,u})$ where $M_w$ (resp. $M_{\g^\vee,w}$) denotes a 
Verma module in the principal block of category $\cO$ for $\g$ (resp. $\g^\vee$).  By the work 
of Soergel \cite{Soergel:CategoryO}, the principal blocks of category $\cO$ for $\g$ and 
$\g^\vee$ are equivalent, and the isomorphism of $\mathrm{Ext}$-groups follows.
\end{proof}

\begin{question}
Can this isomorphism be an indication of mirror symmetry between open Richardson varieties $\cR^w_u \subset G/B$ and $\check\cR^{w}_{u} \subset G^\vee/B^\vee$?
\end{question}

\section{Proofs from Section \ref{ssec:gamma}}\label{sec:proofs}
\subsection{Proof of Proposition \ref{prop:gamma}}
(1) $\implies$ (2) and (1) $\implies$ (3) are easy to check directly.

We thank the referee for the following argument showing (2) $\implies$ (1), simplifying our 
original proof.  Recall that $Q^\vee$ (resp. $Q^\vee_P$) denotes the coroot lattice spanned 
by $\alpha_i^\vee, i \in I$ (resp. $\alpha_i^\vee, i \in I_P$).  By a result of Peterson and Woodward 
(see \cite[Thm.10.13(1)]{Lam-Shimozono:GmodP-affine}), for each $\lambda_P^\vee \in 
Q^\vee/Q^\vee_P$ there exists a unique lift $\lambda^\vee \in Q^\vee$ (called the 
``Peterson--Woodward lift") of $\lambda_P^\vee$ such that $\langle \alpha,\lambda^\vee \rangle \in 
\{0,-1\}$ 
for all $\alpha \in R^+_P$.  Since $\i$ is minuscule and $\beta \in R^+\setminus R^+_P$, we have 
$\beta^\vee + Q^\vee_P = \alpha^\vee_\i +Q^\vee_P \in Q^\vee/Q^\vee_P$.  Thus, if $\beta^\vee$ satisfies 
(2), then it is the Peterson--Woodward lift, which is unique, and we must have $\beta^\vee = 
\qroot^\vee$.

\medskip

We show that (3) $\implies$ (1).
Suppose $G$ is simply-laced.  Suppose $\beta = -w^{-1}(\theta) \in R^+\setminus R^+_P$, but $\beta \neq \alpha_\i$.  Then $\i$ is also cominuscule so $\beta = \alpha_\i + \beta'$ where $\beta'$ is a nonzero linear combination of $\alpha_j$ for $j \neq \i$.  Since $w \in W^P$, we have $w \alpha_j \in R^+$ for $j \neq \i$.  Thus $w \beta - w\alpha_\i \in \Z_{\geq 0} R^+ \setminus \{0\}$.  Since $w\alpha_\i$ is a root, it would be impossible for $w\beta = -\theta$. 

Suppose $G$ is of type $B_n$.  Choosing coordinates for $R$, we have $\theta = \alpha_1 + 2\alpha_2 + \cdots + 2 \alpha_n = \epsilon_1+\epsilon_2$.  We may identify $W$ with the group of signed permutations on $\{1,2,\ldots,n\}$, and $W^P$ is identified with signed permutations that are increasing, under the order $1 < 2 < \cdots < n < -n < -(n-1) < \cdots < -1$.  We have $|W^P| = 2^n$.  For example, $w = (2,4,5,-3,-1) \in W^P$ and $w^{-1}(\epsilon_1+\epsilon_2) = -\epsilon_5 + \epsilon_1$.  It follows by inspection that $-w^{-1}(\theta)  \in R^+\setminus R^+_P$ implies that $-w^{-1}(\theta) = \epsilon_4 + \epsilon_5 = \alpha_{n-1}+ 2\alpha_n$.

Suppose $G$ is of type $C_n$.  We have $\theta = 2\alpha_1 + 2\alpha_2 + \cdots + 2\alpha_{n-1} + \alpha_n$.  The elements of $W^P$ are $$1, s_1, s_{2} s_1, s_3 s_2 s_1, \ldots, s_n s_{n-1} \cdots s_1, s_{n-1} s_{n} s_{n-1} \cdots s_2 s_1, \ldots, s_1 s_2 \cdots s_{n-1} s_n s_{n-1} \cdots s_2 s_1,$$ and we have $|W^P| = 2n$.  We have $-w^{-1}(\theta)  \in R^+\setminus R^+_P$ if and only if $w = s_1 s_2 \cdots s_{n-1} s_n s_{n-1} \cdots s_2 s_1$, and the statement follows.

\subsection{Proof of Proposition \ref{prop:Wgamma}}
We first note the following properties of $w_{P/Q}$.
\begin{lemma}\label{lem:z}\
\begin{enumerate}
\item
$\Inv(w_{P/Q}) = R^+_P \setminus R^+_Q$,
\item
$\ell(w_{P/Q}) = \ip{-2\rho_P,\qroot^\vee}$,
\item
$\ell(w_{P/Q} s_\qroot) = \ell(w_{P/Q}) + \ell(s_\qroot) = \ip{2(\rho-\rho_P),\qroot^\vee}-1$.
\end{enumerate}
\end{lemma}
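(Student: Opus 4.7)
\textit{Plan.} The approach will be to identify $w_{P/Q}$ concretely and then deduce (1)--(3) by unpacking the structural facts about $\gamma$ in Lemma~\ref{lem:gamma}. The central identification is $w_{P/Q} = w_0^P w_0^Q$: among minimal length left coset representatives for $W_P/W_Q$, the longest one $v$ maximizes $\ell(v) = \ell(vw_0^Q) - \ell(w_0^Q)$ subject to $vw_0^Q \in W_P$, forcing $vw_0^Q = w_0^P$. Equivalently, the factorization $w_0^P = w_{P/Q}\cdot w_0^Q$ is length-additive.

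For (1), since $w_{P/Q}\in W_P$ it permutes $R_P$, so $\Inv(w_{P/Q})\subseteq R^+_P$. Because $w_{P/Q}$ is a minimal left coset representative, $w_{P/Q}\alpha_j>0$ for all $j\in I_Q$, and positive combinations give $w_{P/Q}\beta>0$ for all $\beta\in R^+_Q$; thus $\Inv(w_{P/Q})\subseteq R^+_P\setminus R^+_Q$. The reverse inclusion follows by counting: $\ell(w_{P/Q}) = \ell(w_0^P)-\ell(w_0^Q) = |R^+_P|-|R^+_Q| = |R^+_P\setminus R^+_Q|$. For (2), Lemma~\ref{lem:gamma}(2) gives $\ip{\alpha,\gamma^\vee}=-1$ for each $\alpha\in R^+_P\setminus R^+_Q$, while $\ip{\alpha,\gamma^\vee}=0$ for $\alpha\in R^+_Q$ by definition of $I_Q$. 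Summing over $R^+_P$,
$$\ip{2\rho_P,\gamma^\vee} = \sum_{\alpha\in R^+_P}\ip{\alpha,\gamma^\vee} = -|R^+_P\setminus R^+_Q| = -\ell(w_{P/Q}).$$

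For (3), the equality with $\ip{2(\rho-\rho_P),\gamma^\vee}-1$ is immediate from (2) and the quantum-root identity $\ell(s_\gamma) = \ip{2\rho,\gamma^\vee}-1$, applicable since $\gamma$ is a long root and hence a quantum root. The substantive claim is the length-additivity $\ell(w_{P/Q}s_\gamma) = \ell(w_{P/Q})+\ell(s_\gamma)$. Invoking the standard criterion ``$\ell(uv)=\ell(u)+\ell(v)$ iff $u\cdot\Inv(v)\subseteq R^+$'', this reduces to $\Inv(s_\gamma)\cap\Inv(w_{P/Q})=\emptyset$. Any $\beta\in\Inv(s_\gamma)$ must satisfy $\ip{\beta,\gamma^\vee}>0$, since $s_\gamma\beta = \beta - \ip{\beta,\gamma^\vee}\gamma$ can only be negative for positive $\beta$ when $\ip{\beta,\gamma^\vee}>0$. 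But every $\beta\in\Inv(w_{P/Q}) = R^+_P\setminus R^+_Q$ satisfies $\ip{\beta,\gamma^\vee}=-1$ by Lemma~\ref{lem:gamma}(2), so the two sets are disjoint. There is no serious obstacle in this program: each part is a direct manipulation of inversion sets, with Lemma~\ref{lem:gamma} supplying all the needed root-theoretic input and the quantum-root characterization of $\ell(s_\gamma)$ providing the numerology.
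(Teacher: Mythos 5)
Your proof is correct and follows essentially the same route as the paper: identify $w_{P/Q} = w_0^P w_0^Q$ via the length-additive factorization $w_0^P = w_{P/Q} w_0^Q$, deduce (1) from the inversion set of $w_{P/Q}$, read off (2) from Lemma~\ref{lem:gamma}(2) together with $\langle\alpha,\gamma^\vee\rangle=0$ for $\alpha\in R^+_Q$, and establish (3) by combining length-additivity of $w_{P/Q}s_\gamma$ with the quantum-root identity for $\ell(s_\gamma)$. One small caution: the criterion you invoke, ``$\ell(uv)=\ell(u)+\ell(v)$ iff $u\cdot\Inv(v)\subseteq R^+$,'' is not correct as stated — the right-hand condition should be $u\cdot\Inv(v^{-1})\subseteq R^+$, equivalently $\Inv(u)\cap\Inv(v^{-1})=\emptyset$ (your version fails e.g. in $S_3$ for $u=s_1$, $v=s_1s_2$). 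Fortunately $v=s_\gamma$ is an involution here, so $\Inv(v)=\Inv(v^{-1})$ and the disjointness $\Inv(s_\gamma)\cap\Inv(w_{P/Q})=\emptyset$ that you verify is exactly what is needed; the paper proves the slightly stronger statement $\Inv(s_\gamma)\cap R^+_P=\emptyset$, which implies yours.
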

\begin{proof}
Let $w_P$ (resp. $w_Q$) be the maximal element of $W_P$ (resp. $W_Q$).  Then $w_{P/Q} w_Q = w_P$ is length-additive, so $\Inv(w_{P/Q}) = w_Q(\Inv(w_P) \setminus \Inv(w_Q)) = R^+_P \setminus R^+_Q$, proving (1).  Formula (2) follows from Lemma \ref{lem:gamma}(2).  Since $\Inv(s_\qroot) \cap R^+_P = \emptyset$, it follows that the product $w_{P/Q} s_\qroot$ is length-additive.  (3) follows from (2) and $\qroot \in \tR$.
\end{proof}

\subsubsection{Proof of (1) in Proposition \ref{prop:Wgamma}}
 It is equivalent to show that $\Inv(w) \supset \Inv(s_\qroot)$.  Suppose $\alpha \in \Inv(s_\qroot)$.  Then $\alpha - \ip{\alpha,\qroot^\vee} \qroot = s_\qroot \alpha < 0$, where $a = \ip{\alpha,\qroot^\vee} > 0$.  Thus
$$
-a\theta = a w(\qroot) = w(\alpha)-w(s_\qroot \alpha),
$$
and it follows that $w \alpha < 0$ because $w(s_\qroot \alpha)$ is a root.  

\subsubsection{Proof of (2) in Proposition \ref{prop:Wgamma}}
After Lemma \ref{lem:z}(1,3), it is equivalent to show that $\Inv(ws_\qroot) \supset R^+_P \setminus R^+_Q$.  Let $\alpha \in R^+_P \setminus R^+_Q$.  Then $s_\qroot \alpha = \qroot+\alpha$ by Lemma \ref{lem:gamma}.  Thus $w s_\qroot \alpha = w\alpha + w\beta = w\alpha -\theta$.  Since $\theta$ is the highest root, we deduce that $\alpha \in \Inv(ws_\qroot)$.

\subsubsection{Proof of (3) in Proposition \ref{prop:Wgamma}}
Since $w_{P/Q}^{-1} \in W_P$, it suffices to show that $ws'_\qroot \in W^P$.  It suffices to check that $\Inv(ws_\qroot) \cap R^+_Q = \emptyset$.  But $s_\qroot$ fixes every element in $R^+_Q$, and $\Inv(w) \cap R^+_Q = \emptyset$ since $w \in W^P$.  The claim follows.

\subsubsection{Proof of (4) in Proposition \ref{prop:Wgamma}}
The standard parabolic subgroup $J$ is given as follows: for type $A_n$, we have $J = \{2,3,\ldots,n-1\}$; for type $D_n$ or $E_6$, we have $J = [n] \setminus \{2\}$; for type $E_7$, we have $J = [n] \setminus \{1\}$; for type $B_n$, we have $J = [n] \setminus \{1,2\}$; for type $C_n$, we have $J = \emptyset$.  In all cases, it is clear that $W_J$ stabilizes $\theta$.  

If $w,v \in W(\qroot)$ then clearly $wv^{-1}$ belongs to the stabilizer of $\theta$.  In the simply-laced types, this stabilizer is exactly the group $W_J$.  In type $B_n$, the stabilizer of $\theta$ is $W_{[n] \setminus \{2\}}$, but from the description in the proof of Propositio \ref{prop:gamma}, it is clear that $wv^{-1} \in W_J$.  In type $C_n$, as noted previously we have $W(\qroot) = \{s_\qroot\}$ consists of a single element.

The double coset $W_J w W_P$ contains a unique minimal element $w'$, and since $w \in W^P$, we have a length-additive factorization $w = uw'$, where $u \in W_J$.  Since $w' \in W^P$ and $(w')^{-1}(\theta) = w^{-1}(\theta) = -\qroot$, we have $w' \in W(\qroot)$.

\subsubsection{Proof of final sentence in Proposition \ref{prop:Wgamma}}
We assume that $w \in W^P$ satisfies $\Inv(w) \supset \Inv(s_\qroot)$ and $\Inv(ws_\qroot) \supset R^+_P \setminus R^+_Q$.  Suppose first that $G$ is simply-laced, so that $\qroot = \alpha_\ki$.  Suppose that $-w^{-1}(\theta) = \alpha \neq \alpha_\ki$.  Let $w\alpha_\ki = -\eta < 0$.  Since $w \in W^P$, we have $\alpha \notin R^+_P$.  On the other hand, we have $w(\alpha - \alpha_\ki) = -\theta + \eta < 0$.  Again because $w \in W^P$, this shows that $\alpha \notin R^+ \setminus R^+_P$.  Thus $\alpha \in R^-$.  

Let $\delta = -\alpha \in R^+$.  Since $w\delta = \theta$ and $w \in W^P$, we have that $\delta + \lambda$ cannot be a root whenever $0 \neq \lambda \in \sum_{j \in I_P} \Z_{\geq 0} \alpha_j$.  If $\delta \in R^+_P$, it follows that $\delta \in R^+_P \setminus R^+_Q$.  But then $s_\qroot \delta = \delta + \qroot$  implies that $(ws_\qroot)\delta = w \delta + w\qroot > 0$ contradicts the assumption that $\Inv(ws_\qroot) \supset R^+_P \setminus R^+_Q$.

Thus $\delta \in R^+\setminus R^+_P$, and again since $w \in W^P$, we may assume that $\delta = \theta$.  Thus $w \theta = \theta$, so $w$ lies in the stabilizer $W' \subset W$ of $\theta$.  In types $E_6$, $E_7$, or $D_n$, $n \geq 4$, it is easy to see that $\Inv(ws_\ki)$ for $w \in W'$ cannot contain $R^+_P \setminus R^+_Q$ since $W'$ is a parabolic subgroup that contains the minuscule node $\i$, but does not contain the adjoint node (node $2$ in types $D_n$ or $E_6$ and node $1$ in type $E_7$).  In type $A_n$, the whole claim is easy to check directly, and we conclude that $w \in W(\qroot)$.

Suppose that $G$ is of type $B_n$.  We use notation from the proof of Proposition \ref{prop:gamma}.  We have $s_\qroot = s_{n-1}s_ns_{n-1}$ and for a signed permutation $w = w_1 w_2 \cdots w_n \in W^P$, we have $ws_\qroot = w_1 w_2 \cdots (-w_{n}) (-w_{n-1})$.  Thus the first condition $\Inv(w) \supset \Inv(s_\qroot)$ is equivalent to $w_{n-1}, w_n < 0$.  The second condition $\Inv(ws_\qroot) \supset  R^+_P \setminus R^+_Q$ is equivalent to the condition that $\{w_1,w_2,\ldots,w_{n-2}\}$ are all bigger than $-w_n$ and $-w_{n-1}$ under the order $1 < 2 < \cdots n < -n < -(n-1) <\cdots < -1$.  It follows that $w_{n-1} = -2$ and $w_{n} = -1$, so that $w \qroot = -\theta$.

Suppose that $G$ is of type $C_n$.  Then $\ell(s_\qroot) \geq \ell(w)$ for $w \in W^P$ with equality if and only if $w = s_\qroot = s_1 s_2 \cdots s_{n-1} s_n s_{n-1} \cdots s_2 s_1$.  The claim follows easily.

\section{Background on $D_{\hbar}$-modules}
\label{s:filtered}

 The main purpose of this section is to establish Proposition~\ref{prop:hfree}, which is used in \S\ref{s:proof-Peterson}.

\subsection{Filtered and graded categories}
\label{ssec:Dh}
Let $X$ be a complex smooth affine algebraic variety equipped with a 
$\G_m$-action. Its structure sheaf $\cO_X$ is naturally graded by $\G_m$-homogeneous
sections. Denote by $\ppi: T^*X \to X$ the cotangent bundle of $X$.  Denote by $D_X$ the sheaf of differential operators on $X$.  This is a sheaf of
noncommutative rings.  It is equipped with a \emph{filtration}
$$
\cdots \subset D_{X,-1} \subset D_{X,0}  \subset D_{X,1} \subset \cdots
$$
induced by the gradation of $\cO_{T^*X}$ plus the order of the differential operator.

Let $\MF(D_{X,\bullet})$ denote the category of \emph{filtered left $D_{X,\bullet}$-modules} that 
are quasi-coherent
as $\cO_X$-modules.  
An object $M_\bullet \in \MF(D_{X,\bullet})$ is equipped with a filtration $ \cdots M_{-1}\subset M_0 \subset M_1 \cdots$ satisfying 
$D_{X,j}M_i \subset M_{i+j}$.  The category $\MF(D_{X,\bullet})$ is an additive category but not an abelian category; it can be made into an exact category
by declaring a sequence $0 \to M'_\bullet \to M_\bullet \to M''_\bullet \to 0$ to be exact if $0 \to M'_i \to M_i \to M''_i \to 0$ is exact for all $i$.  (This
is stronger than asking for the sequence of underlying unfiltered $D_X$-modules to be 
exact.)  As shown in \cite{Laumon:filtres}, one can define the derived
category of $\MF(D_{X,\bullet})$; we let $D^bF(D_{X,\bullet})$ denote the bounded derived 
category of $\MF(D_{X,\bullet})$.  There are natural forgetful
functors $\MF(D_{X,\bullet}) \to \rM(D_X)$ and $D^bF(D_{X,\bullet}) \to D^b(D_X)$ sending a filtered module $M_\bullet$ to the underlying $D_X$-module $M$, and a complex $M^\cdot_\bullet$ of filtered modules to the underlying complex $M^\cdot$.

The associated graded of $D_{X,\bullet}$ is the sheaf $\gr D_{X,\bullet} = \ppi_* \cO_{T^*X}$ of graded commutative rings on $X$, where the grading comes from the grading of $\cO_X$ together with the declaration that vector fields have degree one.  Since $\ppi$ is affine, we have equivalences of categories
$$
\rM(\cO_{T^*X}) \cong \rM(\ppi_* \cO_{T^*X}), \qquad \text{and} \qquad D^b(\cO_{T^*X}) \cong D^b(\ppi_* \cO_{T^*X}),
$$
between the corresponding categories of quasi-coherent $\cO_{T^*X}$-modules and 
quasi-coherent $\ppi_* \cO_{T^*X}$-modules, and bounded derived categories.  We have an 
associated graded functor, and derived functor
$$
\gr: \MF(D_{X,\bullet}) \to \rM(\cO_{T^*X}), \qquad \text{and} \qquad  \gr: D^bF(D_{X,\bullet}) \to D^b(\cO_{T^*X}).
$$

\begin{definition}
Let $D_{\hbar,X}$ denote the sheaf of graded noncommutative rings with a central section 
$\hbar$, locally generated by $f \in \cO_X$ and sections $\xi \in
\Theta_X$ of the tangent sheaf with the relations $[f,\xi] = \hbar (\xi \cdot f)$ and $\xi \eta-\eta \xi = \hbar [\xi,\eta]$.  The grading is given by the
assignment $\deg(\hbar) = 1$ and the homogeneous degrees of $f$ and $\xi$ induced by the 
$\G_m$-action.  
\end{definition}
The sheaf $D_{\hbar,X}/\hbar$ is isomorphic to the sheaf $\ppi_* \cO_{T^*X}$, while the 
localization $D_{\hbar,X}$ at $(\hbar)$ is isomorphic to $D_X[\hbar^{\pm 1}]$.  Let $\MG(D_{\hbar,X})$ 
denote the category of sheaves of graded left $D_{\hbar,X}$-modules that are 
quasi-coherent as graded $\cO_X$-modules. To an object $M_\bullet \in \MF(D_{X,\bullet})$ we associate an object 
\[
M_\bullet \otimes \C[\hbar] =: M^\hbar = \bigoplus_i M^\hbar_i \in \MG(D_{\hbar,X}),
\]
by defining $M^\hbar_i = M_i$.  The section $\hbar$ acts by the identity, thought of as a map 
from $M^\hbar_i$ to $M^\hbar_{i+1}$.  It is clear that $\otimes \C[\hbar]: \MF(D_{X,\bullet}) \to 
\MG(D_{\hbar,X})$ is an exact functor.

For the following result see \cite[\S7]{Laumon:filtres} \cite[\S4]{Schapira-Schneiders}.
\begin{proposition}\label{prop:h0}
The functor
$$
{\otimes} \C[\hbar]:  \MF(D_{X,\bullet}) \to \MG(D_{\hbar,X}), \qquad M_\bullet \mapsto M^\hbar = M_\bullet \otimes 
\C[\hbar],
$$
is an equivalence between $\MF(D_{X,\bullet})$ and the full subcategory of $\hbar$-torsion-free 
$D_{\hbar,X}$-modules.  It induces a derived functor ${\otimes} \C[\hbar]$ giving an equivalence of 
categories
$${\otimes} \C[\hbar]: D^bF(D_{X,\bullet}) \cong D^b(D_{\hbar,X}).$$ 
\end{proposition}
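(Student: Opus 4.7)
\medskip

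\noindent\textbf{Proof plan for Proposition \ref{prop:h0}.}

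The plan is to recognize the functor $\otimes \mathbb{C}[\hbar]$ as the classical Rees construction, and then bootstrap the resulting abelian equivalence to a derived one by resolving arbitrary graded $D_{\hbar,X}$-modules by $\hbar$-torsion free ones. First I would verify that $D_{\hbar,X}$ is canonically isomorphic to the Rees algebra $\mathrm{Rees}(D_{X,\bullet}) := \bigoplus_i D_{X,i}\,\hbar^i \subset D_X[\hbar]$: the generators $f \in O_X$ and $\xi \in \Theta_X$, placed in the appropriate homogeneous degrees, satisfy precisely the relations $[f,\xi] = \hbar(\xi\cdot f)$ and $[\xi,\eta]_{D_X} = \hbar^{-1}[\xi,\eta]_{D_{\hbar,X}}$ after clearing the Rees degree shift. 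Under this identification, the assignment $M_\bullet \mapsto M^\hbar = \bigoplus_i M_i$, with $\hbar$ acting as the inclusion $M_i \hookrightarrow M_{i+1}$, is visibly $\hbar$-torsion free, and is the Rees functor applied to $M_\bullet$.

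Next I would construct the quasi-inverse on the abelian level. Given an $\hbar$-torsion free $N_\bullet \in \mathrm{MG}(D_{\hbar,X})$, let $M := N/(\hbar-1)N$, and equip $M$ with the filtration $M_i := \mathrm{image}(N_i \to M)$. The $D_{\hbar,X}$-action descends to a $D_X$-action compatible with this filtration, giving an object of $\mathrm{MF}(D_{X,\bullet})$. The two natural transformations $M_\bullet \to (M^\hbar/(\hbar-1)M^\hbar)_\bullet$ and $N_\bullet \to (N/(\hbar-1)N)^\hbar_\bullet$ are easily checked to be isomorphisms (the second precisely uses $\hbar$-torsion freeness, which ensures that each $N_i$ injects into the colimit $\mathrm{colim}_j N_j = N/(\hbar-1)N$). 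This establishes the abelian equivalence, together with the fact that $\otimes\mathbb{C}[\hbar]$ is exact: a short exact sequence $0 \to M'_\bullet \to M_\bullet \to M''_\bullet \to 0$ in $\mathrm{MF}(D_{X,\bullet})$ is by definition exact in each filtered degree, which is exactly what is required on $\hbar$-graded pieces.

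For the derived statement I would show the two key claims: (i) every object of $D^b(D_{\hbar,X})$ is quasi-isomorphic to a bounded complex of $\hbar$-torsion free modules; (ii) the functor $\otimes\mathbb{C}[\hbar]$ is fully faithful on derived categories. For (i), the crucial observation is that every graded $D_{\hbar,X}$-module $N$ fits in a short exact sequence $0 \to N' \to F \to N \to 0$ with $F$ a direct sum of shifted free modules $D_{\hbar,X}(k)$ (hence $\hbar$-torsion free), so $N'$ is $\hbar$-torsion free as a submodule of $F$; iterating and truncating yields a finite resolution by $\hbar$-torsion free modules for any bounded complex (this uses that $O_X$-quasi-coherence is preserved, together with the finite cohomological dimension). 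For (ii), one compares $\mathrm{Ext}$-groups: since the filtered exact structure on $\mathrm{MF}(D_{X,\bullet})$ is defined precisely so that the Rees functor identifies filtered extensions with graded extensions of $\hbar$-torsion free modules, the functor identifies $\mathrm{Ext}^n_{\mathrm{MF}}(M_\bullet,M'_\bullet)$ with the graded $\mathrm{Ext}^n$ in $\mathrm{MG}(D_{\hbar,X})$ between the $\hbar$-torsion free images.

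The main obstacle, and where the argument requires care, is step (i): one must ensure that the $\hbar$-torsion free resolutions remain within the quasi-coherent framework and have bounded length compatible with the cohomological dimension assumed in $D^b$. Equivalently, one has to control the two-sided nature of $D_{\hbar,X}$ when building these resolutions as $O_X$-quasi-coherent graded modules. This is handled locally on $X$ by Laumon's construction \cite{Laumon:filtres}, and patched via the quasi-coherence hypothesis; once this is in place, the equivalence of derived categories follows formally from the abelian equivalence together with (i) and (ii).
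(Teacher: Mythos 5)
The paper does not give a proof of this proposition; it simply cites Laumon \cite{Laumon:filtres}*{Section 7} and Schapira--Schneiders \cite{Schapira-Schneiders}*{Section 4}. Your argument reconstructs the standard Rees-algebra proof that those references carry out, and the outline is correct: the identification $D_{\hbar,X} \cong \bigoplus_i D_{X,i}\hbar^i$, the abelian quasi-inverse via $N \mapsto (N/(\hbar-1)N,\;\text{image filtration})$ with $\hbar$-torsion-freeness guaranteeing $N_i \hookrightarrow \operatorname{colim}_j N_j$, the observation that the filtered exact structure matches degreewise exactness after taking $\hbar$-graded pieces, and finally the passage to derived categories by resolving any graded module by the $\hbar$-torsion-free ones (free modules, hence submodules of free modules) form the expected chain of reasoning. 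The Ext-comparison in your step (ii) is the standard consequence of $\hbar$-torsion-free modules being a resolving subcategory of $\operatorname{MG}(D_{\hbar,X})$.

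One remark on your final paragraph: the worry about "controlling the two-sided nature of $D_{\hbar,X}$" and patching resolutions via quasi-coherence is largely moot here, because the paper works with $X$ a smooth irreducible \emph{affine} variety (this is stated at the start of the appendix). In the affine case $\Gamma(X,-)$ is exact on quasi-coherent sheaves and all the categories are equivalent to module categories over the ring $\Gamma(X,D_{\hbar,X})$, so the resolutions are built purely algebraically and no local-to-global patching is needed. It would be worth making explicit that the free $D_{\hbar,X}$-modules used in step (i) are taken to be of the form $D_{\hbar,X}(k)$ (graded shifts), so the epimorphism $F \twoheadrightarrow N$ can be chosen graded; this is what makes the kernel $N'$ a graded $\hbar$-torsion-free module and keeps the iteration inside $\operatorname{MG}(D_{\hbar,X})$.
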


We also have a functor ${\otimes}_{\C[\hbar]} \C: \MG(D_{\hbar,X}) \to \rM(\cO_{T^*X})$ and a left derived functor $\stackrel{L}{\otimes}_{\C[\hbar]} \C: D^b(D_{\hbar,X}) \to D^b(\cO_{T^*X})$, setting $\hbar  = 0$.
\begin{proposition}\label{prop:functors} We have commutative diagrams
\begin{align*}
\begin{diagram}
\node{\MF(D_{X,\bullet})} \arrow {e,t}{ \otimes \C[\hbar]} \arrow{se,r}{ \gr} \node{\MG(D_{\hbar,X})} \arrow{s,r}{{\otimes}_{\C[\hbar]} \C}
\\ \node{} \node{\rM(\cO_{T^*X})} \end{diagram}
\quad\quad
\begin{diagram}
\node{D^bF(D_{X,\bullet})} \arrow {e,t}{\otimes \C[\hbar]} \arrow{se,r}{\gr} \node{D^b(D_{\hbar,X})} \arrow{s,r}{\stackrel{L}{\otimes}_{\C[\hbar]} \C}
\\ \node{} \node{D^b(\cO_{T^*X})} \end{diagram}
\end{align*}
\end{proposition}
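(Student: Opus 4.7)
The plan is to verify the left (non-derived) diagram directly from the definitions, and then derive the right diagram by exploiting the $\hbar$-flatness of the image of $\otimes\C[\hbar]$.

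First, I will unpack the composition $({\otimes}_{\C[\hbar]} \C) \circ (\otimes \C[\hbar])$ on an object $M_\bullet \in \MF(D_{X,\bullet})$. By construction $M^\hbar_\bullet = \bigoplus_i M_i$ with $\hbar$ acting as the structural inclusion $M_i \hookrightarrow M_{i+1}$. Hence $\hbar \cdot M^\hbar = \bigoplus_i M_{i-1}$ as a graded subobject of $M^\hbar$, and the quotient $M^\hbar/\hbar M^\hbar$ is canonically isomorphic, as a graded $\ppi_*O_{T^*X}$-module, to $\bigoplus_i M_i/M_{i-1} = \gr M_\bullet$. The commutation of the first diagram is then a natural isomorphism, and the compatibility with the $D_{\hbar,X}/\hbar \cong \ppi_* O_{T^*X}$-action is immediate from the relation $[f,\xi] = \hbar(\xi \cdot f)$ reducing to the commutativity of the symbol calculus modulo~$\hbar$.

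For the derived diagram, the key observation is that for any $M_\bullet \in \MF(D_{X,\bullet})$, the object $M^\hbar_\bullet$ is $\hbar$-torsion free by construction, hence flat as a $\C[\hbar]$-module. Therefore $M^\hbar_\bullet \stackrel{L}{\otimes}_{\C[\hbar]} \C$ coincides with the ordinary quotient $M^\hbar_\bullet/\hbar M^\hbar_\bullet$, and the first diagram supplies the required natural isomorphism with $\gr M_\bullet$. To pass from modules to complexes, I would argue as follows: by Proposition~\ref{prop:h0} the equivalence $\otimes\C[\hbar]$ sends $D^bF(D_{X,\bullet})$ onto the full subcategory of $D^b(D_{\hbar,X})$ whose objects are represented by complexes of $\hbar$-torsion free modules. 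On such representatives the left derived functor $\stackrel{L}{\otimes}_{\C[\hbar]} \C$ agrees with the termwise quotient, which on each term computes $\gr$ of the corresponding filtered piece. One then checks that this identification is compatible with quasi-isomorphisms, so it descends to the derived category.

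The only genuinely delicate point is ensuring that the derived tensor product is well-defined on the full subcategory of $\hbar$-torsion free complexes, i.e.\ that any quasi-isomorphism between such complexes becomes a quasi-isomorphism after applying $-\otimes_{\C[\hbar]} \C$. This follows from flatness: a quasi-isomorphism $f: N^\cdot \to M^\cdot$ of $\hbar$-flat complexes induces a distinguished triangle whose cone is acyclic and $\hbar$-flat, and tensoring with $\C$ over $\C[\hbar]$ preserves this acyclicity by the standard spectral sequence argument (or equivalently by the vanishing of $\mathrm{Tor}^{\C[\hbar]}_i(\C, -)$ for $i>0$ on $\hbar$-flat modules). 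Given this, the commutativity of the right diagram follows tautologically from the commutativity of the left one, termwise. I expect this flatness/compatibility verification to be the main (but essentially routine) technical point, the rest being bookkeeping of the grading conventions $\deg \hbar = 1$.
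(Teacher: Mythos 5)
Your proof is correct, and it takes the natural route: the non-derived diagram is an identity of graded pieces once one notes that $\hbar$ acts on $M^\hbar$ as the structural inclusion, so $M^\hbar/\hbar M^\hbar \cong \gr M_\bullet$; and the derived diagram follows because $M^\hbar$ is always $\hbar$-torsion free, hence $\C[\hbar]$-flat ($\C[\hbar]$ being a PID), so the derived tensor $\stackrel{L}{\otimes}_{\C[\hbar]}\C$ is computed termwise on the image of $\otimes\C[\hbar]$. The paper itself gives no proof for this proposition, treating it as a routine consequence of the preceding Proposition~\ref{prop:h0} and the references \cite{Laumon:filtres, Schapira-Schneiders}; your write-up simply makes those omitted details explicit. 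One minor clarification you could add: in your ``delicate point'' paragraph, the reason a bounded acyclic complex of $\hbar$-flat modules remains acyclic after $\otimes_{\C[\hbar]}\C$ is that over the PID $\C[\hbar]$ submodules of torsion-free modules are torsion-free, so the boundary and cycle modules of such a complex are themselves flat, and the termwise short exact sequences remain exact after tensoring; the Tor-vanishing you cite is the ingredient, but one still needs this inheritance of flatness to make the induction close.
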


\begin{example}\label{ex:DX-filtered}
Consider $X=\G_m^n\times \G_m$, with coordinates $(x_1,\ldots,x_n,q)$, and equipped with the $\G_m$-action 
\[
\zeta \cdot (x_1,\ldots,x_n,q) = (\zeta x_1,\ldots,\zeta x_n,\zeta^{n+1}q).
\]
The ring $\C[X]$ of Laurent polynomials has a corresponding gradation by homogeneous polynomials. The potential $f=x_1+\cdots +x_n +\frac{q}{x_1\cdots
x_n}$ has degree one.
The ring of differential operators $D_X$ is filtered by the subspaces $D_{X,i}$, which for each $i\in \Z$, are the linear span of the operators 
\[
x_1^{a_1}  \frac{\partial^{b_1}}{\partial x^{b_1}_1} 
x_2^{a_2}  \frac{\partial^{b_2}}{\partial x^{b_2}_2}
\cdots x_n^{a_n}\frac{\partial^{b_n}}{\partial x^{b_n}_n} 
q^{m}
\frac{\partial^\ell}{\partial q^\ell},\quad
a_1 + a_2+ \cdots + a_n + (n+1)m - n\ell \le i.
\]
The $D_X$-module $D_X/D_X(d - df \wedge)$ that we denote $\Exp^f$ is equipped with a natural filtration, and becomes an object of $MF(D_{X,\bullet})$.
The ring $D_{\hbar,X}$ is the graded noncommutative ring generated by functions and differential operators $\xi_{x_k}$, $\xi_q$ with notably the relations
\[
[ \xi_{x_k} ,x_k] = \hbar, \quad [\xi_q, q]=\hbar.
\]
The degrees are given by $\deg(x_k)=1$, $\deg(\xi_{x_k})=0$, $\deg(q)=n+1$, $\deg(\xi_q)=-n$, 
$\deg(\hbar)=1$. 
One can think $\xi_{x_k}$ as representing ``$\hbar \frac{\partial}{\partial x_k}$", and $\xi_q$ as representing ``$\hbar \frac{\partial}{\partial q}$''.
Applying the functor of Proposition~\ref{prop:h0}, we have that $\Exp^f \otimes \C[\hbar]$ becomes the $D_{\hbar,X}$-module $\Exp^{f/\hbar}$ which we can
 describe as follows.
We have that $\Exp^{f/\hbar}$ is isomorphic to the quotient of $D_{\hbar,X}$ by
the left $D_{\hbar,X}$-ideal generated by the operators $\xi_{x_k} - \frac{\partial f}{\partial x_k} $ 
and $\xi_q - \frac{\partial f}{\partial q}$.
The operators are all homogeneous, hence $\Exp^{f/\hbar}$ is an element of $MG(D_{\hbar,X})$, 
and moreover it is $\hbar$-torsion-free, consistently with Proposition~\ref{prop:h0}.
\end{example}

\subsection{Pushforward functors}
In this and the next subsection only we write $\int_\pi$ to denote the pushforward functor for $\aD$-modules, and reserve $\pi_*$ for the pushforward functor of quasi-coherent sheaves.
Let $\pi:X \to Y$ be a $\G_m$-equivariant morphism between complex irreducible smooth varieties $X$ and $Y$ equipped with $\G_m$-actions.  We recall results concerning the pushforward
functors of $D_X$, $D_{\hbar,X}$, and $\cO_{T^*X}$-modules under $\pi$.  Though we shall not need it, the functors of Proposition \ref{prop:functors} are also
compatible with pullbacks under $\pi$.  

Let $\omega_X$ (resp. $\omega_Y$) denote the canonical line bundles of $X$ (resp. $Y$).  The sheaf $\omega_X$ acquires a grading from the $\G_m$-action so that it becomes a filtered right $D_{X,\bullet}$-module.
Define
$$
D_{Y \leftarrow X} := \pi^{-1}(D_Y \otimes_{\cO_Y} \omega_Y^{-1}) \otimes_{\pi^{-1}\cO_Y} \omega_X,
$$
which is a $(\pi^{-1} D_Y, D_X)$-bimodule on $X$.  The module $D_{Y \leftarrow X}$ inherits a filtration from the filtrations of $D_Y$, $\omega_Y$, and $\omega_X$.
We obtain a filtered $(\pi^{-1} D_{Y,\bullet}, D_{X,\bullet})$-bimodule $D_{Y\leftarrow X, \bullet}$ on $X$, satisfying $\pi^{-1}D_{Y,j} \cdot D_{Y \leftarrow X,i} \cdot D_{X,k} \subset D_{Y \leftarrow X,i+j+k}$.  We define the direct image functor by 
$$\int_\pi M^\cdot := R\pi_*(D_{Y \leftarrow X,\bullet} \stackrel{L}{\otimes}_{D_{X,\bullet}} M^\cdot),$$
where $M^\cdot \in D^bF(D_{X,\bullet})$.  Similarly define $\int_\pi: D^b(D_X) \to D^b(D_Y)$ by forgetting filtrations.

\begin{proposition}[{\cite[(5.6.1.1)]{Laumon:filtres}}]
\label{LaumonPushFiltre}
The following diagram commutes:
$$
\begin{diagram}
\node{D^bF(D_{X,\bullet})} \arrow {e,t}{\int_\pi} \arrow{s,l}{} \node{D^bF(D_{Y,\bullet})} \arrow{s}
\\ \node{D^b(D_X)} \arrow{e,t}{\int_\pi} \node{D^b(D_Y),} 
\end{diagram}
$$
where the vertical arrows are the natural forgetful functors.
\end{proposition}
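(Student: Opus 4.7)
The plan is to unpack the definitions on both sides of the square and verify that each ingredient is compatible with the forgetful functor $\MF(D_{X,\bullet}) \to \rM(D_X)$. Concretely, for $M^\cdot \in D^bF(D_{X,\bullet})$, I need to exhibit a natural isomorphism in $D^b(D_Y)$
\[
\mathrm{forget}\Bigl(R\pi_*\bigl(D_{Y\leftarrow X,\bullet} \stackrel{L}{\otimes}_{D_{X,\bullet}} M^\cdot\bigr)\Bigr) \;\cong\; R\pi_*\bigl(D_{Y\leftarrow X} \stackrel{L}{\otimes}_{D_X} \mathrm{forget}(M^\cdot)\bigr),
\]
and check its functoriality.

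First I would record the tautological compatibilities at the level of underlying objects. By construction of $D_{Y\leftarrow X,\bullet}$ as a filtered tensor product of $\pi^{-1}(D_{Y,\bullet} \otimes_{O_Y} \omega_Y^{-1})$ with $\omega_X$, forgetting the filtration recovers $D_{Y\leftarrow X}$ as a $(\pi^{-1}D_Y, D_X)$-bimodule. Similarly the forgetful functor is exact in the sense of \S\ref{s:filtered} and preserves termwise direct sums, so it descends to a triangulated functor $D^bF(D_{X,\bullet}) \to D^b(D_X)$. Finally, $R\pi_*$ on either side is computed on underlying quasi-coherent sheaves using, say, a Čech resolution along an affine cover of $X$, and this resolution is visibly compatible with forgetting the filtration.

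The remaining and only substantive step is to show that the derived tensor product commutes with forgetting the filtration. To handle this I would resolve $M^\cdot$ by a complex $P^\cdot$ of filtered modules each of which is a direct sum of shifts of $D_{X,\bullet}$ (i.e.\ locally free in the filtered sense; such resolutions exist by the standard construction used in \cite{Laumon:filtres}). Each term $D_{X,\bullet}[n]$ is flat over $D_{X,\bullet}$, and after forgetting the filtration it becomes (a direct sum of copies of) $D_X$, which is flat over $D_X$. Hence $\mathrm{forget}(P^\cdot)$ computes the underived tensor product on the right, and the underived tensor product commutes with forgetting because on each term $D_{Y\leftarrow X,\bullet} \otimes_{D_{X,\bullet}} D_{X,\bullet}[n] = D_{Y\leftarrow X,\bullet}[n]$, which forgets to $D_{Y\leftarrow X}[n] = D_{Y\leftarrow X} \otimes_{D_X} D_X[n]$.

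The main obstacle is the existence of filtered flat resolutions with good enough properties to commute with forgetful functors; this is precisely the technical content worked out in \cite{Laumon:filtres} and the reason the statement is quoted from there. Once such resolutions are in hand, the naturality of the resulting isomorphism in $M^\cdot$ is automatic, and the compatibility with the forgetful functor on $D^b(D_Y)$ follows by transporting the Čech computation of $R\pi_*$ through the identification above.
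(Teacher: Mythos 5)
The paper provides no proof for this proposition; it is quoted verbatim from Laumon's treatise, as the citation \cite[(5.6.1.1)]{Laumon:filtres} in the proposition header indicates. Your sketch is a sound reconstruction of what the proof in Laumon looks like: the forgetful functor is exact for the strict exact structure used in \S\ref{ssec:Dh}, $D_{Y\leftarrow X,\bullet}$ manifestly forgets to $D_{Y\leftarrow X}$, $R\pi_*$ computed via an affine \v{C}ech cover commutes with forgetting, and the only substantive point is the compatibility of the derived filtered tensor product with forgetting, which you correctly isolate.

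One small caution on the step you identify as the obstacle: your phrase ``direct sums of shifts of $D_{X,\bullet}$'' is a bit too coarse. Laumon's acyclic objects for the filtered tensor are the \emph{filtered induced modules} $D_{X,\bullet}\otimes_{O_X}\cL$ for filtered quasi-coherent $O_X$-modules $\cL$, not merely filtration shifts of $D_{X,\bullet}$ itself; the resulting resolutions are generally only bounded above, so one either works in $D^-F$ and restricts, or uses quasi-coherent bounds and the affineness of the situations actually used in the paper. Since you explicitly defer this technical content to \cite{Laumon:filtres}, and the paper does the same, the level of rigor of your proposal matches the paper's.
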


Let $T^*Y \times_Y X$ be the pullback of the cotangent bundle $T^*Y$ to $X$, fitting into the commutative diagram \cite[(5.0.1)]{Laumon:filtres}
$$
\begin{diagram}
\node{T^*X} \node{T^*Y \times_Y X} \arrow{w,t}{\Pi} \arrow{s,l}{\bar \pi} \arrow{e,t}{\ppi_\pi} 
\node{X} \arrow{s,r}{\pi} 
\\ \node{} \node{T^*Y} \arrow{e,b}{\ppi_Y} \node{Y} 
\end{diagram}
$$
We have 
$$
\gr D_{Y \leftarrow X,\bullet} = \pi^* \cO_{T^*Y} \otimes_{\cO_X} \omega_{X/Y}
$$
which has a natural structure of a graded $(\pi^* \cO_{T^*Y},\cO_{T^*X})$-bimodule.  We now define a functor $\int_\pi: D^b(\cO_{T^*X}) \to D^b(\cO_{T^*Y})$ by
\begin{equation}\label{eq:0pushforward}
\int_\pi M^\cdot_0 := (R\bar \pi_* \circ \Pi^![d])(M^\cdot_0),
\end{equation}
where $d= \dim X - \dim Y$ and $\Pi^!: D^b(\cO_{T^*X}) \to D^b(\cO_{T^*Y \times_Y X})$ denotes the upper-shriek functor on derived categories of quasi-coherent sheaves.

We will only use \eqref{eq:0pushforward} when the map $\pi: X \to Y$ is smooth, in which case, we have
\begin{equation}\label{eq:0pushforward2}
\Pi^![d](-) = L\Pi^*(-) \otimes_{\cO_{T^*Y \times_Y X}} \ppi_\pi^*\omega_{X/Y}.
\end{equation}
where $L\Pi^*: D^b(\cO_{T^*X}) \to D^b(\cO_{T^*Y \times_Y X})$ is the left derived functor of the usual pullback functor $\Pi^*$ of quasi-coherent sheaves.

We have the following compatibility result of pushforwards. 
\begin{proposition}[{\cite[(5.6.1.2)]{Laumon:filtres}}] \label{prop:pushLau}
The following diagram commutes:
$$
\begin{diagram}
\node{D^bF(D_{X,\bullet})} \arrow {e,t}{\int_\pi} \arrow{s,l}{\gr} \node{D^bF(D_{Y,\bullet})} \arrow{s,r}{\gr}
\\ \node{D^b(\cO_{T^*X})} \arrow{e,t}{\int_\pi} \node{D^b(\cO_{T^*Y}).}
\end{diagram}
$$
\end{proposition}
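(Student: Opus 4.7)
The plan is to factor $\pi$ as the composition of the graph embedding $i: X \hookrightarrow X \times Y$ with the projection $p: X \times Y \to Y$, so it suffices to verify the claim separately for a closed immersion and for a smooth projection. Both steps can further be reduced to the affine case, where we can work with filtered resolutions of $M_\bullet^\cdot$ by free (or at least strictly flat) filtered $D_{X,\bullet}$-modules; the existence of such resolutions, and the fact that $\gr$ commutes with such resolutions, is a general principle for good filtrations \cite{Laumon:filtres}.

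First I would handle the transfer bimodule. The definition $D_{Y \leftarrow X,\bullet} = \pi^{-1}(D_{Y,\bullet} \otimes_{O_Y}\omega_Y^{-1}) \otimes_{\pi^{-1}O_Y} \omega_{X,\bullet}$ yields, after an explicit local computation with a frame, the identification $\gr D_{Y\leftarrow X,\bullet} = \pi^* O_{T^*Y} \otimes_{O_X}\omega_{X/Y}$ as a graded $(\pi^*O_{T^*Y},\,O_{T^*X})$-bimodule, noted in the text above the statement. The key compatibility is then the statement
\[
\gr\bigl( D_{Y\leftarrow X,\bullet} \stackrel{L}{\otimes}_{D_{X,\bullet}} M_\bullet^\cdot \bigr) \;\cong\; \gr(D_{Y\leftarrow X,\bullet}) \stackrel{L}{\otimes}_{\gr D_{X,\bullet}} \gr(M_\bullet^\cdot),
\]
which holds because the Rees algebra construction (Proposition~\ref{prop:h0}) realizes the filtered derived tensor product as a derived tensor product over the flat $\C[\hbar]$-algebra $D_{\hbar,X}$, and setting $\hbar = 0$ commutes with flat base change applied to an $\hbar$-torsion-free resolution.

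Second, the sheaf-theoretic pushforward step. For a smooth projection $p$, using the equivalent description of $\int_p$ on $D$-modules via the relative de~Rham complex, the associated graded complex is precisely the relative Koszul complex that computes $R\bar p_*\circ \Pi^![d]$ in formula \eqref{eq:0pushforward2}; this is a direct local computation. For a closed immersion $i$, the associated graded of the transfer bimodule is a locally free resolution of $i_* \omega_{X/Y}$ along the conormal directions, and the identification with $\int_i$ in the graded picture reduces to the standard calculation $\bar i_* \Pi^! = i_*$ at the level of $O$-modules once one chases degrees. Combining the two cases through the graph factorization $\pi = p\circ i$ gives the claim.

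The main obstacle is the commutation of $\gr$ with the derived tensor product in the first step: one must ensure that every object of $D^bF(D_{X,\bullet})$ admits a bounded resolution by filtered modules whose associated graded is flat over $\gr D_{X,\bullet}$, so that $\gr$ applied to the Tor computation really yields the graded Tor. This is not automatic, and is the content of the technical part of \cite{Laumon:filtres}; once granted, everything else follows by local cotangent-bundle computations and the factorization trick above.
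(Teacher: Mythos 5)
The paper itself gives no proof of this proposition; it is stated as a direct citation to Laumon \cite[(5.6.1.2)]{Laumon:filtres}, so there is no in-paper argument to compare yours against. Your sketch is a reasonable reconstruction of the standard proof strategy (graph factorization into a closed immersion and a smooth projection, compatibility of $\gr$ with the transfer bimodule, Koszul/de Rham computations), which is in the same spirit as Laumon's treatment; you also correctly isolate the real technical content, namely the commutation of $\gr$ with the derived tensor product, which requires resolutions in the filtered derived category whose associated graded is flat.

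Two imprecisions worth flagging. First, the phrase ``setting $\hbar=0$ commutes with flat base change applied to an $\hbar$-torsion-free resolution'' is not quite right as stated: $\C[\hbar]\to\C$ is certainly not flat. The correct point is that for an $\hbar$-torsion-free graded $D_{\hbar}$-module $M^\hbar$ one has $M^\hbar\otimes_{\C[\hbar]}\C = M^\hbar\stackrel{L}{\otimes}_{\C[\hbar]}\C$, so on such resolutions the underived specialization already computes the derived one; this is what Proposition~\ref{prop:functors} encodes, and it is the mechanism you should invoke rather than ``flat base change.'' Second, your appeal to ``good filtrations'' is narrower than what the category $\MF(D_{X,\bullet})$ allows --- the paper (following Laumon) works with all quasi-coherent filtered modules, and the existence of the needed filtered-flat (or strict) resolutions is part of the structure of Laumon's filtered derived category rather than a property of good filtrations per se. Neither issue is a gap in the argument's logic, only in the wording; the overall plan is sound and matches the cited source.
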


Finally, we describe the pushforward functor for $D_{X,\hbar}$-modules.  We define $D_{Y\leftarrow X, \hbar} := D_{Y \leftarrow X,\bullet} \otimes \C[\hbar]$,
which is a graded $(\pi^{-1} D_{Y,\hbar}, D_{X,\hbar})$-bimodule.  We define the direct image functor $\int_\pi: D^b(D_{X,\hbar}) \to D^b(D_{Y,\hbar})$ by 
$$\int_\pi M^\cdot := R\pi_*(D_{Y \leftarrow X,\hbar} \stackrel{L}{\otimes}_{D_{X,\hbar}} M^\cdot).$$
\begin{proposition}\label{prop:pushfilter}
The following diagram commutes:
$$
\begin{diagram}
\node{D^bF(D_{X,\bullet})} \arrow {e,t}{\int_\pi} \arrow{s,l}{{\otimes}\C[\hbar] } \node{D^bF(D_{Y,\bullet})} \arrow{s,r}{{\otimes}\C[\hbar]}
\\ \node{D^b(D_{X,\hbar})} \arrow{e,t}{\int_\pi} \node{D^b(D_{Y,\hbar}).}
\end{diagram}
$$
\end{proposition}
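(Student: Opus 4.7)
The plan is to exploit Proposition~\ref{prop:h0}, which tells us that the vertical functor $\otimes\C[\hbar]$ is an equivalence between $D^bF(D_{X,\bullet})$ and $D^b(D_{\hbar,X})$ (and similarly for $Y$). It then suffices to prove that the two composite functors $D^bF(D_{X,\bullet}) \to D^b(D_{\hbar,Y})$ agree. The key point is that by definition $D_{Y\leftarrow X,\hbar} = D_{Y\leftarrow X,\bullet}\otimes \C[\hbar]$, so the two transfer bimodules are matched up under the equivalence.

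First I would verify the key monoidal compatibility: for a filtered right $D_{X,\bullet}$-module $B_\bullet$ and a complex $M_\bullet^\cdot$ of filtered left $D_{X,\bullet}$-modules, there is a canonical isomorphism
\[
(B_\bullet \otimes \C[\hbar]) \stackrel{L}{\otimes}_{D_{X,\hbar}} (M_\bullet^\cdot \otimes \C[\hbar]) \cong \bigl(B_\bullet \stackrel{L}{\otimes}_{D_{X,\bullet}} M_\bullet^\cdot\bigr) \otimes \C[\hbar].
\]
This is checked by taking a filtered-flat resolution $P_\bullet^\cdot \to M_\bullet^\cdot$ in the sense of Laumon \cite{Laumon:filtres}; then $P_\bullet^\cdot \otimes \C[\hbar]$ is $\hbar$-torsion-free, hence $\C[\hbar]$-flat, and by the Rees-module compatibility the underlying degree-by-degree pieces compute both derived tensor products. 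The underlying exactness statement is precisely the content of Proposition~\ref{prop:h0}, which identifies $\otimes\C[\hbar]$ with an equivalence onto $\hbar$-torsion-free modules and is exact.

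Second, I would observe that $R\pi_*$ commutes with $\otimes\C[\hbar]$, because the Rees construction $N_\bullet \mapsto \bigoplus_i N_i \hbar^i$ is a filtered colimit operation compatible with quasi-coherent pushforward. Combining these two compatibilities yields, for $M_\bullet^\cdot \in D^bF(D_{X,\bullet})$,
\begin{align*}
\left(\textstyle\int_\pi M_\bullet^\cdot\right) \otimes \C[\hbar]
&= R\pi_*\bigl(D_{Y\leftarrow X,\bullet} \stackrel{L}{\otimes}_{D_{X,\bullet}} M_\bullet^\cdot\bigr) \otimes \C[\hbar] \\
&\cong R\pi_*\Bigl( \bigl(D_{Y\leftarrow X,\bullet} \stackrel{L}{\otimes}_{D_{X,\bullet}} M_\bullet^\cdot\bigr)\otimes \C[\hbar] \Bigr)\\
&\cong R\pi_*\bigl(D_{Y\leftarrow X,\hbar} \stackrel{L}{\otimes}_{D_{X,\hbar}} (M_\bullet^\cdot \otimes \C[\hbar])\bigr) = \textstyle\int_\pi (M_\bullet^\cdot \otimes \C[\hbar]),
\end{align*}
which is the desired commutativity.

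The main obstacle is the derived tensor-product compatibility in the first step. At the level of objects it is essentially the statement that the Rees construction is monoidal, but at the derived level one must handle resolutions carefully in an exact (rather than abelian) category. The cleanest route is Laumon's machinery of filtered-flat resolutions; alternatively, since $\otimes\C[\hbar]$ is an equivalence identifying filtered-flat with $\hbar$-torsion-free (hence $\C[\hbar]$-flat) $D_{\hbar,X}$-modules, one can transport the derived tensor product across the equivalence and compare the two bimodule structures, which coincide by construction of $D_{Y\leftarrow X,\hbar}$.
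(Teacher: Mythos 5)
Your proposal is correct and matches the paper's argument: both proofs reduce to two compatibilities, the identity $(D_{Y\leftarrow X,\bullet}\stackrel{L}{\otimes}_{D_{X,\bullet}}M^\cdot)\otimes\C[\hbar]\cong D_{Y\leftarrow X,\hbar}\stackrel{L}{\otimes}_{D_{X,\hbar}}(M^\cdot\otimes\C[\hbar])$ for the transfer bimodule, and the commutation of $\otimes\C[\hbar]$ with $R\pi_*$ via exactness; you just spell out the resolution argument more explicitly where the paper says ``a direct comparison shows.''
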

\begin{proof}
A direct comparison shows that $$(D_{Y \leftarrow X,\bullet} \stackrel{L}{\otimes}_{D_{X,\bullet}} M^\cdot) \otimes \C[\hbar] = D_{Y
\leftarrow X,\hbar} \stackrel{L}{\otimes}_{D_{X,\hbar}} (M^\cdot \otimes \C[\hbar]),$$ as graded $\pi^{-1}(D_{Y,\hbar})$-modules.  
Similarly, $\otimes \C[\hbar]$ is an exact functor, so it commutes with $R\pi_*$.
\end{proof}

\begin{proposition}\label{prop:h0commute}
The following diagram commutes:
$$
\begin{diagram}
\node{D^b(D_{X,\hbar})} \arrow {e,t}{\int_\pi} \arrow{s,l}{ \stackrel{L}{\otimes}_{\C[\hbar]} \C} \node{D^b(D_{Y,\hbar})} \arrow{s,r}{ \stackrel{L}{\otimes}_{\C[\hbar]}\C}
\\ \node{D^b(\cO_{T^*X})} \arrow{e,t}{\int_\pi} \node{D^b(\cO_{T^*Y}),}
\end{diagram}
$$
where the vertical arrows are the natural forgetful functors.
\end{proposition}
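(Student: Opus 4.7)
\smallskip

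\noindent\emph{Proof proposal.} The plan is to deduce Proposition \ref{prop:h0commute} by combining the three preceding propositions via the equivalence of derived categories in Proposition \ref{prop:h0}. By that equivalence, any $N^\cdot \in D^b(D_{\hbar,X})$ is isomorphic to $M^\cdot \otimes \C[\hbar]$ for some $M^\cdot \in D^bF(D_{X,\bullet})$, so it suffices to verify the commutativity on objects of the form $M^\cdot \otimes \C[\hbar]$.

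For such an $N^\cdot$, I would chase the diagram as follows. On one hand, Proposition \ref{prop:pushfilter} gives
\[
\int_\pi (M^\cdot \otimes \C[\hbar]) \;\cong\; \Bigl(\int_\pi M^\cdot\Bigr) \otimes \C[\hbar]
\]
in $D^b(D_{Y,\hbar})$, and then Proposition \ref{prop:functors} (right diagram) gives
\[
\Bigl(\int_\pi M^\cdot\Bigr) \otimes \C[\hbar] \;\stackrel{L}{\otimes}_{\C[\hbar]} \C \;\cong\; \gr\Bigl(\int_\pi M^\cdot\Bigr).
\]
On the other hand, applying Proposition \ref{prop:functors} first gives $(M^\cdot \otimes \C[\hbar]) \stackrel{L}{\otimes}_{\C[\hbar]} \C \cong \gr(M^\cdot)$, and then Proposition \ref{prop:pushLau} gives
\[
\int_\pi \gr(M^\cdot) \;\cong\; \gr\Bigl(\int_\pi M^\cdot\Bigr).
\]
Both composites land on $\gr\bigl(\int_\pi M^\cdot\bigr)$, and the natural isomorphisms produced by Propositions \ref{prop:functors}, \ref{prop:pushLau} and \ref{prop:pushfilter} are compatible with one another, so Proposition \ref{prop:h0commute} follows.

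The one point that needs care is the compatibility of these natural isomorphisms, i.e.\ checking that the two paths around the square do not merely have isomorphic outputs but are isomorphic as functors. For this I would give a more direct argument: use the identification $D_{Y\leftarrow X,\hbar}/\hbar = \gr D_{Y\leftarrow X,\bullet}$ to produce the quasi-isomorphism
\[
\bigl(D_{Y\leftarrow X,\hbar} \stackrel{L}{\otimes}_{D_{X,\hbar}} N^\cdot\bigr) \stackrel{L}{\otimes}_{\C[\hbar]} \C
\;\cong\;
\gr D_{Y\leftarrow X,\bullet} \stackrel{L}{\otimes}_{\gr D_{X,\bullet}} \bigl(N^\cdot \stackrel{L}{\otimes}_{\C[\hbar]} \C\bigr),
\]
which is a standard base-change identity in derived tensor products once one resolves $\C$ by $[\C[\hbar] \xrightarrow{\hbar} \C[\hbar]]$. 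Then one commutes $R\pi_*$ past $\stackrel{L}{\otimes}_{\C[\hbar]} \C$, which is legitimate because the Koszul resolution of $\C$ by $\C[\hbar]$-modules is perfect and pushforward commutes with perfect tensor products by the projection formula. Finally, applying $R\bar\pi_* \circ \Pi^![d]$ (unwinding the definition \eqref{eq:0pushforward}–\eqref{eq:0pushforward2}) yields the $O_{T^*X}$-side pushforward $\int_\pi$ applied to $N^\cdot \stackrel{L}{\otimes}_{\C[\hbar]} \C$.

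The main obstacle is the commutativity of $R\pi_*$ with $\stackrel{L}{\otimes}_{\C[\hbar]} \C$: naively this requires either properness of $\pi$ or a finiteness hypothesis that is not built into the quasi-coherent setup. The clean way around this is to note that $\C$ admits a two-term resolution by free $\C[\hbar]$-modules, so $\stackrel{L}{\otimes}_{\C[\hbar]} \C$ is the mapping cone of multiplication by $\hbar$; since $R\pi_*$ is triangulated and commutes with multiplication by the central element $\hbar$, it commutes with this cone. Once this is in hand, the proposition follows, and in fact so does the strengthening that all four functors in the diagram arise consistently from Laumon's filtered formalism.
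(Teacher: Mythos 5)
Your first paragraph is exactly the paper's proof: the paper's proof consists of the single sentence ``Combine Proposition \ref{prop:functors} with Propositions \ref{prop:pushLau} and \ref{prop:pushfilter},'' and the diagram chase you write out (using Proposition \ref{prop:h0} to reduce to objects in the essential image of $\otimes\C[\hbar]$) is precisely what that sentence is compressing. The concern you raise in the middle of your proposal — that the two composites might only be objectwise isomorphic rather than isomorphic as functors — is already addressed by the statements of Propositions \ref{prop:functors}, \ref{prop:pushLau}, and \ref{prop:pushfilter}, each of which asserts commutativity of a diagram of functors (i.e.\ a specified natural isomorphism), so the standard ``transport along an equivalence'' argument ($G\cong G\circ F\circ F^{-1}\cong H\circ F\circ F^{-1}\cong H$ for the quasi-inverse $F^{-1}$ of $\otimes\C[\hbar]$) closes the loop; no further care is needed. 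Your second, more hands-on argument is sound in outline (the reduction of $\stackrel{L}{\otimes}_{\C[\hbar]}\C$ to the cone of multiplication by the central section $\hbar$ is the right way to commute it past the triangulated functor $R\pi_*$, and the derived base-change identity you write down uses the $\hbar$-torsion-freeness of $D_{Y\leftarrow X,\hbar}$ correctly), but note it does not actually bypass Laumon's theorem: the passage from $R\pi_*\bigl(\gr D_{Y\leftarrow X,\bullet}\stackrel{L}{\otimes}_{\gr D_{X,\bullet}}(-)\bigr)$ to the geometric $\int_\pi = R\bar\pi_*\circ\Pi^![d]$ on $O_{T^*X}$-modules in the final step is exactly the content of Proposition \ref{prop:pushLau}, which you invoke implicitly. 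So the ``more direct'' argument is not independent of the first; it is the first argument unfolded one level.
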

\begin{proof}
Combine Proposition \ref{prop:functors} with Propositions \ref{prop:pushLau} and \ref{prop:pushfilter}.
\end{proof}

\begin{example}\label{Dhbar-QDE}
Consider $Y=\G_m$, graded by $\deg(q)=n+1$. The ring $D_{\hbar,Y}=\C[q^{\pm 1},\hbar]\langle \xi_q \rangle$ satisfies the relation $[\xi_q,q]=\hbar$, and the gradation
is given by $\deg(\xi_q)=-n$, $\deg(\hbar)=1$.
The quantum differential operator $(q\xi_q)^{n+1}-q$ is homogeneous of degree $n+1$.
It shall follow from the next subsection that it is isomorphic to the pushforward $\int_pi 
\Exp^{f/\hbar}$ where $\Exp^{f/\hbar}$ is as in Example~\ref{ex:DX-filtered} and $\pi:X\to Y$ is the 
projection onto the second factor.
\end{example}

\subsection{Application to the character $D_\hbar$-module}
Let $\pi: X \to Z(L_P)$ denote the geometric crystal and $f: X \to \A^1$ denote the superpotential.  Recall that we defined $\G_m$-actions on $X$ and $Z(L_P)$ in \S \ref{s:homogeneous}.

\begin{proposition}\label{prop:hfree}
The character $D_{\hbar,Z(L_P)} \otimes \Sym(\t)$-module $\WBK^{1/\hbar}_{(G,P)} \in 
D^b(D_{\hbar,Z(L_P)} 
\otimes \Sym(\t))$ is $\hbar$-torsion-free and concentrated in a 
single degree.
\end{proposition}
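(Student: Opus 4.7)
The strategy is to specialize the $D_\hbar \otimes S$-module at $\hbar \ne 0$ and at $\hbar = 0$ via the formalism of Appendix \ref{s:filtered}, verify the desired properties for each specialization, and deduce the proposition by a flatness/semicontinuity argument.

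First observe that $\gamma^*\Mult^{S,\hbar}\otimes \Exp^{f/\hbar}$ on $X$ is free of rank one as a graded $(\cO_X \otimes S)[\hbar]$-module, hence $\hbar$-torsion free; by Proposition \ref{prop:h0} it arises from a filtered $D$-module. Via Propositions \ref{prop:pushfilter} and \ref{prop:h0commute}, taking $R\pi_!$ commutes with specialization at any $\hbar_0 \in \C^\times$ and with the associated-graded specialization at $\hbar = 0$. For $\hbar_0 \in \C^\times$, Proposition \ref{p:hbarBK}(i), whose proof uses only the homogeneity in \S\ref{s:homogeneous}, reduces the study to the case $\hbar_0 = 1$. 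The equivariant analogue of Theorem \ref{t:HNYisBK} (proved in Theorem \ref{t:EHNYisBK}) combined with the equivariant analogue of Lemma \ref{lem:HNYvanish} shows that $\BK_{(G,P)}(S,1)$ is a coherent $D$-module concentrated in cohomological degree zero, of generic rank $|W^P|$ over $\Gm \times \Spec S$.

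For the $\hbar = 0$ specialization, the associated graded sheaf is supported on the subscheme of $T^*X \times \t^*$ cut out by $\xi - d_{X/Z(L_P)}(f+\ell\circ\gamma) = 0$. Its pushforward to $T^*Z(L_P) \times \t^*$ can be computed on a Lusztig chart $(\G_m)^{\ell(w_P)} \cong X_t$ via the relative Koszul complex associated to the partial derivatives of $f + \ell\circ\gamma$ with respect to the chart coordinates. Using the explicit positive Laurent-polynomial form of the potential from Corollary \ref{cor:superpositive}, these partial derivatives cut out a zero-dimensional subscheme of each fiber of finite length; a direct degree computation using the Newton polytope of $f_t$ identifies this length as $|W^P|$, independently of Theorem \ref{t:Jac}. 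Consequently the $\hbar = 0$ specialization is also concentrated in degree zero with generic rank $|W^P|$.

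The main obstacle is this last step: the Jacobian-ring analysis must be carried out directly without invoking Theorem \ref{t:Jac}, since that theorem relies on the present proposition. Once the $\hbar \in \C^\times$ and $\hbar = 0$ specializations are both shown to be concentrated in a single degree with matching generic rank $|W^P|$, the Gauss--Manin complex $\GM^\bullet \otimes S$ representing $\BK_{(G,P)}(S,\hbar)$ is a bounded complex of graded $\C[\hbar]$-modules whose generic and special fibers have matching Euler characteristic concentrated in one degree. By standard semicontinuity and base change this forces $\BK_{(G,P)}(S,\hbar)$ to be concentrated in a single cohomological degree and flat over $\C[\hbar]$, whence $\hbar$-torsion free.
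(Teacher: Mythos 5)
Your overall strategy of reducing to the study of the $\hbar=0$ specialization (the classical Hitchin-type picture) is in the spirit of the paper's proof, but the architecture you build around it is more complicated than necessary and contains a genuine gap that you yourself flag.

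The paper's proof uses the observation that for a bounded complex $N^\hbar$ of graded $D_{\hbar,Z(L_P)}\otimes S$-modules, the two conditions ``$N^\hbar$ is $\hbar$-torsion free and concentrated in one degree'' and ``$N^\hbar\stackrel{L}{\otimes}_{\C[\hbar]}\C$ is concentrated in one degree'' are \emph{equivalent} (this follows from the Tor spectral sequence over $\C[\hbar]$ and graded Nakayama). Consequently the only thing that must be established is concentration in a single degree of the $\hbar=0$ fiber $N_0$. In particular, neither a separate analysis at $\hbar\in\C^\times$, nor a computation of the rank of $N_0$, nor a semicontinuity/base-change argument linking the two fibers, is required. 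The $\hbar\ne 0$ analysis you carry out (via Theorem~\ref{t:EHNYisBK} and the equivariant analogue of Lemma~\ref{lem:HNYvanish}) is correct but redundant for this proposition.

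The genuine gap is exactly the one you flag: the claim that ``a direct degree computation using the Newton polytope of $f_t$ identifies this length as $|W^P|$, independently of Theorem~\ref{t:Jac}.'' This is not carried out and is not a routine BKK-type count; in fact the paper never needs to compute the length at $\hbar=0$ at all. The paper instead shows $N_0$ is concentrated in one degree by a purely homological argument: $M_0$ is $O_V$ for the smooth Lagrangian $V\subset T^*X$ given by the graph of $d_{X/Z(L_P)}(f_S)$, and $W = T^*Z(L_P)\times_{Z(L_P)}X$ is also smooth. Rietsch's result that the critical locus of $f_t$ on each fiber is $0$-dimensional gives that $V\cap W$ is pure of dimension one, hence a \emph{proper} intersection, and then Serre's Tor-vanishing criterion for proper intersections of Cohen--Macaulay subvarieties yields $\Tor^i_{O_{T^*X}}(O_W,O_V)=0$ for $i>0$. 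Since $\pi$ is affine, $R\pi'_*$ is exact, and one concludes without ever computing a rank.

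There is also a secondary issue in your $\hbar=0$ step: you compute on a single Lusztig chart $\G_m^{\ell(w_P)}$, but the Lusztig parametrization is only a birational (rational) parametrization of $X_t$, so a chart-by-chart Koszul argument does not by itself account for critical points outside the chart or glue to the global statement. The paper's global Tor argument on $T^*X$ avoids this entirely.
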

\begin{proof}
To simplify the notation we will prove the proposition for $\BK^{1/\hbar}_{(G,P)}\in 
D^b(D_{\hbar,Z(L_P)})$ without the 
weight. Thus let 
$$
M^\hbar = D_{\hbar,X}/(\xi - (\xi \cdot f))
$$
denote the cyclic $D_{\hbar,X}$-module generated by a single section $e^{f/\hbar}$.  Here $\xi \in \Gamma(X,\Theta_X)$ denotes a vector field on $X$. 
We shall show that $N^\hbar := \int_\pi M^\hbar \in D^b(D_{\hbar,Z(L_P)})$ is isomorphic to an 
$\hbar$-torsion-free $D_{Z(L_P),\hbar}$-module concentrated in a single degree.  The condition 
that $N^\hbar$ is $\hbar$-torsion-free and concentrated in one cohomological degree is 
equivalent to the condition that the object $N_0 = N^\hbar \stackrel{L}{\otimes}_{\C[\hbar]} \C \in 
D^b(\cO_{T^*Z(L_P)})$ (see \S \ref{ssec:Dh}) is concentrated in a single cohomological degree.

Let $M_0 = M^\hbar \otimes_{\C[\hbar]} \C \in \rM(\cO_{T^*X})$.  Then $M_0$ isomorphic to $\cO_V$, where $V \subset T^*X$ is cut out by the equations $\xi - (\xi \cdot f)$.
By Proposition \ref{prop:h0commute}, we have $N_0 = \int_\pi M_0$.  Denote $T^*Z(L_P) \times_{Z(L_P)} X$ by $Y$.  By \eqref{eq:0pushforward} and \eqref{eq:0pushforward2}, we have
$$
\int_\pi M_0 = R\pi'_*(LF^*(M_0)\otimes_{\cO_{Y} } \tilde \pi^*\omega_{X/Z(L_P)} ),
$$
where $\tilde \pi: Y \to X$ and $\pi':Y \to T^*Z(L_P)$ are the two projections and $F: Y \to T^*X$ is the natural inclusion.  We first show that $LF^*(M_0) \in D^b(\cO_W)$ is concentrated in a single cohomological degree.  
This is equivalent to the condition that $\Tor^i_{\cO_{T^*X}}(\cO_Y,\cO_V) = 0$ for $i > 0$.  It 
is easy to see that both $V$ and $Y$ are smooth subvarieties of $T^*X$, and hence 
Cohen--Macaulay.  

The fiber of $Y \cap V$ under $Y \to T^*Z(L_P) \to Z(L_P)$ over a point $q \in Z(L_P)$ can be identified with the critical point set of $f|_{\pi^{-1}(q)}$.  Rietsch
\cite{Rietsch:mirror-construction-QH-GmodP} showed that this critical point set is 0-dimensional, and it follows that $Y \cap V$ is pure of dimension $1$.  Since $\dim V = \dim X$ and $\dim Y = \dim X + 1$, it follows that the intersection $Y \cap V$ is proper.  

If $\Tor^i_{\cO_{T^*X}}(\cO_Y,\cO_V)$ is nonzero then it is nonzero after localizing to some irreducible component of $C$ of $Y \cap V$.  Applying \cite[V.6,
Cor.]{Serre:local-algebra} we obtain  $\Tor^i_{\cO_{T^*X,C}}(\cO_{Y,C},\cO_{V,C}) = 0$ for all $i > 0$, where $\cO_{T^*X,C}$ (resp. $\cO_{Y,C}$, $\cO_{V,C}$) denotes the localization.  Thus $\Tor^i_{\cO_{T^*X}}(\cO_Y,\cO_V) = 0$ for $i > 0$ and we deduce that $L^iF^*(M_0)= 0$ for $i > 0$.  Since $\pi: X \to Z(L_P)$ is affine, the map $\pi'$ is also affine, so $R\pi'_*(F^*(M_0))$ is concentrated in a single degree.  
\end{proof}

\subsection*{Acknowledgements.} 
Special thanks to Nick Katz for his invaluable help on $G_2$ differential equations, and to
Xinwen Zhu for discussions on~\cite{Zhu:FG-HNY}. It is also a pleasure to thank Sasha Braverman, 
Victor Ginzburg, Dick Gross, Allen Knutson, Andrei 
Okounkov, Konstanze Rietsch, Geordie Williamson and Zhiwei Yun for helpful discussions.  
We thank the referees for many helpful suggestions. TL acknowledges support from the 
Simons Foundation under award number 341949 and from the NSF under agreement No. 
DMS-1464693 and DMS-1953852. NT acknowledges support from the NSF under agreement No. 
DMS-1454893.

\def\cprime{$'$} \def\cprime{$'$} \def\cprime{$'$} \def\cprime{$'$}
  \def\cprime{$'$} \def\cprime{$'$} \def\cprime{$'$} \def\cprime{$'$}
  \def\cprime{$'$} \def\cprime{$'$} \def\cprime{$'$} \def\cprime{$'$}
\begin{bibsection}
\begin{biblist}

\bib{Abouzaid:hms-toric}{article}{
      author={Abouzaid, Mohammed},
       title={Morse homology, tropical geometry, and homological mirror
  symmetry for toric varieties},
        date={2009},
     journal={Selecta Math. (N.S.)},
      volume={15},
      number={2},
       pages={189\ndash 270},
}

\bib{Akhiezer:Lie-gp-actions}{book}{
      author={Akhiezer, Dmitri~N.},
       title={Lie group actions in complex analysis},
      series={Aspects of Mathematics, E27},
   publisher={Friedr. Vieweg \& Sohn, Braunschweig},
        date={1995},
        ISBN={3-528-06420-X},
         url={http://dx.doi.org/10.1007/978-3-322-80267-5},
}

\bib{Bat}{article}{
	author={Batyrev, Victor V.}
	title={Quantum cohomology rings of toric manifolds},
	journal={Ast\'erisque},
	volume={218},
	year={1993},
	pages={9 \ndash 34},
}
\bib{BCKS:conifold}{article}{
      author={Batyrev, Victor~V.},
      author={Ciocan-Fontanine, Ionu{\c{t}}},
      author={Kim, Bumsig},
      author={van Straten, Duco},
       title={Conifold transitions and mirror symmetry for {C}alabi-{Y}au
  complete intersections in {G}rassmannians},
        date={1998},
        ISSN={0550-3213},
     journal={Nuclear Phys. B},
      volume={514},
      number={3},
       pages={640\ndash 666},
         url={http://dx.doi.org/10.1016/S0550-3213(98)00020-0},
}

\bib{BCKS:acta}{article}{
      author={Batyrev, Victor~V.},
      author={Ciocan-Fontanine, Ionu{\c{t}}},
      author={Kim, Bumsig},
      author={van Straten, Duco},
       title={Mirror symmetry and toric degenerations of partial flag
  manifolds},
        date={2000},
        ISSN={0001-5962},
     journal={Acta Math.},
      volume={184},
      number={1},
       pages={1\ndash 39},
         url={http://dx.doi.org/10.1007/BF02392780},
}

\bib{Beilinson-Drinfeld}{article}{
      author={Beilinson, A.},
      author={Drinfeld, V.},
       title={Quantization of {H}itchin's integrable system and {H}ecke
  eigensheaves},
     journal={Preprint,
  \texttt{http://math.uchicago.edu/mitya/langlands.html}},
}

\bib{BFZ:cluster-III}{article}{
      author={Berenstein, Arkady},
      author={Fomin, Sergey},
      author={Zelevinsky, Andrei},
       title={Cluster algebras. {III}. {U}pper bounds and double {B}ruhat
  cells},
        date={2005},
     journal={Duke Math. J.},
      volume={126},
      number={1},
       pages={1\ndash 52},
}

\bib{BK}{incollection}{
      author={Berenstein, Arkady},
      author={Kazhdan, David},
       title={Geometric and unipotent crystals. {II}. {F}rom unipotent
  bicrystals to crystal bases},
        date={2007},
   booktitle={Quantum groups},
      series={Contemp. Math.},
      volume={433},
   publisher={Amer. Math. Soc., Providence, RI},
       pages={13\ndash 88},
}

\bib{Berenstein-Zelevinsky:total-positivity}{article}{
      author={Berenstein, Arkady},
      author={Zelevinsky, Andrei},
       title={Total positivity in {S}chubert varieties},
        date={1997},
     journal={Comment. Math. Helv.},
      volume={72},
      number={1},
       pages={128\ndash 166},
}

\bib{Berenstein-Zelevinsky:tensor-product-multiplicities}{article}{
      author={Berenstein, Arkady},
      author={Zelevinsky, Andrei},
       title={Tensor product multiplicities, canonical bases and totally
  positive varieties},
        date={2001},
     journal={Invent. Math.},
      volume={143},
      number={1},
       pages={77\ndash 128},
}

\bib{Bertram:QH}{article}{
      author={Bertram, Aaron},
       title={Quantum {S}chubert calculus},
        date={1997},
     journal={Adv. Math.},
      volume={128},
      number={2},
       pages={289\ndash 305},
}

\bib{Bertram-Ciocan-Fulton:quantum-Schur}{article}{
      author={Bertram, Aaron},
      author={Ciocan-Fontanine, Ionu{\c{t}}},
      author={Fulton, William},
       title={Quantum multiplication of {S}chur polynomials},
        date={1999},
        ISSN={0021-8693},
     journal={J. Algebra},
      volume={219},
      number={2},
       pages={728\ndash 746},
         url={http://dx.doi.org/10.1006/jabr.1999.7960},
}

\bib{BLR90}{book}{
      author={Bosch, Siegfried},
      author={L{\"u}tkebohmert, Werner},
      author={Raynaud, Michel},
       title={N\'eron models},
      series={Ergebnisse der Mathematik und ihrer Grenzgebiete (3)},
   publisher={Springer-Verlag},
     address={Berlin},
        date={1990},
      volume={21},
        ISBN={3-540-50587-3},
}

\bib{Boalch:wild-character-varieties}{article}{
      author={Boalch, P.~P.},
       title={Geometry and braiding of {S}tokes data; fission and wild
  character varieties},
        date={2014},
     journal={Ann. of Math. (2)},
      volume={179},
      number={1},
       pages={301\ndash 365},
}

\bib{bourbaki:lie7-9}{book}{
      author={Bourbaki, Nicolas},
       title={Lie groups and {L}ie algebras. {C}hapters 7--9},
      series={Elements of Mathematics (Berlin)},
   publisher={Springer-Verlag},
     address={Berlin},
        date={2005},
}

\bib{BMO}{article}{
      author={Braverman, Alexander},
      author={Maulik, Davesh},
      author={Okounkov, Andrei},
       title={Quantum cohomology of the {S}pringer resolution},
        date={2011},
     journal={Adv. Math.},
      volume={227},
      number={1},
       pages={421\ndash 458},
}

\bib{Bremer-Sage:moduli-irregular}{article}{
      author={Bremer, Christopher~L.},
      author={Sage, Daniel~S.},
       title={Moduli spaces of irregular singular connections},
        date={2013},
     journal={Int. Math. Res. Not. IMRN},
      number={8},
       pages={1800\ndash 1872},
}

\bib{Buch:QH-Gr}{article}{
      author={Buch, Anders~Skovsted},
       title={Quantum cohomology of {G}rassmannians},
        date={2003},
        ISSN={0010-437X},
     journal={Compositio Math.},
      volume={137},
      number={2},
       pages={227\ndash 235},
         url={http://dx.doi.org/10.1023/A:1023908007545},
}

\bib{BCMP:Chevalley-K-cominuscule}{article}{
   author={Buch, Anders S.},
   author={Chaput, Pierre-Emmanuel},
   author={Mihalcea, Leonardo C.},
   author={Perrin, Nicolas},
   title={A Chevalley formula for the equivariant quantum $K$-theory of
   cominuscule varieties},
   journal={Algebr. Geom.},
   volume={5},
   date={2018},
   number={5},
   pages={568--595},
   issn={2313-1691},
}

\bib{Buch-Kresch-Tamvakis:GW-grassmannians}{article}{
      author={Buch, Anders~Skovsted},
      author={Kresch, Andrew},
      author={Tamvakis, Harry},
       title={Gromov-{W}itten invariants on {G}rassmannians},
        date={2003},
        ISSN={0894-0347},
     journal={J. Amer. Math. Soc.},
      volume={16},
      number={4},
       pages={901\ndash 915},
}

\bib{Chaput-Manivel-Perrin:QH-minuscule}{article}{
      author={Chaput, P.~E.},
      author={Manivel, L.},
      author={Perrin, N.},
       title={Quantum cohomology of minuscule homogeneous spaces},
        date={2008},
        ISSN={1083-4362},
     journal={Transform. Groups},
      volume={13},
      number={1},
       pages={47\ndash 89},
         url={http://dx.doi.org/10.1007/s00031-008-9001-5},
}

\bib{Chaput-Manivel-Perrin:QH-minuscule-II}{article}{
      author={Chaput, P.~E.},
      author={Manivel, L.},
      author={Perrin, N.},
       title={Quantum cohomology of minuscule homogeneous spaces. {II}.
  {H}idden symmetries},
        date={2007},
        ISSN={1073-7928},
     journal={Int. Math. Res. Not. IMRN},
      number={22},
       pages={Art. ID rnm107, 29},
         url={http://dx.doi.org/10.1093/imrn/rnm107},
}

\bib{Chaput-Manivel-Perrin:QH-minuscule-III}{article}{
      author={Chaput, P.~E.},
      author={Manivel, L.},
      author={Perrin, N.},
       title={Quantum cohomology of minuscule homogeneous spaces {III}.
  {S}emi-simplicity and consequences},
        date={2010},
        ISSN={0008-414X},
     journal={Canad. J. Math.},
      volume={62},
      number={6},
       pages={1246\ndash 1263},
         url={http://dx.doi.org/10.4153/CJM-2010-050-9},
}

\bib{Chaput-Perrin:QH-adjoint}{article}{
      author={Chaput, P.~E.},
      author={Perrin, N.},
       title={On the quantum cohomology of adjoint varieties},
        date={2011},
        ISSN={0024-6115},
     journal={Proc. Lond. Math. Soc. (3)},
      volume={103},
      number={2},
       pages={294\ndash 330},
         url={http://dx.doi.org/10.1112/plms/pdq052},
}

\bib{Cheong:QH-LG-OG}{article}{
      author={Cheong, Daewoong},
       title={Quantum cohomology rings of {L}agrangian and orthogonal
  {G}rassmannians and total positivity},
        date={2009},
     journal={Trans. Amer. Math. Soc.},
      volume={361},
      number={10},
       pages={5505\ndash 5537},
}

\bib{Chevalley:decompositions-cellulaires}{incollection}{
      author={Chevalley, C.},
       title={Sur les d\'ecompositions cellulaires des espaces {$G/B$}},
        date={1994},
   booktitle={Algebraic groups and their generalizations: classical methods
  ({U}niversity {P}ark, {PA}, 1991)},
      series={Proc. Sympos. Pure Math.},
      volume={56},
   publisher={Amer. Math. Soc., Providence, RI},
       pages={1\ndash 23},
        note={With a foreword by Armand Borel},
}

\bib{Chhaibi:Whittaker-processes}{article}{
      author={Chhaibi, R.},
       title={Whittaker processes and {L}andau-{G}inzburg potentials for flag
  manifolds},
     journal={arXiv:1504.07321},
}

\bib{Ciocan-Fontanine:QHflags}{article}{
      author={Ciocan-Fontanine, Ionu\c~t},
       title={The quantum cohomology ring of flag varieties},
        date={1999},
     journal={Trans. Amer. Math. Soc.},
      volume={351},
      number={7},
       pages={2695\ndash 2729},
}

\bib{Ciocan-Fontanine-Kim-Sabbah:nonabelian-Frobenius}{article}{
      author={Ciocan-Fontanine, Ionu{\c{t}}},
      author={Kim, Bumsig},
      author={Sabbah, Claude},
       title={The abelian/nonabelian correspondence and {F}robenius manifolds},
        date={2008},
        ISSN={0020-9910},
     journal={Invent. Math.},
      volume={171},
      number={2},
       pages={301\ndash 343},
         url={http://dx.doi.org/10.1007/s00222-007-0082-x},
}

\bib{CCGGK}{article}{
    AUTHOR = {Coates, Tom},
    author={Corti, Alessio},
    author={Galkin, Sergey},
    author={Golyshev, Vasily},
    author={Kasprzyk, Alexander},
     TITLE = {Mirror symmetry and {F}ano manifolds},
 BOOKTITLE = {European {C}ongress of {M}athematics},
     PAGES = {285--300},
 PUBLISHER = {Eur. Math. Soc., Z\"{u}rich},
      YEAR = {2013},
}

\bib{Coates-Corti-Iritani-Tseng}{article}{
   author={Coates, Tom},
   author={Corti, Alessio},
   author={Iritani, Hiroshi},
   author={Tseng, Hsian-Hua},
   title={Hodge-theoretic mirror symmetry for toric stacks},
   journal={J. Differential Geom.},
   volume={114},
   date={2020},
   number={1},
   pages={41--115},
}

\bib{Cox-Katz:mirror}{book}{
      author={Cox, David~A.},
      author={Katz, Sheldon},
       title={Mirror symmetry and algebraic geometry},
      series={Mathematical Surveys and Monographs},
   publisher={American Mathematical Society, Providence, RI},
        date={1999},
      volume={68},
}

\bib{Deligne:SGA4h}{book}{
      author={Deligne, P.},
       title={Cohomologie \'etale},
      series={Lecture Notes in Mathematics, Vol. 569},
   publisher={Springer},
        date={1977},
        note={S{\'e}minaire de G{\'e}om{\'e}trie Alg{\'e}brique du Bois-Marie
  (SGA 4$\frac 12$)},
}

\bib{Deligne-Malgrange-Ramis}{book}{
      author={Deligne, Pierre},
      author={Malgrange, Bernard},
      author={Ramis, Jean-Pierre},
       title={Singularit\'es irr\'eguli\`eres},
      series={Documents Math\'ematiques (Paris), 5},
   publisher={Soci\'et\'e Math\'ematique de France, Paris},
        date={2007},
}

\bib{Donagi-Pantev:Langlands-duality}{article}{
      author={Donagi, R.},
      author={Pantev, T.},
       title={Langlands duality for {H}itchin systems},
        date={2012},
     journal={Invent. Math.},
      volume={189},
      number={3},
       pages={653\ndash 735},
}

\bib{Dubrovin:geometry-TFT}{incollection}{
      author={Dubrovin, Boris},
       title={Geometry of {$2$}{D} topological field theories},
        date={1996},
   booktitle={Integrable systems and quantum groups ({M}ontecatini {T}erme,
  1993)},
      series={Lecture Notes in Math.},
      volume={1620},
   publisher={Springer, Berlin},
       pages={120\ndash 348},
         url={http://dx.doi.org/10.1007/BFb0094793},
}

\bib{Eguchi-Hori-Xiong}{article}{
      author={Eguchi, Tohru},
      author={Hori, Kentaro},
      author={Xiong, Chuan-Sheng},
       title={Gravitational quantum cohomology},
        date={1997},
     journal={Internat. J. Modern Phys. A},
      volume={12},
      number={9},
       pages={1743\ndash 1782},
}

\bib{Esnault-Sabbah-Yu}{article}{
   author={Esnault, H\'{e}l\`ene},
   author={Sabbah, Claude},
   author={Yu, Jeng-Daw},
   title={$E_1$-degeneration of the irregular Hodge filtration},
   note={With an appendix by Morihiko Saito},
   journal={J. Reine Angew. Math.},
   volume={729},
   date={2017},
   pages={171--227},
}

\bib{Faenzi-Manivel}{article}{
      author={Faenzi, Daniele},
      author={Manivel, Laurent},
       title={On the derived category of the {C}ayley plane {II}},
        date={2015},
     journal={Proc. Amer. Math. Soc.},
      volume={143},
      number={3},
       pages={1057\ndash 1074},
}

\bib{FLTZ:T-duality-mirror-toric}{article}{
      author={Fang, Bohan},
      author={Liu, Chiu-Chu~Melissa},
      author={Treumann, David},
      author={Zaslow, Eric},
       title={T-duality and homological mirror symmetry for toric varieties},
        date={2012},
     journal={Adv. Math.},
      volume={229},
      number={3},
       pages={1875\ndash 1911},
}

\bib{Fang-Zhou}{article}{
	author={Fang, Bohan},
	author={Zhou, Peng},
	title={Gamma II for toric varieties from integrals on T-dual branes and homological mirror symmetry},
	journal={arXiv:1903.05300},
}

\bib{FZ:double-bruhat}{article}{
      author={Fomin, Sergey},
      author={Zelevinsky, Andrei},
       title={Double {B}ruhat cells and total positivity},
        date={1999},
        ISSN={0894-0347},
     journal={J. Amer. Math. Soc.},
      volume={12},
      number={2},
       pages={335\ndash 380},
         url={http://dx.doi.org/10.1090/S0894-0347-99-00295-7},
}

\bib{Frenkel-Gross}{article}{
      author={Frenkel, Edward},
      author={Gross, Benedict},
       title={A rigid irregular connection on the projective line},
        date={2009},
        ISSN={0003-486X},
     journal={Ann. of Math. (2)},
      volume={170},
      number={3},
       pages={1469\ndash 1512},
         url={http://dx.doi.org/10.4007/annals.2009.170.1469},
}

\bib{FSY}{article}{
	author={Fresan, J.},
	author={Sabbah, C.},
	author={Yu, J-D.},
	title={Hodge theory of Kloosterman connections},
	date={2022},
	journal={Duke Math. J.},
	volume={171},
	number={8},
	pages={1649\ndash 1747},
}

\bib{FOOO:LF-compact-toric-I}{article}{
      author={Fukaya, Kenji},
      author={Oh, Yong-Geun},
      author={Ohta, Hiroshi},
      author={Ono, Kaoru},
       title={Lagrangian {F}loer theory on compact toric manifolds. {I}},
        date={2010},
     journal={Duke Math. J.},
      volume={151},
      number={1},
       pages={23\ndash 174},
}

\bib{FW}{article}{
      author={Fulton, W.},
      author={Woodward, C.},
       title={On the quantum product of {S}chubert classes},
        date={2004},
     journal={J. Algebraic Geom.},
      volume={13},
      number={4},
       pages={641\ndash 661},
}

\bib{Galkin-Golyshev-Iritani:gamma-conj}{article}{
   author={Galkin, Sergey},
   author={Golyshev, Vasily},
   author={Iritani, Hiroshi},
   title={Gamma classes and quantum cohomology of Fano manifolds: gamma
   conjectures},
   journal={Duke Math. J.},
   volume={165},
   date={2016},
   number={11},
   pages={2005--2077},
}

\bib{Galkin-Golyshev:QH-Grassmannians}{article}{
      author={Galkin, S.},
      author={Golyshev, V.},
       title={Quantum cohomology of {G}rassmannians, and cyclotomic fields},
        date={2006},
        ISSN={0042-1316},
     journal={Uspekhi Mat. Nauk},
      volume={61},
      number={1(367)},
       pages={175\ndash 176},
         url={http://dx.doi.org/10.1070/RM2006v061n01ABEH004304},
}

\bib{Galkin-Iritani:Gamma-conj}{article}{
      author={Galkin, Sergey},
      author={Iritani, Hiroshi},
       title={Gamma conjecture via mirror symmetry},
     journal={Advanced Studies in Pure Mathematics},
year = {2019},
number = {83},
pages={55\ndash 115}
}

\bib{Geck:minuscule}{article}{
   author={Geck, Meinolf},
   title={Minuscule weights and Chevalley groups},
   conference={
      title={Finite simple groups: thirty years of the atlas and beyond},
   },
   book={
      series={Contemp. Math.},
      volume={694},
      publisher={Amer. Math. Soc., Providence, RI},
   },
   date={2017},
   pages={159--176},

}

\bib{GLO:parabolic-Whittaker}{article}{
      author={Gerasimov, A.},
      author={Lebedev, D.},
      author={Oblezin, S.},
       title={Parabolic {W}hittaker functions and topological field theories
  {I}},
        date={2011},
     journal={Commun. Number Theory Phys.},
      volume={5},
      number={1},
       pages={135\ndash 201},
}

\bib{GLO:Givental-Integral}{article}{
      author={Gerasimov, A.},
      author={Lebedev, D.},
      author={Oblezin, S.},
       title={New integral representations of {W}hittaker functions for
  classical {L}ie groups},
        date={2012},
     journal={Uspekhi Mat. Nauk},
      volume={67},
      number={no.~1(403)},
       pages={3\ndash 96},
}

\bib{Givental:equivariant-GW}{article}{
      author={Givental, Alexander},
       title={Equivariant {G}romov-{W}itten invariants},
        date={1996},
        ISSN={1073-7928},
     journal={Internat. Math. Res. Notices},
      number={13},
       pages={613\ndash 663},
         url={http://dx.doi.org/10.1155/S1073792896000414},
}

\bib{Givental:mirror}{incollection}{
      author={Givental, Alexander},
       title={Stationary phase integrals, quantum {T}oda lattices, flag
  manifolds and the mirror conjecture},
        date={1997},
   booktitle={Topics in singularity theory},
      series={Amer. Math. Soc. Transl. Ser. 2},
      volume={180},
   publisher={Amer. Math. Soc., Providence, RI},
       pages={103\ndash 115},
}

\bib{Givental:mirror-toric}{incollection}{
      author={Givental, Alexander},
       title={A mirror theorem for toric complete intersections},
        date={1998},
   booktitle={Topological field theory, primitive forms and related topics
  ({K}yoto, 1996)},
      series={Progr. Math.},
      volume={160},
   publisher={Birkh\"auser Boston, Boston, MA},
       pages={141\ndash 175},
}

\bib{Givental-Kim}{article}{
      author={Givental, Alexander},
      author={Kim, Bumsig},
       title={Quantum cohomology of flag manifolds and {T}oda lattices},
        date={1995},
        ISSN={0010-3616},
     journal={Comm. Math. Phys.},
      volume={168},
      number={3},
       pages={609\ndash 641},
         url={http://projecteuclid.org/euclid.cmp/1104272492},
}

\bib{Golyshev-Zagier}{article}{
      author={Golyshev, V.},
      author={Zagier, D.},
       title={Proof of the {G}amma {C}onjecture for {F}ano 3-folds of {P}icard
  rank one},
        date={2016},
        ISSN={0373-2436},
     journal={Izv. Ross. Akad. Nauk Ser. Mat.},
      volume={80},
      number={1},
       pages={27\ndash 54},
         url={http://dx.doi.org/10.4213/im8343},
}

\bib{Golyshev-Manivel}{article}{
      author={Golyshev, Vasily},
      author={Manivel, Laurent},
       title={Quantum cohomology and the {S}atake isomorphism},
     journal={arXiv:1106.3120},
}

\bib{Goncharov-Shen:canonical-bases}{article}{
      author={Goncharov, Alexander},
      author={Shen, Linhui},
       title={Geometry of canonical bases and mirror symmetry},
        date={2015},
        ISSN={0020-9910},
     journal={Invent. Math.},
      volume={202},
      number={2},
       pages={487\ndash 633},
}

\bib{Gonzalez-Woodward:QH-toric-minimal}{article}{
   author={Gonz\'{a}lez, Eduardo},
   author={Woodward, Chris T.},
   title={Quantum cohomology and toric minimal model programs},
   journal={Adv. Math.},
   volume={353},
   date={2019},
   pages={591--646},
   issn={0001-8708},
   review={\MR{3986375}},
   doi={10.1016/j.aim.2019.07.004},
}

\bib{Gorbounov-Petrov}{article}{
      author={Gorbounov, Vassily},
      author={Petrov, Victor},
       title={Schubert calculus and singularity theory},
        date={2012},
     journal={J. Geom. Phys.},
      volume={62},
      number={2},
       pages={352\ndash 360},
}

\bib{Goresky-MacPherson:spectrum}{article}{
      author={Goresky, Mark},
      author={MacPherson, Robert},
       title={On the spectrum of the equivariant cohomology ring},
        date={2010},
     journal={Canad. J. Math.},
      volume={62},
      number={2},
       pages={262\ndash 283},
}

\bib{Gross:motive-principalsl2}{article}{
      author={Gross, B.},
       title={On the motive of {$G$} and the principal homomorphism {${\rm
  SL}\sb 2\to\widehat G$}},
        date={1997},
        ISSN={1093-6106},
     journal={Asian J. Math.},
      volume={1},
      number={1},
       pages={208\ndash 213},
         url={http://dx.doi.org/10.4310/AJM.1997.v1.n1.a8},
}

\bib{Gross:minuscule-principalsl2}{article}{
      author={Gross, B.},
       title={On minuscule representations and the principal {${\rm SL}\sb
  2$}},
        date={2000},
        ISSN={1088-4165},
     journal={Represent. Theory},
      volume={4},
       pages={225\ndash 244},
         url={http://dx.doi.org/10.1090/S1088-4165-00-00106-0},
}

\bib{GHK:mirror-log-CY-I}{article}{
      author={Gross, Mark},
      author={Hacking, Paul},
      author={Keel, Sean},
       title={Mirror symmetry for log {C}alabi-{Y}au surfaces {I}},
        date={2015},
     journal={Publ. Math. Inst. Hautes \'Etudes Sci.},
      volume={122},
       pages={65\ndash 168},
}

\bib{GHKK:canonical-bases-cluster}{article}{
   author={Gross, Mark},
   author={Hacking, Paul},
   author={Keel, Sean},
   author={Kontsevich, Maxim},
   title={Canonical bases for cluster algebras},
   journal={J. Amer. Math. Soc.},
   volume={31},
   date={2018},
   number={2},
   pages={497--608},
   issn={0894-0347},
   review={\MR{3758151}},
   doi={10.1090/jams/890},
}

\bib{Guest:book:QH}{book}{
      author={Guest, Martin~A.},
       title={From quantum cohomology to integrable systems},
      series={Oxford Graduate Texts in Mathematics},
   publisher={Oxford University Press, Oxford},
        date={2008},
      volume={15},
}

\bib{Harder:phd}{thesis}{
      author={Harder, A.},
       title={The geometry of {L}andau--{G}inzburg models},
        type={Ph.D. Thesis},
        date={University of Alberta, 2016},
}

\bib{Hausel-Thaddeus:mirror-Langlands-Hitchin}{article}{
      author={Hausel, Tam{\'a}s},
      author={Thaddeus, Michael},
       title={Mirror symmetry, {L}anglands duality, and the {H}itchin system},
        date={2003},
        ISSN={0020-9910},
     journal={Invent. Math.},
      volume={153},
      number={1},
       pages={197\ndash 229},
}

\bib{HNY}{article}{
      author={Heinloth, Jochen},
      author={Ng{\^o}, Bao-Ch{\^a}u},
      author={Yun, Zhiwei},
       title={Kloosterman sheaves for reductive groups},
        date={2013},
     journal={Ann. of Math. (2)},
      volume={177},
      number={1},
       pages={241\ndash 310},
}

\bib{Hertling:book-Frobenius}{book}{
      author={Hertling, Claus},
       title={Frobenius manifolds and moduli spaces for singularities},
      series={Cambridge Tracts in Mathematics},
   publisher={Cambridge University Press, Cambridge},
        date={2002},
      volume={151},
}

\bib{mirror-symmetry:Clay}{book}{
      author={Hori, Kentaro},
      author={Katz, Sheldon},
      author={Klemm, Albrecht},
      author={Pandharipande, Rahul},
      author={Thomas, Richard},
      author={Vafa, Cumrun},
      author={Vakil, Ravi},
      author={Zaslow, Eric},
       title={Mirror symmetry},
      series={Clay Mathematics Monographs},
   publisher={American Mathematical Society, Providence, RI; Clay Mathematics
  Institute, Cambridge, MA},
        date={2003},
      volume={1},
}

\bib{HTT:D-modules}{book}{
      author={Hotta, Ryoshi},
      author={Takeuchi, Kiyoshi},
      author={Tanisaki, Toshiyuki},
       title={{$D$}-modules, perverse sheaves, and representation theory},
      series={Progress in Mathematics},
   publisher={Birkh\"auser Boston Inc.},
     address={Boston, MA},
        date={2008},
      volume={236},
}

\bib{HKLY}{article}{
	author={Hu, Jianxun},
	author={Ke, Hua-Zhong},
	author={Li, Changzheng},
	author={Yang, Tuo},
	title={Gamma conjecture I for del Pezzo surfaces},
	journal={arXiv:1901.01748},
}

\bib{Huang-Lian-Zhu}{article}{
   author={Huang, An},
   author={Lian, Bong H.},
   author={Zhu, Xinwen},
   title={Period integrals and the Riemann-Hilbert correspondence},
   journal={J. Differential Geom.},
   volume={104},
   date={2016},
   number={2},
   pages={325--369},
   issn={0022-040X},
}

\bib{Iritani:integral-structure}{article}{
      author={Iritani, Hiroshi},
       title={An integral structure in quantum cohomology and mirror symmetry
  for toric orbifolds},
        date={2009},
     journal={Adv. Math.},
      volume={222},
      number={3},
       pages={1016\ndash 1079},
}

\bib{Joe-Kim:equivariant}{article}{
      author={Joe, Dosang},
      author={Kim, Bumsig},
       title={Equivariant mirrors and the {V}irasoro conjecture for flag
  manifolds},
        date={2003},
     journal={Int. Math. Res. Not.},
      number={15},
       pages={859\ndash 882},
}

\bib{Katz:exp-sums-diff-eq}{book}{
      author={Katz, Nicholas~M.},
       title={Exponential sums and differential equations},
      series={Annals of Mathematics Studies},
   publisher={Princeton University Press, Princeton, NJ},
        date={1990},
      volume={124},
        ISBN={0-691-08598-6; 0-691-08599-4},
}

\bib{Katzarkov-Kontsevich-Pantev:LG-models}{article}{
   author={Katzarkov, Ludmil},
   author={Kontsevich, Maxim},
   author={Pantev, Tony},
   title={Bogomolov-Tian-Todorov theorems for Landau-Ginzburg models},
   journal={J. Differential Geom.},
   volume={105},
   date={2017},
   number={1},
   pages={55--117},
}

\bib{Kim:ann-QH-GmodB}{article}{
      author={Kim, Bumsig},
       title={Quantum cohomology of flag manifolds {$G/B$} and quantum {T}oda
  lattices},
        date={1999},
        ISSN={0003-486X},
     journal={Ann. of Math. (2)},
      volume={149},
      number={1},
       pages={129\ndash 148},
}

\bib{KLS:projections-Richardson}{article}{
      author={Knutson, Allen},
      author={Lam, Thomas},
      author={Speyer, David~E.},
       title={Projections of {R}ichardson varieties},
        date={2014},
        ISSN={0075-4102},
     journal={J. Reine Angew. Math.},
      volume={687},
       pages={133\ndash 157},
         url={http://dx.doi.org/10.1515/crelle-2012-0045},
}

\bib{Kostant:polynomial}{article}{
      author={Kostant, Bertram},
       title={Lie group representations on polynomial rings},
        date={1963},
     journal={Amer. J. Math.},
      volume={85},
       pages={327\ndash 404},
}

\bib{Kresch-Tamvakis:QH-orthogonal}{article}{
      author={Kresch, Andrew},
      author={Tamvakis, Harry},
       title={Quantum cohomology of orthogonal {G}rassmannians},
        date={2004},
        ISSN={0010-437X},
     journal={Compos. Math.},
      volume={140},
      number={2},
       pages={482\ndash 500},
         url={http://dx.doi.org/10.1112/S0010437X03000204},
}

\bib{Kumar-Schwede}{article}{
    AUTHOR = {Kumar, Shrawan},
    author={Schwede, Karl},
     TITLE = {Richardson varieties have {K}awamata log terminal
              singularities},
   JOURNAL = {Int. Math. Res. Not. IMRN},
  FJOURNAL = {International Mathematics Research Notices. IMRN},
      YEAR = {2014},
    NUMBER = {3},
     PAGES = {842--864},
}

\bib{Kuznetsov-Polishchuk}{article}{
      author={Kuznetsov, Alexander},
      author={Polishchuk, Alexander},
       title={Exceptional collections on isotropic {G}rassmannians},
        date={2016},
     journal={J. Eur. Math. Soc. (JEMS)},
      volume={18},
      number={3},
       pages={507\ndash 574},
}

\bib{Lam:Whittaker}{article}{
   author={Lam, Thomas},
   title={Whittaker functions, geometric crystals, and quantum Schubert
   calculus},
   conference={
      title={Osaka 2012},
   },
   book={
      series={Adv. Stud. Pure Math.},
      volume={71},
      publisher={Math. Soc. Japan},
   },
   date={2016},
   pages={211--250},
}

\bib{Lam-Shimozono:GmodP-affine}{article}{
      author={Lam, Thomas},
      author={Shimozono, Mark},
       title={Quantum cohomology of {$G/P$} and homology of affine
  {G}rassmannian},
        date={2010},
        ISSN={0001-5962},
     journal={Acta Math.},
      volume={204},
      number={1},
       pages={49\ndash 90},
         url={http://dx.doi.org/10.1007/s11511-010-0045-8},
}

\bib{Laumon:filtres}{incollection}{
      author={Laumon, G.},
       title={Sur la cat\'egorie d\'eriv\'ee des {D}-modules filtr\'es},
        date={1983},
   booktitle={Algebraic geometry ({T}okyo/{K}yoto, 1982)},
      series={Lecture Notes in Math.},
      volume={1016},
   publisher={Springer, Berlin},
       pages={151\ndash 237},
}

\bib{Leclerc:cluster-Richardson}{article}{
      author={Leclerc, B.},
       title={Cluster structures on strata of flag varieties},
        date={2016},
     journal={Adv. Math.},
      volume={300},
       pages={190\ndash 228},
}

\bib{Lian-Song-Yau}{article}{
      author={Lian, Bong~H.},
      author={Song, Ruifang},
      author={Yau, Shing-Tung},
       title={Periodic integrals and tautological systems},
        date={2013},
     journal={J. Eur. Math. Soc. (JEMS)},
      volume={15},
      number={4},
       pages={1457\ndash 1483},
}

\bib{Macdonald:hypergeometric}{article}{
      author={Macdonald, I.},
       title={Hypergeometric functions {I}},
     journal={preprint arXiv:1309:4568},
}

\bib{Manin:book-Frobenius}{book}{
      author={Manin, Yuri~I.},
       title={Frobenius manifolds, quantum cohomology, and moduli spaces},
      series={American Mathematical Society Colloquium Publications},
   publisher={American Mathematical Society, Providence, RI},
        date={1999},
      volume={47},
        ISBN={0-8218-1917-8},
}

\bib{Marsh-Rietsch:B-model-Grassmannians}{article}{
   author={Marsh, R. J.},
   author={Rietsch, K.},
   title={The $B$-model connection and mirror symmetry for Grassmannians},
   journal={Adv. Math.},
   volume={366},
   date={2020},
   pages={107027, 131},
}

\bib{Mihalcea:EQ-Chevalley}{article}{
      author={Mihalcea, Leonardo~Constantin},
       title={On equivariant quantum cohomology of homogeneous spaces:
  {C}hevalley formulae and algorithms},
        date={2007},
     journal={Duke Math. J.},
      volume={140},
      number={2},
       pages={321\ndash 350},
}

\bib{Mochizuki:twistor-GKZ}{article}{
      author={Mochizuki, T.},
       title={Twistor property of {GKZ}-hypergeometric systems},
     journal={preprint arXiv:1501.04146},
}

\bib{Ngo:endoscopie}{article}{
   author={Ng\^{o}, Bao Ch\^{a}u},
   title={Fibration de Hitchin et endoscopie},
   journal={Invent. Math.},
   volume={164},
   date={2006},
   number={2},
   pages={399--453},
}

\bib{Ostrover-Tyomkin:toric-Fano}{article}{
      author={Ostrover, Yaron},
      author={Tyomkin, Ilya},
       title={On the quantum homology algebra of toric {F}ano manifolds},
        date={2009},
     journal={Selecta Math. (N.S.)},
      volume={15},
      number={1},
       pages={121\ndash 149},
}

\bib{Pech-Rietsch-Williams:LG-quadrics}{article}{
      author={Pech, C.},
      author={Rietsch, K.},
      author={Williams, L.},
       title={On {L}andau-{G}inzburg models for quadrics and flat sections of
  {D}ubrovin connections},
        date={2016},
     journal={Adv. Math.},
      volume={300},
       pages={275\ndash 319},
}

\bib{Pech-Rietsch:LG-Lagrangian-Grassmannians}{article}{
      author={Pech, Clelia},
      author={Rietsch, Konstanze},
       title={A {L}andau-{G}inzburg model for {L}agrangian {G}rassmannians,
  {L}anglands duality and relations in quantum cohomology},
     journal={arXiv:1304.4958},
}

\bib{Peterson:lectures}{article}{
      author={Peterson, D.},
       title={Quantum cohomology of {$G/P$}},
        date={1997},
     journal={lecture course, MIT},
}

\bib{Przyjalkowski:LG-Fano}{article}{
      author={Przyjalkowski, Victor},
       title={On {L}andau-{G}inzburg models for {F}ano varieties},
        date={2007},
        ISSN={1931-4523},
     journal={Commun. Number Theory Phys.},
      volume={1},
      number={4},
       pages={713\ndash 728},
         url={http://dx.doi.org/10.4310/CNTP.2007.v1.n4.a4},
}

\bib{Reichelt-Sevenheck:hypergeometric-Hodge}{article}{
   author={Reichelt, Thomas},
   author={Sevenheck, Christian},
   title={Hypergeometric Hodge modules},
   journal={Algebr. Geom.},
   volume={7},
   date={2020},
   number={3},
   pages={263--345},
}

\bib{Riche-Soergel-Williamson}{article}{
      author={Riche, Simon},
      author={Soergel, Wolfgang},
      author={Williamson, Geordie},
       title={Modular {K}oszul duality},
        date={2014},
        ISSN={0010-437X},
     journal={Compos. Math.},
      volume={150},
      number={2},
       pages={273\ndash 332},
         url={http://dx.doi.org/10.1112/S0010437X13007483},
}

\bib{Rietsch:QH-grassmannian-positivity}{article}{
      author={Rietsch, Konstanze},
       title={Quantum cohomology rings of {G}rassmannians and total
  positivity},
        date={2001},
        ISSN={0012-7094},
     journal={Duke Math. J.},
      volume={110},
      number={3},
       pages={523\ndash 553},
         url={http://dx.doi.org/10.1215/S0012-7094-01-11033-8},
}

\bib{Rietsch:JAMS}{article}{
      author={Rietsch, Konstanze},
       title={Totally positive {T}oeplitz matrices and quantum cohomology of
  partial flag varieties},
        date={2003},
     journal={J. Amer. Math. Soc.},
      volume={16},
      number={2},
       pages={363\ndash 392},
}

\bib{Rietsch:mirror-construction-QH-GmodP}{article}{
      author={Rietsch, Konstanze},
       title={A mirror symmetric construction of {$qH\sp \ast\sb T(G/P)\sb
  {(q)}$}},
        date={2008},
        ISSN={0001-8708},
     journal={Adv. Math.},
      volume={217},
      number={6},
       pages={2401\ndash 2442},
         url={http://dx.doi.org/10.1016/j.aim.2007.08.010},
}

\bib{Rietsch:mirror-solution-Toda}{article}{
      author={Rietsch, Konstanze},
       title={A mirror symmetric solution to the quantum {T}oda lattice},
        date={2012},
        ISSN={0010-3616},
     journal={Comm. Math. Phys.},
      volume={309},
      number={1},
       pages={23\ndash 49},
}

\bib{Rietsch:Peterson}{article}{
    AUTHOR = {Rietsch, Konstanze},
     TITLE = {A mirror construction for the totally nonnegative part of the
              {P}eterson variety},
   JOURNAL = {Nagoya Math. J.},
  FJOURNAL = {Nagoya Mathematical Journal},
    VOLUME = {183},
      YEAR = {2006},
     PAGES = {105--142},
}

\bib{Schapira-Schneiders}{article}{
   author={Schapira, Pierre},
   author={Schneiders, Jean-Pierre},
   title={Derived categories of filtered objects},
   journal={Ast\'{e}risque},
   number={383},
   date={2016},
   pages={103--120},
}

\bib{Serre:local-algebra}{book}{
      author={Serre, Jean-Pierre},
       title={Local algebra},
      series={Springer Monographs in Mathematics},
   publisher={Springer-Verlag, Berlin},
        date={2000},
}

\bib{STWZ:Cluster-Legendrian}{article}{
   author={Shende, Vivek},
   author={Treumann, David},
   author={Williams, Harold},
   author={Zaslow, Eric},
   title={Cluster varieties from Legendrian knots},
   journal={Duke Math. J.},
   volume={168},
   date={2019},
   number={15},
   pages={2801--2871},
}

\bib{Shimura:generalized-Bessel}{article}{
      author={Shimura, Goro},
       title={Generalized {B}essel functions on symmetric spaces},
        date={1999},
        ISSN={0075-4102},
     journal={J. Reine Angew. Math.},
      volume={509},
       pages={35\ndash 66},
         url={http://dx.doi.org/10.1515/crll.1999.041},
}

\bib{Siebert-Tian:QH}{article}{
      author={Siebert, Bernd},
      author={Tian, Gang},
       title={On quantum cohomology rings of {F}ano manifolds and a formula of
  {V}afa and {I}ntriligator},
        date={1997},
        ISSN={1093-6106},
     journal={Asian J. Math.},
      volume={1},
      number={4},
       pages={679\ndash 695},
         url={http://dx.doi.org/10.4310/AJM.1997.v1.n4.a2},
}

\bib{Su:equivariant-QH}{article}{
      author={Su, Changjian},
       title={Equivariant quantum cohomology of cotangent bundle of {$G/P$}},
        date={2016},
     journal={Adv. Math.},
      volume={289},
       pages={362\ndash 383},
}

\bib{Snow:nef-value}{article}{
      author={Snow, Dennis~M.},
       title={The nef value and defect of homogeneous line bundles},
        date={1993},
     journal={Trans. Amer. Math. Soc.},
      volume={340},
      number={1},
       pages={227\ndash 241},
}

\bib{Soergel:CategoryO}{article}{
      author={Soergel, Wolfgang},
       title={Kategorie {$\scr O$}, perverse {G}arben und {M}oduln \"uber den
  {K}oinvarianten zur {W}eylgruppe},
        date={1990},
     journal={J. Amer. Math. Soc.},
      volume={3},
      number={2},
       pages={421\ndash 445},
}

\bib{Sperber:hyperKloosterman-congruences}{article}{
      author={Sperber, S.},
       title={Congruence properties of the hyper-{K}loosterman sum},
        date={1980},
     journal={Compositio Math.},
      volume={40},
      number={1},
       pages={3\ndash 33},
}

\bib{book:Steinberg}{book}{
      author={Steinberg, Robert},
       title={Lectures on {C}hevalley groups},
   publisher={Yale University, New Haven, Conn.},
        date={1968},
        note={Notes prepared by John Faulkner and Robert Wilson},
}

\bib{Stembridge:fully-commutative-Coxeter}{article}{
      author={Stembridge, John~R.},
       title={On the fully commutative elements of {C}oxeter groups},
        date={1996},
     journal={J. Algebraic Combin.},
      volume={5},
      number={4},
       pages={353\ndash 385},
}

\bib{Tarasov-Varchenko:hypergeometric}{article}{
      author={Tarasov, Vitaly},
      author={Varchenko, Alexander},
       title={Hypergeometric solutions of the quantum differential equation of
  the cotangent bundle of a partial flag variety},
        date={2014},
     journal={Cent. Eur. J. Math.},
      volume={12},
      number={5},
       pages={694\ndash 710},
}

\bib{Ueda:Stokes-QHgr}{article}{
      author={Ueda, Kazushi},
       title={Stokes matrices for the quantum cohomologies of {G}rassmannians},
        date={2005},
        ISSN={1073-7928},
     journal={Int. Math. Res. Not.},
      number={34},
       pages={2075\ndash 2086},
         url={http://dx.doi.org/10.1155/IMRN.2005.2075},
}

\bib{Wan:variation-padic-Newton}{article}{
      author={Wan, Daqing},
       title={Variation of {$p$}-adic {N}ewton polygons for {$L$}-functions of
  exponential sums},
        date={2004},
     journal={Asian J. Math.},
      volume={8},
      number={3},
       pages={427\ndash 471},
}

\bib{Witten:Verlinde}{incollection}{
      author={Witten, Edward},
       title={The {V}erlinde algebra and the cohomology of the {G}rassmannian},
        date={1995},
   booktitle={Geometry, topology, \& physics},
      series={Conf. Proc. Lecture Notes Geom. Topology, IV},
   publisher={Int. Press, Cambridge, MA},
       pages={357\ndash 422},
}

\bib{Witten:gauge-theory-wild-ramification}{article}{
      author={Witten, Edward},
       title={Gauge theory and wild ramification},
        date={2008},
     journal={Anal. Appl. (Singap.)},
      volume={6},
      number={4},
       pages={429\ndash 501},
}

\bib{Xu-Zhu}{article}{
author = {Xu, D.},
author = {Zhu, X.},
title={Bessel  \(F\)-isocrystals for reductive groups},
date = {2022},
journal = {Invent. Math.},
volume={227},
pages = {997--1092},
}

\bib{Yun:rigid}{incollection}{
      author={Yun, Zhiwei},
       title={Rigidity in automorphic representations and local systems},
   booktitle={Current developments in mathematics 2013},
   publisher={Int. Press, Somerville, MA},
       pages={73\ndash 168},
}

\bib{Yun:moments}{article}{
      author={Yun, Zhiwei},
       title={Galois representations attached to moments of {K}loosterman sums
  and conjectures of {E}vans},
        date={2015},
     journal={Compos. Math.},
      volume={151},
      number={1},
       pages={68\ndash 120},
        note={Appendix B by Christelle Vincent},
}

\bib{Zhu:FG-HNY}{article}{
      author={Zhu, Xinwen},
       title={Frenkel-{G}ross' irregular connection and
  {H}einloth-{N}g\^o-{Y}un's are the same},
        date={2017},
     journal={Selecta Math.},
      volume={23},
       pages={245\ndash 274},
}

\end{biblist}
\end{bibsection}

\end{document}